\def\h{{\Phi}}
\theoremstyle{plain}
\newtheorem{thm}{Theorem}[section]
\theoremstyle{plain}
\newtheorem{lem}[thm]{Lemma}
\newtheorem{cor}[thm]{Corollary}
\theoremstyle{definition}
\newtheorem{defi}{Definition}[section]
\newtheorem{rem}{Remark}[section]
\newtheorem*{maintheorem*}{Main Theorem}
\newtheorem*{maincorollary*}{Main Corollary}
\newenvironment{Assumptions}
{
\setcounter{enumi}{0}

\begin{enumerate}}
{\end{enumerate} }
\newsavebox{\@brx}
\newcommand{\llangle}[1][]{\savebox{\@brx}{\(\m@th{#1\langle}\)}%
 \mathopen{\copy\@brx\mkern2mu\kern-0.9\wd\@brx\usebox{\@brx}}}
\newcommand{\rrangle}[1][]{\savebox{\@brx}{\(\m@th{#1\rangle}\)}%
 \mathclose{\copy\@brx\mkern2mu\kern-0.9\wd\@brx\usebox{\@brx}}}
\newcommand{\norm}[1]{\left\|#1\right\|}
\newcommand{\R}{\ensuremath{\mathbb{R}}}
\newcommand{\D}{{\ensuremath{\R^d}}}
\newcommand{\E}{\ensuremath{\mathbb{E}}}
\newcommand{\Div}{\mathrm{div}\,}
\newcommand{\sgn}{\mathrm{sign}}
\newcommand{\rd}{\ensuremath{\mathbb{R}^d}}
\newcommand{\supp}{\ensuremath{\mathrm{supp}\,}}
\newcommand{\goto}{\ensuremath{\rightarrow}}
\newcommand{\grad}{\ensuremath{\nabla}}
\newcommand{\eps}{\ensuremath{\varepsilon}}
\newcommand{\ep}{\eps}
\newcommand{\intrd}{\int_{\mathbb{R}^{d}}}
\newcommand{\Rd}{\mathbb{R}^d}
\newcommand{\fr}{\mathscr{L}_{\lambda}}
\def\dint{\displaystyle\int}
\def\d#1{\mathrm{ d#1}}
\numberwithin{equation}{section} \allowdisplaybreaks
\DeclareMathOperator{\supess}{sup\,ess}
\title[Stochastic degenerate fractional problem]
{A fractional degenerate parabolic-hyperbolic Cauchy \\problem with noise}
\subjclass[2000]{35R11, 35R60, 35L65}
\keywords{Nonlinear Fractional Conservation Laws; Degenerate Parabolic-Hyperbolic; Stochastic Conservation Laws; Young Measures; Existence; Uniqueness; Continuous Dependence Estimates; Rate of Convergence}
\author[Neeraj Bhauryal]{Neeraj Bhauryal}
\address[Neeraj Bhauryal]{\newline
	Centre for Applicable Mathematics,
	Tata Institute of Fundamental Research,
	P.O.\ Box 6503, GKVK Post Office,
	Bangalore 560065, India}
\email[]{neeraj@tifrbng.res.in}
\author[Ujjwal Koley]{Ujjwal Koley}
\address[Ujjwal Koley]{\newline
	Centre for Applicable Mathematics,
	Tata Institute of Fundamental Research,
	P.O.\ Box 6503, GKVK Post Office,
	Bangalore 560065, India}
\email[]{ujjwal@tifrbng.res.in}
\author[Guy Vallet]{Guy Vallet}
\address[Guy Vallet]{\newline 
	LMAP UMR- CNRS 5142, IPRA BP 1155, 64013 Pau Cedex, France}
\email[]{guy.vallet@univ-pau.fr}
\begin{document}
\begin{abstract}
We consider the Cauchy problem for a stochastic scalar parabolic-hyperbolic equation in any space dimension with nonlocal, nonlinear, and possibly degenerate diffusion terms. The equations are nonlocal because they involve fractional diffusion operators.  We adapt the notion of stochastic entropy solution and provide a new technical framework to prove the uniqueness. The existence proof relies on the vanishing viscosity method.  Moreover, using bounded variation (BV) estimates for vanishing viscosity approximations, we derive an explicit continuous dependence estimate on the nonlinearities and derive error estimate for the stochastic vanishing viscosity method. In addition, we develop uniqueness method ``\`{a} la Kru\v{z}kov" for more general equations where the noise coefficient may depends explicitly on the spatial variable.
\end{abstract}

\maketitle
\tableofcontents

\section{Introduction}
In this paper, we consider the following initial value problem for the stochastic nonlinear, nonlocal conservation law
\begin{equation}
 \label{eq:stoc_frac}
 \begin{cases} 
 du(t,x) + \Big[\mathscr{L}_{\lambda}[A(u(t,\cdot))](x)- \Div f(u(t,x))\Big]\,dt
 =\h(u(t,x)) \,dW(t), & \quad \text{in } Q_T, \\
 u(0,x) = u_0(x), & \quad \text{in } \D,
 \end{cases}
 \end{equation}
 where $Q_T:=\D \times (0,T)$ with $T>0$ fixed, $u_0$ is the given initial
 function, $f: \R \to \D$, $A:\R \to \R$ are  given (sufficiently smooth) functions 
(see Section~\ref{sec:tech} for the complete list of assumptions), and $\mathscr{L}_\lambda[u]$ denotes the fractional Laplace operator $(-\Delta)^\lambda[u]$ of order $\lambda \in (0,1)$, defined pointwise as follows
\begin{align*}
\fr[\varphi](x) := c_{\lambda}\, \text{P.V.}\, \int_{|z|>0} \frac{\varphi(x) -\varphi(x+z)}{|z|^{d + 2 \lambda}} \,dz,
\end{align*}
for some constants $c_{\lambda}>0$, and a sufficiently regular function $\varphi$.
Note that $A'$ is allowed to be zero on an interval so that, as in the local case (see \cite{Carrillo}), the problem may degenerate in a free set. Let $\big(\Omega, \mathcal{F}, \mathbb{P}, (\mathcal{F}_t )_{t\ge0} \big)$ be a stochastic basis, where $\big(\Omega, \mathcal{F}, \mathbb{P} \big)$ is a probability space and $(\mathcal{F}_t)_{t \ge 0}$ is a complete filtration with the usual assumptions. We assume that $W$ is a cylindrical Wiener process: $W(t)= \sum_{k\ge 1} e_k \beta_k(t)$ with $(\beta_k)_{k\ge 1}$ being mutually independent real valued standard Wiener processes, and $(e_k)_{k\ge 1}$ a complete orthonormal system in a separable Hilbert space $\mathbb{H}$. The map $ u \mapsto \h(u)$ is  an $\mathbb{H}$-valued function signifying the multiplicative nature of the noise. Moreover, for each $v$ in $L^2(\R^d)$, we consider the mapping $\h(v): \mathbb{H}\goto L^2(\R^d)$ defined by $\h(v)e_k= g_k(v(\cdot))$. In particular, we suppose that $g_k$ is Lipschitz-continuous and $\mathbb{G}^2(r):= \sum_{k\ge 1} g_k^2(r)$.

\subsection{Stochastic Entropy Formulation}
It is well-known that the nonlinearity of the flux function and a possible degeneracy of the diffusion term in \eqref{eq:stoc_frac} can lead to a loss of regularity in the solution, even with smooth initial data. Thus, weak solutions to \eqref{eq:stoc_frac} must be sought. However, weak solutions are not necessarily uniquely determined by their initial data. Consequently, an admissibility condition, so called {\em entropy condition}, must be imposed to single out the physically relevant solution. To describe the entropy framework for \eqref{eq:stoc_frac}, we first introduce the notion of entropy-entropy flux pair.

\begin{defi}[entropy-entropy flux pair]
	A pair $(\eta,\zeta) $ is called an entropy-entropy flux pair 
	if $ \eta \in C^2(\R) $, $\eta \ge0$ and 
	$\zeta = (\zeta_1,\zeta_2,....\zeta_d):\R \mapsto\rd $ is a vector field satisfying
	$\zeta'(r) = \eta'(r)f'(r)$, for all $r$. An entropy-entropy flux pair $(\eta,\zeta)$ is called 
	convex if $ \eta^{\prime\prime} \ge 0$.  
\end{defi}
With the help of  convex entropy-entropy flux pairs $(\eta,\zeta)$, we are ready to define the notion of stochastic entropy solution. To this end, let us first split the non-local operator $\fr$ into two terms: for each $r>0$, we write $\fr[\varphi] := \mathscr{L}_{\lambda, r}[\varphi] + \mathscr{L}_{\lambda}^{r}[\varphi]$, where
\begin{align*}
\mathscr{L}_{\lambda, r}[\varphi](x):= c_{\lambda}\, \text{P.V.}\, \int_{|z|\le r} \frac{\varphi(x) -\varphi(x+z)}{|z|^{d + 2 \lambda}} \,dz, \quad
\mathscr{L}_{\lambda}^{r}[\varphi](x):= c_{\lambda}\,\int_{|z|> r} \frac{\varphi(x) -\varphi(x+z)}{|z|^{d + 2 \lambda}} \,dz.
\end{align*}
 
\addtocounter{footnote}{1}

\begin{defi}[Stochastic Entropy Solution] 
\label{Defi_Entropy_formulation}
A square integrable $ L^2(\R^d )$-valued $\{\mathcal{F}_t: t\geq 0 \}$-predictable  process $u$\footnote{$u \in N_w^2(0,T,L^2(\R^d ))$ in the sequel.} is called a stochastic entropy solution of \eqref{eq:stoc_frac} with initial data $u_0$ if $u \in L^\infty(0,T,L^2(\Omega\times\R^d))$ and, given any non-negative test function $\varphi\in C_{c}^{1,2}([0,\infty )\times\R^d)$\footnote{$\varphi$ in $H^1(Q)\cap L^2(0,T,H^2(\R^d))$ is possible by a density argument.}, any convex entropy flux pair $(\eta,\zeta)$ and any positive $r$, the following inequality holds:
\begin{align*}
&0\leq  \int_\D\eta(u_0(x)-k)\varphi(0) \,dx + \int_{Q_T} \eta(u(t,x) - k)\partial_t\varphi(t,x) -\nabla\zeta(u(t,x))\cdot \varphi(t,x)\,dx\,dt\\
&+ \int_{Q_T} \eta'(u(t,x) -k) \h(u(t,x))\varphi(t,x) \,dx \,dW(t) + \frac{1}{2}\int_{Q_T}\mathbb{G}^2(u(t,x))\eta^{\prime\prime}(u(t,x) - k)\varphi(t,x) \,dx\,dt\\ 
&-\int_{Q_T} \Big[\mathscr{L}^r_\lambda[A(u(t,\cdot))](x) \varphi(t,x) \eta'(u(t,x)-k) + A^\eta_k(u(t,x)) \mathscr{L}_{\lambda,r}[\varphi(t,\cdot)](x)\Big]\,dx\,dt,\hspace{0.5cm} \mathbb{P}-a.s.
\end{align*}
\end{defi}
A formal derivation of the above entropy inequality can be found in Appendix~\ref{appendix_entropy}. Let us only remark here that for any positive $r$, assuming that $u$ belongs to $L^2(Q_T)$ is enough to give a sense to  
\begin{align*}
\int_{Q_T} \Big[\mathscr{L}^r_\lambda[A(u(t,\cdot))](x) \varphi(t,x) \eta'(u(t,x)-k) + A^\eta_k(u(t,x)) \mathscr{L}_{\lambda,r}[\varphi(t,\cdot)](x)\Big]\,dx\,dt,
\end{align*}
where $A^\eta_k(u)$ is defined in Section~\ref{sec:tech}.

\begin{rem}\label{weakcont}
Note that since $u \in L^\infty(0,T,L^2(\Omega\times\R^d))$ and $\partial_t [ u - \int_0^t \h(u) dW] \in L^2(\Omega\times(0,T),H^{-1}(\D))$, based on the properties of It\^o's integrals, one concludes that $u - \int_0^t \h(u) dW \in C_w([0,T],L^2(\Omega\times\R^d))$ first (\cite[p.262-263]{Temam}) and then $u \in C_w([0,T],L^2(\Omega\times\R^d))$.
\end{rem}

\subsection{Earlier works and outline of this paper}   

The equation \eqref{eq:stoc_frac} can be viewed as a stochastic perturbation of a nonlocal degenerate parabolic-hyperbolic equation. In the absence of nonlocal term along with $\h=0$, equation \eqref{eq:stoc_frac} becomes a standard conservation law in $\R^d$. For conservation laws, the question of existence and uniqueness of solutions was first settled in the pioneer papers of Kru\v{z}kov \cite{Kruzkov} and Vol'pert \cite{Volpert}. In the case $\h=0$, well-posedness of Cauchy problems was studied by Alibaud \cite{Alibaud}, Cifani \& Jakobsen \cite{CifaniJakobsen}. For the linear case i.e., $A(x)=x$, well posedness results for \eqref{eq:stoc_frac} has been recently developed in \cite{BhKoleyVa}.
\smallskip

The study of stochastic balance laws have a recent yet intense history. In fact, Kim \cite{KIm2005} extended  Kru\v{z}kov well-posedness theory to one dimensional balance laws that are driven by additive Brownian noise, and Vallet \& Wittbold \cite{Vallet_2009} to the multidimensional Dirichlet problem. On the other hand, Feng $\&$ Nualart introduced a notion of strong entropy solution in \cite{nualart:2008}, for which the uniqueness was established in the class of entropy solutions in any space dimension, in the multiplicative case. The existence was proven by using a stochastic version of the compensated compactness method and it was valid only for \emph{one} spatial dimension. To overcome this problem, Debussche $\&$ Vovelle \cite{Vovelle2010} introduced
the kinetic formulation of such problems and as a result they were able to establish the well-posedness
of multidimensional stochastic balance law via \emph{kinetic} approach. 
A number of authors have contributed since then, and we mention the works of 
Bauzet et al. \cite{BaVaWit_2012,BaVaWit_JFA}, Biswas et al. \cite{BisMajKarl_2014}. For degenerate
parabolic equations, we mention the works of Vallet \cite{Vallet_2005, Vallet_2008}, Debussche et al. \cite{martina}, Koley et al. \cite{Koley1, Koley2}.
We also mention works by Chen et al. \cite{Chen-karlsen_2012}, and Biswas et al. \cite{BisKoleyMaj}, where the well posedness of the problem for entropy solution is established in $L^p \cap BV$, via BV framework. 
Moreover, they were able to develop a continuous dependence theory for multidimensional balance laws and as a by product they derived an explicit \emph{convergence rate} of the approximate solutions to the underlying problem. 
\smallskip

In a nutshell, the main difficulty in above mentioned works is that, by virtue of It\^o's formula, it is not possible to use the usual Kru\v{z}kov's entropies and therefore adaptation of the deterministic ideas are quite involved. In fact, one has to work with smooth approximations of the absolute-value function and to deal with the consequences of this change. On the other hand, the (well posedness)  analysis of fractional conservation laws relies on an essential ingredient, namely the following (Kato's type of) inequality
\begin{equation}\label{imp_10}
\sgn\Big(u(x)-v(y)\Big) \Big(\fr[A(u)](x)- \fr[A(v)](y)\Big) \le \fr\Big[|A(u) -A(v)|\Big](x,y),
\end{equation}
where we have used the standard notation of fractional derivative for a function of two variables on the right hand side of the above inequality (for more details, see Cifani \& Jakobsen \cite[Section 3]{CifaniJakobsen}). We remark that the above inequality \eqref{imp_10} is true for the signum function, and does not hold (due to the presence of nonlinear function $A$) for a regularized version of the signum function which is required to establish the well posedness theory in the stochastic case. In view of the above discussions, it is clear that there is a gap between the stochastic theory and its deterministic counterpart for nonlinear fractional conservation laws. Incidentally, in the linear case \textit{i.e.}, $A(x)=x$, inequality \eqref{imp_10} holds for a regularized version of the signum function. Indeed, this has been exploited by the authors in \cite[Lemma 3.4]{BhKoleyVa} to establish the well posedness theory in the linear case.

The present proof of well posedness contains two new ingredients: 
\begin{itemize}
\item [(a)] A change in computing hierarchical limits with respect to various parameters involved, \textit{i.e.}, we pass to the limit in the parameter $\delta$ (related to the approximation of the absolute value function), before passing to the limit in the parameter $l$ (related to approximation of  the ``doubling of variable'' constant). This is a significant departure from the existing literature, and seems necessary to accommodate Kato's type of inequality \eqref{imp_10} in the stochastic setup. However, in view of Definition~\ref{Defi_Entropy_formulation}, this change immediately invites rudimentary problem due to the presence of the term consisting of $\eta_{\delta}''$. We overcome this difficulty by making use of an integration by parts formula (for details, see Lemma~\ref{lem:stochastic-terms}). Needless to mention that the above change in hierarchy enforces us to revisit all the terms involved in the entropy inequality.
\item [(b)] A typical approach to prove the existence of solutions for the regularized/viscous problem is often based on a semi-implicit time discretization (see \cite{BaVaWit_2012,BhKoleyVa}). However, due to the presence of the nonlinear fractional diffusion, it seems not possible to adapt such techniques here. Therefore, following \cite[Section 4]{martina}, we use a general method of constructing martingale solutions of SPDEs (see Section~\ref{viscous}), that does not rely on any kind of martingale representation theorem. This argument is based on a compactness method where one needs uniform estimates to demonstrate tightness results and this yields the convergence of the approximate sequence on another probability space and the existence of martingale solution follows. The existence of a pathwise solution is obtained by  Gy\"{o}ngy-Krylov's characterization of convergence of probability. Since we are working on the full space (not on a torus, as in \cite{martina}), we require weighted $L^2$-estimates for solutions to successfully demonstrate a compactness argument.
\end{itemize}
We also develop a \emph{continuous dependence theory} for stochastic entropy solution of \eqref{eq:stoc_frac}, which, in turn, is used to establish an \emph{error estimate} for the vanishing viscosity method. To that context, we first address the question of existence, uniqueness of stochastic BV entropy solution in $L^2(\R^d) \cap BV(\R^d)$ of the problem \eqref{eq:stoc_frac}. To display essential new ideas
in a simpler context, we only provide a continuous dependence estimate on the nonlinearities coming from the (fractional) nonlocal term. For the continuous dependence estimates on other nonlinearities present in the equation, one can follow \cite[Section 4]{BhKoleyVa}. Furthermore, making use of the crucial BV estimate, we derive an error estimate for the vanishing viscosity method provided that the initial data lies in $u_0 \in L^2(\R^d) \cap BV(\R^d)$. Finally, we turn our discussions to more general stochastic nonlocal degenerate problems driven by Brownian noise, namely when the coefficient of the Brownian noise $\Phi$ has an explicit dependency on the spatial position $x$ (cf. equation \eqref{eq:stoc_frac_001}) as well. In this case, the uniqueness proof again requires a change in order in computing limits with respect to various parameters. This technical hurdle compelled us to analyze the general equation with values of $\lambda$ in $[0,1/2)$ only.

We remark that our solution concept is completely different from the concept of \emph{random entropy solution} for fractional conservation laws incorporating randomness in the initial data and fluxes. Several results are available in that direction. For more details on the well-posedness theory of random entropy solution, we refer to \cite{ujjwal, koley2013multilevel,Koley3}.

The rest of the paper is organized as follows:  we describe the technical framework and state the main results in Section~\ref{sec:tech}. In Section~\ref{uniqueness}, we present a proof of the result of uniqueness by using a variant of Kru\v{z}kov's doubling of variable technique, and then derive stability results for \eqref{eq:stoc_frac}. Section~\ref{ContDep_NonL} is devoted to deriving the continuous dependence estimate on nonlinearities, while Section~ \ref{rateofconv} deals with the error estimates. Existence, uniqueness and several \textit{a priori} bounds of viscous solutions are presented in Section~\ref{viscous}. Finally, in Section~\ref{extension}, we establish a uniqueness argument for more general nonlocal stochastic problems, and a formal derivation of the entropy inequality is presented in Appendix~\ref{appendix_entropy}.

\section{Technical Framework and Statement of the Main Results}
\label{sec:tech}
Throughout this paper, we use the letter $C$ to denote various generic constants. There are situations where this constant may change from line to line, but the notation is kept unchanged so long as it does not impact the central idea. 
In general, if $G \subset \R^k$, $\mathcal{D}(G)$ denotes the restriction to $G$ of $\mathcal{D}(\R^k)$ functions $u$ such that \supp($u$)$\cap G$ is compact.  Then, $\mathcal{D}^+(G)$ will denote the subset of nonnegative elements of $\mathcal{D}(G)$.

For a given separable Banach space $X$, we denote by $N^2_w(0,T,X)$ the space of square integrable predictable $X$-valued processes (cf. \cite{daprato} p.94 or \cite{PrevotRockner} p.28 for example). Furthermore, we denote $BV(\R^d)$ as the set of integrable functions with bounded variation on $\R^d$ endowed with the norm $\|u\|_{BV(\R^d)}= \|u\|_{L^1(\R^d)} + TV_{x}(u)$, where $TV_{x}$ is the total variation of $u$ defined on $\R^d$.
\smallskip

We denote by $\mathcal{E}$ the set of nonnegative convex functions in $C^{2,1}(\R)$  approximating the absolute-value function, such that $\eta(0)=0$ and that there exists $\delta>0$ such that $\eta'(x)=1$ (resp. $-1$) if $x>\delta$  (resp. $x<-\delta$ ). Then, $\eta^{\prime\prime}$ has a compact support and $\eta$ and $\eta'$ are Lipschitz-continuous functions.
\smallskip

For convenience, denote by $\sgn(x)=\frac{x}{|x|}$ if $x\neq 0$ and $0$ otherwise;  $F(a,b)=\sgn(a-b)[f (a)-f (b)]$ and $F^\eta(a,b)=\int_b^a\eta'(\sigma - b) f'(\sigma)\,\d{\sigma}$. Note, in particular, that $F$ and $F^\eta$ are Lipschitz-continuous functions.
\\
Similarly, denote by $A^\eta_k(a)=\int_k^a\eta'(\sigma - k) A'(\sigma)\d{\sigma}$.
\smallskip

 Next, we write down some useful properties of the fractional operator which are used in the sequel, for a detailed description, consult \cite[Appendix B]{BhKoleyVa}. First note that 
\begin{align*}
\mathscr{L}_{\lambda}[\varphi](x)&= c_{\lambda}\, \text{P.V.}\, \int_{|z|\le r} \frac{\varphi(x) -\varphi(x+z)}{|z|^{d + 2 \lambda}} \,dz + c_{\lambda}\,\int_{|z|> r} \frac{\varphi(x) -\varphi(x+z)}{|z|^{d + 2 \lambda}} \,dz \\
& = c_{\lambda}\, \int_{|z|\le r} \frac{\varphi(x) -\varphi(x+z) + z \cdot \nabla \varphi(x)}{|z|^{d + 2 \lambda}} \,dz + c_{\lambda}\,\int_{|z|> r} \frac{\varphi(x) -\varphi(x+z)}{|z|^{d + 2 \lambda}} \,dz,
\end{align*}
for some constants $c_{\lambda}$, $\lambda \in (0,1)$, and a sufficiently regular function $\varphi$.
Moreover, for all $u,v \in H^{\lambda}(\D)$, denoting the convolution operator by $\star$, we have
\begin{align*}
u \star \mathscr{L}_\lambda [v] &= v \star \mathscr{L}_\lambda [u], \\
\langle\mathscr{L}_\lambda [u],v\rangle &=\frac{c_{\lambda}}{2}\int_{\D}\int_{\D} \frac{\big(u(x)-u(y)\big) \big(v(x)-v(y)\big)}{|x-y|^{d+2\lambda}}\,dx\,dy =\int_{\D} \mathscr{L}_{\lambda/2} [u](x) \,\mathscr{L}_{\lambda/2} [v](x) \,dx, \\
\langle \mathscr{L}_{\lambda,r}[u],v \rangle &= \frac{c_{\lambda}}{2} \int_\D \int_{|z| \le r} \frac{(u(x)- u(x+z))(v(x)-v(x+z))}{|z|^{d+2\lambda}} \,dz\,dx.
\end{align*}
The primary objective of this paper is to settle the problem of the existence and uniqueness of a solution for the Cauchy problem \eqref{eq:stoc_frac}, and we do so under the following assumptions:
 \begin{Assumptions}
 \label{aasumptions}
 	\item \label{A1} The initial function $u_0$ is a deterministic function in $L^2(\R^d)$\footnote{Note that $\|u\|_{BV} <\infty$ will be assumed for stability analysis.}.
 	\item \label{A2}  $ f=(f_1,f_2,\cdots, f_d):\R\rightarrow \R^d$ is a Lipschitz continuous function with $f_k(0)=0$, for 
 	all $1\le k\le d$.
 	\item \label{A3} $A: \R \to \R$  is non-decreasing Lipschitz continuous function with $A(0)=0$. 
 	\item \label{A4} 
We assume that $g_k:\R \to \R$ satisfies $g_k(0)=0$ for all $k\ge 1$. Moreover, there exists a positive constant $K > 0$  such that, for all $u,v \in \R$,  
 	\begin{align*} 
 	\sum_{k\ge 1}\big| g_k(u)-g_k(v)\big|^2  \leq K |u-v|^2 \quad \text{and}\quad \mathbb{G}^2(u)= \sum_{k\ge 1} g_k^2(u)\le K\,|u|^2.
 	\end{align*}  
 \end{Assumptions}

\begin{rem}
 In view of Assumption \ref{A4}, for any $v\in L^2(\R^d)$ (resp. $H^1(\R^d)$), $\h(v)$ is a Hilbert-Schmidt operator from the separable Hilbert space $\mathbb{H}$ to $L^2(\R^d)$ (resp. $H^1(\R^d)$). Therefore, for a given predictable process 
 $v\in L^2(\Omega;L^2(0,T;L^2(\R^d)))$, the stochastic integral $t\mapsto \int_0^t \h(v)dW(s)$ is a well-defined process taking values in the Hilbert space $L^2(\R^d)$. Moreover, the trajectories 
 of $W$ are $\mathbb{P}$- a.s. continuous in $\mathbb{H}_0 \supset \mathbb{H}$, where 
 \begin{align*}
  \mathbb{H}_0:= \Big\{ v= \sum_{k\ge 1} v_k e_k:\,\, \sum_{k\ge 1} \frac{v_k^2}{k^2} < + \infty \Big\} 
 \end{align*}
 endowed with the norm $\|v\|_{\mathbb{H}_0}^2= \sum_{k\ge 1} \frac{v_k^2}{k^2} $ where $v= \sum_{k\ge 1} v_k e_k$. Furthermore, the embedding $\mathbb{H}\hookrightarrow \mathbb{H}_0$ is Hilbert-Schmidt (see \cite{daprato}).
\end{rem}


Like its deterministic counterpart, the result of existence of entropy solutions is largely related to the study of associated viscous problems. For a small positive number $\eps>0$, consider the parabolic perturbation
\begin{align}
\label{eq:viscous-Brown} 
du_\eps(t,x) -[\eps \Delta u_\eps(t,x)+ \Div f(u_\ep(t,x))]\,dt & + \mathscr{L}_{\lambda}[A(u_\eps(t, \cdot))](x)\,dt = \h(u_\eps(t,x))\,dW(t) 
\end{align}
of \eqref{eq:stoc_frac} with initial the data $u_{\eps}(0,x)=u_0^{\eps}(x)\in H^1(\R^d)\cap L^1(\R^d)$, where $u_0^{\eps}$ is a suitable approximation of the initial condition satisfying: $u_0^{\eps}$ converges to $u_0$ in $L^2(\R^d)$, $\sqrt{\eps}u_0^{\eps}$ is bounded in $H^1(\R^d)$ by $C\|u_0\|_{L^2(\R^d)}$; if moreover $u_0\in L^1(\R^d)$ then $\|u_0^{\eps}\|_{L^1(\R^d)} \leq \|u_0\|_{L^1(\R^d)}$ and if $u_0\in BV(\R^d)$ then $TV(u_0^{\eps}) \leq TV(u_0)$. We first propose a result of existence of the weak solution\footnote{Weak is understood here in the sense of PDE.} to the regularized problem \eqref{eq:viscous-Brown} by adapting the argument of Gy\"{o}ngy and Krylov (see \cite{GyongyKrylov}) based on a result from Yamada and Watanabe (see \cite{YamadaWatanabe}) in Section \ref{viscous}.

\begin{thm}[Existence and Uniqueness of Viscous Solution]
\label{prop:vanishing viscosity-solution} \hfill \\
Let assumptions \ref{A1}-\ref{A4} hold and $u_0^{\eps}\in H^1(\R^d)\cap L^1(\R^d)$ as presented above. Then for any $\eps>0$, there exists a unique solution $u_\eps \in N_w^2(0,T,H^1(\R^d))$, pathwise continuous in $L^2(\R^d)$ such that $\partial_t \big(u_\eps - \int_0^t \h(u_\eps(s,\cdot))\, dW(s)\big)
\in L^2(\Omega\times(0,T),H^{-1}(\R^d))$, to the problem \eqref{eq:viscous-Brown}. Moreover, the solution 
$u_\eps \in C([0,T];L^2(\Omega \times\R^d))$ and there exists a constant $C>0$, independent of $\eps$, such that
\begin{align}
\sup_{0\le t\le T} \E\Big[\big\|u_\eps(t)\big\|_{L^2(\R^d)}^2\Big]  + \eps \int_0^T \E\Big[\big\|\grad u_\eps(s)\big\|_{L^2(\R^d)}^2\Big]\,ds + \int_0^T \E\Big[\big\| A(u_\eps(s))\|_{H^{\lambda}(\R^d)}^2\Big]\,ds \le C.
\end{align} 
Assuming that $u_0 \in BV(\R^d)$, there exists a constant $C>0$, independent of $\eps$, such that for any time $t>0$, 
\begin{align*}
\sup_{\eps>0} \E\Big[ \|u_\eps(t)\|_{L^1(\R^d)}\Big] \le  C \, \|u_0\|_{L^1(\R^d)}, \quad
&\sup_{\eps>0} \E\Big[ TV_x(u_\eps(t))\Big] \le TV_x(u_0).
 \end{align*}
\end{thm}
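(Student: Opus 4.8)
The plan is to establish the four assertions in turn: existence, pathwise uniqueness, the energy bound, and finally the $L^1$/$BV$ bounds. For \emph{existence}, since the nonlinear fractional diffusion rules out the semi-implicit time discretization available in the linear case, I would follow the martingale-solution strategy of \cite{martina}. Introduce a Galerkin (spectral) approximation $u_\eps^n$ of \eqref{eq:viscous-Brown}, for which solvability on the original basis is classical (a finite-dimensional SDE with locally Lipschitz, one-sidedly bounded coefficients). The first task is to derive $n$-uniform a priori bounds: the $L^2$-energy estimate below, together with a \emph{weighted} $L^2$-estimate (testing against a weight $\langle x\rangle^{-\kappa}$) to control mass escaping to infinity — this is precisely where working on $\rd$ rather than a torus forces extra work, since Rellich compactness is unavailable. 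These bounds yield tightness of the laws of $u_\eps^n$ in a space such as $L^2(0,T;L^2_{\mathrm{loc}}(\rd))\cap C([0,T];H^{-s}(\rd))$ (weak topologies); Prokhorov and the Skorokhod representation theorem then produce a.s.\ convergent copies on a new stochastic basis, and the nonlinear terms (in particular $\fr[A(\cdot)]$) pass to the limit by the strong local convergence and monotonicity of $A$, giving a martingale solution. Pathwise uniqueness upgrades this to a pathwise solution on the original basis via the Gy\"{o}ngy--Krylov characterization \cite{GyongyKrylov,YamadaWatanabe}; the gradient bound furnishes the $H^1$-integrability for fixed $\eps$.

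For the \emph{energy estimate} and \emph{pathwise uniqueness} I would apply the infinite-dimensional It\^{o} formula to $v\mapsto\|v\|_{L^2(\rd)}^2$. The dissipative terms are: the viscous term $\langle u_\eps,\eps\Delta u_\eps\rangle=-\eps\|\grad u_\eps\|_{L^2}^2$; the flux term $\langle u_\eps,\Div f(u_\eps)\rangle=0$ by the divergence structure (it is the integral of $\Div\!\big(\int_0^{u_\eps}\sigma f'(\sigma)\,d\sigma\big)$); and the nonlocal term, for which the bilinear identity recalled in Section~\ref{sec:tech} gives
\begin{align*}
\langle u_\eps,\fr[A(u_\eps)]\rangle=\frac{c_\lambda}{2}\int_{\rd}\!\int_{\rd}\frac{\big(u_\eps(x)-u_\eps(y)\big)\big(A(u_\eps(x))-A(u_\eps(y))\big)}{|x-y|^{d+2\lambda}}\,dx\,dy\ \ge\ \frac{c_\lambda}{2\,\mathrm{Lip}(A)}\,\big\|A(u_\eps)\big\|_{\dot H^{\lambda}(\rd)}^2,
\end{align*}
since $A$ is nondecreasing and Lipschitz, so that $(A(a)-A(b))^2\le \mathrm{Lip}(A)\,(a-b)(A(a)-A(b))$ (the full $H^\lambda$-norm then follows by adding $\|A(u_\eps)\|_{L^2}^2\le\mathrm{Lip}(A)^2\|u_\eps\|_{L^2}^2$). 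The It\^{o} correction is $\tfrac12\|\h(u_\eps)\|_{\mathrm{HS}}^2=\tfrac12\int\mathbb{G}^2(u_\eps)\,dx\le \tfrac{K}{2}\|u_\eps\|_{L^2}^2$ by \ref{A4}, the martingale term has zero expectation after a Burkholder--Davis--Gundy/localization argument, and Gr\"{o}nwall yields $\sup_t\E\|u_\eps(t)\|_{L^2}^2\le C\,\E\|u_0^\eps\|_{L^2}^2$; the remaining two terms are then bounded, with $\eps$-independent constant since $\|u_0^\eps\|_{L^2}\le C\|u_0\|_{L^2}$. The same computation applied to $\|u_\eps-\tilde u_\eps\|_{L^2}^2$ for two solutions, using monotonicity of $A$ (good sign of the nonlocal term) and $\sum_k|g_k(u_\eps)-g_k(\tilde u_\eps)|^2\le K|u_\eps-\tilde u_\eps|^2$, gives $L^2$-contraction and hence pathwise uniqueness; continuity in $C([0,T];L^2(\Omega\times\rd))$ follows from the resulting energy balance and the stated regularity of $\partial_t\big(u_\eps-\int_0^t\h\,dW\big)$.

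For the \emph{$L^1$ and $BV$ bounds} (with $u_0\in BV$), I would apply It\^{o} to $\int_{\rd}\eta_\delta(u_\eps)\,dx$ with $\eta_\delta\in\mathcal E$ and let $\delta\to0$. Convexity of $\eta_\delta$ makes the viscous term $-\eps\int\eta_\delta''(u_\eps)|\grad u_\eps|^2\le0$; the flux term vanishes exactly by the divergence structure; and the nonlocal term has the right sign, since symmetrizing gives
\begin{align*}
\int_{\rd}\eta_\delta'(u_\eps)\,\fr[A(u_\eps)]\,dx=\frac{c_\lambda}{2}\int_{\rd}\!\int_{\rd}\frac{\big(\eta_\delta'(u_\eps(x))-\eta_\delta'(u_\eps(y))\big)\big(A(u_\eps(x))-A(u_\eps(y))\big)}{|x-y|^{d+2\lambda}}\,dx\,dy\ \ge\ 0,
\end{align*}
both factors being nondecreasing functions of $u_\eps$. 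The only surviving contribution is the It\^{o} correction $\tfrac12\int\eta_\delta''(u_\eps)\mathbb{G}^2(u_\eps)\,dx$, which is harmless: by \ref{A4} and the normalization of $\mathcal E$ one has $\eta_\delta''(s)\mathbb{G}^2(s)\le K\,\eta_\delta''(s)s^2\le C|s|\in L^1(\rd)$, so dominated convergence sends it to $0$ as $\delta\to0$, yielding $\E\|u_\eps(t)\|_{L^1}\le\|u_0^\eps\|_{L^1}\le\|u_0\|_{L^1}$. The same scheme applied to two solutions $u_\eps,v_\eps$ driven by the same $W$ (handling the flux difference through the entropy flux $F^\eta$ and the divergence structure, the nonlocal term by the Kato-type positivity \eqref{imp_10}, and the noise by the domination above) gives the $L^1$-contraction $\E\|u_\eps(t)-v_\eps(t)\|_{L^1}\le\|u_\eps(0)-v_\eps(0)\|_{L^1}$. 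Exploiting the translation invariance of \eqref{eq:viscous-Brown}, the shift $\tau_h u_\eps=u_\eps(\cdot+h)$ solves the same equation with the same $W$ and datum $\tau_h u_0^\eps$, so taking $v_\eps=\tau_h u_\eps$, dividing by $|h|$ and letting $h\to0$ produces $\E[TV_x(u_\eps(t))]\le TV_x(u_0^\eps)\le TV_x(u_0)$.

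The main obstacle I anticipate is the existence step on the unbounded domain: securing enough compactness for tightness when Rellich's theorem is unavailable — which is exactly why the weighted $L^2$-estimates are needed — and then identifying the nonlinear limit, especially $\fr[A(u_\eps)]$, on the Skorokhod copies. A secondary, more technical point is the sharp (i.e.\ $\eps$-independent, constant $\le TV_x(u_0)$) passage to the limit $\delta\to0$ in the flux and noise-correction terms of the $L^1$-contraction; this hinges on the cancellation encoded in $F^\eta$ and \eqref{imp_10} together with the domination $\eta_\delta''(s)s^2\le C|s|$ afforded by $\mathcal E$ and $g_k(0)=0$.
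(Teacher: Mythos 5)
Your overall architecture (approximate problem, uniform bounds plus a weighted estimate for tightness on $\R^d$, Prokhorov--Skorokhod, martingale solution, Gy\"ongy--Krylov, then $L^1$-contraction and translation invariance for the $BV$ bound) matches the paper's, and your energy estimate and $TV$ argument are essentially the ones used there. However, there is a genuine gap at the linchpin of the whole construction: \emph{pathwise uniqueness}. You claim that applying It\^o's formula to $\|u_\eps-\tilde u_\eps\|_{L^2}^2$ gives an $L^2$-contraction because ``monotonicity of $A$'' gives the nonlocal term a good sign. This is false for nonlinear $A$. The relevant term is
\begin{align*}
\big\langle \fr[A(u_\eps)]-\fr[A(\tilde u_\eps)],\,u_\eps-\tilde u_\eps\big\rangle
=\frac{c_\lambda}{2}\iint\frac{\big(w(x)-w(y)\big)\big(\delta(x)-\delta(y)\big)}{|x-y|^{d+2\lambda}}\,dx\,dy,
\qquad w:=A(u_\eps)-A(\tilde u_\eps),\ \delta:=u_\eps-\tilde u_\eps,
\end{align*}
and $A$ nondecreasing does \emph{not} make $(w(x)-w(y))(\delta(x)-\delta(y))\ge0$: the sign of $A(a)-A(b)$ is controlled by the sign of $a-b$, but the increments of $w$ are not controlled by the increments of $\delta$ (take $A(s)=s^+$ with $\delta$ small where both arguments are positive and large where both are negative). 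Only the diagonal pairing $\langle\fr[A(v)],v\rangle\ge0$ is available. This is precisely the obstruction the paper flags around \eqref{imp_10}: the Kato inequality holds for the \emph{signum} function but fails for quadratic or smoothed entropies. Consequently the paper does not prove an $L^2$-contraction for \eqref{eq:viscous-Brown}; it (i) restores monotonicity only for an auxiliary problem by adding a biharmonic term $\gamma\Delta^2$, whose $H^2$-coercivity absorbs $\|A'\|_\infty\|u-v\|_2\|\mathscr{L}_\lambda[u-v]\|_2$ (this absorption is impossible with only $\eps\Delta$ once $\lambda>1/2$), and (ii) proves pathwise uniqueness of \eqref{eq:viscous-Brown} as an $L^1$-contraction via a full doubling-of-variables argument with the sign function (inequality \eqref{Uniq-vicous}). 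Your undoubled $L^1$ sketch points at the right ingredients ($F^\eta$, \eqref{imp_10}, the $\eta_\delta''\,\mathbb{G}^2$ domination), but invoking \eqref{imp_10} requires passing $\delta\to0$ inside the nonlocal term first, which for $\lambda>1/2$ is exactly the delicate reordering of limits the paper's Subsection on viscous uniqueness is built around; as written, your uniqueness step does not close, and without it the Gy\"ongy--Krylov upgrade from martingale to pathwise solution has no foundation.

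Two secondary points. First, for the approximate problem the paper does not use Galerkin: it solves the $\gamma\Delta^2$-regularized equation directly by the variational monotone-operator theory of Pr\'ev\^ot--R\"ockner (conditions (H1)--(H4)), which is why the fourth-order term is there in the first place; a Galerkin scheme is a legitimate alternative but you would still need the $H^2$-level coercivity to verify uniqueness/monotonicity at fixed approximation parameter when $\lambda>1/2$. Second, your weighted estimate uses a \emph{decaying} weight $\langle x\rangle^{-\kappa}$, which only yields local compactness; to prevent mass from escaping to infinity and obtain tightness in $L^2(Q_T)$ one needs a \emph{growing} weight, and the paper establishes $\sup_t\E\big[\int u_{\eps,\gamma}^2\|x\|^2\,dx\big]\le C(\eps)$ (via truncated weights $\varphi_k$) and uses the compact embedding $H^1\cap L^2(\|x\|^2dx)\hookrightarrow L^2$.
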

The estimates of the above theorem are obtained by classical arguments and are given in Subsection \ref{sec:apriori+existence}.  
With the above results at hand, we are now in a position to state the existence and regularity part of the main results of this paper. The uniqueness and stability one is proved in Section \ref{uniqueness}.
\begin{thm}[Existence and Uniqueness]
\label{uniqueness_new}
Let the assumptions \ref{A1}-\ref{A4} be true. Then there exists a unique stochastic entropy solution for the Cauchy problem \eqref{eq:stoc_frac} in the sense of Definition~\ref{Defi_Entropy_formulation}.
Furthermore,  $A(u) \in L^2(\Omega\times(0,T),H^\lambda(\R^d))$ and, assuming that $u_0 -v_0 \in L^1(\R^d)$, we have for every $t$ in $(0,T)$,
\begin{align*}
\E \Big[\int_{\D}  |u(t,x) -v(t,x)| \,dx\Big] \leq& \int_\D |u_0-v_0|\,dx.
\end{align*}
Assuming moreover that $u_0 \in BV(\R^d)$ implies that $u$ is a BV-entropy solution in the sense that for any $t>0$,
\begin{align*}
   \E \Big[\|u(t)\|_{BV(\R^d)} \Big] \le \|u_0\|_{BV(\R^d)}.
\end{align*}
\end{thm}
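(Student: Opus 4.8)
The plan is to obtain existence by vanishing viscosity and to obtain uniqueness, together with the $L^1$-stability estimate, by a doubling-of-variables argument; the $BV$ bound is then inherited by passing to the limit in the viscous estimate.

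For existence, I would start from the viscous solutions $u_\eps$ supplied by Theorem~\ref{prop:vanishing viscosity-solution} and exploit the uniform bounds there. The bound $\int_0^T \E[\|A(u_\eps)\|_{H^\lambda}^2]\,ds \le C$ yields a subsequence with $A(u_\eps) \wc \chi$ weakly in $L^2(\Omega\times(0,T),H^\lambda(\R^d))$, while $u_\eps \ws u$ weak-$\ast$ in $L^\infty(0,T;L^2(\Omega\times\R^d))$ and $\sqrt{\eps}\,\grad u_\eps$ stays bounded in $L^2$. Since the problem is degenerate parabolic-hyperbolic, no strong compactness of $u_\eps$ is available, so I would encode the oscillations through a Young measure $\nu_{\omega,t,x}$ generated by $\{u_\eps\}$. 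Applying It\^o's formula to a convex entropy $\eta$ for the viscous equation \eqref{eq:viscous-Brown} produces an entropy inequality whose viscous contribution $-\eps\int \eta''(u_\eps)|\grad u_\eps|^2\varphi$ has a favorable sign and may be dropped, whose remaining viscous term $\eps\int\eta'(u_\eps)\grad u_\eps\cdot\grad\varphi$ vanishes as $\eps\to0$, and whose nonlocal part splits according to $\fr = \mathscr{L}_{\lambda,r} + \mathscr{L}^r_\lambda$ exactly as in Definition~\ref{Defi_Entropy_formulation}. Passing to the limit — linearly in the nonlocal and stochastic terms, and in the Young-measure sense in the nonlinear flux and entropy terms — delivers a measure-valued entropy inequality and identifies $\chi = \int_\R A(\xi)\,d\nu(\xi)$.

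The heart of the matter is the uniqueness/stability estimate, which I would establish by Kru\v{z}kov's doubling of variables adapted to the stochastic nonlocal setting, and which simultaneously forces the Young measure above to be a Dirac mass (hence strong convergence, $\chi = A(u)$, and the asserted regularity $A(u)\in L^2(\Omega\times(0,T),H^\lambda)$). Given two solutions $u(t,x)$ and $v(s,y)$, I would test the two entropy inequalities against a common test function, double the time, space and probability variables, and use the approximations $\eta_\delta \in \mathcal{E}$ of $|\cdot|$ together with a spatial convolution at scale $l$. The crucial and novel point, flagged in the introduction, is the order of limits: one must send $\delta \to 0$ \emph{before} $l \to 0$, since Kato's inequality \eqref{imp_10} for the fractional term is valid only for the genuine $\sgn$ and fails for its regularization when $A$ is nonlinear. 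This reordering produces a residual term carrying $\eta_\delta''$, which I would dispose of by the integration-by-parts identity of Lemma~\ref{lem:stochastic-terms}; simultaneously, the It\^o corrections $\tfrac12 \mathbb{G}^2 \eta_\delta''$ from the two copies must be matched against the cross term arising from the stochastic integrals and shown to vanish as $\delta\to0$ via Assumption~\ref{A4}. Removing the doubling in time (letting the temporal kernel concentrate at $t=s$) and then the spatial regularization yields
\begin{align*}
\E\Big[\int_\D |u(t,x)-v(t,x)|\,dx\Big] \le \E\Big[\int_\D |u_0(x)-v_0(x)|\,dx\Big],
\end{align*}
which is the stated contraction; taking $u_0 = v_0$ gives uniqueness.

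Finally, the $BV$ bound is inherited from the viscous level: Theorem~\ref{prop:vanishing viscosity-solution} gives $\E[TV_x(u_\eps(t))] \le TV_x(u_0)$ and $\E[\|u_\eps(t)\|_{L^1}] \le C\|u_0\|_{L^1}$ uniformly in $\eps$, and since the reduction above provides (along a subsequence) a.e.\ convergence $u_\eps \to u$ in $(\omega,x)$ for a.e.\ $t$, lower semicontinuity of the total variation together with Fatou's lemma gives $\E[\|u(t)\|_{BV(\R^d)}] \le \|u_0\|_{BV(\R^d)}$. I expect the main obstacle to be the uniqueness step, and within it precisely the reconciliation of the regularized entropy $\eta_\delta$ (needed to apply It\^o's formula) with Kato's inequality \eqref{imp_10} (available only for the true $\sgn$): arranging the two limits $\delta\to0$ and $l\to0$ so that they cooperate, while controlling the $\eta_\delta''$ remainder and the stochastic cross terms, is the delicate part of the whole argument.
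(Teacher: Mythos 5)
Your overall architecture (vanishing viscosity plus Young measures for existence, a doubling argument for uniqueness and the $L^1$ contraction, lower semicontinuity for the $BV$ bound) matches the paper, and you correctly identify the key novelty that $\delta\to 0$ must precede $l\to 0$ so that Kato's inequality \eqref{imp_10} is only ever invoked for the genuine $\sgn$. However, there is a genuine gap in your uniqueness step: you propose to ``double the time, space and probability variables'' between \emph{two entropy solutions} $u(t,x)$ and $v(s,y)$. This is precisely what the paper says cannot be done: doubling the time variable between two entropy solutions produces stochastic integrands that are anticipative (for $s>t$ the quantity $v(s,y)$ is not $\mathcal{F}_t$-measurable) and hence not interpretable as It\^o integrals. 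More concretely, the cross term you call the ``reconciliation of the It\^o corrections with the stochastic integrals'' is the term $J_3$ of Lemma~\ref{lem:stochastic-terms}, and its evaluation requires applying It\^o's formula to $\rho_l(u^\gamma_\eps(s,y)-k)$, i.e.\ it requires one of the two objects to solve the SPDE in the strong PDE sense (a viscous, spatially mollified solution), not merely to satisfy entropy inequalities. The paper therefore compares two \emph{viscous} approximations $u_\theta$ and $u_\eps$ (one of them additionally mollified by $\rho_\gamma$), derives Kato's inequality for their Young-measure limits, deduces that the limit is unique and independent of the Young-measure parameter (whence strong convergence and existence), and only then obtains uniqueness of entropy solutions by observing (Remark~\ref{remnonvisc}) that the same computation goes through when one of the two compared objects is an arbitrary entropy solution and the other remains the viscous limit. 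Your plan, as stated, has no way to compute the stochastic cross term when both objects are bare entropy solutions.

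A second, smaller omission: you assert that removing the time doubling and the spatial regularization ``yields'' the $L^1$ contraction, but in the nonlocal setting the passage from Kato's inequality to the contraction is not the routine cutoff-plus-Gronwall step, because the term $-|A(u)-A(v)|\,\mathscr{L}_\lambda[\varphi]$ has no sign and $\mathscr{L}_\lambda[\varphi]$ decays slowly. The paper handles this by the Endal--Jakobsen construction: the test function is built from a viscosity solution of $\partial_t\Phi-(-\mathscr{L}_\lambda\Phi)^+=0$, convolved and composed so that $\partial_t\Gamma+\|f'\|_\infty|\nabla\Gamma|+\|A'\|_\infty[-\mathscr{L}_\lambda\Gamma]^+\le 0$. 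Without an argument of this type the final step of your uniqueness proof does not close. (Minor points: the parameter $l$ in the paper regularizes the entropy constant $k$, not space, and there is no doubling of the probability variable --- both solutions are driven by the same Wiener process.)
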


\begin{thm}[Continuous Dependence Estimate]
\label{continuous-dependence}
Consider two sets of given data $(u_0, f, A,\h, \lambda)$ and $(v_0,f,B,\h,\lambda)$ satisfying the assumptions \ref{A1}-\ref{A4} and assume moreover that the initial data $u_0$ is in $L^1(\R^d)$, that $v_0$ is in $BV(\R^d)$ and $f^{\prime\prime}\in L^\infty(\R^d)$.
Denote by $u$ and $v$ the corresponding solutions of \eqref{eq:stoc_frac}, which are also BV entropy solutions. Then there exists a constant $C$, only depending on $ \|u_0\|_{L^1(\R^d)}$, and $\|v_0\|_{BV(\R^d)}$, such that for all $0<t<T<+\infty$, 
\begin{align*}
  \E & \Big[ \int_{\D} \big| u(t,x)-v(t,x)\big|\,dx \Big] 
 \le   \norm{u_0-v_0}_{L^1(\R^d)} + C\Big(\norm{u_0}_{L^1(\R^d)},\norm{v_0}_{BV(\R^d)}\Big)\Big[T\|A'-B'\|_\infty\Big]^{\frac1{1+\lambda}}. 
\end{align*}
\end{thm}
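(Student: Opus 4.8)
The plan is to run a Kru\v{z}kov doubling-of-variables argument between the two solutions $u$ (diffusion $A$) and $v$ (diffusion $B$), closely following the uniqueness proof of Section~\ref{uniqueness} but now keeping track of the mismatch $A'-B'$. I would write the stochastic entropy inequality of Definition~\ref{Defi_Entropy_formulation} for $u$ in the variables $(t,x)$ with $k=v(s,y)$, and the corresponding inequality for $v$ in the variables $(s,y)$ with $k=u(t,x)$, using entropies $\eta_\delta\in\mathcal{E}$ approximating the absolute value and a test function that is the product of a temporal localizer and a spatial mollifier $\rho_l(x-y)$ concentrating on the diagonal. Adding the two inequalities and taking expectations, the genuine martingale parts drop out, while the It\^o correction terms assemble into $\tfrac12\int\sum_k|g_k(u)-g_k(v)|^2\,\eta_\delta''(u-v)\,\varphi$, which by Assumption~\ref{A4} is dominated by $\tfrac{K}{2}\int|u-v|^2\eta_\delta''(u-v)\varphi$ and is therefore harmless. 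A crucial structural point, exactly as stressed in the introduction, is the order of limits: I would first send $\delta\to0$ (so that $\eta_\delta'\to\sgn$ and $\eta_\delta''$ concentrates), and only afterwards send the doubling parameter $l$ to its limit. Taking $\delta\to0$ first is what makes the nonlinear Kato inequality \eqref{imp_10} available, since \eqref{imp_10} holds for the true sign function but not for its regularization when $A$ is nonlinear; the spurious $\eta_\delta''$-term generated by this reordering is absorbed via the integration-by-parts identity of Lemma~\ref{lem:stochastic-terms}.

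The flux contributions are treated as in the $L^1$-contraction proof: since both problems share the same $f$, they combine through the Kru\v{z}kov flux $F$ and, upon concentrating $\rho_l$ on the diagonal, reduce to the transport of $\E\int|u-v|$ up to a mollification error that vanishes, controlled by $\E[TV_x v]\le TV_x(v_0)$ from Theorem~\ref{uniqueness_new} together with the regularity $f''\in L^\infty$. The entire new difficulty is therefore concentrated in the nonlocal terms, and here I would split $\fr=\mathscr{L}_{\lambda,r}+\mathscr{L}^r_\lambda$ at a free radius $r>0$. For the part carrying the common nonlinearity I apply \eqref{imp_10} and the monotonicity of $A$, which yields only nonpositive (or diagonally vanishing) contributions, exactly as in the uniqueness argument. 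Writing $B=A-(A-B)$, the remaining terms are driven by $w:=A(v)-B(v)$, for which the chain rule for $BV$ functions gives $TV_x(w)\le\|A'-B'\|_\infty\,TV_x(v)$ and $\|w\|_{L^1}\le\|A'-B'\|_\infty\|v\|_{L^1}$; combined with the bounds of Theorem~\ref{uniqueness_new} these are controlled in expectation by $\|A'-B'\|_\infty TV_x(v_0)$ and $\|A'-B'\|_\infty\|v_0\|_{L^1}$.

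It then remains to estimate the nonlocal operator acting on $w$ through the two regimes of the splitting. The near-field operator $\mathscr{L}_{\lambda,r}$, tested against the mollifier, is small as $r\to0$ and is controlled by $TV_x(v_0)$; the far-field operator $\mathscr{L}^r_\lambda[w]$ is integrable for every $r>0$ but degenerates as $r\to0$, and is controlled by $\|A'-B'\|_\infty$ together with $\|v_0\|_{L^1}$. After concentrating the mollifier and integrating in time on $(0,t)$, one is left with an estimate of the schematic form
\begin{align*}
\E\Big[\int_{\D}|u(t)-v(t)|\,dx\Big]\le \norm{u_0-v_0}_{L^1(\R^d)} + C\,T\big(r^{a}+\|A'-B'\|_\infty\,r^{-b}\big),
\end{align*}
with exponents $a,b>0$ fixed by the order $\lambda$ of the operator and $C=C(\|u_0\|_{L^1(\R^d)},\|v_0\|_{BV(\R^d)})$. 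Optimizing the right-hand side over $r>0$ balances the two regimes and produces the power $[T\|A'-B'\|_\infty]^{1/(1+\lambda)}$ claimed in the statement; when $A=B$ one simply lets $r\to0$ and recovers the $L^1$-contraction of Theorem~\ref{uniqueness_new}. A final Gronwall argument in $t$ closes the estimate.

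I expect the principal obstacle to be the interplay, inside the stochastic framework, between the reordered limits ($\delta\to0$, then $l$) that are needed to legitimately invoke the nonlinear Kato inequality \eqref{imp_10} and the simultaneous sharp bookkeeping of the splitting radius $r$: the It\^o correction term must be reabsorbed through Lemma~\ref{lem:stochastic-terms} at the very moment the near- and far-field contributions are being balanced, and it is the precise tracking of the powers $a,b$ of $r$ against the $BV$ and $L^1$ norms of $v_0$ that produces the specific exponent $\tfrac1{1+\lambda}$.
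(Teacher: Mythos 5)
Your overall strategy (doubling of variables, Kato's inequality for the common part of the diffusion, a radial splitting of the nonlocal operator, and an optimization over the splitting radius) is the right skeleton, but there are two concrete gaps. First, you cannot write the entropy inequality for $u$ with $k=v(s,y)$ and the one for $v$ with $k=u(t,x)$: in the stochastic setting these substitutions make the It\^o integrands anticipative. This is exactly why the paper's proof compares the entropy solution $u$ with the \emph{viscous, mollified} approximation $v^\gamma_\eps$ of the second problem \eqref{eq:viscous}, keeps $k$ as a free variable smeared by $\rho_l$, and only sends $\eps\to 0$ (via Young measures and the BV-entropy limit) at the very end. The reordering ``$\delta$ before $l$'' and the integration by parts of Lemma~\ref{lem:stochastic-terms} that you invoke live inside that viscous comparison; they do not rescue a direct comparison of two entropy solutions.

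Second, and more seriously for the quantitative claim, your treatment of the mismatch via $B=A-(A-B)$ and $w:=(A-B)(v)$ does not go through. The difference $A-B$ is not monotone, so the Kato-type transfer of Lemma~\ref{lemma_02} is unavailable for it, and the only remaining estimate of the near-field contribution, $\int_{r<|z|\le r_1}\|w(\cdot+z)-w\|_{L^1}\,d\mu_\lambda(z)\le TV(w)\int_r^{r_1}s^{-2\lambda}\,ds$, diverges as $r\to 0$ whenever $\lambda\ge 1/2$. The paper circumvents this with the Borel partition $E_\pm$ of $\R$ according to the sign of $A'-B'$, producing \emph{non-decreasing} pieces $C_\pm=\pm(A_\pm-B_\pm)$ with $\|C_\pm'\|_\infty\le\|A'-B'\|_\infty$; monotonicity lets one throw the near-field operator onto the test function $\varphi\rho_m$ via Lemma~\ref{lemma_02}, where a second-order Taylor expansion yields the weight $\int_{r<|z|\le r_1}|z|^2\,d\mu_\lambda(z)\sim r_1^{2-2\lambda}$, integrable for every $\lambda\in(0,1)$, at the price of a factor $m$ from $D^2\rho_m$. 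That factor is essential to the exponent: one must optimize \emph{jointly} over $m$ (balancing the $1/m$ doubling error against $m\,T\|A'-B'\|_\infty r_1^{2-2\lambda}$, which gives $\sqrt{T\|A'-B'\|_\infty}\,r_1^{1-\lambda}$) and then over $r_1$ against the far-field term $T\|A'-B'\|_\infty r_1^{-2\lambda}$, as in \eqref{inq:pre-final_001}. Your schematic bound $C\,T\big(r^{a}+\|A'-B'\|_\infty r^{-b}\big)$, optimized over $r$ alone, yields $C\,T\,\|A'-B'\|_\infty^{a/(a+b)}$, which matches neither the power of $T$ nor the exponent $\tfrac1{1+\lambda}$. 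Finally, no Gronwall step is used or needed: the time variable is handled by the localizer $\psi^t_h$ and a right Lebesgue-point argument.
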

\begin{cor}[Rate of Convergence]
\label{rate}
Let the assumptions \ref{A1}-\ref{A4} hold, $u_0 \in BV(\R^d)$, $f^{\prime\prime}\in L^\infty$ and let $u$ be the entropy solution of \eqref{eq:stoc_frac} and $u_\eps$ be a weak solution to the problem \eqref{eq:viscous-Brown}. Then there exists a constant $C$, depending only on 
$|u_0|_{BV(\R^d)}$, such that for all $t>0$, 
\begin{align*}
 \E\Big[\|u_\eps(t,\cdot)-u(t,\cdot)\|_{L^1(\R^d)}\Big] \le C \eps^\frac{1}{2},
\end{align*}
provided the initial error $\|u_0^\eps-u_0\|_{L^1(\R^d)} \approx \mathcal{O}(\eps^\frac{1}{2})$.

\end{cor}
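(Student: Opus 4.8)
The plan is to read Corollary~\ref{rate} as a Kuznetsov-type error estimate obtained by comparing the entropy solution $u$ of \eqref{eq:stoc_frac} with the viscous solution $u_\eps$ of \eqref{eq:viscous-Brown}, reusing the doubling-of-variables machinery already developed for the uniqueness part of Theorem~\ref{uniqueness_new} and for the continuous dependence estimate of Theorem~\ref{continuous-dependence}. The decisive simplification is that $u$ and $u_\eps$ share the same data $f$, $A$, $\h$ and the same order $\lambda$; hence every term handled in those proofs --- the flux term, the nonlocal term (via the Kato inequality \eqref{imp_10}), and the stochastic corrections --- cancels or combines exactly as before, and the \emph{only} genuinely new contribution is the artificial viscosity $\eps\Delta u_\eps$. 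Tracking this single term with an explicit spatial regularisation width and optimising it against the $BV$ bound of Theorem~\ref{prop:vanishing viscosity-solution} should produce the rate $\eps^{1/2}$.

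First I would produce an approximate entropy inequality for $u_\eps$. Since $u_\eps$ is a strong (in the PDE sense) solution of \eqref{eq:viscous-Brown} with the regularity granted by Theorem~\ref{prop:vanishing viscosity-solution}, I apply It\^o's formula to $\eta(u_\eps-k)$ for $\eta\in\mathcal{E}$ and a nonnegative test function $\varphi$. The contribution of the Laplacian splits as
\[
\eps\int_{\Rd}\eta'(u_\eps-k)\,\Delta u_\eps\,\varphi\,dx
= -\,\eps\int_{\Rd}\eta''(u_\eps-k)\,|\grad u_\eps|^2\,\varphi\,dx
\;-\;\eps\int_{\Rd}\eta'(u_\eps-k)\,\grad u_\eps\cdot\grad\varphi\,dx .
\]
The first term is nonpositive (since $\eta''\ge0$ and $\varphi\ge0$) and is discarded, so that $u_\eps$ satisfies the inequality of Definition~\ref{Defi_Entropy_formulation} up to the single error functional $\mathcal{R}_\eps(\varphi)=-\eps\int_{\Rd}\eta'(u_\eps-k)\,\grad u_\eps\cdot\grad\varphi\,dx$, in which one derivative falls on $u_\eps$ and one on $\varphi$.

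Next comes the doubling of variables. Following Section~\ref{uniqueness}, I double only the spatial variable (time is \emph{not} doubled, in order to keep the It\^o integral meaningful), test the entropy inequality for $u(t,x)$ and the approximate inequality for $u_\eps(t,y)$ against a product of a temporal weight and a spatial mollifier $\rho_\rho(x-y)$ of width $\rho$, and add the two. Here I must respect the same hierarchy of limits as in the uniqueness proof --- first letting $\delta\to0$ (the parameter of $\eta\in\mathcal{E}$), then the doubling parameter --- and invoke Lemma~\ref{lem:stochastic-terms} to control the stochastic and $\eta''_\delta$-terms, precisely because \eqref{imp_10} is \emph{not} available for the regularised sign. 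After these limits the flux, nonlocal and stochastic contributions assemble into the $L^1$-contraction structure, leaving
\[
\E\Big[\,\|u_\eps(t)-u(t)\|_{L^1(\Rd)}\,\Big]
\;\le\; \norm{u_0^\eps-u_0}_{L^1(\Rd)}
\;+\; C\,\rho\,\E\big[\,TV_x(u_\eps)\,\big]
\;+\; \big|\mathcal{R}_\eps\big| ,
\]
where the middle term is the usual mollification error. On the diagonalised expression the test function is $\rho_\rho(x-y)$, so $\grad\varphi$ carries a factor $|\grad\rho_\rho|\lesssim \rho^{-1}$ while $\grad u_\eps$ is integrated against this kernel and produces a factor controlled by the total variation; hence $|\mathcal{R}_\eps|\le C\,\eps\,\rho^{-1}\,t\,\E[TV_x(u_\eps)]$. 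Using the uniform bound $\E[TV_x(u_\eps)]\le TV_x(u_0)$ from Theorem~\ref{prop:vanishing viscosity-solution} and optimising $C\rho\,TV_x(u_0)+C\eps\rho^{-1}t\,TV_x(u_0)$ over $\rho$ forces $\rho\sim\eps^{1/2}$ and a total of order $\eps^{1/2}$; combined with $\norm{u_0^\eps-u_0}_{L^1(\Rd)}=\mathcal{O}(\eps^{1/2})$ this yields the claim.

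I expect the main obstacle to lie in the stochastic part of the doubling argument rather than in the viscous bookkeeping: because It\^o's formula forbids the exact sign function and \eqref{imp_10} fails for its regularisation, the cancellation of the martingale and It\^o-correction terms has to be engineered through the prescribed order of limits and the integration-by-parts device of Lemma~\ref{lem:stochastic-terms}, and one must verify that inserting the extra functional $\mathcal{R}_\eps$ does not interfere with that delicate limiting procedure. A secondary technical point is to confirm that $\mathcal{R}_\eps$ is genuinely $\mathcal{O}(\eps/\rho)$ uniformly in $\eps$ --- that is, that exactly one derivative can be made to land on $u_\eps$ so as to expose the $BV$ seminorm, with the remaining derivative absorbed by the mollifier.
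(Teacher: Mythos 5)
Your proposal follows essentially the same route as the paper: the paper proves Corollary~\ref{rate} by rerunning the continuous-dependence/doubling-of-variables argument of Section~\ref{ContDep_NonL} with $A=B$, so that the flux, nonlocal and stochastic terms combine as in the uniqueness proof and the only surviving contributions are the initial error, the mollification error $C/m$, and the viscosity residual $C\eps m$ (your $\mathcal{R}_\eps$ with $\rho=1/m$), after which optimising $m=\eps^{-1/2}$ gives the rate. Your identification of the residual as $O(\eps/\rho)$ via the $BV$ bound and the final optimisation match the paper's \eqref{ROC} exactly; the only cosmetic difference is that the paper does double the time variable with the one-sided kernel $\rho_n(t-s)$ and removes it first, rather than avoiding time-doubling altogether.
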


\section{Proof of Theorem \ref{uniqueness_new}: Uniqueness of Entropy Solutions}
\label{uniqueness}
\subsection{Kato's inequality}
\label{Kato}
We follow the usual strategy of adapting a variant of the classical Kru\v{z}kov's doubling of variables approach. Note that, the main difficulty lies in ``doubling'' the time variable which gives rise to stochastic integrands that are anticipative and hence can not be interpreted in the usual It\^{o} sense. To get around this problem, we shall compare first two weakly converging sequences of viscous approximations: let $u(t,x,\alpha)$ and $v(s,y,\beta)$, $\alpha, \beta \in (0,1)$, be Young measure-valued limit processes associated to the sequences  
$u_\theta(t,x)$ and $u_\eps(s,y)$ of weak solutions of \eqref{eq:viscous-Brown} with regularized initial data $v^{\theta}_0(x)$ and $u^{\eps}_0(y)$ respectively. 

For technical reasons, as observed in \cite[Section 3]{BaVaWitParab} concerning the local diffusion case, the regularity $u_\ep \in H^1(\D)$ is not sufficient to prove Kato's inequality (cf. term $J_3$ in Lemma \ref{lem:stochastic-terms}). Therefore, we need to regularize $u_\ep$ by a space convolution. 
Let $\{\rho_\gamma\}_{\gamma}$ be a given mollifier-sequence in $\R^d$, by using the test function $\varphi*\rho_\gamma$ in the equation satisfied by $u_\eps$ for any $\varphi \in \mathcal{D}(\R^{d+1})$, one gets that $u_\eps * \rho_\gamma$ is a solution to a perturbed stochastic problem.  The way to obtain the local terms are detailed in \cite[Section 3]{BaVaWitParab}, so we only need to focus on the nonlocal one. 

Thanks to the regularity of the functions $A(u_\eps)(\omega,t) \in H^1(\D)$ and $\varphi(t)\in \mathcal{D}(\D)$, one gets that 
\begin{align*}
\int_\D \mathscr{L}_{\lambda/2}[A(u_\eps)] \mathscr{L}_{\lambda/2}[\varphi*\rho_\gamma] \,dx =& \int_\D A(u_\eps) \mathscr{L}_{\lambda}[\varphi*\rho_\gamma] \,dx = \int_\D A(u_\eps) \mathscr{L}_{\lambda}[\varphi]*\rho_\gamma \,dx
\\=&
 \int_\D (A(u_\eps)*\rho_\gamma) \mathscr{L}_{\lambda}[\varphi ]\,dx= \int_\D \mathscr{L}_{\lambda}[A(u_\eps)*\rho_\gamma]\, \varphi \,dx.
\end{align*}
Thus,  $u_\eps *\rho_\gamma(t=0) = u_{\eps}(0)*\rho_\gamma$ and  the above mentioned perturbed stochastic problem is 
\begin{eqnarray*}
\partial_t \bigg[u_\eps*\rho_\gamma - \dint_0^t \h(u_\eps)*\rho_\gamma dW\bigg] - \big[\eps \Delta (u_\eps *\rho_\gamma) - \mathscr{L}_\lambda [A(u_\eps)*\rho_\gamma] + \Div f (u_\eps)*\rho_\gamma \big] = 0.
\end{eqnarray*}
Observe that the  problem is posed in $L^2(\D)$ and, for convenience, let us denote in the sequel $v*\rho_\gamma$ by $v^\gamma$, for any generic $v$ in $L^2(\D)$.

Let $\rho_n$ and $\rho_m$ be standard nonnegative 
mollifiers on $\R$ and $\R^d$ respectively such that 
$\supp(\rho_n) \subset [-2/n,0]$ and $\supp(\rho_m) \subset \overline{B_{1/m}}(0)$.  Given a  nonnegative test 
function $\varphi\in C_c^{1,2}([0,\infty)\times \rd)$, we define 
\begin{align}
\label{test_function}
\psi (t,x, s,y) := \rho_n(t-s) 
\,\rho_m(x-y) \,\varphi(t,x).
\end{align}
Clearly $ \rho_n(t-s) \neq 0$ only 
if $s-2/n \le t\le s$ and hence $\psi(t,x, s,y)= 0$ outside  $s-2/n \le t\le s$. 
 
Moreover, let $\rho_l$ be the standard symmetric 
nonnegative mollifier on $\R$ with support in $[-l,l]$. For simplicity, we also use the generic $\eta$ for the function $\eta_\delta$.  
Applying It\^{o}'s formula to $\eta( u^\gamma_\ep(s,y)-k) \psi(t,x,s,y)$, multiplying by $\rho_l( u_\theta(t,x)-k)$, taking the expectation, and integrating with respect to $s,y,k$, an application of Fubini's theorem yields 

\begin{align}
 0\leq & 
\E\bigg[\dint_{Q_T\times\R\times \D}\eta(u^\gamma_\eps(0)-k)\psi(t,x,0,y)  \rho_l(u_\theta(t,x)-k) \,dy\,dk\,dx\,dt\bigg] \notag
\\ & 
+ \E \bigg[\dint_{\R\times Q^2_T} \eta(u^\gamma_\eps(s,y)-k) \partial_s \psi(t,x,s,y) \rho_l(u_\theta(t,x)-k)\,dy\,ds\,dk\,dx\,dt\bigg]\notag\\
&+ \E\bigg[\sum_{j \ge 1} \dint_{\R\times Q^2_T}  \eta'(u^\gamma_\eps(s,y)-k) \psi(t,x,s,y) g_j(u_\eps)^\gamma(s,y) \rho_l(u_\theta(t,x)-k)\, d\beta_j(s)\,dy \,dk\,dx\,dt\bigg] \notag
\\&
+ \frac{1}{2}\E\bigg[\dint_{\R\times Q^2_T} \eta^{\prime\prime}(u^\gamma_\eps(s,y)-k)  (\mathbb{G}(u_\eps)^\gamma(s,y))^2 \psi(t,x,s,y)\rho_l(u_\theta(t,x)-k) \,ds\,dy\,dk\,dx\,dt\bigg]\notag\\
  & - \E\bigg[\dint_{\R\times Q^2_T} F^\eta( u^\gamma_\ep(s,y),k)\cdot \nabla_y \psi(t,x,s,y)\rho_l( u_\theta(t,x)-k)\,  dy\,ds\,dk\,dx\,dt\bigg]\notag \\
&- \eps \E\bigg[\dint_{\R\times Q^2_T}\eta'(u^\gamma_\eps(s,y)-k) \nabla u^\gamma_\eps(s,y).\nabla_y\psi(t,x,s,y)\rho_l(u_\theta(t,x)-k) \,dy\,ds\,dk\,dx\,dt\bigg]\notag
\\
& -\E \bigg[\int_{\R \times Q^2_T} \mathscr{L}^r_\lambda[A(u_\eps)^\gamma(s,\cdot)](y) \psi(t,x,s,y)\, \eta^\prime( u^\gamma_\eps(s,y)-k) \rho_l(u_\theta(t,x)-k) \,dy \,ds\,dk\,dx\,dt\bigg]\notag\\
& -\E \bigg[\int_{\R \times Q_T}\int^T_0 \mathscr{L}_{\lambda,r}[A( u_\eps)^\gamma(s,\cdot)](y)  \psi(t,x,s,y) \eta^\prime( u^\gamma_\eps(s,y)-k)\rho_l(u_\theta(t,x)-k) \,ds\,dk\,dx\,dt\bigg]\notag\\
& =:  I_1 + I_2 + I_3 +I_4 + I_5 + I_6 + I_7 + I_8.
\label{kato_one_01}
\end{align}

We now write  It\^o's formula  for $\eta(u_\theta(t,x)-k)\varphi$ then multiply by $\rho_l[u^\gamma_\eps(s,y)-k]$ and  integrate with respect to $k,y,s$ and then take expectation to get
\begin{align}
 0\leq & 
\E\bigg[\dint_{Q_T\times\R\times \D}\eta(u_\theta(0)-k)\psi(0,x,s,y)  \rho_l(u^\gamma_\eps(s,y)-k) \,dy\,dk\,dx\,ds\bigg] \notag
\\ & 
+ \E \bigg[\dint_{\R\times Q^2_T} \eta(u_\theta(t,x)-k) \partial_t \psi(t,x,s,y) \rho_l(u^\gamma_\eps(s,y)-k)\,dy\,ds\,dk\,dx\,dt\bigg]\notag\\
& +\E\bigg[\sum_{j \ge1}\dint_{\R\times Q^2_T} \eta'(u_\theta(t,x)-k) \psi(t,x,s,y) g_j(u_\theta(t,x)) \, d\beta_j(t)\rho_l(u^\gamma_\eps(s,y)-k)\,dy \,dk\,dx\,ds\bigg] \notag
\\&
+ \frac{1}{2}\E\bigg[\dint_{\R\times Q^2_T} \eta^{\prime\prime}(u_\theta(t,x)-k)  \mathbb{G}^2(u_\theta(t,x)) \psi(t,x,s,y)\rho_l(u^\gamma_\eps(s,y)-k) \,ds\,dy\,dk\,dx\,dt\bigg]\notag\\
&-\E\bigg[\dint_{\R\times Q^2_T} F^\eta(u_\theta(t,x),k)\cdot \nabla_x \psi(t,x,s,y) \rho_l(u^\gamma_\eps(s,y)-k) \,dy\,ds\,dk\,dx\,dt\bigg] \notag
\\
&- \theta \E\bigg[\dint_{\R\times Q^2_T}\eta'(u_\theta(t,x)-k) \nabla_x u_\theta(t,x). \nabla_x\psi(t,x,s,y) \rho_l(u^\gamma_\eps(s,y)-k) \,dy\,ds\,dk\,dx\,dt\bigg]\notag
\\
& -\E \bigg[\int_{\R \times Q^2_T} \mathscr{L}^r_\lambda[A(u_\theta)(t,\cdot)](x) \psi(t,x,s,y)\, \eta^\prime( u_\theta(t,x)-k) \rho_l(u^\gamma_\eps(s,y)-k) \,dy \,ds\,dk\,dx\,dt\bigg]\notag\\
& -\E \bigg[\int_{\R \times Q^2_T} A^\eta_k(u_\theta(t,x))\mathscr{L}_{\lambda,r}[\psi(t,\cdot,s,y)](x) \rho_l(u^\gamma_\eps(s,y)-k) \,ds\,dy\,dk\,dx\,dt\bigg]\notag\\
& =:  J_1 + J_2 + J_3 +J_4 + J_5 + J_6 + J_7 + J_8.
\label{Kato_second_01}
\end{align}

We shall now add \eqref{kato_one_01} and \eqref{Kato_second_01} and pass to the limit with respect to the parameters in a fixed order: first $n$, then $\gamma, \delta, l ,\theta,\eps, r$ and finally $m \to \infty$. Some of the local terms can be handled as in 
\cite{BaVaWit_2012, BaVaWitParab},  therefore, we focus on providing details for the terms coming from the nonlocal part. To begin with, we recall well-known results for local terms.
\begin{lem}[\cite{BaVaWit_2012,BaVaWitParab}]\label{lem:initial+time-terms} \hfil \\ 
It holds that $I_1=0$ and since $u_0^{\eps} \underset{\eps\to0} \longrightarrow u_0$ in $L^2(\D)$ and $v_0^{\theta} \underset{\theta\to0} \longrightarrow v_0$ in $L^2(\D)$, we have
\begin{align*}
\lim_{m \goto \infty}\lim_{\theta \goto 0}\,\lim_{\eps \goto 0}\,\lim_{l \to 0}\,\lim_{\delta\goto 0} \,\lim_{\gamma\to 0}\,\lim_{n\goto \infty} \big(I_1 + J_1\big)& =   \int_{\R^d} |u_0(x)-v_0(x)| \,\varphi(0,x) \,dx.
\\
\lim_{m \goto \infty}\lim_{\theta \goto 0}\,\lim_{\eps \goto 0}\,\lim_{l \to 0}\,\lim_{\delta\goto 0} \,\lim_{\gamma\to 0}\,\lim_{n\goto \infty}  \big(I_2 + J_2\big) 
& =\E \Big[\int_{0}^T \int_{\R^d}\int_{0}^1 \int_{0}^1 
  |u(t,x,\alpha)-v(t,x,\beta)| \partial_t\varphi(t,x)
  d\alpha \,d\beta\,dx\,dt\Big].
\end{align*}
\end{lem}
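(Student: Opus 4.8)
The plan is to read off both identities by letting the regularisation parameters tend to their limits in the prescribed order $n,\gamma,\delta,l,\eps,\theta,m$ (reading the displayed limits from the inside out), using at each step only: the pathwise $L^2$-continuity in time of the viscous approximations $u_\theta,u_\eps$ from Theorem~\ref{prop:vanishing viscosity-solution}; the approximate-identity properties of $\rho_n,\rho_m,\rho_l$; the fact that $\eta=\eta_\delta\in\mathcal E$ is even, increases to $|\cdot|$, and has bounded derivative; and the narrow (Young-measure) convergence of the two independent families $u_\theta$ and $u_\eps$ to $u(t,x,\alpha)$ and $v(t,y,\beta)$. That $I_1=0$ is immediate from the support convention: in \eqref{kato_one_01} the variable $t$ ranges over $(0,T)$ whereas $\psi(t,x,0,y)$ carries the factor $\rho_n(t)$ supported in $[-2/n,0]$; hence the integrand vanishes for a.e.\ $t>0$, and this holds for every value of the remaining parameters.

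For $J_1$ I would first send $n\to\infty$. The factor $\rho_n(-s)$ in $\psi(0,x,s,y)$ is supported in $s\in[0,2/n]$ and has unit mass, and $s\mapsto u_\eps(s,\cdot)$ is continuous into $L^2$ up to $s=0$, so $u^\gamma_\eps(s,y)$ is replaced by $u^\gamma_\eps(0,y)=(u_0^\eps)*\rho_\gamma(y)$ while $u_\theta(0,x)=v_0^\theta(x)$ is frozen. The integral is now deterministic and the remaining limits act one at a time: $\gamma\to0$ removes the space convolution since $(u_0^\eps)*\rho_\gamma\to u_0^\eps$ in $L^2$ (dominated convergence, $\rho_l$ bounded); $\delta\to0$ turns $\eta_\delta(v_0^\theta-k)$ into $|v_0^\theta-k|$; $l\to0$ concentrates $\rho_l(u_0^\eps(y)-k)$ at $k=u_0^\eps(y)$, collapsing the $k$-integral to $|v_0^\theta(x)-u_0^\eps(y)|$; $\eps\to0$ and $\theta\to0$ insert $u_0^\eps\to u_0$ and $v_0^\theta\to v_0$ in $L^2$; and $m\to\infty$ concentrates $\rho_m(x-y)$ on the diagonal. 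This produces $\int_{\rd}|u_0-v_0|\,\varphi(0,x)\,dx$, and together with $I_1=0$ gives the first identity.

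The time terms are the substantive point. From \eqref{test_function}, $\partial_s\psi=-\rho_n'(t-s)\rho_m(x-y)\varphi(t,x)$ and $\partial_t\psi=\rho_n'(t-s)\rho_m(x-y)\varphi(t,x)+\rho_n(t-s)\rho_m(x-y)\partial_t\varphi(t,x)$, so that, writing $u_\theta$ for $u_\theta(t,x)$ and $u^\gamma_\eps$ for $u^\gamma_\eps(s,y)$,
\begin{align*}
I_2+J_2 = R_n + \E\Big[\int_{\R\times Q_T^2}\eta(u_\theta-k)\,\rho_n(t-s)\,\rho_m(x-y)\,\partial_t\varphi(t,x)\,\rho_l(u^\gamma_\eps-k)\,dy\,ds\,dk\,dx\,dt\Big],
\end{align*}
\begin{align*}
R_n=\E\Big[\int_{\R\times Q_T^2}\rho_n'(t-s)\,\rho_m(x-y)\,\varphi(t,x)\,\big(\eta(u_\theta-k)\rho_l(u^\gamma_\eps-k)-\eta(u^\gamma_\eps-k)\rho_l(u_\theta-k)\big)\,dy\,ds\,dk\,dx\,dt\Big].
\end{align*}
The key observation I would exploit is that $R_n$ vanishes identically, before any limit. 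Indeed, by Fubini the inner $k$-integral (finite because $\rho_l$ is compactly supported and $\eta_\delta$ grows at most linearly) equals, with $a=u_\theta$ and $b=u^\gamma_\eps$,
\begin{align*}
\int_\R\big(\eta(a-k)\rho_l(b-k)-\eta(b-k)\rho_l(a-k)\big)\,dk=(\eta*\rho_l)(a-b)-(\eta*\rho_l)(b-a)=0,
\end{align*}
the last equality because $\eta$ and $\rho_l$ are both even, so $\eta*\rho_l$ is even. This exact algebraic cancellation is what lets us bypass the anticipating, non-It\^o character of the doubled time variable. For the surviving term I proceed exactly as for $J_1$: $n\to\infty$ freezes $s=t$ by time-continuity, $\gamma\to0$ undoes the convolution, $\delta\to0$ and $l\to0$ collapse the $k$-integral to $|u_\theta(t,x)-u_\eps(t,y)|$, the pair $(\eps,\theta)\to0$ replaces it by the Young-measure average $\int_0^1\int_0^1|u(t,x,\alpha)-v(t,y,\beta)|\,d\alpha\,d\beta$, and $m\to\infty$ sets $y=x$, giving the second identity.

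The part I expect to require the most care is the vanishing of $R_n$: a crude bound using only the time-modulus of continuity of $u_\eps$ yields, after multiplication by $\rho_n'$ (whose $L^1$-norm is of order $n$) and integration over $|t-s|\le 2/n$, a quantity of order one (or worse) rather than $o(1)$. One therefore genuinely needs the exact cancellation above rather than a continuity estimate, and this is where the evenness built into $\mathcal E$ and the symmetry of $\rho_l$ are essential. The only other non-routine ingredient is the narrow convergence of the two independent viscous families in the final step, which I would invoke exactly as in \cite{BaVaWit_2012,BaVaWitParab}; all remaining manipulations are standard applications of Fubini's theorem and dominated convergence.
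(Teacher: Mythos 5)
Your reconstruction is essentially sound, and it should be said up front that the paper itself does not prove this lemma --- it defers entirely to \cite{BaVaWit_2012,BaVaWitParab}, remarking only that the limits follow from Lebesgue's theorem and convolution properties --- so your argument has to be judged on its own merits. The treatment of $I_1$, of $J_1$, and of the surviving $\rho_n\,\partial_t\varphi$ part of $I_2+J_2$ is correct and in the spirit of the cited works. Your exact algebraic cancellation of $R_n$ via $\int_\R\big(\eta(a-k)\rho_l(b-k)-\eta(b-k)\rho_l(a-k)\big)\,dk=(\eta\star\rho_l)(a-b)-(\eta\star\rho_l)(b-a)$ is a clean way to dispose of the $\rho_n'$ contribution without ever touching the anticipating stochastic calculus, and the Fubini step it rests on is justified by the compact support of $\rho_l$ in $k$ and the $L^2$ bounds on $u_\theta,u_\eps$.

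The one point you should not gloss over is your assertion that evenness is ``built into $\mathcal{E}$'': it is not. The paper's $\mathcal{E}$ only requires $\eta\ge0$ convex, $\eta(0)=0$ and $\eta'=\pm1$ outside $[-\delta,\delta]$; an asymmetric profile on $[-\delta,\delta]$ is allowed, and for such an $\eta$ the function $\eta\star\rho_l$ is not even, so $R_n$ does not vanish identically. Moreover this is not repaired by a size estimate: one only gets $\big|(\eta\star\rho_l)(z)-(\eta\star\rho_l)(-z)\big|\le 2\delta$, and multiplying by $\|\rho_n'\|_{L^1}=\mathcal{O}(n)$ gives a bound of order $\delta n$, which does not go to zero in the prescribed order of limits ($n\to\infty$ \emph{before} $\delta\to0$). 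So your argument genuinely needs $\eta_\delta$ even at that step. The fix is trivial but must be stated: since $\eta_\delta$ is a regularization of $|\cdot|$ chosen by the prover in the doubling-of-variables argument, one may (and should) take it even, whence $\eta_\delta\star\rho_l$ is even and $R_n\equiv0$ exactly as you claim. Alternatively one can handle the $\rho_n'$ term as in \cite{BaVaWit_2012} by exploiting the one-sided support of $\rho_n$ (so that $s\ge t$ and $\rho_l(u_\theta(t,x)-k)$ is $\mathcal{F}_s$-adapted) together with It\^o's formula; your route avoids that machinery at the price of the symmetry assumption. With the evenness made explicit, the proof is complete.
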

The way to pass to the limits is mainly based on Lebesgue's Theorem and the properties of convolution. Concerning the proof of next lemma, this  is different from \cite{BaVaWit_2012,BaVaWitParab} and we develop the arguments of a new proof.
\begin{lem}\label{lem:stochastic-terms}
It holds that 
\begin{align*}
\lim_{\delta \to 0}\,& \lim_{\gamma\to 0}\,\lim_{n\goto \infty}  \big(I_4 + J_4 \big)\\
&=  -\frac{1}{2}\E \Big[ \int_{Q_T\times \R^d \times \R} \sgn( u_\theta(t,x)-k) \mathbb{G}^2(u_\theta(t,x)) \varphi(t,x) \rho'_l(u_\ep(t,y)-k)\rho_m(x-y) \,dy\,dk\,dx\,dt\Big]  \\
& \quad -\frac{1}{2}\E \Big[ \int_{Q_T\times \R^d \times \R} \sgn( u_\ep(t,y)-k) \mathbb{G}^2(u_\ep(t,y)) \varphi(t,x) \rho'_l(u_\theta(t,x)-k)\rho_m(x-y) \,dy\,dk\,dx\,dt\Big]  \\
& =-\frac{1}{2}\E \Big[ \int_{Q_T\times \R^d \times \R} \sgn( u_\theta(t,x)-u_\ep(t,y)+k) \mathbb{G}^2(u_\theta(t,x)) \varphi(t,x) \rho'_l(k)\rho_m(x-y) \,dy\,dk\,dx\,dt\Big]  \\
&\quad-\frac{1}{2}\E \Big[ \int_{Q_T\times \R^d \times \R} \sgn( u_\ep(t,y)-u_\theta(t,x)+k) \mathbb{G}^2(u_\ep(t,y)) \varphi(t,x) \rho'_l(k)\rho_m(x-y) \,dy\,dk\,dx\,dt\Big]\\
&=\E \Big[ \int_{Q_T\times \R^d} \mathbb{G}^2(u_\theta(t,x)) \varphi(t,x) \rho_l(u_\ep(t,y)-u_\theta(t,x)) \rho_m(x-y)\,dy\,dk\,dx\,dt\Big]  \\
& \quad+\E \Big[ \int_{Q_T\times \R^d} \mathbb{G}^2(u_\ep(t,y)) \varphi(t,x) \rho_l(u_\theta(t,x)-u_\ep(t,y)) \rho_m(x-y)\,dy\,dk\,dx\,dt\Big].
\end{align*}
Moreover, $I_3$=0 and 
\begin{align*}
&\lim_{\delta \to 0} \lim_{\gamma \to 0}\lim_{n \to \infty}J_3
\\ =
& \E\bigg[\sum_{j \ge1}\dint_{\R\times Q_T\times \R^d}\hspace{-1cm} \sgn(u_\theta(t,x)-u_\ep(t,y))+k) \varphi(t,x) g_j(u_\theta(t,x)) g_j(u_\ep(t,y)) 
\times \rho'_l(k)\rho_m(x-y)\,dy \,dk\,dx\,dt\bigg]
\\
=&-2 \E \bigg[\sum_{j \ge1}\dint_{Q_T\times \R^d}g_j(u_\theta(t,x)) g_j(u_\ep(t,y))\varphi(t,x)\rho_m(x-y)\rho_l(u_\ep(t,y)-u_\theta(t,x))\,dy\,dx\,dt\bigg]
\end{align*}
Finally, it follows that
$$\lim_{l \to 0}\lim_{\delta \to 0} \lim_{ \gamma \to 0} \lim_{ n \to \infty}(I_3+I_4+J_3+J_4)=0.$$
\end{lem}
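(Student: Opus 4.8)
The plan is to read the assertion as the outcome of a cancellation built into the structure of the noise: adding the three limits already recorded—the vanishing of $I_3$, the limit of $I_4+J_4$, and the limit of $J_3$—produces a single nonnegative integral that can then be estimated by hand. First I would add them, using the symmetry of $\rho_l$ to identify $\rho_l(u_\ep-u_\theta)=\rho_l(u_\theta-u_\ep)$. The limit of $I_4+J_4$ carries the weight $\mathbb{G}^2(u_\theta(t,x))+\mathbb{G}^2(u_\ep(t,y))$ and the limit of $J_3$ the weight $-2\sum_{j\ge1}g_j(u_\theta(t,x))g_j(u_\ep(t,y))$, both multiplied by the common factor $\varphi(t,x)\rho_l(u_\theta(t,x)-u_\ep(t,y))\rho_m(x-y)$. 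Since $\mathbb{G}^2(r)=\sum_{j\ge1}g_j^2(r)$, the combined weight is a perfect square,
\begin{align*}
\mathbb{G}^2(u_\theta(t,x))+\mathbb{G}^2(u_\ep(t,y))-2\sum_{j\ge1}g_j(u_\theta(t,x))g_j(u_\ep(t,y))=\sum_{j\ge1}\big(g_j(u_\theta(t,x))-g_j(u_\ep(t,y))\big)^2,
\end{align*}
so that, recalling that $\varphi,\rho_l,\rho_m\ge0$,
\begin{align*}
\lim_{\delta\to0}\lim_{\gamma\to0}\lim_{n\to\infty}(I_3+I_4+J_3+J_4)&=\E\Big[\int_{Q_T\times\R^d}\sum_{j\ge1}\big(g_j(u_\theta(t,x))-g_j(u_\ep(t,y))\big)^2\\
&\qquad\times\varphi(t,x)\,\rho_l(u_\theta(t,x)-u_\ep(t,y))\,\rho_m(x-y)\,dy\,dx\,dt\Big]\ge0.
\end{align*}

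For completeness I would recall the provenance of the three ingredients. The identity $I_3=0$ follows from the one-sided support $\supp\rho_n\subset[-2/n,0]$, which forces $t\le s$ on the support of $\psi$; there $\rho_l(u_\theta(t,x)-k)$ is $\mathcal{F}_s$-measurable, the integrand of the $d\beta_j(s)$-integral is predictable, and its expectation vanishes. The limit of $I_4+J_4$ is obtained through the integration by parts in the $k$-variable highlighted in the introduction: writing $\eta''(\cdot-k)=-\partial_k\eta'(\cdot-k)$ trades the troublesome $\eta''$ for the bounded $\eta'$ at the price of turning $\rho_l$ into $\rho_l'$, after which $\gamma\to0$, $n\to\infty$ and $\delta\to0$ (so $\eta'\to\sgn$), together with the symmetric-mollifier identity $\int_{\R}\sgn(a+k)\rho_l'(k)\,dk=-2\rho_l(a)$, give the stated form.

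The genuinely delicate ingredient, and the main obstacle, is $J_3$: because the factor $\rho_l(u_\ep^\gamma(s,y)-k)$ involves $u_\ep$ at times $s\ge t$, the integrand of the $d\beta_j(t)$-martingale is anticipating and cannot simply be discarded. I would treat it by applying It\^o's formula to $s\mapsto\rho_l(u_\ep^\gamma(s,y)-k)$, isolating its martingale part $\sum_j\rho_l'(u_\ep^\gamma(s,y)-k)g_j(u_\ep)^\gamma(s,y)\,d\beta_j(s)$, and computing its cross-variation with the $d\beta_j(t)$-martingale via the It\^o isometry; letting $n\to\infty$ then collapses $s$ onto $t$ and yields precisely the cross term $-2\sum_jg_j(u_\theta)g_j(u_\ep)$. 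It is here that the reordered hierarchy of limits (passing $\delta\to0$ before $l\to0$) is forced. With the perfect-square representation in hand the conclusion is immediate: Assumption \ref{A4} gives $\sum_{j\ge1}\big(g_j(u_\theta)-g_j(u_\ep)\big)^2\le K|u_\theta-u_\ep|^2$, and because $\rho_l(u_\theta-u_\ep)$ is supported on $\{|u_\theta-u_\ep|\le l\}$ with $\|\rho_l\|_{L^\infty(\R)}\le C/l$, one has the pointwise bound $|u_\theta-u_\ep|^2\rho_l(u_\theta-u_\ep)\le Cl$ with $C$ independent of $l$. Hence
\begin{align*}
0\le\lim_{\delta\to0}\lim_{\gamma\to0}\lim_{n\to\infty}(I_3+I_4+J_3+J_4)&\le CK\,l\int_{Q_T\times\R^d}\varphi(t,x)\,\rho_m(x-y)\,dy\,dx\,dt\\
&=CK\,l\int_{Q_T}\varphi(t,x)\,dx\,dt,
\end{align*}
using $\int_{\R^d}\rho_m(x-y)\,dy=1$. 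As $\varphi$ has compact support the right-hand side is finite and tends to $0$ as $l\to0$, which gives $\lim_{l\to0}\lim_{\delta\to0}\lim_{\gamma\to0}\lim_{n\to\infty}(I_3+I_4+J_3+J_4)=0$.
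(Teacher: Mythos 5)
Your proposal is correct and follows essentially the same route as the paper: integration by parts in $k$ to trade $\eta''$ for $\eta'$ before sending $\delta\to0$, vanishing of $I_3$ by adaptedness, It\^o's formula on $s\mapsto\rho_l(u_\ep^\gamma(s,y)-k)$ with the cross-variation producing the $-2\sum_j g_j(u_\theta)g_j(u_\ep)$ term, the perfect-square identity, and the $O(l)$ bound from $\supp\rho_l\subset[-l,l]$ and Assumption \ref{A4}. The only point you pass over quickly is that the drift (bounded-variation) part of the It\^o expansion of $\rho_l(u_\ep^\gamma(\cdot,y)-k)$ also pairs with the $d\beta_j(t)$-integral and must be shown to vanish as $n\to\infty$; the paper handles this by rewriting it as a $\partial_k$-derivative and invoking the argument of Bauzet--Vallet--Wittbold, so this is a minor elision rather than a gap.
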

\begin{proof}
Inspired by the proof of \cite[Sec. 4]{BaVaWit_2012}, we would like to pass the limits in various parameters involved. However, contrary to arguments depicted in \cite{BaVaWit_2012}, we would like to pass to the limit with respect to the parameter $\delta$ before passing to the limits in the parameter $l$. To begin with, an argument based on Lebesgue's theorem and classical convolution properties reveals that
\begin{align*}
\lim_{\gamma \to 0} \lim_{n \to \infty} I_4 &=\frac{1}{2}\E \bigg[ \int_{Q_T\times \R^d \times \R} \eta''( u_\ep(t,y)-k) \mathbb{G}^2(u_\ep(t,y)) \varphi(t,x) \rho_l(u_\theta(t,x)-k)\rho_m(x-y) \,dy\,dk\,dx\,dt\Big].  
\end{align*}
Applying an integration by parts formula on the variable $k$ and then using a change of variables we get 
\begin{align*}
\lim_{\gamma \to 0} \lim_{n \to \infty} I_4 &=-\frac{1}{2}\E \bigg[ \int_{Q_T\times \R^d \times \R} \eta'( u_\ep(t,y)-u_\theta(t,x)+k) \mathbb{G}^2(u_\ep(t,y)) \varphi(t,x) \rho'_l(k)\rho_m(x-y) \,dy\,dk\,dx\,dt\Big].  
\end{align*}
We shall now consider the passage to the limit as $\delta$ goes to 0.\\
First observe that $\mathbb{G}^2(u_\ep(t,y)) \varphi(t,x) \rho'_l(k)\rho_m(x-y) \in L^1( \Omega \times Q_T \times \R^d \times \R)$. Indeed we have
\begin{align*}
&\E\Big[ \int_{Q_T \times \R^d \times \R} \mathbb{G}^2(u_\ep(t,y)) \varphi(t,x) |\rho'_l(k)| \rho_m(x-y) \,dy\,dk\,dx\,dt\Big] \\
&\hspace{4cm} \le C(\varphi)\E\Big[ \int_{Q_T  \times \R}  |u_\ep(t,y)|^2 |\rho'_l(k)| \,dk\,dt\Big]< +\infty.
\end{align*}
We can apply dominated convergence theorem, using the fact that $|\eta'| \le 1$, to conclude
\begin{align*}
\lim_{\delta \to 0}\,\lim_{\gamma\to 0}\,\lim_{n\goto \infty}  I_4 = & -\frac{1}{2}\E \Big[ \int_{Q_T\times \R^d \times \R} \hspace{-0.2cm}\sgn( u_\ep(t,y)-u_\theta(t,x)+k) \mathbb{G}^2(u_\ep(t,y)) \varphi(t,x) \rho'_l(k)\rho_m(x-y) \,dy\,dk\,dx\,dt\Big]
\\ = & \E \Big[ \int_{Q_T\times \R^d } \hspace{-0.2cm} \mathbb{G}^2(u_\ep(t,y)) \varphi(t,x) \rho_l(u_\theta(t,x)-u_\ep(t,y))\rho_m(x-y) \,dy\,dx\,dt\Big]
\end{align*}
by noticing that
\begin{align*}
\int_\R \sgn( u_\ep(t,y)-u_\theta(t,x)+k) \rho'_l(k)\,dk= -2\rho_l(u_\theta(t,x)-u_\ep(t,y)).
\end{align*}
A similar argument yields
\begin{align*}
\lim_{\delta \to 0} \,\lim_{\gamma \to 0}\, \lim_{n \to \infty} J_4=&-\frac{1}{2}\E \Big[ \int_{Q_T\times \R^d \times \R} \hspace{-0.2cm} \sgn(u_\theta(t,x)-u_\ep(t,y)+k) \mathbb{G}^2(u_\theta(t,x)) \varphi(t,x) \rho'_l(k)\rho_m(x-y) \,dy\,dk\,dx\,dt\Big]
\\ =&\E \Big[ \int_{Q_T\times \R^d } \hspace{-0.2cm}  \mathbb{G}^2(u_\theta(t,x)) \varphi(t,x) \rho_l (u_\ep(t,y)-u_\theta(t,x))\rho_m(x-y) \,dy\,dk\,dx\,dt\Big].
\end{align*}
By an argument of conditional independence,  $I_3=0$. For $J_3$ we adapt for the first steps the technique used in \cite[Sec. 4]{BaVaWit_2012} where a simple application of It\^o's formula reveals that
\begin{align*}
&\rho_l(u^\gamma_\ep(s,y)-k) -\rho_l(u^\gamma_\ep(s-2/n,y)-k)\\
&\qquad = \int^s_{s-2/n} \rho'_l( u^\gamma_\ep (\sigma,y)-k) A_\ep(\sigma ,y) \,d\sigma+\sum_{j \ge 1}
\int^s_{s-2/n} \rho'_l( u^\gamma_\ep(\sigma,y)-k)g_j(u_\ep(\sigma,y)) \,d\beta_{j}(\sigma)\\
&\qquad \qquad + \frac{1}{2}\int^s_{s-2/n} \rho''_l( u^\gamma_\ep(\sigma,y)-k) \mathbb{G}^2(u^\gamma_\ep(\sigma,y)) \, d\sigma\\
&\qquad = -\frac{\partial}{\partial k}\bigg[\int^s_{s-2/n} \rho_l( u^\gamma_\ep (\sigma,y)-k) A_\ep(\sigma ,y)+\frac12 \rho'_l( u^\gamma_\ep(\sigma,y)-k) \mathbb{G}^2(u^\gamma_\ep(\sigma,y)) \, d\sigma\bigg]\\ 
&\qquad \qquad  +\sum_{j \ge 1}\int^s_{s-2/n} \rho'_l( u^\gamma_\ep(\sigma,y)-k)g_j(u_\ep(\sigma,y)) \,d\beta_{j}(\sigma)
\end{align*}
where $A_\epsilon=\eps \Delta u^\gamma_\eps+ \Div [f(u_\ep)]^\gamma -  \mathscr{L}_{\lambda}\Big([A(u_\eps))]^\gamma\Big)$ is square integrable. Hence, we recast the term $J_3$ as
\begin{align*}
J_3=& \E\bigg[\sum_{j \ge1}\dint_{\R\times Q^2_T} \eta'(u_\theta(t,x)-k) \psi(t,x,s,y) g_j(u_\theta(t,x)) \, d\beta_j(t)\\
& \hspace{2cm}\times (\rho_l(u^\gamma_\eps(s,y)-k)-\rho_l(u^\gamma_\eps(s-2/n,y)-k))\,dy \,dk\,dx\,ds\bigg] \\
&=\E \bigg[\sum_{j \ge1}\dint_{\R\times Q^2_T}\eta'(u_\theta(t,x)-k) \psi(t,x,s,y) g_j(u_\theta(t,x)) \, d\beta_j(t)\\
& \hspace{1cm} \times \frac{\partial}{\partial k}\bigg(\int^s_{s-2/n} \rho_l( u^\gamma_\ep (\sigma,y)-k) A_\ep(\sigma ,y)+1/2 \rho'_l( u^\gamma_\ep(\sigma,y)-k) \mathbb{G}^2(u^\gamma_\ep(\sigma,y)) \, d\sigma\bigg) \,dy\,dk\,dx\,ds\bigg]\\
&\quad +\E \bigg[\int_{\R \times Q_T^2} \bigg(\sum_{ j \ge 1}\eta'(u_\theta(t,x)-k) \psi(t,x,s,y) g_j(u_\theta(t,x)) \, d\beta_j(t)\bigg)\\
&\hspace{3cm} \times \bigg( \sum_{j \ge 1}\int^s_{s-2/n} \rho'_l( u^\gamma_\ep(\sigma,y)-k)g_j(u_\ep(\sigma,y)) \,d\beta_{j}(\sigma)\bigg)\,dy\,dk\,dx\,ds\bigg].
\end{align*}
The first term in the above expression goes to $0$ as $n$ goes to infinity (arguments similar to  \cite[Sec 4, Pg 689-690]{BaVaWit_2012}), and it remains now to consider the second one. In fact, using the fact that $\{e_j\}_{j\ge 1}$ is a complete orthonormal system and the product rule of It\^{o}'s integral, we get
\begin{align*}
\lim_{\gamma \to 0}& \lim_{n \to \infty}J_3\\
&= \E\bigg[\sum_{j \ge1}\dint_{\R\times Q_T\times \R^d} \eta'(u_\theta(t,x)-k) \varphi(t,x) g_j(u_\theta(t,x)) g_j(u_\ep(t,y)) \rho'_l(u_\eps(t,y)-k)\rho_m(x-y)\,dy \,dk\,dx\,dt\bigg]\\
&=\E\bigg[\sum_{j \ge1}\dint_{\R\times Q_T\times \R^d} \eta'(u_\theta(t,x)-u_\ep(t,y)+k) \varphi(t,x) g_j(u_\theta(t,x)) g_j(u_\ep(t,y)) \rho'_l(k)\rho_m(x-y)\,dy \,dk\,dx\,dt\bigg].
\end{align*}
We shall now show the passage to the limit as $\delta$ goes to 0. Again notice that $$\sum_{j \ge 1} g_j(u_\theta(t,x))g_j(u_\ep(t,y)) \varphi(t,x) \rho'_l(k) \rho_m(x-y) \in L^1( Q_T \times \R^d \times \R). $$
Indeed, we have
\begin{align*}
&\E \bigg[\int_{Q_T \times \R^d \times \R} \big| \sum_{j \ge 1} g_j(u_\theta(t,x))g_j(u_\ep(t,y))\big| \varphi(t,x) |\rho'_l(k)| \rho_m(x-y) \,dy\,dx\,dk\,dt \bigg] \\
& \hspace{1cm}\le \frac{1}{2} \E \bigg[ \int_{Q_T \times \R^d \times \R} \sum_{j \ge 1} |g_j(u_\theta(t,x))|^2\varphi(t,x) |\rho'_l(k)| \rho_m(x-y)\,dy\,dx\,dk\,dt \bigg] \\
& \hspace{1cm} + \frac{1}{2} \E \bigg[ \int_{Q_T \times \R^d \times \R} \sum_{k \ge 1} |g_j(u_\ep(t,y))|^2\varphi(t,x) |\rho'_l(k)| \rho_m(x-y)\,dy\,dx\,dk\,dt \bigg] \\
&\hspace{1cm}\le   C(\| \varphi\|_\infty,\rho'_l)\E \bigg[\int_{Q_T} | u_\theta(t,x)|^2 \,dx\,dt+ \int_{Q_T } |u_\ep(t,y)|^2\,dy\,dt\bigg] < +\infty.
\end{align*}
Thus using the fact that $|\eta'| \le 1$, and applying dominated convergence theorem we get
\begin{align*}
&\lim_{\delta \to 0} \lim_{\gamma \to 0}\lim_{n \to \infty}J_3
\\=& \E\bigg[\sum_{j \ge1}\dint_{\R\times Q_T\times \R^d} \hspace{-1cm} \sgn(u_\theta(t,x)-u_\ep(t,y)+k) \varphi(t,x) g_j(u_\theta(t,x)) g_j(u_\ep(t,y))  \rho'_l(k)\rho_m(x-y)\,dy \,dk\,dx\,dt\bigg]
\\=& -2\E\bigg[\sum_{j \ge1}\dint_{Q_T\times \R^d} \hspace{-1cm}  \varphi(t,x) g_j(u_\theta(t,x)) g_j(u_\ep(t,y))  \rho_l(u_\ep(t,x)-u_\theta(t,y))\rho_m(x-y)\,dy \,dk\,dx\,dt\bigg],
\end{align*}
and this implies, by symmetry  of $\rho_l$,
\begin{align*}
&\lim_{\delta \to 0} \lim_{ \gamma \to 0} \lim_{ n \to \infty}(I_4+J_3+J_4)
\\ =&  \E \bigg[\sum_{j \ge1}\dint_{Q_T\times \R^d} ( g_j(u_\theta(t,x)-g_j(u_\ep(t,y))^2 \varphi(t,x) \rho_m(x-y)  \rho_l(u_\ep(t,y)- u_\theta(t,x))\,dy\,dx\,dt\bigg].
\end{align*}
We now use the fact that the support of $\rho_l$ lies in $[-l,l],\,\rho_l = \mathcal{O}(1/l)$, and Assumption~\ref{A4} to conclude that 
$$  \sum_j |g_j(u_\theta(t,x))-g_j(u_\ep(t,y))|^2 \varphi(t,x) \rho_m(x-y)
\rho_l(u_\ep(t,y)- u_\theta(t,x)) \le C\,l\varphi(t,x) \rho_m(x-y),$$
and thus we get
$$\lim_{l \to 0}\lim_{\delta \to 0} \lim_{ \gamma \to 0} \lim_{ n \to \infty}(I_3+I_4+J_3+J_4)=0.$$
\end{proof}
\begin{lem}[\cite{BaVaWit_2012,BaVaWitParab}]\label{lem:flux-terms}
The followings hold:
\begin{equation*}
\begin{aligned}
& \lim_{m \goto \infty} \lim_{\theta \goto 0}\,\lim_{\eps \goto 0}\,\lim_{l \to 0}\,\lim_{\delta\goto 0} \,\lim_{\gamma\to 0}\,\lim_{n\goto \infty}  \big(I_5 + J_5 \big)  
= -\E \Big[\int_{Q_T} \int_{0}^1 \int_{0}^1 
F(u(t,x,\alpha),v(t,x,\beta)) \cdot \grad_x \varphi (t,x)
 \,d\alpha\,d\beta\,dx\,dt\Big]. \\[2mm]  
& \lim_{m \goto \infty} \lim_{\theta \goto 0}\,\lim_{\eps \goto 0}\,\lim_{l \to 0}\,\lim_{\delta \goto 0} \,\lim_{\gamma\to 0}\,\lim_{n\goto \infty}  (I_6 +J_6) =0. 
\end{aligned}
\end{equation*}
\end{lem}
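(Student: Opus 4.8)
The plan is to adapt the flux–term analysis of \cite{BaVaWit_2012,BaVaWitParab} to the present nonlocal setting, the only genuinely new point being the reordering of the limits. I begin with $I_5+J_5$. Since $\varphi$ depends only on $(t,x)$, the test function $\psi(t,x,s,y)=\rho_n(t-s)\rho_m(x-y)\varphi(t,x)$ satisfies $\nabla_y\psi=-\varphi\,\rho_n\,(\grad\rho_m)(x-y)$ and $\nabla_x\psi=\varphi\,\rho_n\,(\grad\rho_m)(x-y)+\rho_n\,\rho_m(x-y)\,\grad_x\varphi$. Consequently, after passing to the limits $n\goto\infty$ (which forces $s\to t$ by the support of $\rho_n$) and $\gamma\goto0$ (which replaces $u_\eps^\gamma$ by $u_\eps$ via the convolution properties and Lebesgue's theorem), the term $I_5$ produces a single ``stiff'' contribution carrying $(\grad\rho_m)(x-y)$, while $J_5$ splits into an analogous stiff contribution and the ``good'' contribution carrying $\rho_m(x-y)\grad_x\varphi$.

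Next I would pass to the limit $\delta\goto0$ \emph{before} $l\goto0$. Using $|F^\eta(a,k)|\le\|f'\|_\infty|a-k|$ and the compact support of $(\grad\rho_m)(x-y)\varphi$ and of $\rho_m(x-y)\grad_x\varphi$, the integrands are dominated, uniformly in $\delta$, by integrable functions; since $\eta'\to\sgn$ pointwise, dominated convergence gives $F^\eta(\cdot,k)\goto F(\cdot,k)$ in each term. Then $l\goto0$ collapses the $k$-integration, replacing $\int F(a,k)\rho_l(b-k)\,dk$ by $F(a,b)$. The crucial gain is that, after these two limits, the two stiff contributions combine into
\[
\E\Big[\int_{Q_T\times\rd}\big\{F(u_\eps(t,y),u_\theta(t,x))-F(u_\theta(t,x),u_\eps(t,y))\big\}\cdot(\grad\rho_m)(x-y)\,\varphi(t,x)\,dy\,dx\,dt\Big]=0,
\]
the cancellation being forced by the symmetry $F(a,b)=F(b,a)$ of $F(a,b)=\sgn(a-b)[f(a)-f(b)]$. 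I expect this cancellation to be the heart of the argument, and the reason the hierarchy must be altered: the analogue $F^\eta$ is \emph{not} symmetric, so the stiff terms — which would blow up like $\grad\rho_m$ as $m\goto\infty$ — can only be shown to cancel once $\delta$ has already been sent to $0$, at fixed $m$.

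It then remains to pass to the limit in the good contribution $-\E[\int F(u_\theta(t,x),u_\eps(t,y))\cdot\grad_x\varphi(t,x)\,\rho_m(x-y)\,dy\,dx\,dt]$. Letting $\eps\goto0$ and $\theta\goto0$ and using the Young–measure representation of $u_\eps\rightharpoonup v(\cdot,\beta)$ and $u_\theta\rightharpoonup u(\cdot,\alpha)$ (together with the product structure of the limiting Young measures, as in \cite{BaVaWit_2012,martina}) turns $F(u_\theta(t,x),u_\eps(t,y))$ into $\int_0^1\int_0^1 F(u(t,x,\alpha),v(t,y,\beta))\,d\alpha\,d\beta$; the Lipschitz bound on $F$ and the $L^2$-estimates make these passages routine. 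Finally $m\goto\infty$ sends $y\to x$ through $\rho_m(x-y)$, yielding exactly the claimed expression for $\lim(I_5+J_5)$.

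For $I_6+J_6$ I would exploit the explicit prefactors $\eps$ and $\theta$ together with the uniform estimate of Theorem~\ref{prop:vanishing viscosity-solution}. In $I_6$, the $k$-integral $\int\eta'(u_\eps-k)\rho_l(u_\theta-k)\,dk$ is bounded by $1$ (since $|\eta'|\le1$ and $\int\rho_l=1$), uniformly in $l$; moreover $\nabla_y\psi$ is bounded for fixed $m$ and supported, in $y$, in a fixed compact set $K$ determined by $\supp\varphi$ and $m$. Hence, by Cauchy–Schwarz and $\eps\int_0^T\E\|\grad u_\eps\|_{L^2(\rd)}^2\le C$,
\[
|I_6|\le \eps\,C_m\,\E\Big[\int_0^T\!\!\int_K|\grad u_\eps|\,dy\,ds\Big]\le \eps\,C_m\,(T|K|)^{1/2}\Big(\tfrac{C}{\eps}\Big)^{1/2}=C_m\,\eps^{1/2}\xrightarrow[\eps\goto0]{}0,
\]
uniformly in the previously treated parameters. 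The identical estimate applies to $J_6$ with $\theta$ in place of $\eps$, giving $|J_6|\le C_m\,\theta^{1/2}\to0$ as $\theta\goto0$, uniformly in $\eps$. Since $m$ is fixed throughout these two limits, I conclude $\lim(I_6+J_6)=0$.
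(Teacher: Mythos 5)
Your proposal is correct and reproduces, in the right order of limits ($\delta$ before $l$, as the paper's hierarchy requires), exactly the standard flux-term argument of \cite{BaVaWit_2012,BaVaWitParab} that the paper invokes by citation without reproving: the stiff $\grad\rho_m$ contributions cancel by the symmetry $F(a,b)=F(b,a)$ once the $k$-variable has been collapsed, the good term passes to the limit via Young measures, and $I_6+J_6$ is killed by the energy bound $\eps\int_0^T\E\big[\|\grad u_\eps\|_{L^2}^2\big]\,ds\le C$ giving $|I_6|\le C_m\eps^{1/2}$. No gaps.
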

We now add terms coming from the fractional operator, and compute the limits with respect to the various parameters involved.
\begin{lem}
\label{kato_lemma1}
It holds that
\begin{align*}
I_7+J_7 & \underset{n \goto \infty} \longrightarrow   -\E \bigg[\int_{Q_T\times \R \times \R^d} \mathscr{L}^r_\lambda[A(u_\ep)^\gamma(t,\cdot)](y) \varphi(t,x)\rho_m(x-y)\, \eta^\prime( u^\gamma_\eps(t,y)-k) \rho_l(u_\theta(t,x)-k) \,dy \,dk\,dx\,dt\bigg]\\
&\hspace{1cm}-\E \bigg[\int_{Q_T\times \R \times \R^d} \mathscr{L}^r_\lambda[A(u_\theta(t,\cdot))](x) \varphi(t,x)\rho_m(x-y)\, \eta^\prime( u_\theta(t,x)-k) \rho_l(u^\gamma_\ep(t,y)-k) \,dy \,dk\,dx\,dt\bigg]\\
& \underset{\gamma \goto 0} \longrightarrow -\E \bigg[\int_{Q_T\times \R \times \R^d} \mathscr{L}^r_\lambda[A(u_\eps(t,\cdot))](y) \varphi(t,x)\rho_m(x-y)\, \eta^\prime( u_\eps(t,y)-k) \rho_l(u_\theta(t,x)-k) \,dy \,dk\,dx\,dt\bigg]\\
&\hspace{1cm}-\E \bigg[\int_{Q_T\times \R \times \R^d} \mathscr{L}^r_\lambda[A(u_\theta(t,\cdot))](x) \varphi(t,x)\rho_m(x-y)\, \eta^\prime( u_\theta(t,x)-k) \rho_l(u_\ep(t,y)-k) \,dy \,dk\,dx\,dt\bigg]\\
& \underset{\delta \goto 0} \longrightarrow - \E\bigg[ \int_{Q_T \times \R^d \times \R} \mathscr{L}^r_\lambda[A(u_\ep(t,\cdot))](y) \varphi(t,x)\rho_m(x-y)\, \sgn ( u_\ep(t,y) - k) \rho_l(u_\theta(t,x)-k) \,dy \,dk \,dx\,dt\bigg]\\
&\hspace{1cm}-\E\bigg[ \int_{Q_T \times \R^d \times \R} \mathscr{L}^r_\lambda[A(u_\theta(t,\cdot))](x) \varphi(t,x)\rho_m(x-y)\, \sgn( u_\theta(t,x)- k)\rho_l(u_\ep(t,y)-k)  \,dy\,dk \,dx\,dt\bigg]\\
& \underset{ l \to 0}\longrightarrow - \E \bigg[\int_{Q_T \times \R^d} \mathscr{L}^r_\lambda[A(u_\ep(t,\cdot))](y) \varphi(t,x)\rho_m(x-y)\, \sgn( u_\ep(t,y) - u_\theta(t,x)) \,dy \,dx\,dt\bigg]\\
&\hspace{1cm}-\E \bigg[\int_{Q_T \times \R^d} \mathscr{L}^r_\lambda[A(u_\theta(t,\cdot))](x) \varphi(t,x)\rho_m(x-y)\, \sgn( u_\theta(t,x)- u_\ep(t,y))  \,dy \,dx\,dt\bigg].
\end{align*}
Moreover, we have
\begin{align*}
&\limsup_{\theta \to 0}\,\limsup_{\ep \to 0}\, \lim_{l \to 0}\,\lim_{\delta\to 0}\,\lim_{\gamma \to 0} \, \lim_{n \to \infty} (I_7+J_7) \\
& \qquad \le -\E \bigg[\int_{Q_T \times \R^d\times (0,1)^2} | A(u(t,x,\alpha)) - A( v(t,y,\beta)) |\mathscr{L}^r_\lambda[ \varphi(t,\cdot)](x) \rho_m(x-y) \,d\alpha\,d\beta\,dx\,dy\,dt\bigg].
\end{align*}
\end{lem}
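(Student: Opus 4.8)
The plan is to handle the two nonlocal terms in two stages. In the first stage I would collapse all the auxiliary parameters $n,\gamma,\delta,l$ (in that order) to reduce $I_7+J_7$ to a single pair of tail-fractional integrals paired with a \emph{genuine} sign function; in the second stage I would combine the two resulting terms, apply the Kato inequality \eqref{imp_10} for the far operator, transfer $\mathscr{L}^r_\lambda$ onto the test weight by self-adjointness, and only then pass to the Young-measure limit in $\ep,\theta$.

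For the first chain of limits the key structural point is that, for fixed $r>0$, the tail operator $\mathscr{L}^r_\lambda$ carries no singularity and is bounded on $L^2(\R^d)$, so $\mathscr{L}^r_\lambda[A(u_\ep)^\gamma]$ and $\mathscr{L}^r_\lambda[A(u_\theta)]$ inherit the uniform $L^2(\Omega\times Q_T)$ bounds of Theorem~\ref{prop:vanishing viscosity-solution}. Letting $n\to\infty$, the mollifier $\rho_n(t-s)$ acts as an approximate identity and the pathwise $L^2$-continuity in time of $u_\ep$ (again from Theorem~\ref{prop:vanishing viscosity-solution}) lets Lebesgue's theorem identify $s$ with $t$ and produce the factor $\varphi(t,x)\rho_m(x-y)$. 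Letting $\gamma\to0$, the convergences $A(u_\ep)^\gamma\to A(u_\ep)$, $u^\gamma_\ep\to u_\ep$ in $L^2$ together with the continuity of $\mathscr{L}^r_\lambda$ and dominated convergence (dominating by $|\eta'|\le1$, the bounded kernels $\rho_l,\rho_m$ and the compactly supported $\varphi$) remove the spatial regularization. Letting $\delta\to0$, the bound $|\eta'_\delta|\le1$ and $\eta'_\delta\to\sgn$ give, by dominated convergence, the replacement of $\eta'$ by $\sgn$. Finally, letting $l\to0$, the identity $\int_{\R}\sgn(u_\ep(y)-k)\rho_l(u_\theta(x)-k)\,dk\to\sgn(u_\ep(y)-u_\theta(x))$ collapses the $k$-integral and yields the two displayed limit expressions.

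The crux is the second stage. Writing $a:=u_\ep(t,y)$, $b:=u_\theta(t,x)$ and using $\sgn(b-a)=-\sgn(a-b)$, the two limit terms combine into
\begin{align*}
\lim_{l\to0}\lim_{\delta\to0}\lim_{\gamma\to0}\lim_{n\to\infty}(I_7+J_7)
=-\E\Big[\int_{Q_T\times\R^d}\sgn(a-b)\big(\mathscr{L}^r_\lambda[A(u_\ep)](y)-\mathscr{L}^r_\lambda[A(u_\theta)](x)\big)\varphi(t,x)\rho_m(x-y)\,dy\,dx\,dt\Big].
\end{align*}
Because $A$ is non-decreasing, $\sgn(a-b)[A(a)-A(b)]=|A(a)-A(b)|$, while $|\sgn(a-b)|\le1$ controls the shifted increments; writing $\mathscr{L}^r_\lambda$ as the (non-singular) integral over $|z|>r$ gives the far-part Kato inequality
\begin{align*}
\sgn(a-b)\big(\mathscr{L}^r_\lambda[A(u_\ep)](y)-\mathscr{L}^r_\lambda[A(u_\theta)](x)\big)
\ge c_\lambda\int_{|z|>r}\frac{|A(a)-A(b)|-|A(u_\ep(y+z))-A(u_\theta(x+z))|}{|z|^{d+2\lambda}}\,dz,
\end{align*}
whose right-hand side is the two-variable operator $\mathscr{L}^r_\lambda\big[|A(u_\ep)-A(u_\theta)|\big](y,x)$ in the sense of \eqref{imp_10}. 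Since $\varphi\ge0$ and $\rho_m\ge0$, multiplying and integrating (the overall minus sign reversing the inequality) and then using the change of variables $(x,y)\mapsto(x-z,y-z)$ in the shifted term transfers the tail operator onto the smooth compactly supported weight; as $\rho_m((x+z)-(y+z))=\rho_m(x-y)$ the factor $\rho_m$ passes through and leaves $\mathscr{L}^r_\lambda[\varphi(t,\cdot)](x)\rho_m(x-y)$, so that
\begin{align*}
\lim_{l\to0}\lim_{\delta\to0}\lim_{\gamma\to0}\lim_{n\to\infty}(I_7+J_7)
\le-\E\Big[\int_{Q_T\times\R^d}|A(u_\ep(t,y))-A(u_\theta(t,x))|\,\mathscr{L}^r_\lambda[\varphi(t,\cdot)](x)\rho_m(x-y)\,dy\,dx\,dt\Big].
\end{align*}

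It remains to pass $\ep,\theta\to0$. The integrand is a continuous function of $(u_\ep(t,y),u_\theta(t,x))$ tested against the fixed, bounded, integrable weight $\mathscr{L}^r_\lambda[\varphi(t,\cdot)](x)\rho_m(x-y)$ (bounded because $\varphi\in C_c^{1,2}$ and $r>0$), so the Young-measure representation of the viscous sequences used in Lemma~\ref{lem:initial+time-terms} applies: taking $\ep\to0$ then $\theta\to0$ turns $u_\ep$ and $u_\theta$ into the limit measures $v(t,y,\beta)$ and $u(t,x,\alpha)$ and the right-hand side converges to the asserted bound; since this right-hand side has a genuine limit, $\limsup_{\theta\to0}\limsup_{\ep\to0}$ preserves the inequality. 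The main obstacle is precisely the ordering in the second stage: one must send $\delta\to0$ \emph{before} $l\to0$ so that a true sign function is present when \eqref{imp_10} is invoked, since that inequality fails for regularized signs in the presence of the nonlinearity $A$; one must also check that the self-adjoint transfer of $\mathscr{L}^r_\lambda$ onto $\varphi$ is legitimate even though $\mathscr{L}^r_\lambda[\varphi]$ is sign-indefinite, which is harmless here because the final limit only requires convergence against a fixed weight rather than a sign.
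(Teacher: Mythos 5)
Your proposal is correct and follows essentially the same route as the paper: the same ordering of limits ($n,\gamma,\delta,l$ first, so that a genuine sign function is in place before the nonlinearity is touched), the same far-field Kato inequality exploiting the monotonicity of $A$, the same change of variables $(x,y,z)\mapsto(x+z,y+z,-z)$ to transfer $\mathscr{L}^r_\lambda$ onto $\varphi$ (with $\rho_m$ passing through unchanged), and the same Young-measure passage in $\ep$ then $\theta$. The only point you gloss over is the uniform integrability (equi-smallness at infinity) of the family $G(\cdot,u_\ep(\cdot))$ needed to apply the Young-measure limit on the unbounded domain $\R^d$, which the paper verifies explicitly by splitting the $z$-integral at a large radius $R$ and using the compact supports of $\varphi$ and $\rho_m$.
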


\begin{proof} We shall prove the lemma with the regular part of the non-local operator in several steps.\\
\noindent {\bf Step 1} (Passing to the limit as $ n \to \infty $): As $\varphi$ is compactly supported in space and $u^{\gamma}_\ep,A(u_\ep)^{\gamma} \in L^2(\Omega \times Q_T)$, a simple application of dominated convergence theorem reveals that
\begin{align*}
&-\E \bigg[\int_{\R \times Q^2_T} \mathscr{L}^r_\lambda[A(u_\ep)^\gamma(s,\cdot)](y) \psi(t,x,s,y)\, \eta^\prime( u^\gamma_\ep(s,y)-k) \rho_l(u_\theta(t,x)-k) \,dy \,ds\,dk\,dx\,dt\bigg]\\
& \underset{n \goto \infty} \longrightarrow -\E \bigg[\int_{Q_T\times \R \times \R^d} \mathscr{L}^r_\lambda[A(u_\ep)^\gamma(t,\cdot)](y) \varphi(t,x)\rho_m(x-y)\, \eta^\prime( u^\gamma_\ep(t,y)-k) \rho_l(u_\theta(t,x)-k) \,dy \,dk\,dx\,dt\bigg],
\end{align*}
and similarly, $A(u_\theta)\in L^2(\Omega \times Q_T)$ and 
\begin{align*}
&-\E\bigg[ \int_{\R \times Q^2_T} \mathscr{L}^r_\lambda[A(u_\theta)(t,\cdot)](x) \psi(t,x,s,y)\, \eta^\prime( u_\theta(t,x)-k) \rho_l(u^\gamma_\ep(s,y)-k) \,dy \,ds\,dk\,dx\,dt\bigg]\\
& \underset{n \goto \infty} \longrightarrow -\E \bigg[\int_{Q_T\times \R \times \R^d} \mathscr{L}^r_\lambda[A(u_\theta(t,\cdot)](x) \varphi(t,x)\rho_m(x-y)\, \eta^\prime( u_\theta(t,x)-k) \rho_l(u^\gamma_\ep(t,y)-k) \,dy \,dk\,dx\,dt\bigg].
\end{align*}

\noindent {\bf Step 2} (Passing to the limit as $\gamma \goto 0$): 
Let us consider the first term 
\begin{align*}
-\E \bigg[\int_{Q_T\times \R \times \R^d} \fr^r[A(u_\ep)^\gamma(t,\cdot)](y) \varphi(t,x)\rho_m(x-y)\, \eta^\prime( u^\gamma_\ep(t,y)-k) \rho_l(u_\theta(t,x)-k) \,dy \,dk\,dx\,dt\bigg].
\end{align*}
{\bf Claim:} The following convergences hold in $ L^2( \Omega \times Q_T \times \R^d \times \R)$
\begin{align*}
(a)\qquad & \fr^r[A(u_\ep)^\gamma(t,\cdot)](y) \varphi(t,x)\sqrt{\rho_m(x-y)} \sqrt{ \rho_l( u_\theta(t,x)-k)}\\
& \qquad \qquad \qquad \qquad \underset{\gamma \to 0} \longrightarrow \fr^r[A(u_\ep)(t,\cdot)](y) \varphi(t,x)\sqrt{\rho_m(x-y)} \sqrt{ \rho_l( u_\theta(t,x)-k)} \\
(b) \qquad & \eta'(u^\gamma_\ep(t,y)-k) \sqrt{\rho_m(x-y)} \sqrt{\rho_l(u_\theta(t,x)-k)}\\
& \qquad \qquad \qquad \qquad \underset{\gamma \to 0} \longrightarrow \eta' ( u_\ep(t,y)-k) \sqrt{\rho_m(x-y)} \sqrt{\rho_l(u_\theta(t,x)-k)} 
\end{align*}
\begin{proof}(of the claim)
To prove the first one, let us consider 
\begin{align*}
 &\E \bigg[\int_{ Q_T \times \R^d \times \R} | \fr^r[A(u_\ep)^\gamma(t,\cdot)](y)- \fr^r[ A(u_\ep)(t,\cdot)](y) |^2 \rho_m(x-y) \varphi^2(t,x) \rho_l(u_\theta(t,x)-k) \,dk\,dy\,dx\,dt\bigg]\\
 & =\E \bigg[\int_{ Q_T \times \R^d \times \R} \Bigg| \int_{|z| \ge r} \frac{A(u_\ep)^\gamma(t,y)-A(u_\ep)^\gamma(t,y+z)}{|z|^{d+2\lambda}}\,dz- \int_{|z| \ge r} \frac{A(u_\ep)(t,y)-A(u_\ep)(t,y+z)}{|z|^{d+2\lambda}}\Bigg|^2 \\
 & \hspace{5cm} \times \rho_m(x-y) \varphi^2(t,x) \rho_l(u_\theta(t,x)-k)\,dk\,dy\,dx\,dt\bigg]\\
 &= \E \bigg[\int_{Q_T \times \R^d\times \R} \Bigg| \Bigg( \int_{|z| \ge r}\frac{dz}{|z|^{d+2\lambda}}\Bigg) \Big[ A(u_\ep)^\gamma(t,y)- A(u_\ep)(t,y)\Big] - \int_{|z| \ge r} \hspace{-0.3cm} \frac{A(u_\ep)^\gamma(t,y+z)-A(u_\ep)(t,y+z)}{|z|^{d+2\lambda}}\,dz\Bigg|^2\\
 &\hspace{5cm} \times \rho_m(x-y) \varphi^2(t,x) \rho_l(u_\theta(t,x)-k)\,dk\,dy\,dx\,dt\bigg]\\
 & \le c(r) \Bigg( \E \bigg[\int_{ Q_T \times \R^d\times \R}  | A(u_\ep)^\gamma(t,y)- A(u_\ep)(t,y)|^2\rho_m(x-y) \varphi^2(t,x) \rho_l(u_\theta(t,x)-k)\,dk\,dy\,dx\,dt\bigg]\\
 & \quad +\E \bigg[\int_{Q_T \times \R^d\times \R \times \{|z| \ge r\}} \hspace{-0.3cm} \frac{ |A(u_\ep)^\gamma(t,y+z)-A(u_\ep)(t,y+z)|^2}{|z|^{d+2\lambda}} \rho_m(x-y) \varphi^2(t,x) \rho_l(u_\theta(t,x)-k)\,dz\,dk\,dy\,dx\,dt\bigg]\Bigg) \\
&\le c(r,\varphi)\Bigg( \E \bigg[\int_{ Q_T }  | A(u_\ep)^\gamma(t,y)- A(u_\ep)(t,y)|^2 \,dy\,dt\bigg]\\
&\hspace{5cm}+\E \bigg[\int_{Q \times  \{|z| \ge r\}} \frac{|A(u_\ep)^\gamma(t,y+z)-A(u_\ep)(t,y+z)|^2}{|z|^{d+2\lambda}} \,dz\,dy\,dt\bigg]\Bigg)\\
&\le c(r,\varphi)\Bigg( \E\bigg[ \int_{ Q_T}  | A(u_\ep)^\gamma(t,y)- A(u_\ep)(t,y)|^2 \,dy\,dt\bigg]\\
&\hspace{5cm}+ \int_{\{|z| \ge r\}} \frac{1}{|z|^{d+2\lambda}}\,dz\,\| A(u_\ep)^\gamma-A(u_\ep)\|^2_{L^2(\Omega \times Q_T)} \Bigg) \underset{\gamma \to 0} \longrightarrow 0.
\end{align*}
Let us now consider the second part
\begin{align*}
 &\E \bigg[\int_{Q_T \times \R^d\times \R}|\eta'(u^\gamma_\ep(t,y)-k)- 
\eta' ( u_\ep(t,y)-k)|^2 \rho_m(x-y) \rho_l(u_\theta(t,x)-k)\,dk\,dy\,dx\,dt\bigg]\\
&\qquad \qquad \le C(\eta') \E \bigg[ \int_{Q_T}|u^\gamma_\ep(t,y)-  u_\ep(t,y)|^2\,dy\,dt\bigg] \underset{ \gamma \to 0} \longrightarrow 0.
\end{align*}
\end{proof}
In view of the above claims, we get the desired result. Next, to pass to the limit in the second term we consider
\begin{align*}
&\bigg| \E \bigg[\int_{Q_T\times \R \times \R^d} \mathscr{L}^r_\lambda[A(u_\theta(t,\cdot)](x) \varphi(t,x)\rho_m(x-y)\, \eta^\prime( u_\theta(t,x)-k) \rho_l(u^\gamma_\ep(t,y)-k) \,dy \,dk\,dx\,dt\bigg]\\
&\qquad -\E\bigg[ \int_{Q_T \times \R \times \R^d} \mathscr{L}^r_\lambda[A(u_\theta(t,\cdot)](x) \varphi(t,x)\rho_m(x-y)\, \eta^\prime( u_\theta(t,x)-k) \rho_l(u_\ep(t,y)-k) \,dy \,dk\,dx\,dt\bigg] \bigg|\\
&=\bigg| \E \bigg[\int_{Q_T \times \R \times \R^d} \mathscr{L}^r_\lambda[A(u_\theta(t,\cdot)](x) \varphi(t,x)\rho_m(x-y)\, \eta^\prime( u_\theta(t,x)-u^\gamma_\ep(t,y)+k)\rho_l(k) \,dk\,dy \,dx\,dt\bigg]\\
&\qquad -\E \bigg[\int_{Q_T\times \R \times \R^2} \mathscr{L}^r_\lambda[A(u_\theta(t,\cdot)](x) \varphi(t,x)\rho_m(x-y)\, \eta^\prime( u_\theta(t,x)-u_\ep(s,y)+k)\rho_l(k)\,dk \,dy \,dx\,dt\bigg] \bigg|\\
&\le C(\eta') \E \bigg[\int_{Q_T\times \R \times \R^d} |u^\gamma_\ep(t,y)- u_\ep(t,y)||\mathscr{L}^r_\lambda[A(u_\theta(t,\cdot)](x)| \varphi(t,x)\rho_m(x-y) \rho_l(k) \,dk\,dy\,dx\,dt\bigg]\\
&\le C(\eta',\varphi)\bigg(\E \bigg[\int_{Q_T\times \R^d} |u^\gamma_\ep(t,y)- u_\ep(t,y)|^2\rho_m(x-y) \,dy\,dx\,dt\bigg]\bigg)^{1/2}\\
& \hspace{3cm}\times \Big( \E \bigg[\int_{Q_T\times \R^d} |\mathscr{L}^r_\lambda[A(u_\theta(t,\cdot)](x)|^2 \rho_m(x-y) \,dy\,dx\,dt\bigg]\bigg)^{1/2}\quad \underset{\gamma \to 0} \longrightarrow 0.
\end{align*}

\noindent {\bf Step 3} (Passing to the limit as $\delta \to 0$):
First we shall consider the term $$ \E \bigg[\int_{Q_T \times \R^d \times \R} \mathscr{L}^r_\lambda [A(u_\ep(t,\cdot))](y) \eta' (u_\ep (t,y)- k) \varphi(t,x) \rho_m(x-y) \rho_l(u_\theta(t,x)-k) \,dk\,dy\,dx\,dt\bigg].$$
Observe that $\mathscr{L}^r_\lambda [A(u_\ep(t,\cdot))](y) \varphi(t,x) \rho_m(x-y) \rho_l(u_\theta(t,x)-k)\in L^1( \Omega \times Q_T \times \R^d\times \R)$. Indeed, since $\varphi$ has a compact support in space, we see
\begin{align*}
& \E\bigg[ \int_{Q_T \times \R^d \times \R }\big|\mathscr{L}^r_\lambda [A(u_\ep(t,\cdot))](y) \varphi(t,x) \rho_m(x-y)\rho_l(u_\theta(t,x)-k)\big|\,dy\,dx\,dt\bigg]\\
&\le \E \bigg[\int_{Q_T \times \R^d \times \{|z| \ge r\}} \frac{|A(u_\ep(t,y))|}{|z|^{d+2 \lambda}}\varphi(t,x) \rho_m(x-y) \,dz\,dy\,dx\,dt\bigg]\\
& \hspace{3cm} + \E \bigg[\int_{Q_T \times \R^d \times \{|z| \ge r\}}  \frac{|A(u_\ep(t,y+z))|}{|z|^{d+2 \lambda}}\varphi(t,x) \rho_m(x-y) \,dy\,dx\,dt\bigg]\\
& \le c(r) \E \bigg[\int_{Q_T \times \R^d} | A(u_\ep(t,y))| \varphi(t,x) \rho_m(x-y)\,dz\,dy\,dx\,dt\bigg]\\
& \hspace{3cm}+ d(r) \E \bigg[\int_{Q_T\times \R^d} \| A(u_\ep(t,\cdot))\|_{L^2(\R^d)} \varphi(t,x) \rho_m(x-y) \,dy\,dx\,dt\bigg]\\
& \le c(r) \Bigg( \E \bigg[\int_{\D \times Q_T} |u_\ep(t,y)|^2 \varphi(t,x)\rho_m(x-y)\,dt\,dy\,dx\bigg]\Bigg)^{1/2}\Bigg( \E \bigg[\int_{\D \times Q_T} \varphi(t,x)\rho_m(x-y)\,dt\,dy\,dx\bigg]\Bigg)^{1/2}\\
&+d(r) \Bigg( \E \bigg[\int_{\D \times Q_T} \hspace{-0.3cm} \| u_\ep(t,\cdot)\|^2_{L^2(\R^d)}\varphi(t,x) \rho_m(x-y) \,dy \,dt \,dx\bigg] \Bigg)^{1/2}\Bigg( \E \bigg[\int_{\D \times Q_T} \hspace{-0.3cm}\varphi(t,x)\rho_m(x-y) \,dy\,dt\,dx\bigg]\Bigg)^{1/2} \,\, \\
&< +\infty.
\end{align*}
Thus, using the fact that $| \eta'| \le 1$ and using Lebesgue's dominated convergence theorem, we get 
\begin{align*}
&-\E \bigg[\int_{Q_T \times \R^d \times \R} \mathscr{L}^r_\lambda [A(u_\ep(t,\cdot))](y) \eta' (u_\ep (t,y)- k) \varphi(t,x) \rho_m(x-y) \rho_l(u_\theta(t,x)-k)\,dy\,dk\,dx\,dt\bigg]\\
& \qquad \underset{ \delta \to 0} \longrightarrow -\E \bigg[\int_{Q_T \times \R^d \times \R} \mathscr{L}^r_\lambda [A(u_\ep(t,\cdot))](y) \sgn(u_\ep (t,y)- k) \varphi(t,x) \rho_l(u_\theta(t,x)-k)\rho_m(x-y) \,dy\,dk\,dx\,dt\bigg].
\end{align*}
Similarly, we can conclude 
\begin{align*}
&-\E \bigg[\int_{Q_T \times \R^d \times \R} \mathscr{L}^r_\lambda[A(u_\theta(t,\cdot)](x) \varphi(t,x)\rho_m(x-y)\, \eta^\prime( u_\theta(t,x)- k)\rho_l(u_\ep(t,y)-k)  \,dy\,dk \,dx\,dt\bigg]\\
& \qquad \underset{\delta \to 0} \longrightarrow -\E\bigg[ \int_{Q_T \times \R^d \times \R} \mathscr{L}^r_\lambda[A(u_\theta(t,\cdot)](x) \varphi(t,x)\rho_m(x-y)\, \sgn( u_\theta(t,x)- k)\rho_l(u_\ep(t,y)-k)  \,dy\,dk \,dx\,dt\bigg].
\end{align*}

\noindent {\bf Step 4} (Passing to the limit as $l \to 0$): 
Observe that after a change of variable and using the convolution in $L^1( \R ; L^1( \Omega \times Q_T \times \R^d))$  we have 
\begin{align*}
& -\E \bigg[\int_{Q_T\times \R \times \R^d} \mathscr{L}^r_\lambda[A(u_\ep)(t,\cdot)](y) \varphi(t,x)\rho_m(x-y)\, \sgn( u_\ep(t,y)-k) \rho_l(u_\theta(t,x)-k)\,dy\,dk\,dx\,dt\bigg]\\
&= -\E \bigg[\int_{Q_T \times \R^d} \mathscr{L}^r_\lambda[A(u_\ep)(t,\cdot)](y) \varphi(t,x)\rho_m(x-y)\, \bigg(\int_\R \sgn( u_\ep(t,y)-u_\theta(t,x)-k) \rho_l(k)\,dk\bigg)\,dy\,dx\,dt\bigg].\\
\end{align*}
Let $f_l(x,y,t)= \int_\R \sgn( u_\ep(t,y)-u_\theta(t,x)-k) \rho_l(k)\,dk$, then it is easy to see that $f_l(x,y,t) \overset {l \to 0} \longrightarrow \sgn( u_\ep(t,y)-u_\theta(t,x))$. Moreover, note that $|f_l|\le 1$ and $\mathscr{L}^r_\lambda[A(u_\ep(t,\cdot)](y) \varphi(t,x) \rho_m(x-y) \in L^1(\Omega \times Q_T)$(similar to Step 3 above). This allows us to use dominated convergence Theorem to get the desired result.

Similarly we conclude that 
\begin{align*}
& -\E \bigg[\int_{Q_T\times \R \times \R^d} \mathscr{L}^r_\lambda[A(u_\theta)(t,\cdot)](x) \varphi(t,x) \rho_m(x-y)\, \sgn( u_\theta(t,x)-k) \rho_l(u_\ep(t,y)-k)\,dy\,dk\,dx\,dt\bigg]\\
&\underset{ l \to 0} \longrightarrow -\E \bigg[\int_{Q_T \times \R^d} \mathscr{L}^r_\lambda[A(u_\theta)(t,\cdot)](x) \varphi(t,x)\rho_m(x-y)\, \sgn( u_\theta(t,y)-u_\ep(t,y))\,dy\,dx\,dt\bigg].\\
\end{align*}

\noindent {\bf Step 5} (Passing to the limit as $\ep,\theta \to 0$): Following \cite[Sec 4]{Alibaud}, we observe that 
\begin{align*}
& -\E \bigg[\int_{Q_T \times \R^d} \Big[\mathscr{L}^r_\lambda[A(u_\ep(t,\cdot)](y)-\mathscr{L}^r_\lambda[A(u_\theta(t,\cdot)](x)\Big] \varphi(t,x)\rho_m(x-y)\, \sgn(u_\ep(t,y)- u_\theta(t,x))  \,dy \,dx\,dt\bigg]\\
&= - \E \bigg[\int_{Q_T \times \R^d \times \{|z| \ge r\}} \Bigg(\frac{A(u_\ep(t,y))-A(u_\ep(t,y+z))}{|z|^{d+2\lambda}}-\frac{A(u_\theta(t,x))-A(u_\theta(t,x+z))}{|z|^{d+2\lambda}}\Bigg)\\
& \hspace{7cm}\times \sgn(u_\ep(t,y)-u_\theta(t,x)) \varphi(t,x) \rho_m(x-y) \,dz\,dy\,dx\,dt\bigg]\\
&= - \E \bigg[\int_{Q_T \times \R^d \times \{ |z| \ge r\}} \Bigg(\frac{[A(u_\ep(t,y))-A( u_\theta(t,x))]-[ A(u_\ep(t,y+z))- A(u_\theta(t,x+z))]}{|z|^{d+2\lambda}}\Bigg)\\
&\hspace{7cm} \times \sgn(u_\ep(t,y)-u_\theta(t,x)) \varphi(t,x) \rho_m(x-y) \,dz\,dy\,dx\,dt\bigg]\\
& \le  - \E \bigg[\int_{Q_T \times \R^d \times \{ |z| \ge r\}} \Bigg(\frac{|A(u_\ep(t,y))-A( u_\theta(t,x))| - | A(u_\ep(t,y+z))- A(u_\theta(t,x+z))|}{|z|^{d+2\lambda}}\Bigg)\\
&\hspace{10cm} \times\varphi(t,x) \rho_m(x-y) \,dz\,dy\,dx\,dt\bigg]\\
&= -\E \bigg[\int_{Q_T \times \R^d} | A( u_\ep(t,y))- A(u_\theta(t,x))|\mathscr{L}^r_{\lambda} [ \varphi(t,\cdot)](x) \rho_m(x-y) \,dx\,dy\,dt\bigg]
\end{align*} 
where to derive the penultimate inequality, we have used the fact that $A(a)-A(b)$ and $\sgn(a-b)$ have the same sign as $A$ is non-decreasing. For the last equality, we have performed a change of variable of coordinates for the first integral $x \to x+z, \,y\to y+z,\, z \to -z$. As we mentioned earlier, we have changed the order in which we pass to the limit in various parameters due to the presence of the non linearity inside the fractional operator. Note that we passed to the limit in the parameter $\delta$ before $\ep$ and $\theta$, which is a different order than in the uniqueness proof of \cite{BhKoleyVa}.

At this point, we first fix $\theta$ and pass to the limit in $\ep$ in the sense of Young measures. For that purpose, let 
us define 
$$G(t,y,\omega; \mu):= \intrd | A(u_\theta(t,x))-A(\mu)|\, \fr^r[ \varphi(t, \cdot)](x) \rho_m(x-y)\,dx.$$
Since $\mathscr{L}^r_{\lambda}[\varphi(t,\cdot)] \in L^p(\Rd)$ for any $t$ and any $p \in [1,\infty]$, $G$ is a Carath\'eodory function on $Q_T \times \Omega \times \R$. Note that $G(\cdot, u_\ep(\cdot))$ is uniformly bounded in $L^2( \Omega \times Q_T)$. Indeed,
\begin{align*}
\E \Big[\int_{Q_T} & |G(t,y;u_\ep)|^2 \,dy\,dt\Big]= \E \bigg[ \intrd \int^T_{t=0} \Big[\intrd |A(u_\theta(t,x))- A(u_\ep(t,y))| \fr^r[ \varphi(t, \cdot)](x) \rho_m(x-y) \,dx\Big]^2 \,dy \,dt \bigg]\\
&\le c(r,A',\varphi) \E \bigg[\intrd \intrd \int^T_{t=0} | u_\theta(t,x)-u_\ep(t,y)|^2 \rho_m(x-y)\,dt \,dx \,dy \bigg]\\
& \le c(r,A',\varphi)\E \bigg[\int_\D \int_\D \int^T_{t=0} ( | u_\theta(t,x)|^2 +| u_\ep(t,y)|^2 ) \rho_m(x-y) \,dt \,dx \,dy \bigg] \le C.
\end{align*}
To ensure that the family $G(\cdot, u_\ep(\cdot))$ is uniformly integrable, we need to check the equi-smallness property at infinity. For that purpose, set $\tilde \eps>0$ and note that for a given $R>r$ 
\begin{align*}
G(t,y,\omega;u_\ep) =& \int_{|z|>r} \frac{1}{|z|^{d+2\lambda}} \intrd |A(u_\theta(t,x))-A(u_\ep(t,y))| [\varphi(t, x)-\varphi(t,x+z)] \rho_m(x-y)\,dx\,dz
\\
=& \int_{|z|>R} \frac{1}{|z|^{d+2\lambda}} \intrd |A(u_\theta(t,x))-A(u_\ep(t,y))| [\varphi(t, x)-\varphi(t,x+z)] \rho_m(x-y)\,dx \,dz
\\&+ \int_{R>|z|>r} \frac{1}{|z|^{d+2\lambda}} \intrd |A(u_\theta(t,x))-A(u_\ep(t,y))| [\varphi(t, x)-\varphi(t,x+z)] \rho_m(x-y)\,dx\,dz.
\end{align*}
Firstly, set $R$ such that 
\begin{align*}
&\E \bigg[\int_{Q_T} \int_{|z|>R} \frac{1}{|z|^{d+2\lambda}} \intrd |A(u_\theta(t,x))-A(u_\ep(t,y)) | |\varphi(t, x)-\varphi(t,x+z)| \rho_m(x-y)\,dx\,dz\,dy\,dt \bigg]\\
\leq & 
C\int_{|z|>R} \frac{1}{|z|^{d+2\lambda}} \E \bigg[\int_{Q_T}  \intrd \big[|u_\theta(t,x)|+|u_\ep(t,y)|\big] \big[|\varphi(t, x)|+|\varphi(t,x+z)|\big] \rho_m(x-y)\,dx\,dy\,dt\bigg]\,dz \\
\leq & C\Big(\|\varphi\|_{L^2(Q_T)},\|u_\theta\|_{L^2(\Omega\times Q_T)},\|u_\eps\|_{L^2(\Omega\times Q_T)}\Big)\frac{1}{R^{2\lambda}} < \tilde \eps.
\end{align*}
Then, considering $M$ such that $M > K+1/m + R$, where supp$\varphi(t,.) \subset \bar B(0,K)$ for any $t$,
\begin{align*}
&\E \bigg[\int_{|y|>M} \int_{R>|z|>r} \frac{1}{|z|^{d+2\lambda}} \intrd |A(u_\theta(t,x))-A(u_\ep(t,y))| [\varphi(t, x)-\varphi(t,x+z)] \rho_m(x-y)\,dx\,dz\,dy\,dt \bigg]\\
=&\int_{R>|z|>r} \frac{1}{|z|^{d+2\lambda}} \E \bigg[ \int_{|y|>M}  \int_{|x-y|<\delta} \hspace{-0.3cm}|A(u_\theta(t,x))-A(u_\ep(t,y))| [\varphi(t, x)-\varphi(t,x+z)] \rho_m(x-y)\,dx\,dy\,dt \bigg]\,dz \\
&\hspace{-0.3cm}=0,
\end{align*}
since then $|x|>K$ and $|x+z|>K$. 
\\
Hence $G(\cdot;u_\ep)$ is uniformly integrable, and taking advantage of Young measures theory, we conclude that
\begin{align*}
&\lim_{\ep \to 0} \,\E\bigg[ \intrd \int_{Q_T}|A( u_\theta(t,x))-A(u_\ep(t,y))| \fr^r[ \varphi(t, \cdot)](x) \rho_m(x-y) \,dy\,dx\,dt \bigg]\\
& \qquad \qquad = \E\bigg[ \intrd \int_{Q_T} \int^1_0|A( u_\theta(t,x))-A(v(t,y,\beta))| \fr^r[ \varphi(t, \cdot)](x) \rho_m(x-y) \,d\beta \,dy\,dx\,dt \bigg].
\end{align*}
A verbatim copy of the above arguments yields
\begin{align*}
&\lim_{\theta \to 0}  \,\E\bigg[ \intrd \int_{Q_T} \int^1_0 | A(u_\theta(t,x))-A(v(t,y,\beta))| \fr^r[ \varphi(t, \cdot)](x) \rho_m(x-y) \,d\beta \,dy\,dx\,dt \bigg]\\
& \qquad \qquad = \E\bigg[ \intrd \int_{Q_T} \int^1_0\int^1_0 | A(u(t,x,\alpha))-A(v(t,y,\beta))| \fr^r[ \varphi(t, \cdot)](x) \rho_m(x-y) \,d\alpha\,d\beta \,dy\,dx\,dt\bigg].
\end{align*}
This finishes the proof.
\end{proof}

\begin{lem}
\label{kato_lemma2}
It holds that
\begin{align*}
I_8+J_8 &\underset{n \to \infty} \longrightarrow  -\E \bigg[\int_{Q_T \times \R \times \R^d} \mathscr{L}_{\lambda,r}[A( u_\eps)^\gamma(t,\cdot)](y)  \varphi(t,x) \rho_m(x-y) \eta^\prime( u^\gamma_\eps(t,y)-k)\rho_l(u_\theta(t,x)-k) \,dy\,dk\,dx\,dt\bigg]\\
&\hspace{1cm} -\E \bigg[\int_{Q_T\times\R\times \R^d} A^\eta_k (u_\theta(t,x))  \mathscr{L}_{\lambda,r}[\varphi(t,\cdot) \rho_m(\cdot -y)](x) \rho_l(u^\gamma_\eps(t,y)-k) \,dy\,dk\,dx\,dt\bigg]\\
&\underset{\gamma \to 0} \longrightarrow -\E \bigg[\int_{Q_T\times \R} \big \langle\mathscr{L}_{\lambda,r}[A( u_\eps)(t,\cdot)](y),   \rho_m(x-y) \eta^\prime( u_\eps(t,y)-k)\big \rangle \varphi(t,x) \rho_l(u_\theta(t,x)-k) \,dk\,dx\,dt\bigg] \\
&\hspace{1cm} -\E \bigg[\int_{Q_T\times\R\times \R^d}A^\eta_k (u_\theta(t,x))  \mathscr{L}_{\lambda,r}[\varphi(t,\cdot) \rho_m(\cdot -y)](x) \rho_l(u_\eps(t,y)-k) \,dy\,dk\,dx\,dt\bigg]\\
& \hspace{-1.5cm}\limsup_{\delta \to 0}\lim_{\gamma \to 0} \lim_{n \to \infty}(I_8+J_8) \le  -\E \bigg[\int_{Q_T \times \R^d\times \R} \hspace{-0.9cm}| A(u_\ep(t,y))-A(k)| \mathscr{L}_{\lambda,r}[ \rho_m(x-\cdot)](y)\varphi(t,x)\rho_l(u_\theta(t,x)-k) \,dy\,dk\,dx\,dt\bigg]\\
&\hspace{1cm}-\E \bigg[\int_{Q_T\times \R^d\times \R} | A(u_\theta(t,x))-A(k)| \mathscr{L}_{\lambda,r}[ \varphi(t,\cdot) \rho_m(\cdot-y)](x) \rho_l(u_\ep(t,y)-k) \,dy\,dk\,dx\,dt\bigg]\\
&\underset{l \to 0} \longrightarrow -\E \bigg[\int_{Q_T \times \R^d} | A(u_\ep(t,y))-A(u_\theta(t,x))| \mathscr{L}_{\lambda,r}[ \rho_m(x-\cdot)](y)\varphi(t,x) \,dy\,dx\,dt\bigg]\\
& \hspace{1cm}-\E \bigg[\int_{Q_T\times \R^d} | A(u_\theta(t,x))-A(u_\ep(t,y))| \mathscr{L}_{\lambda,r}[ \varphi(t,\cdot) \rho_m(\cdot-y)](x)  \,dy\,dx\,dt\bigg]\\
& \underset{ \ep \to 0} \longrightarrow -\E \bigg[\int_{Q \times \R^d \times (0,1)} | A(v(t,y,\beta))-A(u_\theta(t,x))| \mathscr{L}_{\lambda,r}[ \rho_m(x-\cdot)](y)\varphi(t,x) \,d\beta\,dy\,dx\,dt\bigg]\\
& \hspace{1cm}-\E \bigg[\int_{Q_T\times \R^d\times (0,1)} | A(u_\theta(t,x))-A(v(t,y,\beta))| \mathscr{L}_{\lambda,r}[ \varphi(t,\cdot) \rho_m(\cdot-y)](x)  \,d\beta\,dy\,dx\,dt\bigg]\\
& \underset{ \theta \to 0} \longrightarrow -\E \bigg[\int_{Q \times \R^d \times (0,1)^2} | A(v(t,y,\beta))-A(u(t,x,\alpha))| \mathscr{L}_{\lambda,r}[ \rho_m(x-\cdot)](y)\varphi(t,x) \,d\beta\,d\alpha\,dy\,dx\,dt\bigg]\\
& \hspace{1cm}-\E \bigg[\int_{Q_T\times \R^d\times (0,1)^2} | A(u(t,x,\alpha))-A(v(t,y,\beta))| \mathscr{L}_{\lambda,r}[ \varphi(t,\cdot) \rho_m(\cdot-y)](x)  \,d\beta\,d\alpha\,dy\,dx\,dt\bigg]
\end{align*}
\begin{proof}
We shall prove the lemma on the singular part of the non-local operator  in several steps.

\noindent {\bf Step 1} (Passing to the limit as $n \to \infty$):
Consider 
\begin{align*}
&\E \bigg[ \int_\R \int_{Q^2_T} \bigg[ \mathscr{L}_{\lambda,r}[A( u_\eps)^\gamma(s,\cdot)](y)\eta^\prime( u^\gamma_\eps(s,y)-k)- \mathscr{L}_{\lambda,r}[A( u_\eps)^\gamma(t,\cdot)](y)\eta^\prime( u^\gamma_\eps(t,y)-k) \bigg] \\
&\hspace{2cm} \times \varphi(t,x) \rho_m(x-y) \rho_l(u_\theta(t,x)-k) \rho_n(t-s) \,ds\,dt\,dx\,dy\,dk\bigg]\\
&=\E \bigg[ \int_\R \int_{Q^2_T} \bigg( \mathscr{L}_{\lambda,r}[A( u_\eps)^\gamma(s,\cdot)](y)-\mathscr{L}_{\lambda,r}[A( u_\eps)^\gamma(t,\cdot)](y)\bigg)\eta^\prime( u^\gamma_\eps(s,y)-k)\\
&\hspace{4cm} \times \varphi(t,x) \rho_m(x-y) \rho_l(u_\theta(t,x)-k) \rho_n(t-s)\,ds\,dt\,dx\,dy\,dk\bigg]\\
&+ \E\bigg[ \int_\R \int_{Q^2_T} \mathscr{L}_{\lambda,r}[A( u_\eps)^\gamma(t,\cdot)](y)\bigg(\eta^\prime( u^\gamma_\eps(s,y)-k)-\eta'(u^\gamma_\eps(t,y)-k)\bigg)\rho_n(t-s) \\
&\hspace{4cm}\times \varphi(t,x) \rho_m(x-y) \rho_l(u_\theta(t,x)-k) \,ds\,dt\,dx\,dy\,dk\bigg]:= I+J.
\end{align*}
Let us first compute $I$, we consider,
\begin{align*}
I &= \E\bigg[ \int_\R \int_{Q^2_T}  \int_{\{|z| \le r\}} \frac{[A(u_\ep(s,\cdot) -A(u_\ep(t,\cdot)] \ast \rho_\gamma(y)-[A(u_\ep(s,\cdot) -A(u_\ep(t,\cdot)] \ast \rho_\gamma(y+z)]}{|z|^{d+2\lambda}}\,dz\\
& \hspace{3cm}\times  \eta'( u^\gamma_\eps(s,y)-k)\varphi(t,x) \rho_m(x-y) \rho_l(u_\theta(t,x)-k) \rho_n(t-s)\,ds\,dt\,dx\,dy\,dk\bigg]\\
& (\text{ adding the zero term} \int_{\{|z| \le r\}}\hspace*{-0.7cm}\frac{\nabla[A(u_\ep(s,\cdot)) -A(u_\ep(t,\cdot))] \ast \rho_\gamma(y)].z}{|z|^{d+2\lambda}}dz \text{ and then using Taylor's series}), \\
& \le c(r) \E \bigg[ \int_{Q^2_T} \| (A(u_\ep(s,\cdot)) -A(u_\ep(t,\cdot))) \ast D^2\rho_\gamma\|_{L^\infty(B(y,r))} \varphi(t,x) \rho_m(x-y) \rho_n(t-s) \,dy\,dx\,ds\,dt\bigg]\\
& \text{ (using Young's inequality for convolution) }\\
&\le c(r) \E\bigg[ \int_{Q^2_T} \| A(u_\ep(s,\cdot))-A(u_\ep(t,\cdot))\|_{L^2} \|D^2 \rho_\gamma\|_{L^2}\, \varphi(t,x) \rho_m(x-y) \rho_n(t-s) \,dy\,dx\,ds\,dt\bigg]\\
& \text{ (using Cauchy-Schwarz's inequality)} \\
&\le c(r,\rho_\gamma,\varphi) \E \bigg[ \int^T_0 \int^T_0  \|A(u_\ep(s,\cdot))-A(u_\ep(t,\cdot))\|^2_{L^2}\, \rho_n(t-s) \,ds\,dt\bigg] \overset{ n \to \infty} \longrightarrow 0.
\end{align*}
To compute $J$, first notice that 
$$|\eta^\prime( u^\gamma_\eps(s,y)-k)-\eta'(u^\gamma_\eps(t,y)-k)| \le \text{min}\{2, \|\eta''\|_\infty ( u^\gamma_\eps(s,y)-u^\gamma_\eps(t,y))\}.$$
and thus using Lebesgue's dominated convergence theorem we conclude that $J \to 0$.
\\[0.2cm]
Now we shall consider the second term:
\begin{align*}
J_8=-\E \bigg[\int_{\R \times Q^2_T} A^\eta_k(u_\theta(t,x))\mathscr{L}_{\lambda,r}[\psi(t,\cdot,s,y)](x) \rho_l(u^\gamma_\eps(s,y)-k) \,ds\,dy\,dk\,dx\,dt\bigg].
\end{align*}
Note that $$|A^\eta_a(c)-A^\eta_b(c)| \le \norm{A'}_{\infty}\,|a-b|\Big(1 + \norm{\eta''}_{\infty}\,\max{\lbrace |c-b|, |c-a| \rbrace}\Big) $$
and that all the integrals below are over compact sets thanks to $\supp(\varphi), \supp(\rho_m)$ and the set $\{|z|\le r\}$. Indeed, if $K$ a compact set containing supp$\varphi(t,.)$ for any $t$ and $\tilde{K}=K+\overline{B(0,\frac 1m)}+\overline{B(0,r)}$, then $K$ and $\tilde{K}$ are the sets of integration of $x$ and $y$ respectively. Thus, 
\begin{align*}
&\bigg|\E \bigg[\int_{Q^2_T\times\R} A^\eta_k (u_\theta(t,x))  \mathscr{L}_{\lambda,r}[\psi(t,\cdot,s,y)](x) \rho_l(u^\gamma_\eps(s,y)-k) \,dy\,ds\,dk\,dx\,dt\bigg]\\
& \quad -\E \bigg[\int_{Q_T\times\R\times \R^d} A^\eta_k (u_\theta(t,x)) \mathscr{L}_{\lambda,r}[\varphi(t,\cdot) \rho_m(\cdot -y)](x) \rho_l(u^\gamma_\eps(t,y)-k) \,dy\,dk\,dx\,dt\bigg]\bigg|\\
&= \bigg| \E \bigg[ \int_{Q^2_T\times\R} \bigg( A^\eta_{u^\gamma_\ep(s,y)-k}(u_\theta(t,x)) - A^\eta_{u^\gamma_\ep(t,y)-k}(u_\theta(t,x)) \bigg) \\
& \hspace{5cm}\times \mathscr{L}_{\lambda,r}[ \varphi(t,\cdot) \rho_m(\cdot -y )](x) \rho_l(k) \rho_n(t-s) \,dy\,ds\,dk\,dx\,dt \bigg]\bigg|\\
&\le C \E \bigg[ \int_{Q^2_T\times\R} |u^\gamma_\ep(s,y)- u^\gamma_\ep(t,y)|\big( | u_\theta(t,x)-u^\gamma_\ep(t,y)+k| +1\big)\\
& \hspace{5cm} \times \mathscr{L}_{\lambda,r}[ \varphi(t,\cdot) \rho_m(\cdot -y )](x) \rho_l(k) \rho_n(t-s) \,dy\,ds\,dk\,dx\,dt \bigg]\\
&\le c(\varphi, \rho_m) \E \bigg[ \int_{(0,T)^2 \times \R \times K \times \tilde{K}} \hspace{-1.5cm}|u^\gamma_\ep(s,y)- u^\gamma_\ep(t,y)|\, (|u_\theta(t,x)-u^\gamma_\ep(t,y)+k| + 1) \rho_l(k) \rho_n(t-s) \,dk\,dy\,dx\,ds\,dt \bigg]\\
\le& c(\varphi, \rho_m) \bigg(\E \bigg[ \int_{(0,T)^2 \times K \times \tilde{K}} | u^\gamma_\ep(s,y)- u^\gamma_\ep(t,y)|^2\rho_n(t-s)\,dy\,dx\,ds\,dt\bigg] \bigg)^{1/2}\\
& \hspace{0.5cm} \times \bigg( \E \bigg[ \int_{(0,T)^2 \times \R \times K \times \tilde{K}} \Big(|u_\theta(t,x) -u^\gamma_\ep(t,y)|^2 +(l+1)^2\Big) \rho_l(k)\rho_n(t-s) \,dk\,dy\,dx\,ds\,dt \bigg] \bigg)^{1/2} \overset{n \to \infty} \longrightarrow 0.
\end{align*}

\noindent {\bf Step 2} (Passing to the limit as $\gamma \to 0$):
Consider 
\begin{align*}
& \bigg| \E \bigg[\int_{Q_T\times \D \times \R} \mathscr{L}_{\lambda,r}[A( u_\eps)^\gamma(t,\cdot)](y)  \varphi(t,x) \rho_m(x-y) \eta^\prime( u^\gamma_\eps(t,y)-k)\rho_l(u_\theta(t,x)-k) \,dy\,dk\,dx\,dt\bigg]
\\& 
- \E \bigg[\int_{\D \times (0,T)\times \R} \Big\langle \mathscr{L}_{\lambda,r}[A( u_\eps)(t,\cdot)](y), \rho_m(x-y) \eta^\prime( u_\eps(t,y)-k)\Big\rangle \varphi(t,x) \rho_l(u_\theta(t,x)-k) \,dk\,dx\,dt\bigg]\bigg|
\\&
\le \bigg| \E \bigg[\int_{\D \times (0,T)\times \R} \hspace{-0.6cm} \Big\langle \mathscr{L}_{\lambda,r} [ A( u_\eps)^\gamma(t,\cdot)-A( u_\eps)(t,\cdot)](y), \rho_m(x-y) \eta^\prime( u^\gamma_\eps(t,y)-k)\Big\rangle \varphi(t,x) \rho_l(u_\theta(t,x)-k) \,dk\,dx\,dt\bigg]\bigg|
\\&
+ \bigg|\E \bigg[\int_{\D \times (0,T)\times \R} \hspace{-1cm} \Big\langle \mathscr{L}_{\lambda,r}[A( u_\eps)(t,\cdot)](y), \rho_m(x-y)( \eta^\prime( u^\gamma_\eps(t,y)-k)-\eta'(u_\ep(t,y)-k))\Big\rangle \varphi(t,x) \rho_l(u_\theta(t,x)-k) \,dk\,dx\,dt\bigg]\bigg|
\\
& \le \E \bigg[\int_{\D \times (0,T) \times \R} \hspace{-0.9cm}\| A(u_\ep)^\gamma(t,\cdot)-A(u_\ep(t,\cdot))\|_{H^\lambda(\D)} \| \rho_m(x-\cdot) \eta'(u^\gamma_\ep(t,\cdot)-k)\|_{H^\lambda(\D)}\varphi(t,x) \rho_l(u_\theta(t,x)-k)\,dk\,dx\,dt\bigg]
\\
&+\E \bigg[\int_{\D \times (0,T)\times \D} \hspace{-0.9cm} \| A(u_\ep)(t,\cdot)\|_{H^\lambda(\D)} \| \rho_m(x-\cdot)(\eta'(u^\gamma_\ep(t,\cdot)-k)- \eta'(u_\ep(t,y)-k))\|_{H^\lambda(\D)} \varphi(t,x) \rho_l(u_\theta(t,x)-k)\,dx\,dt\bigg]\\
 &=: I + J, \mbox{where we used the variational representation of $\mathscr{L}_{\lambda,r}$.}
\end{align*}
We shall show that both $I$ and $J$ go to zero as $\gamma $ goes to 0. To see this, consider 
\begin{align*}
&I \le C \E \bigg[\int_{\D \times (0,T) \times \R} \hspace{-0.9cm} \| A(u_\ep)^\gamma(t,\cdot)-A(u_\ep(t,\cdot))\|_{H^1(\D)} \| \rho_m(x-\cdot) \eta'(u^\gamma_\ep(t,\cdot)-k)\|_{H^1(\D)}\varphi(t,x) \rho_l(u_\theta(t,x)-k)\,dk\,dx\,dt\bigg]
\\
& \le C\bigg(\E \bigg[\int_{\D \times (0,T) \times \R} \| A(u_\ep)^\gamma(t,\cdot)-A(u_\ep(t,\cdot))\|^2_{H^1(\D)} \varphi(t,x) \rho_l(u_\theta(t,x)-k)\,dx\,dk\,dt\bigg]\bigg)^{1/2}\\
&\hspace{1cm} \times \bigg(\E \bigg[\int_{\D \times (0,T) \times \R}  \| \rho_m(x-\cdot) \eta'(u^\gamma_\ep(t,\cdot)-k)\|^2_{H^1(\D)}\varphi(t,x) \rho_l(u_\theta(t,x)-k)\,dk\,dx\,dt\bigg]\bigg)^{1/2}
\\
&\le c(\varphi) \bigg(\E \bigg[\int^T_0\| A(u_\ep)^\gamma(t,\cdot)-A(u_\ep(t,\cdot))\|^2_{H^1(\D)} \,dt\bigg]\bigg)^{1/2} \\
& \hspace{0.5cm}\times \bigg(\E \bigg[\int_{\D \times (0,T) \times \R} \Big(\|\rho_m(x-\cdot) \eta'(u^\gamma_\ep(t,\cdot)-k)\|^2_{L^2(\D)}+\| \nabla \rho_m(x-y) \eta'(u^\gamma_\ep -k) \\
&\hspace{3cm} + \rho_m(x-y) \eta''(u^\gamma_\ep -k) \nabla u^\gamma_\ep\|^2_{L^2(\D)} \Big)\times \varphi(t,x) \rho_l( u_\theta(t,x)-k) \,dk\,dx\,dt \bigg] \bigg)^{1/2} \overset{ \gamma \to 0} \longrightarrow 0.
\end{align*}
Let us consider J, for convenience let $\chi_\gamma(t,\cdot)=\eta'(u^\gamma_\ep(t,\cdot)-k)) - \eta'(u_\ep(t,\cdot)-k)$.
\begin{align*}
& J \le \left| \E \left[ \int_{\D \times (0,T) \times \R} \| A(u_\ep)(t,\cdot)\|_{H^1(\D)}\| \rho_m(x-\cdot)\chi_\gamma(t,\cdot)\|_{H^1(\D)} \varphi(t,x) \rho_l(u_\theta(t,x)-k)\,dk\,dx\,dt \right]\right|\\
\le&  \left( \E \left[ \int_{\D \times (0,T) \times \R} \| A(u_\ep)(t,\cdot)\|^2_{H^1(\D)} \varphi(t,x) \rho_l(u_\theta(t,x)-k) \,dk\,dx\,dt \right] \right)^{1/2}\\
&\hspace{2cm}\times \left(\E \left[ \int_{\D \times (0,T) \times \R}\| \rho_m(x- \cdot) \chi_\gamma(t,\cdot)\|_{H^1(\D)} \varphi(t,x) \rho_l(u_\theta(t,x)-k)\,dk\,dx\,dt \right]\right)^{1/2}\\
\le& C( \varphi, \| A'\|_\infty) \| u_\ep \|_{L^2( \Omega \times (0,T); H^1(\D))}\\
&\times \bigg( \E \bigg[ \int_{\D \times (0,T) \times \R} \Big( \| \rho_m(x- \cdot) \chi_\gamma(t,\cdot)\|^2_{L^2(\D)}+\| \nabla \rho_m(x- \cdot)\chi_\gamma(t,\cdot)\|^2_{L^2(\D)} \\
& \hspace{3cm}+ \| \rho_m(x- \cdot)\nabla\chi_\gamma(t,\cdot)\|^2_{L^2(\D)}\Big) \varphi(t,x) \rho_l(u_\theta(t,x)-k)\,dk\,dx\,dt\bigg] \bigg)^{1/2}\\
\le&  C \bigg( \E \left[ \int_{Q_T \times \D }  \Big(|\rho_m(x-y)|^2+| \nabla \rho_m(x-y)|^2\Big)|u^\gamma_\ep(t,y)-u_\ep(t,y)|^2 \varphi(t,x)\,dy\,dx\,dt  \right]\\
& + \E \left[ \int_{Q_T \times \D \times \R} | \rho_m(x-y)\nabla\chi_\gamma(t,y) |^2\varphi(t,x)\rho_l(u_\theta(t,x)-k)\,dy\,dx\,dt\,dk \right]\bigg)^{1/2}\\
\le& C(m,\varphi, \eta'') \bigg( \| u^\gamma_\ep-u_\ep\|^2_{L^2(\Omega \times Q_T)}
+ \E \bigg[ \int_{Q_T \times \D } | \rho_m(x-y)|^2| \nabla u^\gamma_\ep(t,y)- \nabla u_\ep(t,y))|^2 \varphi(t,x) \,dy\,dx\,dt \bigg]\\
&+ \E \left[ \int_{Q_T \times \D \times \R} \hspace{-1cm}| \rho_m(x-y)|^2(  \eta''(u^\gamma_\ep(t,y)-k) - \eta''(u_\ep(t,y)-k) )^2 | \nabla u_\ep(t,y)|^2 \varphi(t,x) \rho_l(u_\theta(t,x)-k) \,dy\,dx\,dt\,dk \right]\bigg)^{1/2}\\
& \overset{ \gamma \to 0} \longrightarrow 0,
\end{align*}
where for the last integral we use the fact that $\eta''(u^\gamma_\ep-k)-\eta''(u_\ep-k)$ is uniformly bounded in $\gamma$ as $ 0 \le \eta''\le C/\delta$, and thus an application of dominated convergence theorem gives the result. 

\noindent Concerning the second term,  Lebesgue's Theorem yields
\begin{align*}
&\lim_{\gamma \to 0}\E \bigg[\int_{Q_T\times\R\times \R^d} A^\eta_k (u_\theta(t,x)) \mathscr{L}_{\lambda,r}[\varphi(t,\cdot) \rho_m(\cdot -y)](x) \rho_l(u^\gamma_\eps(t,y)-k) \,dy\,dk\,dx\,dt\bigg]
\\=&
\E \bigg[\int_{Q_T\times\R\times \R^d} A^\eta_k (u_\theta(t,x)) \mathscr{L}_{\lambda,r}[\varphi(t,\cdot) \rho_m(\cdot -y)](x) \rho_l(u_\eps(t,y)-k) \,dy\,dk\,dx\,dt\bigg].
\end{align*}

\noindent {\bf Step 3} (Passing to the limit as $ \delta \to 0$):
Note that, from Step 2 , we have
\begin{align*}
&\lim_{\gamma \to 0} \lim_{n \to \infty}(I_8+J_8)
\\ =& -\E \bigg[\int_{\D \times \R} \bigg[ \int^T_0 \Big\langle\mathscr{L}_{\lambda,r}[A( u_\ep)(t,\cdot)](y), \rho_m(x-y) \eta^\prime(u_\ep(t,y)-k)\Big\rangle \,dt\bigg]  \rho_l(u_\theta(t,x)-k)\varphi(t,x) \,dk\,dx\bigg]\\
&-\E \bigg[\int_{Q_T\times\R\times \R^d}A^\eta_k (u_\theta(t,x))  \mathscr{L}_{\lambda,r}[\varphi(t,\cdot) \rho_m(\cdot -y)](x) \rho_l(u_\eps(t,y)-k) \,dy\,dk\,dx\,dt\bigg].
\end{align*}
Identical calculations  as the ones concerning $I_r$ in Appendix~\ref{appendix_entropy} yield
\begin{align*}
\lim_{\gamma \to 0} \lim_{n \to \infty}(I_8+J_8) &\le -\E \bigg[\int_{Q_T\times \R^d\times \R} A^\eta_k( u_\ep(t,y)) \mathscr{L}_{\lambda,r}[ \rho_m(x-\cdot)](y)\varphi(t,x) \rho_l(u_\theta(t,x)-k)\,dy\,dk\,dx\,dt\bigg]\\
&-\E \bigg[\int_{Q_T\times \R^d\times \R} A^\eta_k(u_\theta(t,x)) \mathscr{L}_{\lambda,r}[ \varphi(t,\cdot) \rho_m(\cdot-y)](x) \rho_l(u_\ep(t,y)-k) \,dy\,dk\,dx\,dt\bigg].
\end{align*}
Now we shall pass to the limit as $\delta$ goes to $0$. For that, consider,
\begin{align*}
&\mathcal{B}_1=\Bigg|\bigg[ \E \int_{Q_T \times \R^d\times \R}  A^\eta_k( u_\ep(t,y)) \mathscr{L}_{\lambda,r}[ \rho_m(x-\cdot)](y)\varphi(t,x) \rho_l(u_\theta(t,x)-k)\,dy\,dk\,dx\,dt \bigg]\\
&-
 \E \bigg[\int_{Q_T \times \R^d\times \R}  |A( u_\ep(t,y))-A(k)| \mathscr{L}_{\lambda,r}[ \rho_m(x-\cdot)](y)\varphi(t,x)\rho_l(u_\theta(t,x)-k) \,dy\,dk\,dx\,dt\bigg]\Bigg|\\
& \le \E \bigg[ \int_{Q_T \times \R^d\times \R}  \bigg|A^\eta_k( u_\ep(t,y))-|A( u_\ep(t,y))-A(k)|\bigg|\,| \mathscr{L}_{\lambda,r}[ \rho_m(x-\cdot)](y)|\varphi(t,x) \rho_l(u_\theta(t,x)-k)\,dy\,dk\,dx\,dt \bigg]\\
&\le \delta\|A'\|_\infty \E\bigg[ \int_{Q_T \times \R^d\times \R} | \mathscr{L}_{\lambda,r}[ \rho_m(x-\cdot)](y)|\varphi(t,x) \rho_l(u_\theta(t,x)-k)\,dk\,dy\,dx\,dt \bigg] \underset{\delta \to 0} \longrightarrow 0,
\end{align*}
where we have used the fact that $\Big|A^\eta_b(a)-|A(a)-A(b)|\Big| \le \delta \|A'\|_\infty$ and that both $x,y$  vary over a fixed compact support depending on $m,r$ and $\supp(\varphi)$. \\
Similarly for the second term we have
\begin{align*}
&-\E \bigg[\int_{Q_T\times \R^d\times \R} A^\eta_k(u_\theta(t,x)) \mathscr{L}_{\lambda,r}[ \varphi(t,\cdot) \rho_m(\cdot-y)](x) \rho_l(u_\ep(t,y)-k) \,dy\,dk\,dx\,dt\\
&\underset{\delta \to 0} \longrightarrow -\E \bigg[\int_{Q_T\times \R^d\times \R} | A(u_\theta(t,x))-A(k)| \mathscr{L}_{\lambda,r}[ \varphi(t,\cdot) \rho_m(\cdot-y)](x) \rho_l(u_\ep(t,y)-k) \,dy\,dk\,dx\,dt
\end{align*}

\noindent {\bf Step 4} (Passing to the limit as $ l \to 0$):
Consider the term 
\begin{align*}
\mathcal{B}_2=&\Bigg|   \bigg[\E \int_{Q_T \times \R^d\times \R}  |A( u_\ep(t,y))-A(k)| \mathscr{L}_{\lambda,r}[ \rho_m(x-\cdot)](y)\varphi(t,x)\rho_l(u_\theta(t,x)-k) \,dy\,dk\,dx\,dt \bigg]\\
&-  \E \bigg[\int_{Q_T \times \R^d}  |A( u_\ep(t,y))-A(u_\theta(t,x))| \mathscr{L}_{\lambda,r}[ \rho_m(x-\cdot)](y)\varphi(t,x) \,dy\,dx\,dt \bigg]\Bigg|\\
&\le \E \bigg[\int_{Q_T\times \R \times \R^d} \bigg| |A( u_\ep(t,y))-A(k)|-|A( u_\ep(t,y))-A(u_\theta(t,x))|\bigg|\, |\mathscr{L}_{\lambda,r}[ \rho_m(x-\cdot)](y)|\\[-0.3cm]
&\hspace{8cm}\times \varphi(t,x) \rho_l(u_\theta(t,x)-k) \,dy\,dk\,dx\,dt\bigg]\\
&\le \E \int_{Q_T \times \R \times \R^d} | A(k) -A( u_\theta(t,x))|\,  |\mathscr{L}_{\lambda,r}[ \rho_m(x-\cdot)](y)|\varphi(t,x) \rho_l(u_\theta(t,x)-k) \,dy\,dk\,dx\,dt\bigg]\\
&\le l \| A'\|_\infty \E \int_{Q_T \times \R^d}   |\mathscr{L}_{\lambda,r}[ \rho_m(x-\cdot)](y)|\varphi(t,x) \,dy\,dx\,dt\bigg]\underset{ l \to 0} \longrightarrow 0,
\end{align*}
where we have used the fact that $|u_\theta(t,x)-k| <l$ as supp$\rho \subset [-1,1]$. 
Similarly for the second term we get
\begin{align*}
&-\E \bigg[\int_{Q_T\times\R\times \R^d} |A( u_\theta(t,x))-A(k)| \mathscr{L}_{\lambda,r}[ \varphi(t,\cdot) \rho_m(\cdot-y)](x) \rho_l(u_\ep(t,y)-k) \,dy\,dk\,dx\,dt\bigg]\\
&\underset{l \to 0} \longrightarrow -\E \bigg[\int_{Q_T\times \R^d} |A( u_\theta(t,x))-A(k)| \mathscr{L}_{\lambda,r}[ \varphi(t,\cdot) \rho_m(\cdot-y)](x)  \,dy\,dx\,dt\bigg].
\end{align*}

\noindent {\bf Step 5} (Passing to the limit as $ \ep,\theta \to 0$):
Consider
\begin{align*}
\mathcal{B}_1:=\E \bigg[\int_{\R^d \times Q_T}  |A(u_\ep(t,y)) - A(u_\theta(t,x) )| \,\mathscr{L}_{\lambda,r} \big[\varphi(t,\cdot) \rho_m (\cdot-y)\big](x)  \,dx\,dt \,dy \bigg]
\end{align*}
As before we define,
$$G(t,y,\omega,\mu) = \intrd |A(\mu)-A(u_\theta(t,x))| \mathscr{L}_{\lambda,r} \big[ \varphi(t,\cdot)\rho_m (\cdot-y)\big](x)\,dx. $$
$G$ is a Carath\'eodory function and the above integral holds in the compact set $K+\bar{B}(0,r)$ thanks to the compact support of $\varphi$. $G$ is compactly supported in $K+\bar{B}(0,r+\frac1m)$ and $G(u_\ep)$ is bounded in $L^2(\Omega \times Q_T)$, indeed, 
\begin{align*}
&\E \bigg[\int_{Q_T} \bigg| \intrd |A(u_\ep(t,y)) - A(u_\theta(t,x))| \mathscr{L}_{\lambda,r} \big[\varphi(t,\cdot)\rho_m (\cdot-y)\big](x)\,dx\bigg|^2\,dy\,dt \bigg]\\
& \le C(A^\prime, \rho_m,\varphi) \E \bigg[\int_0^T \int_{K +\bar{B}(0, r +1/m)}\int_{K+\Bar{B}(0,r)} | u_\theta(t,x)-u_\ep(t,y)|^2\,dx \,dy \,dt \bigg] \le C.
\end{align*}
Thus, $G(\cdot, u_\ep(\cdot))$ is uniformly integrable and the Young measures theory gives 
\begin{align*}
\mathcal{B}_1 \underset{\ep \to 0} \longrightarrow  \E \bigg[\int_{\R^d} \int_{Q_T} \int_{\beta=0}^1  |A(v(t,y,\beta))-A(u_\theta(t,x))| \,\mathscr{L}_{\lambda,r} \big[\varphi(t,\cdot)\rho_m (\cdot-y)\big](x)  \, d\beta \,dx\,dt \,dy \bigg].
\end{align*}
Similarly, it can be shown that 
\begin{align*}
& \E \bigg[\int_{\R^d} \int_{Q_T} \int_{\beta=0}^1  |A(v(t,y,\beta)-A(u_\theta(t,x))| \,\mathscr{L}_{\lambda,r} \big[ \varphi(t,\cdot)\rho_m (\cdot-y)\big](x) \, d\beta \,dx\,dt \,dy \bigg]\\ &\underset{\theta\to 0}\longrightarrow \E \bigg[\int_{\R^d} \int_{Q_T}\int^1_{\alpha=0} \int_{\eta=0}^1  |A(v(t,y,\beta))-A(u(t,x,\alpha))| \,\mathscr{L}_{\lambda,r} \big[\varphi(t,\cdot)\rho_m (\cdot-y)\big](x) \,d\alpha  \, d\beta \,dx\,dt \,dy \bigg].
\end{align*}
For the other term, we consider
$$\mathcal{B}_2:= \E \bigg[\int_{\Rd} \int_{Q_T} |A( u_\theta(t,x))-A(u_\ep(t,y))| \mathscr{L}_{\lambda, r}[\rho_m(x-\cdot)](y) \varphi(t,x) \, \,dx\,dt\,\,dy \bigg].$$
Following the same analysis as for the term $\mathcal{A}_1$, we conclude
\begin{align*}
\mathcal{B}_2 &\underset{\ep \to 0} \longrightarrow  \E \bigg[\int_{\R^d} \int_{Q_T}  \int_{\beta=0}^1 |A(u_\theta(t,x))-A(v(t,y,\beta))| \mathscr{L}_{\lambda, r}[\rho_m(x-\cdot)](y) \varphi(t,x) \, d\beta \,dx\,dt \,dy \bigg]\\
&\underset{\theta \to 0}\longrightarrow \E \bigg[\int_{\R^d} \int_{Q_T} \int_{\alpha=0}^1 \int_{\beta=0}^1 |A(u(t,x,\alpha))-A(v(t,y,\beta) )| \mathscr{L}_{\lambda, r}[\rho_m(x-\cdot)](y) \varphi(t,x) \, \,d\alpha \, d\beta \,dx\,dt \,dy \bigg].
\end{align*}
This concludes the proof.
\end{proof}
\end{lem}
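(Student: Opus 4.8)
The plan is to pass to the limit in the two nonlocal contributions $I_8$ and $J_8$ along the prescribed hierarchy $n\to\infty$, then $\gamma,\delta,l,\eps,\theta\to 0$, isolating for each parameter the analytic tool that makes the passage rigorous. Here $I_8$ carries $\mathscr{L}_{\lambda,r}$ acting on the (rough) argument $A(u_\eps)^\gamma$, while $J_8$ carries it on the smooth test function $\psi$; the two are therefore treated by different mechanisms. Throughout, the support of $\varphi$ and $\rho_m$ together with the restriction $|z|\le r$ confine every integral to a fixed compact set in $x$ and $y$, which is what ultimately makes the dominated-convergence and Young-measure arguments go through.

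First I would treat the delicate $n\to\infty$ passage, which collapses the doubled time variable. For $I_8$ this reduces to showing that the increment $\mathscr{L}_{\lambda,r}[A(u_\eps)^\gamma(s,\cdot)](y)-\mathscr{L}_{\lambda,r}[A(u_\eps)^\gamma(t,\cdot)](y)$, weighted by $\rho_n(t-s)$, is negligible. The principal-value integral over $|z|\le r$ cannot be controlled by the mere $H^1$-regularity of $A(u_\eps)$: this is exactly why the space mollification $\gamma$ is present. Adding the vanishing first-order term $\int_{|z|\le r}\nabla[\,\cdots\,]^\gamma(y)\cdot z\,|z|^{-d-2\lambda}\,dz$ and Taylor-expanding, the singular kernel is absorbed by $D^2\rho_\gamma$; Young's inequality for convolutions and Cauchy--Schwarz then reduce the estimate to $\int_0^T\int_0^T\|A(u_\eps(s))-A(u_\eps(t))\|_{L^2}^2\,\rho_n(t-s)\,ds\,dt\to0$. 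For $J_8$ I would instead use the Lipschitz bound $|A^\eta_a(c)-A^\eta_b(c)|\le\|A'\|_\infty|a-b|(1+\|\eta''\|_\infty\max\{|c-a|,|c-b|\})$ and a Cauchy--Schwarz splitting over the compact supports, again reducing to the same time-continuity quantity.

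Next, the $\gamma\to0$ step removes the space mollification through the variational representation $\langle\mathscr{L}_{\lambda,r}[u],v\rangle$ and the $H^\lambda$/$H^1$ bounds on $A(u_\eps)^\gamma$ and on $\rho_m(x-\cdot)\eta'(u_\eps-k)$; the awkward contribution $\eta''\nabla u_\eps$ is controlled at fixed $\delta$ via $0\le\eta''\le C/\delta$ and dominated convergence, while the $J_8$-type term converges by plain Lebesgue. The $\delta\to0$ step is where the reordering pays off. For the $I_8$-term I would first invoke an entropy-dissipation inequality of exactly the kind computed for $I_r$ in Appendix~\ref{appendix_entropy}: using $(A(a)-A(b))\eta'(a-k)\ge A^\eta_k(a)-A^\eta_k(b)$ (valid because $\eta'$ is nondecreasing and $A'\ge0$ by Assumption~\ref{A3}), together with $\rho_m\ge0$ and the symmetry of the bilinear form, the bracket $\langle\mathscr{L}_{\lambda,r}[A(u_\eps)](y),\rho_m(x-y)\eta'\rangle$ is bounded below by $A^\eta_k(u_\eps)\,\mathscr{L}_{\lambda,r}[\rho_m(x-\cdot)](y)$, which is what produces the $\limsup\le$. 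The elementary bound $|A^\eta_k(a)-|A(a)-A(k)||\le\delta\|A'\|_\infty$ then replaces $A^\eta_k$ by the absolute value in both terms as $\delta\to0$.

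Finally, the $l\to0$ step collapses the $k$-mollifier: since $|u_\theta(t,x)-k|<l$ on $\supp\rho_l$ (resp. with $u_\eps$), the bound $|A(k)-A(u_\theta)|\le l\|A'\|_\infty$ lets me freeze $k$ at $u_\theta$ (resp. $u_\eps$). The concluding $\eps\to0$ and $\theta\to0$ passages are Young-measure limits: I would introduce the Carath\'eodory function $G(t,y,\omega;\mu)=\intrd|A(\mu)-A(u_\theta(t,x))|\,\mathscr{L}_{\lambda,r}[\varphi(t,\cdot)\rho_m(\cdot-y)](x)\,dx$ and its counterpart with the roles of $x,y$ exchanged, note that each has compact support and is bounded in $L^2(\Omega\times Q_T)$---hence uniformly integrable---and apply the Young-measure convergence underlying the whole doubling argument to introduce first $v(t,y,\beta)$ and then $u(t,x,\alpha)$. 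I expect the main obstacle to be the $n\to\infty$ passage for the singular part: the principal value forces both the $\gamma$-regularization and the $D^2\rho_\gamma$/Taylor estimate, and this same regularization must then be undone in the $\gamma\to0$ step without losing control of the $H^\lambda$ norms---this interplay, rather than any single estimate, is the crux. A secondary subtlety is that the $\delta$-limit must genuinely be taken \emph{before} the $l$-limit, since the needed inequality is available for $A^\eta_k$ but not for a smoothed sign composed with the nonlinearity $A$.
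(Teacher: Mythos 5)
Your proposal follows the paper's proof essentially step for step: the zero-term/Taylor expansion with $D^2\rho_\gamma$ and Young's convolution inequality for the $n\to\infty$ passage, the variational representation of $\mathscr{L}_{\lambda,r}$ with $H^1$ bounds for $\gamma\to0$, the appendix-style inequality $(A(a)-A(b))\eta'(a-k)\ge A^\eta_k(a)-A^\eta_k(b)$ producing the $\limsup\le$, the bounds $\delta\|A'\|_\infty$ and $l\|A'\|_\infty$ for the $\delta$- and $l$-limits, and the Carath\'eodory/Young-measure argument for $\eps,\theta\to0$. This is correct and matches the paper's argument, including the crucial observation that the $\delta$-limit must precede the $l$-limit.
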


\begin{lem}
\label{kato_lemma3}
It holds that 
\begin{align*}
&\limsup_{\theta \goto 0}\limsup_{\eps \goto 0}\limsup_{l \goto 0}\limsup_{\delta \goto 0}\lim_{\gamma \to 0}\lim_{n \goto \infty}(I_7+J_7+I_8+J_8) \le \\
 &-\E \bigg[\int_{Q_T \times \R^d\times (0,1)^2} | A(u(t,x,\alpha)) - A( v(t,y,\beta)) |\mathscr{L}^r_\lambda[ \varphi(t,\cdot)](x) \rho_m(x-y) \,d\alpha\,d\beta\,dx\,dy\,dt \bigg]\\
&-\E \bigg[ \int_{Q_T \times \R^d \times (0,1)^2} | A(v(t,y,\beta))-A(u(t,x,\alpha))| \mathscr{L}_{\lambda,r}[ \rho_m(x-\cdot)](y)\varphi(t,x) \,d\beta\,d\alpha\,dy\,dx\,dt \bigg]\\
& -\E \bigg[ \int_{Q_T\times \R^d\times (0,1)^2} | A(u(t,x,\alpha))-A(v(t,y,\beta))| \mathscr{L}_{\lambda,r}[ \varphi(t,\cdot) \rho_m(\cdot-y)](x)  \,d\beta\,d\alpha\,dy\,dx\,dt \bigg]\\
& \underset{ r \to 0} \longrightarrow -\E \bigg[ \int_{Q_T \times \R^d\times (0,1)^2} | A(u(t,x,\alpha)) - A( v(t,y,\beta)) |\mathscr{L}_\lambda[ \varphi(t,\cdot)](x) \rho_m(x-y) \,d\alpha\,d\beta\,dx\,dy\,dt \bigg]\\
& \underset{m \to \infty} \longrightarrow -\E \bigg[ \int_{Q_T \times (0,1)^2} | A(u(t,x,\alpha)) - A( v(t,x,\beta)) |\mathscr{L}_\lambda[ \varphi(t,\cdot)](x)\,d\alpha\,d\beta\,dx\,dt \bigg].
\end{align*} 
\begin{proof}
First, note that we already have from Lemma~\ref{kato_lemma1} and Lemma~\ref{kato_lemma2}
\begin{align*}
\limsup_{\theta \goto 0} &\limsup_{\eps \goto 0}\limsup_{l \goto 0}\limsup_{\delta \goto 0}\lim_{\gamma \to 0}\lim_{n \goto \infty}(I_7+J_7+I_8+J_8) \le \\
 &-\E \bigg[\int_{Q_T \times \R^d\times (0,1)^2} | A(u(t,x,\alpha)) - A( v(t,y,\beta)) |\mathscr{L}^r_\lambda[ \varphi(t,\cdot)](x) \rho_m(x-y) \,d\alpha\,d\beta\,dx\,dy\,dt\bigg]\\
&-\E \bigg[\int_{Q_T \times \R^d \times (0,1)^2} | A(v(t,y,\beta))-A(u(t,x,\alpha))| \mathscr{L}_{\lambda,r}[ \rho_m(x-\cdot)](y)\varphi(t,x) \,d\beta\,d\alpha\,dy\,dx\,dt \bigg]\\
& -\E \bigg[\int_{Q_T\times \R^d\times (0,1)^2} | A(u(t,x,\alpha))-A(v(t,y,\beta))| \mathscr{L}_{\lambda,r}[ \varphi(t,\cdot) \rho_m(\cdot-y)](x)  \,d\beta\,d\alpha\,dy\,dx\,dt \bigg].
\end{align*}
\noindent {\bf Step 1} (Passing to the limit as $ r \to 0$):
First note that (cf. \cite{CifaniJakobsen})
\[
|\mathscr{L}_{\lambda,r}[\varphi](x)|\le 
\begin{cases}
c_{\lambda}\|D\varphi\|_{L^\infty} \int_{|z|\le r} \frac{|z|}{|z|^{d+2\lambda}}\,dz, & \lambda \in (0,1/2) \\[2mm]
\frac{c_{\lambda}}{2}\|D^2\varphi\|_{L^\infty}\int_{|z|\le r} \frac{|z|^2}{|z|^{d+2\lambda}}\,dz, & \lambda \in [1/2,1)
\end{cases}
\]
Thus we see that in both cases $|\mathscr{L}_{\lambda,r}[\varphi](x)| \le c r^s$ for some $s>0$ and  $\lim_{r\to 0}|\mathscr{L}_{\lambda,r}[\varphi](x)|=0$.
On the other hand, since supp$\varphi(t,\cdot) \subset K$ for any $t$, assuming $r+1/m<1$ one gets 
\begin{align*}
\bigg|\mathscr{L}_{\lambda,r} \big[ \varphi(t, \cdot)\rho_m (\cdot -y)\big](x)\bigg|=&c_\lambda \bigg|\int_{|z|\le r} \frac{\varphi(t,x)\rho_m(x-y)-\varphi(t,x+z)\rho_m(x+z-y)}{|z|^{d+2\lambda}}\,dz \bigg| \\[1mm]
\leq& C(\varphi,\rho_m) r^s 1_{K + \bar B(0,1)}(x)1_{K + \bar B(0,1)}(y),
\\[2mm]
\bigg| \mathscr{L}_{\lambda,r} \big[ \rho_m (x -\cdot)\big](y)\,\varphi(t,x) \bigg|
=& \bigg| \varphi(t,x) \int_{|z|\le r} \frac{\rho_m(x-y)-\rho_m(x-y-z)}{|z|^{d+2\lambda}}\,dz\bigg| \\[1mm]
\leq& C(\varphi,\rho_m) r^s 1_{K}(x)1_{K + \bar B(0,1)}(y).
\end{align*}
Therefore
\begin{align*}
& \E \bigg[\int_{\R^d} \int_{Q_T} \int_{0}^1\int_{0}^1  |A(u(t,x, \alpha)) -A(v(t,y, \beta)) | \,\mathscr{L}_{\lambda,r} \big[ \varphi(t, \cdot)\rho_m (\cdot -y)\big](x) \,d\alpha \, d\beta \,dx\,dt \,dy \bigg] \notag\\ 
&\quad +\E \bigg[\int_{\R^d} \int_{Q_T} \int_{0}^1 \int_{0}^1 |A(v(t,y,\beta)) - A(u(t,x,\alpha))| \,\mathscr{L}_{\lambda,r} \big[ \rho_m (x -\cdot)\big](y)\,\varphi(t,x) \,d\alpha \, d\beta \,dx\,dt \,dy \bigg]\underset{ r \to 0} \longrightarrow 0.
\end{align*}
On the other hand, using $\fr[\varphi] := \mathscr{L}_{\lambda, r}[\varphi] + \mathscr{L}_{\lambda}^{r}[\varphi]$, we conclude using similar argument
\begin{align*}
&\E \bigg[\int_{\R^d} \int_{Q_T} \int_{0}^1\int^1_{0}\fr^r[\varphi(t,\cdot)](x) \rho_m(x-y) |A(u(t,x,\alpha))-A(v(t,y,\beta))| \,d\alpha\, d\beta \,dy\,dx\,dt \bigg]\\
&=\E \bigg[\int_{\R^d} \int_{Q_T} \int_{0}^1\int^1_{0}(\fr[\varphi(t,\cdot)](x)- \mathscr{L}_{\lambda, r}[\varphi(t,\cdot)](x)) \rho_m(x-y)|A(u(t,x,\alpha))- A(v(t,y,\beta))| \,d\alpha\, d\beta \,dy\,dx\,dt \bigg]\\
&\qquad \underset{ r\to 0} \longrightarrow \E \bigg[\int_{\R^d} \int_{Q_T} \int_{0}^1\int_{0}^1 
|A(u(t,x, \alpha)) - A(v(t,y, \beta))| \, \fr[\varphi(t,\cdot)](x)\, \rho_m(x-y)  \,d\alpha \,d\beta \,dx\,dt \,dy \bigg].
\end{align*}
\noindent {\bf Step 2} (Passing to the limit as $ m \to \infty $):
Let us consider  
\begin{align*}
\mathcal{A}_2:= &\Bigg| \E \bigg[\int_{\R^d} \int_{Q_T} \int_{0}^1\int_{0}^1 
|A(u(t,x, \alpha)) - A(v(t,y, \beta))| \, \fr[\varphi(t,\cdot)](x)\, \rho_m (x-y)  \,d\alpha \,d\beta \,dx\,dt \,dy \bigg]\\
& - \E \bigg[\int_{Q_T} \int_{0}^1\int_{0}^1 
|A(u(t,x, \alpha)) -A( v(t,x, \beta))| \, \fr[\varphi(t, \cdot)](x)\,d\alpha \,d\beta \,dx\,dt  \bigg]\Bigg|\\
& \le \E \bigg[\int_{\R^d} \int_{Q_T} \int_{0}^1\int_{0}^1\Big| |A(u(t,x,\alpha))- A(v(t,y,\beta))|-|A(u(t,x,\alpha))-A(v(t,x,\beta))|\Big|\\
&\hspace{7cm}\times |\fr[\varphi(t, \cdot)](x)| \rho_m(x-y) \,d\alpha \, d\beta \,dx \,dt \,dy \bigg]\\
& \le \E\bigg[\int_{\R^d} \int_{Q_T}\int_{0}^1|A( v(t,y,\beta))-A(v(t,x,\beta))| |\fr[\varphi(t, \cdot)](x)| \rho_m(x-y) \,d\beta \,dx\,dy\,dt\bigg]\\
&\le  \E\bigg[\int_{\R^d} \int_{Q_T} |\mathscr{L}[ \varphi(t, \cdot)](x)|^2 \rho_m(x-y)\,dx\,dy\,dt\bigg]^{1/2}\\
& \hspace{2cm} \times \,\E\bigg[\int_{\R^d} \int_{Q_T} \int_{0}^1| v(t,y,\beta)-v(t,x,\beta)|^2 \rho_m(x-y) \,d\beta \,dx\,dy\,dt\bigg]^{1/2}\underset{m \to \infty} \longrightarrow 0.
\end{align*}
This finishes the proof.
\end{proof}
\end{lem}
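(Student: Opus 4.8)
The plan is to treat the first displayed inequality as essentially free: it is obtained merely by adding the $\limsup$-estimate for $I_7+J_7$ furnished by Lemma~\ref{kato_lemma1} to the limit identity for $I_8+J_8$ furnished by Lemma~\ref{kato_lemma2}. All the real work therefore consists in eliminating the two remaining parameters $r$ and $m$, and the natural order is to send $r\to 0$ first and $m\to\infty$ afterwards, exactly as in the statement.

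First I would pass to the limit as $r\to 0$ by separating the contributions of the regular part $\mathscr{L}^r_\lambda$ from those of the singular part $\mathscr{L}_{\lambda,r}$. For the two singular terms (the second and third lines of the bound) the key input is the pointwise estimate $|\mathscr{L}_{\lambda,r}[\varphi](x)|\le c\,r^s$ for some $s>0$, recalled from \cite{CifaniJakobsen}; it comes from a first-order Taylor cancellation when $\lambda\in(0,1/2)$ and a second-order one when $\lambda\in[1/2,1)$. Since $A$ is Lipschitz by Assumption~\ref{A3}, each factor $|A(u)-A(v)|$ is dominated by $\|A'\|_\infty|u-v|$, and all the integrals live on a fixed compact set determined by $\supp\varphi$, $\supp\rho_m$ and the condition $r+1/m<1$; hence these terms are bounded by $C\,r^s$ and vanish. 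For the regular term I would write $\mathscr{L}^r_\lambda=\mathscr{L}_\lambda-\mathscr{L}_{\lambda,r}$ and use the same $r^s$-bound to discard the $\mathscr{L}_{\lambda,r}$ correction, leaving $\fr[\varphi(t,\cdot)](x)$ in the limit.

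Next I would pass to the limit as $m\to\infty$ to remove the coupling mollifier $\rho_m(x-y)$, whose only effect is to concentrate the measure on the diagonal $x=y$. I would quantify the defect by the triangle inequality, bounding $\big||A(u)-A(v(t,y,\beta))|-|A(u)-A(v(t,x,\beta))|\big|\le\|A'\|_\infty|v(t,y,\beta)-v(t,x,\beta)|$, and then split by Cauchy--Schwarz. This reduces the problem to controlling $\E\big[\int_{Q_T\times\R^d}\int_0^1|v(t,y,\beta)-v(t,x,\beta)|^2\rho_m(x-y)\,d\beta\,dx\,dy\,dt\big]$, which is exactly the $L^2$ self-difference of a mollified function and therefore tends to $0$, against the uniformly bounded factor $\int_{Q_T\times\R^d}|\fr[\varphi(t,\cdot)](x)|^2\rho_m(x-y)\,dx\,dy\,dt$ (bounded since $\fr[\varphi]\in L^2(Q_T)$). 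This collapses the double spatial integral to the single-integral expression in the conclusion.

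The hard part will be the $r\to 0$ step for the singular operator $\mathscr{L}_{\lambda,r}$: the whole argument rests on having the sharp decay rate $r^s$, and this is precisely where the dichotomy between $\lambda<1/2$ and $\lambda\ge1/2$ must be handled with care, since the higher range requires second-order cancellation and hence control of $D^2\varphi$ and $D^2\rho_m$. Once that pointwise bound is secured, the Lipschitz continuity of $A$ and the fixed compact supports make every integrand dominated, and both limits follow by dominated convergence.
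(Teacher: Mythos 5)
Your proposal is correct and follows essentially the same route as the paper: the first inequality is obtained by summing Lemmas~\ref{kato_lemma1} and~\ref{kato_lemma2}, the $r\to0$ limit rests on the $|\mathscr{L}_{\lambda,r}[\varphi]|\le cr^s$ bound from \cite{CifaniJakobsen} together with the splitting $\mathscr{L}^r_\lambda=\mathscr{L}_\lambda-\mathscr{L}_{\lambda,r}$ and the fixed compact supports, and the $m\to\infty$ limit uses the triangle inequality followed by Cauchy--Schwarz against the vanishing $L^2$ self-difference of the mollification of $v$. No gaps.
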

Using lemma \ref{kato_lemma1}, lemma \ref{kato_lemma2} and lemma \ref{kato_lemma3} we have the expected Kato inequality 
\begin{align}
0 \leq& \int_\D |u_0-v_0|\varphi(0)\,dx + \E\bigg[\int_{Q_T} \int^1_{0}\int^1_{0}  |u(t,x,\alpha) -v(t,x,\beta)| \partial_t \varphi(t,x)\,d\alpha\,d\beta\,dx\,dt\bigg]\label{kato}\\
&\hspace{3.5cm} -\E\bigg[\int_{Q_T} \int^1_{0}\int^1_{0}  F(u(t,x,\alpha),v(t,x,\beta))\nabla \varphi(t,x)\,dx\,d\alpha d\beta\,dt 
\bigg]\notag \\
&\hspace{4cm}-\E\bigg[\int_{Q_T} \int^1_{0} \int^1_{0} |A(u(t,x,\alpha))-A(v(t,x,\beta))|  \mathscr{L}_\lambda[\varphi(t,\cdot)](x) \,dx\,d\alpha d\beta\,dt\bigg],\notag 
\end{align}
\textit{a priori} for any non-negative $\varphi \in \mathcal{D}([0,T[\times\D)$, but for any non-negative $\varphi \in L^2(0,T,H^2(\D))\cap H^1(Q_T)$ by a density argument.

\begin{rem}\label{remnonvisc}
An important remark here is to mention that the proofs of this section are using the $L^2$-regularity of $u_\theta$. Thus, the same results hold also if one assumes that $u_\theta$ is just an entropy solution and not a viscous one: \textit{i.e.} if $\theta=0$.
\end{rem}

\subsection{Well-posedness}
\label{Well-posedness}

\subsubsection{Uniqueness of (Measure-valued (mild)) Solution}
\label{Uniqueness of (Measure-Valued) Entropy Solution}

We propose to follow closely the idea developed by Endal \& Jakobsen in \cite{EndalJakobsen}. 
Let us note first that \eqref{kato} yields, for a suitable regular test-function $\varphi$, 
\begin{align*}
0 \leq& \int_\D |u_0-v_0|\varphi(0)\,dx 
\\ &+ \E\bigg(\int_{Q_T\times(0,1)^2}\hspace{-0,7cm}  |u(t,x,\alpha) -v(t,x,\beta)| \bigg(\partial_t \varphi(t,x) + \|f^\prime\|_\infty |\nabla \varphi|(t,x)+\|A^\prime\|_\infty  \big[-\mathscr{L}_\lambda[\varphi(t,\cdot)]\big]^+(x)\bigg) \,dx\,d\alpha d\beta\,dt.
\end{align*}

We recall the following information about the way to choose $\varphi$:
\\[0.1cm]
Let $\Phi$ be the unique viscosity solution of $\partial_t \Phi - (-\mathscr{L}_\lambda \Phi)^+=0$ with initial condition $\Phi_0 \in \mathcal{D}^+(\R^d)$ satisfying $0 \leq \Phi \in C([0,T],L^1(\D))$ (see \cite[Lemma 2.6]{EndalJakobsen}). For  a space-time  mollifier $\rho_\delta$, define $\Phi_\delta:= ( \Phi \star \rho_\delta)(x,t)$, for $0<\tau<T$, and $K_\delta:=\Phi_\delta(x,\|A'\|_\infty(\tau-t))$. One has that $\partial_t K_\delta + \|A^\prime\|_\infty \big[-\mathscr{L}_\lambda(K_\delta)\big]^+ \leq 0$ in $ Q_T$, with $0 \le K_\delta \in C([0,T],L^1(\D))\cap C^\infty(Q_T) \cap L^\infty(Q_T)$ ( \cite[ Corollary 5.8]{EndalJakobsen}).
\\
 Set $\gamma_{\tilde\delta}(t,x)=(1_{(-\infty,R)}\star\rho_\ep)(\sqrt{\tilde\delta+|x-x_0|^2}+\|f^\prime\|_{\infty}t)$ (see \cite[Lemma 5.9]{EndalJakobsen}) where $x_0 \in \D$ is given, $R$ is big and $\rho_\ep$ is a mollifier sequence in $\mathcal{D}^+(Q_T)$ such that $\partial_t \gamma_{\tilde\delta} + \|f^\prime\|_{\infty}|\nabla \gamma_{\tilde\delta}| \leq 0$ and $\gamma_{\tilde \delta} \in C^\infty_c(Q_T)$.
\\
 Then, $\Gamma = K_\delta \star_x \gamma_{\tilde\delta}(t,x)$ satisfies $\partial_t \Gamma + \|f^\prime\|_\infty |\nabla \Gamma|+\|A^\prime\|_\infty  \big[-\mathscr{L}_\lambda(\Gamma)\big]^+ \leq 0$ (see \cite[Lemma 5.2]{EndalJakobsen}).
\medskip

Assuming $u_0=v_0$ and setting $\varphi(t,x)=\theta(t)\Gamma(t,x)$ where $\theta(t) = 1-\frac{t}{T}$ for $ t \in [0,T]$ yield 
\begin{align*}
\E \bigg[\int_{Q_T\times(0,1)^2}\hspace{-0,7cm}  |u(t,x,\alpha) -v(t,x,\beta)| (-1/T) \Gamma(t,x) \,dx\,d\alpha d\beta\,dt \bigg] \ge  0,
\end{align*}
and, sending $R$ to $+\infty$, Fatou's lemma gives,
\begin{align*}
\E \bigg[\int_{Q_T\times(0,1)^2}\hspace{-0,7cm}  |u(t,x,\alpha) -v(t,x,\beta)|  \,dx\,d\alpha d\beta\,  \|K_\delta(t)\|_{L^1(\D)}\,dt \bigg] \leq 0.
\end{align*}
Note that $\|K_\delta(\tau)\|_{L^1(\D)}=\|\Phi_\delta(0)\|_{L^1(\D)}$ and 
\begin{align*}
\|\Phi_\delta(0)\|_{L^1(\D)} =& \int_\D \Phi_\delta(x,0)dx=\int_{\R^{2d+1}}\Phi(y,s)\rho_\delta(y-x,s)dydxds=\int_{\R}\|\Phi(s)\|_{L^1(\D)}\int_\D\rho_\delta(z,s)dzds
\\ \geq & \frac12 \|\Phi_0\|_{L^1(\D)}, \text{ for any $\delta$ less than a given $\delta_0$ since $\Phi \in C([0,T],L^1(\D))$.}
\end{align*}
As $K_\delta \in C([0,T],L^1(\D))$ we conclude that $\|K_\delta(t)\|_{L^1(\D)}>0$ in a neighborhood of $\tau$ and thus \\
$\E\bigg[\dint_{\D\times(0,1)^2}\hspace{-0,7cm}  |u(t,x,\alpha) -v(t,x,\beta)|  \,dx\,d\alpha d\beta\bigg]=0$, a.e. in this neighborhood of $\tau$. Since $\tau$ is arbitrary in $(0,T)$,  we conclude that $$\E\bigg[ \dint_{\D\times(0,1)^2}\hspace{-0,7cm}  |u(t,x,\alpha) -v(t,x,\beta)|  \,dx\,d\alpha d\beta\bigg]=0, \,\text{a.e. in }  (0,T).$$
\\
This ensures the uniqueness of the measure valued (mild) solution coming from a viscous regularization. Moreover, the above equality also implies that this unique measure valued (mild) solution is independent of its additional variable $\alpha$ or $\beta$. On the other hand, we conclude that the whole sequence of viscous approximation converges weakly in $L^2(\Omega \times Q_T)$. Since the limit process is independent of the additional variable, the viscous approximation converges strongly in $L^p(\Omega \times (0,T)\times B(0,M))$, for any $M>0$ and any $1\le p <2 $.

\subsubsection{Existence of an Entropy Solution}
\label{entropy_existence}

As a by-product of the above convergence and the \textit{a priori} estimates, one can prove the result of existence of an entropy solution in the sense of Definition~\ref{Defi_Entropy_formulation}. Consider $u$ the limit of the viscous approximation $(u_\eps)$. Thanks to the \textit{a priori} estimates,  $u$ satisfies the regularity of a solution with the additional information that $A(u) \in L^2(\Omega\times(0,T),H^\lambda(\R^d))$. Then, for any $\psi \in \mathcal{D}^{+}(Q_T)$, any pair of entropy-entropy flux pair $(\eta, \zeta)$, with $\eta$ convex, and for any $\mathbb{P}$-measurable set $B$, one needs to pass to the limit in the following inequality satisfied by  $u_{\eps}$:
\begin{align*}
0 & \le  \E \Big[\mathds{1}_{B}\, \hspace{-0.2cm} \int_{\R^d} \eta(u_{\eps}(0,x)-k)\, \varphi(0,x)\,dx \Big]+ 
 \E \Big[\mathds{1}_{B}\, \hspace{-0.2cm}\int_{Q_T} \hspace{-0.2cm} \Big\{\eta(u_{\eps}(t,x)-k) \,\partial_t\varphi(t,x) -  \grad \varphi(t,x)\cdot \zeta(u_{\eps}(t,x)) \Big\}dx\,dt \Big] \\
 & \quad + \E \Big[\mathds{1}_{B}\, \sum_{k\ge 1}\int_{Q_T} g_k(u_\eps(t,x))\eta^\prime (u_\eps(t,x)-k)\varphi(t,x)\,d\beta_k(t)\,dx \Big] \notag \\
 & \quad \quad + \frac{1}{2} \E \Big[\mathds{1}_{B}\sum_{k\ge 1}\, \int_{Q_T}\mathbb{G}^2(u_\eps(t,x))\eta^{\prime\prime} (u_\eps(t,x)-k)\varphi(t,x)\,dx\,dt \Big] + \mathcal{O}(\eps) \notag \\
 & \quad -  \E \Big[\mathds{1}_{B}\,\int_{Q_T} \Big\{ \mathscr{L}^r_\lambda[A(u_\eps(t,\cdot))](x) \varphi(t,x) \eta'(u_\eps(t,x)-k) + A^\eta_k(u_\eps(t,x)) \mathscr{L}_{\lambda,r}[\varphi(t,\cdot)](x) \Big\}\,dx\,dt \Big]. 
\end{align*} 
For that, we may use the same arguments as in \cite{BaVaWit_2012,BaVaWitParab} and \cite[Subsection 3.4]{BhKoleyVa}. We leave the details to the interested reader.


\subsubsection{Uniqueness of Entropy Solution}
\label{Unique}

As alluded to before in Remark \ref{remnonvisc}, Kato's inequality \eqref{kato} is also available if $v$ is any stochastic entropy solution in the sense of Definition \ref{Defi_Entropy_formulation} and $u$  a weakly converging limit of the sequence of viscous solutions. Then, as depicted in subsection~\ref{Kato} and subsection~\ref{Uniqueness of (Measure-Valued) Entropy Solution}, one gets, $t$ a.e. the following equality
\begin{align*}
\E \bigg[\dint_{\R^d} |u(t,x) -v(t,x)|  \,dx \bigg]=0.
\end{align*}
This confirms the uniqueness of the stochastic entropy solution. Indeed, any stochastic entropy solution is equal to the limit point of the viscous approximation.
\\[0.1cm]
Note that thanks to the \textit{a priori} estimates and Remark~\ref{weakcont}, $u$ and $v$ are also in $C_w([0,T],L^2(\Omega\times\R^d))$. Denote by $\mathcal{Z}\subset (0,T)$ a set of full measure such that $\E \bigg[\dint_{\R^d} |u(s,x) -v(s,x)|  \,dx \bigg]=0$, for any $s\in \mathcal{Z}$. Let $t \in (0,T)$ and $(s_n) \subset\mathcal{Z}$ such that $s_n \to t$. Since  $u(s_n,x) -v(s_n,x) \rightharpoonup u(t,x) -v(t,x)$ in $L^2(\Omega\times\R^d)$, and since the $L^1$ norm is a lower semi-continuous convex function on $L^2$, $\E \bigg[\dint_{\R^d} |u(t,x) -v(t,x)|  \,dx \bigg]=0$ for any $t \in (0,T)$.

\subsubsection{Uniqueness: stability}
\label{stability}
Thanks to the uniqueness result, it is then known that any entropy solution $u$ is stemmed from the sequence of viscosity solutions $u_\ep$ and that $A(u) \in L^2(\Omega\times(0,T),H^\lambda(\D))$. Thus, we can recast Kato's inequality as
\begin{align*}
0 \leq& \int_\D |u_0-v_0|\varphi(0)\,dx + \E\Big[\int_{Q_T} |u(t,x) -v(t,x)| \partial_t \varphi(t,x) - F(u(t,x),v(t,x))\nabla \varphi(t,x)\,dx\,dt\Big] 
\\&
-\E \bigg[\int_{Q_T}   \mathscr{L}_{\lambda/2}\big[|A(u(t,\cdot))-A(v(t,\cdot))|\big](x)  \mathscr{L}_{\lambda/2}[\varphi(t,\cdot)](x) \,dx\,dt \bigg].
\end{align*}
To achieve the stability of entropy solution with respect to its initial data, we again follow the argument depicted in \cite[Subsection 3.6]{BhKoleyVa} to conclude that for any $t$, 
\begin{align*}
\E\bigg[\int_{\D} |u(t,x) -v(t,x)| \,dx\bigg] \leq& \int_\D |u_0-v_0|\,dx.
\end{align*}

\section{Proof of Theorem \ref{continuous-dependence}: Continuous Dependence on Nonlinearity}
\label{ContDep_NonL}

In this section, we establish the continuous dependence estimates on the given data. However, as we mentioned earlier, we are only concerned with the continuous dependence on the nonlinearity $A$ coming from the fractional diffusion, as the continuous dependence on the other parameters is well studied (see \cite[Section 4]{BhKoleyVa}). In what follows, 
for $\eps>0$, let $v_\eps$ be a weak solution to the problem 
\begin{align}\label{eq:viscous}
dv_\eps(s,y) - \eps \Delta v_\eps(s,y)\,ds + \mathscr{L}_{\lambda}[B(v_\eps(s, \cdot))](y)\,ds  & - \mbox{div}_y f(v_\eps(s,y)) \,ds = \h(v_\eps(s,y))\,dW(s),   \\
v_\eps(0,y)&=v_0^{\eps}(y) \notag.
\end{align}
In view of Theorem \ref{thm:existence-bv}, we conclude that $v_\eps$ converges to the unique BV-entropy solution $v$ of \eqref{eq:stoc_frac} with initial data $v_0 \in BV(\D)$. Let $u$ be the unique $L^1$-entropy solution of  \eqref{eq:stoc_frac} with initial data $u_0\in L^1(\D)$. Note that, by the $L^1$-entropy solution we refer to the entropy solution corresponding to initial data $u_0\in L^1 (\R^d)\cap L^2(\D)$. We assume that the \ref{A1}, \ref{A2}, \ref{A3} and \ref{A4}  hold for both sets of given functions $(u_0, f,A, h, \lambda)$ and $(v_0, f,B, h , \lambda)$ with additionally $f^{\prime\prime}$ bounded.

We shall estimate the average $L^1$-difference between two $L^1$-entropy/BV-entropy solutions $u$ and $v$. To achieve this, we shall make use of the ``{\it doubling of variables}" technique. However, as in the proof of Kato's inequality, we can not directly compare two entropy solutions $u$ and $v$, but instead we first compare the entropy solution $u$ with the regularized version of the viscous approximation \eqref{eq:viscous}, \textit{i.e.}, $v^{\gamma}_{\eps}$. This approach is somewhat different from the deterministic approach, where one can directly compare two entropy solutions. 

For technical purpose (as observed in \cite{Alibaud_one}), we need to consider the following partition of $\R$ based on the region $A'\ge B'$ and its complementary. Let $E_{\pm}$ be sets satisfying
\begin{align*}
\left\{\begin{array}{l}{E_{ \pm} \subseteq \mathbb{R} \text { are Borel sets, }} \\ {\cup_{ \pm} E^{ \pm}=\mathbb{R} \text { and } \cap_{ \pm} E_{ \pm}=\emptyset,} \\ {\mathbb{R} \backslash \operatorname{supp}\left(A^{\prime}-B^{\prime}\right)^{\mp} \subseteq E_{ \pm}.}\end{array}\right. 
\end{align*}
For all $u \in \R$, we define 
\begin{align*}
\begin{aligned} A_{ \pm}(u)  :=\int_{0}^{u} A^{\prime}(\tau) \mathbf{1}_{E_{ \pm}}(\tau) \mathrm{d} \tau, 
\quad
B_{ \pm}(u)  :=\int_{0}^{u} B^{\prime}(\tau) \mathbf{1}_{E_{ \pm}}(\tau) \mathrm{d} \tau, 
\quad  
C_{ \pm}(u)  :=\pm\left(A_{ \pm}(u)-B_{ \pm}(u)\right).
\end{aligned}
\end{align*}
It is easy to see (for details consult \cite{Alibaud_one}) that the above functions satisfy
\begin{equation}
\label{two}
\begin{cases}
A=A_++A_- \text{ and } B=B_+ +B_-,\\
A_{\pm}, B_{\pm}, C_{\pm} \text{ satisfy } \eqref{A3},\\
|C_{\pm} (u)|_{BV} \le \| A'- B'\|_\infty \| u\|_{BV},\\
\sum_{\pm} \| C_{\pm}(u(\cdot+z))- C_\pm(u) \|_{L^1(\D)} \le \| A'-B' \|_\infty \| u(\cdot +z, \cdot) -u \|_{L^1(\D)}.\\
\end{cases}
\end{equation}
Next, for a nonnegative test function $\varphi\in C_c^{1,2}([0,\infty)\times \rd)$, we define the same test function as in \eqref{test_function}          
\begin{align}
\psi (t,x, s,y) = \rho_n(t-s) 
\,\rho_m(x-y) \,\varphi(t,x).
\end{align}

As in the proof of Kato's inequality, we first apply the It\^{o}'s formula to $ \eta(v^\gamma_\eps(s,y)-k) \psi(t,x,s,y)$, and then multiply with the test function $\rho_l(u(t,x)-k)$ and integrate with respect to $x,t,k$. Taking the expectation we get 
\begin{align}
 0\le  &\, \E \Big[\int_{Q_T}\int_{\R^d}\int_{\R} 
 \eta(v^\gamma_\eps(0,y)-k)\psi(t,x,0,y) \rho_l(u(t,x)-k)\,dk \,dy\,dx\,dt\Big] \notag \\
   & +   \E \Big[\int_{Q_T} \int_{Q_T} \int_{\R} 
 \eta(v^\gamma_\eps(s,y)-k)\partial_s \psi(t,x,s,y)
 \rho_l(u(t,x)-k)\,dk \,dy\,ds\,dx\,dt\Big] \notag \\ 
 & + \E \Big[\int_{Q_T} \int_{Q_T}
\int_{\R} \eta^\prime (v^\gamma_\eps(s,y)-k)\, \psi(t,x,s,y) \h(v_\eps)^\gamma(s,y)\,dk\,dW(s) \rho_l(u(t,x)-k)\,dy\,ds\,dx\,dt \Big] \notag \\
 &+ \frac{1}{2}\, \E \Big[ \int_{Q_T} \int_{Q_T} 
\int_{\R} \eta^{\prime\prime} (v^\gamma_\eps(s,y) -k)(\h(v_\eps)^\gamma(s,y))^2\,\psi(t,x,s,y) \rho_l(u(t,x)-k)\,dk
\,dy\,ds\,dx\,dt \Big] \notag \\
   & -  \E\Big[\int_{Q_T}\int_{Q_T} \int_{\R}  
 f^\eta(v^\gamma_\eps(s,y),k)\cdot \grad_y \psi(t,x,s,y)\, \rho_l(u(t,x)-k)\,dk\,dy\,ds\,dx\,dt\Big] \notag \\
 &  - \eps  \,\E \Big[\int_{Q_T} \int_{Q_T} \int_{\R} 
 \eta^\prime(v^\gamma_\eps(s,y)-k)\grad_y v^\gamma_\eps(s,y) \cdot \grad_y  \psi(t,x,s,y)
  \rho_l(u(t,x)-k)\,dk\,dy\,ds\,dx\,dt\Big]  \notag \\
  & - \E \bigg[\int_{Q_T} \int_{Q_T} 
\int_{\R} \mathscr{L}^r_{\lambda}[B(v_\eps)^\gamma(s, \cdot)](y) \, \psi(t,x,s,y)\, \eta^\prime (v^\gamma_\eps(s,y)-k)\rho_l(u(t,x)-k)\,dk\,dy\,ds\,dx\,dt \bigg]  \notag \\
& -\E \bigg[\int_{Q_T} \int^T_0 
\int_{\R} \mathscr{L}_{\lambda,r}[B(v_\ep)^\gamma(s,\cdot)](y)\, \psi(t,x,s,y)\, \eta'(v^\gamma_\ep(s,y)-k) \, \rho_l(u(t,x)-k)\,dk\,ds\,dx\,dt \bigg] \notag \\[2mm]
& =: I_1 + I_2 + I_3 + I_4 + I_5 + I_6 + I_7 + I_8. \label{continuous_dep_2}
\end{align}

We now write the entropy inequality for $u(t,x)$, based on the 
entropy pair $(\eta(\cdot-k), f^\eta(\cdot, k))$, and 
then multiply by $\rho_l(v^\gamma_\eps(s,y)-k)$, integrate with 
respect to $ s, y, k$ and take the expectation. The result is
\begin{align}
0\le  & \E \Big[\int_{Q_T}\int_{\R^d}\int_{\R} \eta(u(0,x)-k)
\psi(0,x,s,y) \rho_l(v^\gamma_\eps(s,y)-k)\,dk \,dx\,dy\,ds\Big] \notag \\
 & + \E \Big[\int_{Q_T} \int_{Q_T} \int_{\R} \eta(u(t,x)-k)\partial_t \psi(t,x,s,y)
\rho_l(v^\gamma_\eps(s,y)-k)\,dk \,dx\,dt\,dy\,ds \Big]\notag \\ 
& + \E \Big[\int_{Q_T} \int_{Q_T} 
\int_{\R} \eta^\prime (u(t,x)-k)\h(u(t,x))\, \psi(t,x,s,y)\rho_l(v^\gamma_\eps(s,y)-k)\, dk \,dW(t) \,dx\,dy\,ds \Big] \notag \\
 &+ \frac{1}{2}\, \E \Big[ \int_{Q_T} \int_{Q_T}  
\int_{\R}  \eta^{\prime\prime} (u(t,x) -k)h^2(u(t,x))\, \psi(t,x,s,y)\rho_l(v^\gamma_\eps(s,y)-k) \,dx\,dt\,dk\,dy\,ds \Big] \notag \\
& -  \E \Big[\int_{Q_T}\int_{Q_T} \int_{\R} 
 f^\eta(u(t,x),k) \cdot \grad_x \psi(t,x,s,y) \, \rho_l(v^\gamma_\eps(s,y)-k)\,dk\,dx\,dt\,dy\,ds\Big] \notag \\
 & - \E \Big[\int_{Q_T} \int_{Q_T} 
\int_{\R} \mathscr{L}^r_{\lambda}[A(u(t,\cdot))](x)\, \psi(t,x,s,y)\, \eta'(u(t,x) -k)
\, \rho_l(v^\gamma_\eps(s,y)-k)\,dk\,dx\,dt\,dy\,ds \Big] \notag \\
& -  \E \Big[\int_{Q_T} \int_{Q_T} 
\int_{\R} A^\eta_k(u(t,x)) \,\mathscr{L}_{\lambda,r} [\psi(t,\cdot,s,y)](x) \rho_l( v^\gamma_\eps(s,y)-k)\,dk\,dx\,dt\,dy\,ds \Big]\notag \\[2mm]
& =:  J_1 + J_2 + J_3 +J_4 + J_5 + J_6 + J_7. \label{continuous_dep_1}
\end{align}
Our aim is to add \eqref{continuous_dep_1} and \eqref{continuous_dep_2}, 
and pass to the  limits with respect to the various parameters involved. We do this by claiming a series of lemmas and some of the proofs of these lemmas follow from the proof of Kato's inequality in Section \ref{kato} (see Remark~\ref{remnonvisc}) and \cite{BaVaWitParab,BisMajKarl_2014} modulo cosmetic changes. 

To begin with, note that the particular choice of test function \eqref{test_function} implies that $I_1=0$. 
\begin{lem}
\label{stochastic_lemma_1}
It holds that 
\begin{align*}
 I_1 + J_1 & \underset{n \goto \infty} \longrightarrow  \int_{\R^d}\int_{\R^d}\int_{\R} 
  \eta(u(0,x)-k)\varphi(0,x)\rho_m(x-y) \rho_l(v^\gamma_\eps(0,y)-k)\,dk\,dx\,dy
  \\
  & \underset{\gamma \goto 0} \longrightarrow  \int_{\R^d}\int_{\R^d}\int_{\R} 
  \eta(u(0,x)-k)\varphi(0,x)\rho_m(x-y) \rho_l(v_\eps(0,y)-k)\,dk\,dx\,dy
  \\
  &\underset{\delta \to 0} \longrightarrow   \int_{\R^d}\int_{\R^d} \int_\R
  |u(0,x)-k|\varphi(0,x)\rho_m(x-y)\rho_l(v_\ep(0,y)-k)\,dk\,dx\,dy
  \\
  &\underset{l \to 0} \longrightarrow   \int_{\R^d}\int_{\R^d} 
  |u(0,x)-v_\ep(0,y)|\varphi(0,x)\rho_m(x-y)\,dx\,dy.
 \end{align*}
\end{lem}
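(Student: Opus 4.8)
The term $I_1$ vanishes, as already noted: by \eqref{test_function} its kernel is $\psi(t,x,0,y)=\rho_n(t)\rho_m(x-y)\varphi(t,x)$, and since $\supp(\rho_n)\subset[-2/n,0]$ while the outer $t$-integration runs over $(0,T)$, the factor $\rho_n(t)$ is zero a.e.\ on the domain. Hence the entire content of the lemma lies in the four successive limits of $J_1$, and I would begin with the passage $n\to\infty$, which is the only delicate one. Here $\psi(0,x,s,y)=\rho_n(-s)\rho_m(x-y)\varphi(0,x)$; since $\supp(\rho_n)\subset[-2/n,0]$, the map $s\mapsto\rho_n(-s)$ is supported in $[0,2/n]$ with $\int_0^{2/n}\rho_n(-s)\,ds=1$, i.e.\ it is a one-sided approximate identity concentrating at $s=0^+$. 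By Fubini's theorem (the integrand is integrable thanks to the compact supports of $\varphi,\rho_m$, the boundedness of $\rho_l$, and the $L^2$-bounds on $u,v_\eps$) I would write $J_1=\int_0^T\rho_n(-s)\,H_\gamma(s)\,ds$ with
\[
H_\gamma(s):=\E\Big[\int_{\R^d}\int_{\R^d}\int_\R\eta(u(0,x)-k)\,\varphi(0,x)\,\rho_m(x-y)\,\rho_l(v^\gamma_\eps(s,y)-k)\,dk\,dx\,dy\Big].
\]
Theorem~\ref{prop:vanishing viscosity-solution} gives $v_\eps\in C([0,T];L^2(\Omega\times\R^d))$, and since spatial convolution with $\rho_\gamma$ is bounded on $L^2$, also $v^\gamma_\eps\in C([0,T];L^2(\Omega\times\R^d))$; as $\rho_l$ is Lipschitz and $\varphi,\rho_m$ are compactly supported, $H_\gamma$ is continuous at $s=0$. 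The approximate-identity property then yields $J_1\to H_\gamma(0)$. The crucial point is that at $s=0$ the datum $v^\gamma_\eps(0,y)=v_0^\eps*\rho_\gamma(y)$ is \emph{deterministic} (as is $u(0,x)=u_0(x)$), so the expectation in $H_\gamma(0)$ is trivial, and one recovers exactly the first, expectation-free, displayed limit.

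The remaining three passages are routine applications of Lebesgue's dominated convergence theorem, the dominating function being integrable in each case by the same compact-support considerations together with $|\eta'|\le1$ and the boundedness of $\rho_l$. For $\gamma\to0$ I would use $v_0^\eps*\rho_\gamma\to v_0^\eps=v_\eps(0,\cdot)$ in $L^2(\R^d)$, hence pointwise a.e.\ along a subsequence, and the continuity of $\rho_l$ to pass $\rho_l(v^\gamma_\eps(0,y)-k)\to\rho_l(v_\eps(0,y)-k)$. For $\delta\to0$ I would invoke $\eta_\delta\to|\cdot|$ uniformly (the two coincide outside $[-\delta,\delta]$), giving $\eta_\delta(u(0,x)-k)\to|u(0,x)-k|$. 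Finally, for $l\to0$ the factor $\rho_l(v_\eps(0,y)-k)$ is an approximate identity in the variable $k$ concentrating at $k=v_\eps(0,y)$, so that $\int_\R|u(0,x)-k|\rho_l(v_\eps(0,y)-k)\,dk\to|u(0,x)-v_\eps(0,y)|$, and a last dominated convergence in $(x,y)$ delivers the stated limit.

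The main obstacle is therefore confined to the first passage $n\to\infty$: one must establish right-continuity of $s\mapsto v^\gamma_\eps(s,\cdot)$ at the initial time in a topology strong enough to integrate against the approximate identity $\rho_n(-s)$, and then observe that the surviving integrand, evaluated at $s=0$ where the data are deterministic, carries no $\omega$-dependence, which is precisely what removes the outer expectation. Once this is secured, the three mollifier limits are entirely standard.
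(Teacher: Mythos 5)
Your proposal is correct and follows essentially the same route the paper (and the references \cite{BaVaWit_2012,BaVaWitParab} it defers to) takes: $I_1=0$ by the one-sided support of $\rho_n$, the $n\to\infty$ limit via Fubini and the right-continuity of $v_\eps^\gamma$ at $s=0$ in $C([0,T];L^2(\Omega\times\R^d))$ (which, together with the deterministic initial data, removes the expectation), and then three routine dominated-convergence/approximate-identity passages in $\gamma$, $\delta$ and $l$. Nothing is missing.
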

 
\begin{lem}\label{stochastic_lemma_2}
It holds that
\begin{align*}
I_2 + J_2  &\underset{n \goto \infty}\longrightarrow   \E\Big[ \int_{Q_T}\int_{\R^d}\int_{\R}
    \eta(u(t,x)-k) \partial_t\varphi(t,x)\rho_m(x-y)\rho_l(v^\gamma_\eps(t,y)-k)\,dk\,dy\,dx\,dt\Big]
    \\
    &\underset{\gamma \to 0} \longrightarrow   \E\Big[ \int_{Q_T}\int_{\R^d}\int_{\R}
   \eta(u(t,x)-k) \partial_t\varphi(t,x)\rho_m(x-y)\rho_l(v_\eps(t,y)-k)\,dk\,dy\,dx\,dt\Big]
   \\
&\underset{\delta \to 0}\longrightarrow  \E \Big[\int_{Q_T}\int_{\R^d} \int_\R |u(t,x)-k| \partial_t\varphi(t,x) \, \rho_m(x-y)\, \rho_l(v_\ep(t,y)-k)\,dk\,dy\,dx\,dt\Big]
   \\
&\underset{l \to 0}\longrightarrow  \E \Big[\int_{Q_T}\int_{\R^d} |u(t,x)-v_\ep(t,y)| \partial_t\varphi(t,x) \, \rho_m(x-y)\,dy\,dx\,dt\Big].
\end{align*}
\end{lem}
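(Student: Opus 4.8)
The plan is to add \eqref{continuous_dep_2} and \eqref{continuous_dep_1} and pass to the limits in the prescribed order $n\to\infty$, then $\gamma\to0$, $\delta\to0$ and $l\to0$, following verbatim (up to the replacement of the viscous process $u_\theta$ by the entropy solution $u$, which is licit by Remark~\ref{remnonvisc} since only the $L^2$-regularity of $u$ is used) the treatment of the analogous time-derivative terms $I_2+J_2$ in Lemma~\ref{lem:initial+time-terms} of Section~\ref{Kato} and in \cite{BaVaWit_2012,BaVaWitParab}. The organising principle is the cancellation identity coming from the special form \eqref{test_function} of $\psi$, namely
\begin{align*}
\partial_s\psi(t,x,s,y)+\partial_t\psi(t,x,s,y)=\rho_n(t-s)\,\rho_m(x-y)\,\partial_t\varphi(t,x),
\end{align*}
so that in $I_2+J_2$ the two contributions carrying the singular factor $\rho_n'(t-s)$ assemble into a single ``cross-term'', while the regular part is
\begin{align*}
\E\Big[\int_{Q_T}\int_0^T\int_{\R^d}\int_{\R}\rho_n(t-s)\,\rho_m(x-y)\,\partial_t\varphi(t,x)\,\eta(u(t,x)-k)\,\rho_l(v^\gamma_\eps(s,y)-k)\,dk\,dy\,ds\,dx\,dt\Big].
\end{align*}

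The only delicate passage is $n\to\infty$. Since $v_\eps$, hence $v^\gamma_\eps$, is pathwise continuous in $L^2(\R^d)$ and $\supp\rho_n\subset[-2/n,0]$ forces $t\le s\le t+2/n$, the regular part above converges, by concentration of $\rho_n(t-s)$ on the diagonal $s=t$ together with dominated convergence, to the claimed first limit, in which $s$ is replaced by $t$. There remains the cross-term
\begin{align*}
\E\Big[\int_{Q_T}\int_0^T\int_{\R^d}\int_{\R}\rho_n'(t-s)\,\rho_m(x-y)\,\varphi(t,x)\big(\eta(u(t,x)-k)\rho_l(v^\gamma_\eps(s,y)-k)-\eta(v^\gamma_\eps(s,y)-k)\rho_l(u(t,x)-k)\big)\,dk\,dy\,ds\,dx\,dt\Big],
\end{align*}
which must be shown to vanish as $n\to\infty$. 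This is precisely the point where the anticipative nature of the doubled time integrands intervenes, and I would dispose of it exactly as in \cite[Sec.~4]{BaVaWit_2012}: exploiting the one-sided support of $\rho_n$ and writing $\rho_l(v^\gamma_\eps(s,y)-k)-\rho_l(v^\gamma_\eps(s-2/n,y)-k)$ through It\^o's formula over the interval of length $2/n$ (as already done for the term $J_3$ in Lemma~\ref{lem:stochastic-terms}), so that the martingale part has vanishing expectation by adaptedness and the drift and quadratic-variation contributions are $o(1)$ by the uniform a priori bounds and the $L^2$-time-continuity of $v^\gamma_\eps$. I expect this step to be the main obstacle; all subsequent limits are routine.

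Once the $n\to\infty$ limit is secured, the remaining three passages are straightforward applications of Lebesgue's theorem and the properties of convolution. For $\gamma\to0$ I would use $v^\gamma_\eps\to v_\eps$ in $L^2(\Omega\times Q_T)$, the Lipschitz continuity of $\rho_l$, the bound $|\eta(u-k)|\le|u-k|$ and the compact support of $\varphi$ to dominate the integrand and replace $v^\gamma_\eps(t,y)$ by $v_\eps(t,y)$. For $\delta\to0$ I would use that $\eta=\eta_\delta\to|\cdot|$ pointwise with $|\eta_\delta(r)|\le|r|$, again by dominated convergence, giving $\eta(u(t,x)-k)\to|u(t,x)-k|$. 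Finally, for $l\to0$, the change of variable $k\mapsto v_\eps(t,y)-w$ turns
\begin{align*}
\int_{\R}|u(t,x)-k|\,\rho_l(v_\eps(t,y)-k)\,dk=\int_{\R}|u(t,x)-v_\eps(t,y)+w|\,\rho_l(w)\,dw,
\end{align*}
which converges to $|u(t,x)-v_\eps(t,y)|$ because $\rho_l$ is an approximate identity supported in $[-l,l]$; this yields the asserted expression
\begin{align*}
\E\Big[\int_{Q_T}\int_{\R^d}|u(t,x)-v_\eps(t,y)|\,\partial_t\varphi(t,x)\,\rho_m(x-y)\,dy\,dx\,dt\Big],
\end{align*}
and completes the proof.
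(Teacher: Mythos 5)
Your scaffolding is the right one and your three ``routine'' limits ($\gamma\to0$, $\delta\to0$, $l\to0$) are handled exactly as intended, but the way you propose to dispose of the cross-term in the $n\to\infty$ step does not work — and, more importantly, that cross-term never needs an It\^o argument at all. Set $H(a,b):=\int_\R\eta(a-k)\rho_l(b-k)\,dk$. Since $\rho_l$ is symmetric and $\eta=\eta_\delta$ is (as in the cited references) taken even, the change of variables $k\mapsto a+b-k$ gives $H(a,b)=H(b,a)$. After integrating out $k$, the integrand of $I_2$ carries the factor $H(v^\gamma_\eps(s,y),u(t,x))$ against $\partial_s\psi$, while that of $J_2$ carries $H(u(t,x),v^\gamma_\eps(s,y))$ against $\partial_t\psi$; by symmetry these factors coincide, so your cross-term $\E\big[\int\rho_n'(t-s)\rho_m\varphi\,\big(\eta(u-k)\rho_l(v^\gamma_\eps-k)-\eta(v^\gamma_\eps-k)\rho_l(u-k)\big)\big]$ is identically zero \emph{before any limit is taken}, and $I_2+J_2$ equals the regular part $\E\big[\int H(u(t,x),v^\gamma_\eps(s,y))\,\rho_n(t-s)\rho_m(x-y)\partial_t\varphi(t,x)\big]$ on the nose. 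This is the Kru\v{z}kov cancellation, preserved by the $k$-mollification precisely because $H$ is symmetric; the $n\to\infty$ limit is then pure concentration on the diagonal, as you say.

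The reason this matters is that your substitute argument is false as stated. Expanding the increment of $\rho_l(v^\gamma_\eps(\cdot,y)-k)$ by It\^o's formula, the martingale part is indeed killed by adaptedness (provided you expand over $[t,s]$ rather than $[s-2/n,s]$, since $\eta(u(t,x)-k)$ is only $\mathcal{F}_t$-measurable), but the drift and quadratic-variation parts paired against $\rho_n'(t-s)$ are $O(1)$, not $o(1)$: by Fubini, $\int_0^T\rho_n'(t-s)\int_t^s\theta(\sigma)\,d\sigma\,ds=\int\theta(\sigma)\rho_n(t-\sigma)\,d\sigma\to\theta(t^+)$, so each half of the cross-term converges to a genuinely nonzero quantity involving $\rho_l'(v_\eps-k)$ times the drift and $\rho_l''(v_\eps-k)\,\mathbb{G}^2(v_\eps)$ evaluated on the diagonal. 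Heuristically, $\|\rho_n'\|_{L^1}\sim n$ while the increments of $v^\gamma_\eps$ over $[t,t+2/n]$ are only $O(n^{-1/2})$ in $L^2$, so no smallness is available; the two $O(1)$ contributions cancel against each other only through the same symmetry of $H$, at which point the It\^o expansion is superfluous. The analogy with the treatment of $J_3$ in Lemma~\ref{lem:stochastic-terms} is misleading: there the $o(1)$ decay comes from pairing the drift corrections with a stochastic integral $d\beta_j(t)$ over an interval of length $2/n$, which Cauchy--Schwarz and It\^o's isometry make $O(n^{-1/2})$ — a factor that is absent here. Replace the It\^o step by the identity $H(a,b)=H(b,a)$ and the rest of your proposal goes through.
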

\noindent Next we consider the stochastic terms. Regarding that we have the following result
\begin{lem}
\label{lem:stochastic-terms_01}
The following holds:
\begin{align*}
\lim_{\delta \to 0}& \lim_{\gamma \to 0}\lim_{n \goto \infty}\Big( \big(I_3 + J_3\big) + \big(I_4 + J_4 \big)\Big)
\\
&= \E\Big[ \sum_{k \ge 1} \int_{Q_T}\int_{\R^d} \big(g_k(u(t,x))-g_k(v_\eps(t,y)))^2 \rho_l(u(t,x)-v_\ep(t,y)) \varphi(t,x)\rho_m(x-y)\,dx\,dy\,dt \Big] 
\\
& \le C \E\Big[ \int_{Q_T}\int_{\R^d} |u(t,x)-v_\eps(t,y)|^2 
\varphi(t,x)\rho_m(x-y) \rho_l(u(t,x)-v_\ep(t,y))\,dx\,dy\,dt \Big]
\underset{l \to 0}\longrightarrow  0.
\end{align*}
\end{lem}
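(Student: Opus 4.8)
The plan is to mirror the treatment of the stochastic terms in Lemma~\ref{lem:stochastic-terms}, invoking Remark~\ref{remnonvisc} so that the mere $L^2$-regularity of the entropy solution $u$ (which here plays the role of $u_\theta$) is enough, while $v_\eps$ plays the role of the viscous approximation $u_\eps$. First I separate the genuine martingale contributions $I_3,J_3$ from the It\^o correction terms $I_4,J_4$ carrying the factor $\eta''=\eta''_\delta$. The term $I_3$ vanishes: its stochastic integral is taken against $dW(s)$, while the multiplying factor $\rho_l(u(t,x)-k)$ is $\mathcal{F}_t$-measurable, and since $\supp\rho_n(t-\cdot)$ forces $t\le s$, this factor is $\mathcal{F}_s$-adapted; hence the integrand is predictable and the expectation of the It\^o integral is zero.

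For $I_4$ and $J_4$ I pass first to the limit in $n$ (Lebesgue's theorem together with the continuity of convolution), then in $\gamma$ (using the strong convergence $v^\gamma_\eps\to v_\eps$, $|\eta'|\le1$ and $0\le\eta''\le C/\delta$), obtaining integrands containing $\eta''(v_\eps-k)$ and $\eta''(u-k)$ respectively. An integration by parts in the variable $k$ turns each $\eta''$ into $-\eta'\rho'_l$; then, using $|\eta'|\le1$ and the integrable dominating functions furnished by $\mathbb{G}^2(v_\eps),\mathbb{G}^2(u)\in L^1$ and the compact $x$-support of $\varphi$, dominated convergence as $\delta\to0$ replaces $\eta'$ by $\sgn$, and a final integration in $k$ leaves
\begin{align*}
\lim_{\delta\to0}\lim_{\gamma\to0}\lim_{n\to\infty}(I_4+J_4)
&=\E\Big[\int_{Q_T}\int_{\R^d}\mathbb{G}^2(v_\eps(t,y))\varphi(t,x)\rho_l(u(t,x)-v_\eps(t,y))\rho_m(x-y)\,dy\,dx\,dt\Big]\\
&\quad+\E\Big[\int_{Q_T}\int_{\R^d}\mathbb{G}^2(u(t,x))\varphi(t,x)\rho_l(u(t,x)-v_\eps(t,y))\rho_m(x-y)\,dy\,dx\,dt\Big].
\end{align*}

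The delicate part is the anticipating term $J_3$: there the stochastic integral is against $dW(t)$, but the factor $\rho_l(v^\gamma_\eps(s,y)-k)$ depends on the future time $s\ge t$, so its expectation does not vanish. Exactly as in Lemma~\ref{lem:stochastic-terms}, I apply It\^o's formula to expand $\rho_l(v^\gamma_\eps(s,y)-k)-\rho_l(v^\gamma_\eps(s-2/n,y)-k)$ into a $d\sigma$-drift (involving $A_\eps$ and $\mathbb{G}^2$) plus a $\sum_j\int\rho'_l(v^\gamma_\eps-k)g_j(v_\eps)\,d\beta_j$ martingale part. Substituting this into $J_3$, the drift contribution tends to $0$ as $n\to\infty$, while the product of the two It\^o integrals survives: by the product rule of It\^o integrals and the completeness of $\{e_j\}$ it converges, after $\gamma\to0$ and $\delta\to0$, to
\begin{align*}
\lim_{\delta\to0}\lim_{\gamma\to0}\lim_{n\to\infty}J_3=-2\,\E\Big[\sum_{k\ge1}\int_{Q_T}\int_{\R^d}g_k(u(t,x))g_k(v_\eps(t,y))\varphi(t,x)\rho_m(x-y)\rho_l(u(t,x)-v_\eps(t,y))\,dy\,dx\,dt\Big],
\end{align*}
where I use the symmetry of $\rho_l$ and the identity $\int_\R\sgn(\cdot+k)\rho'_l(k)\,dk=-2\rho_l(\cdot)$.

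Adding the three surviving pieces $I_4+J_3+J_4$ and completing the square assembles the announced identity, whose integrand is $\sum_{k\ge1}(g_k(u)-g_k(v_\eps))^2\rho_l(u-v_\eps)\varphi\rho_m$. The first inequality is then immediate from Assumption~\ref{A4}, namely $\sum_k|g_k(u)-g_k(v_\eps)|^2\le K|u-v_\eps|^2$. Finally, since $\supp\rho_l\subset[-l,l]$ and $\rho_l=\mathcal{O}(1/l)$, on the support of $\rho_l(u-v_\eps)$ one has $|u-v_\eps|\le l$, so the integrand is bounded by $C\,l\,\varphi(t,x)\rho_m(x-y)\in L^1$; letting $l\to0$ yields the limit $0$. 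The single genuinely difficult step is the treatment of $J_3$: extracting the non-vanishing cross term from the anticipating stochastic integral through the It\^o expansion and the product rule, and checking that the accompanying drift is negligible as $n\to\infty$.
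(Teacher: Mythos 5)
Your proposal is correct and follows essentially the same route as the paper, which for this lemma simply defers to the treatment of $I_3,I_4,J_3,J_4$ in Lemma~\ref{lem:stochastic-terms} (via Remark~\ref{remnonvisc}, since only the $L^2$-regularity of $u$ is needed) with $u$ in the role of $u_\theta$ and $v_\eps$ in the role of $u_\eps$. Your handling of the adapted term $I_3$, the integration by parts in $k$ for the It\^o corrections, and the It\^o expansion plus product rule for the anticipating term $J_3$ all match the paper's argument, as does the final estimate $\sum_k(g_k(u)-g_k(v_\eps))^2\rho_l(u-v_\eps)\le Cl$ on the support of $\rho_l$.
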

\noindent For the terms coming from the flux functions, we have the following lemma.

\begin{lem}\label{stochastic_lemma_4}
\begin{align*}
\lim_{\delta \to 0} \lim_{\gamma \to 0} \lim_{n \to \infty}&( I_5+J_5) \le \|v_0\|_{BV}  l \|f''\|_\infty\int^T_0 \| \varphi(t,\cdot)\|_\infty \,dt
\\
&-\E \left[ \int_{Q_T} \int_\D \int_\R F(u(t,x),v_\ep(t,y)+k). \nabla_x\varphi(t,x)  \rho_l(k)\rho_m(x-y)\,dk\,dx\,dy\,dt\right]
\\&
\underset{l \to 0}\longrightarrow  -\E \left[ \int_{Q_T} \int_\D F(u(t,x),v_\ep(t,y)). \nabla_x\varphi(t,x)  \rho_m(x-y)\,dx\,dy\,dt\right].
\end{align*}
\end{lem}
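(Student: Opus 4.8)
The plan is to mimic the passage to the limit performed for the local flux terms in Lemma~\ref{lem:flux-terms}, but to \emph{retain} the two ingredients that are new here --- the bound $\|f''\|_\infty$ and the $BV$-estimate on $v_\ep$ --- so as to produce the explicit error term. I first record the geometry of the test function: since $\psi=\rho_n(t-s)\rho_m(x-y)\varphi(t,x)$ one has $\nabla_y\psi=-\rho_n(t-s)\varphi(t,x)(\nabla\rho_m)(x-y)$ and $\nabla_x\psi=\rho_n(t-s)\big[(\nabla\rho_m)(x-y)\varphi(t,x)+\rho_m(x-y)\nabla_x\varphi(t,x)\big]$. Because $I_5,J_5$ carry no It\^o integral, the limit $n\to\infty$ is a routine convolution argument (Lebesgue's theorem), collapsing $s$ to $t$; then $\gamma\to0$ replaces $v^\gamma_\ep$ by $v_\ep$, using $v^\gamma_\ep\to v_\ep$ in $L^2(\Omega\times Q_T)$ and the Lipschitz continuity of $f^\eta(\cdot,k)$ and of $\rho_l$; finally $\delta\to0$ turns $f^\eta$ into $F$, since $\eta'\to\sgn$ gives $f^\eta(a,b)=\int_b^a\eta'(\sigma-b)f'(\sigma)\,d\sigma\to F(a,b)$. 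After these three limits $I_5+J_5$ splits into three surviving contributions: one coming from $I_5$ and two coming from $J_5$ (one for each term of $\nabla_x\psi$).

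\emph{Main term.} The $J_5$-contribution carrying $\rho_m(x-y)\nabla_x\varphi$ is, after the change of variable $k\mapsto v_\ep(t,y)+k$ (which turns $\rho_l(v_\ep(t,y)-k)$ into $\rho_l(k)$ by symmetry of $\rho_l$), exactly the asserted term $-\E\big[\int F(u,v_\ep+k)\cdot\nabla_x\varphi\,\rho_l(k)\rho_m(x-y)\big]$; letting $l\to0$ and using the continuity of $F$ together with $\int_\R\rho_l=1$ produces the final expression $-\E\big[\int F(u,v_\ep)\cdot\nabla_x\varphi\,\rho_m(x-y)\big]$.

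\emph{Error term --- the crux.} The remaining two contributions form
\[
\mathcal{I}:=\E\Big[\int_{Q_T}\!\int_{\D}\!\int_{\R}\big(F(v_\ep(t,y),k)\rho_l(u(t,x)-k)-F(u(t,x),k)\rho_l(v_\ep(t,y)-k)\big)\cdot(\nabla\rho_m)(x-y)\,\varphi(t,x)\,dk\,dy\,dx\,dt\Big].
\]
Here I exploit that $v_\ep$ is a \emph{viscous} solution, so $v_\ep(t,\cdot)\in H^1(\D)$. Writing $(\nabla\rho_m)(x-y)=-\nabla_y\rho_m(x-y)$ and integrating by parts in $y$ in both terms moves the derivative onto $v_\ep$: the chain rule for the Lipschitz map $F(\cdot,k)$ composed with $v_\ep\in H^1$ produces $\sgn(v_\ep-k)f'(v_\ep)\cdot\nabla_y v_\ep$ in the first term and $\rho_l'(v_\ep-k)\,F(u,k)\cdot\nabla_y v_\ep$ in the second. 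After the substitutions $k'=u-k$ and $k'=v_\ep-k$, one integration by parts in $k$, and the symmetry of $\rho_l$, the $k$-integral of the common bracket collapses to the $\R^d$-valued quantity
\[
\vec B(u,v_\ep)=\int_\R\sgn\big(v_\ep-u+k'\big)\,\big[f'(v_\ep)-f'(v_\ep+k')\big]\,\rho_l(k')\,dk',
\]
in which the two \emph{a priori} $O(1)$ pieces have combined into the increment $f'(v_\ep)-f'(v_\ep+k')$. Since $|k'|\le l$ on $\supp\rho_l$ and $f''\in L^\infty$, one has $|\vec B|\le\|f''\|_\infty\,l$, whence
\[
|\mathcal{I}|\le \|f''\|_\infty\,l\,\E\Big[\int_0^T\!\!\int_{\D}|\nabla_y v_\ep(t,y)|\Big(\int_{\D}\rho_m(x-y)|\varphi(t,x)|\,dx\Big)dy\,dt\Big]\le \|f''\|_\infty\,l\,\|v_0\|_{BV}\int_0^T\|\varphi(t,\cdot)\|_\infty\,dt,
\]
using $\int_{\D}\rho_m(x-y)|\varphi(t,x)|\,dx\le\|\varphi(t,\cdot)\|_\infty$, $\int_{\D}|\nabla_y v_\ep(t,y)|\,dy=TV_x(v_\ep(t))$, and the uniform bound $\E[TV_x(v_\ep(t))]\le TV_x(v_0)\le\|v_0\|_{BV}$ from Theorem~\ref{prop:vanishing viscosity-solution}. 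Adding $\mathcal{I}\le(\text{this bound})$ to the main term gives the stated inequality.

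\emph{Main obstacle.} The only genuinely delicate step is this error estimate. Each of the two pieces of $\mathcal{I}$ is separately only $O(1)$, and smallness is produced exclusively through the integration by parts in $y$ --- which requires the $H^1$-regularity of $v_\ep$ and is precisely the reason one cannot bypass the viscous level and argue directly between entropy solutions --- together with the exact algebraic combination of the two $O(1)$ contributions into the increment $f'(v_\ep)-f'(v_\ep+k')$. It is this mechanism that both forces the hypothesis $f''\in L^\infty$ and couples the error to the $BV$-bound on $v_\ep$; everything else (the limits $n\to\infty$, $\gamma\to0$, $\delta\to0$, $l\to0$) proceeds by the dominated-convergence and convolution arguments already used in the proof of Kato's inequality.
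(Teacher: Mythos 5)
Your proposal is correct and follows essentially the same route as the paper: the main term is isolated identically, and the error term is controlled by the same mechanism --- integration by parts in $y$ using the $H^1$-regularity of the viscous solution $v_\ep$, the exact cancellation that turns the two $O(1)$ contributions into the increment $f'(v_\ep)-f'(v_\ep+k)$ bounded by $\|f''\|_\infty\,l$, and the uniform $BV$-estimate on $v_\ep$. The only difference is cosmetic: the paper changes variables in $k$ before integrating by parts in $y$ and reads the cancellation off $\partial_a\bigl(F(a,b-k)-F(a+k,b)\bigr)=\sgn(a-b+k)\bigl(f'(a)-f'(a+k)\bigr)$, whereas you integrate by parts in $y$ first and then in $k$, arriving at the same quantity.
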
 
\noindent We briefly sketch the proof of the above Lemma~\ref{stochastic_lemma_4}. Indeed, the following hold:
\begin{align*}
&\lim_{\delta \to 0} \lim_{\gamma \to 0} \lim_{n \to \infty}( I_5+J_5)
\\=& 
- \E \left[ \int_{Q_T} \int_\D \int_\R F(u(t,x),k). \nabla_x[ \varphi(t,x) \rho_m(x-y)] \rho_l(v_\ep(t,y)-k)\,dk\,dx\,dy\,dt \right]
\\&
-\E \left[ \int_{Q_T} \int_\D \int_\R F(v_\ep(t,y),k). \nabla_y\rho_m(x-y) \varphi(t,x) \rho_l(u(t,x)-k)\,dk\,dx\,dy\,dt\right]
\\=&
-\E \left[ \int_{Q_T} \int_\D \int_\R F(u(t,x),v_\ep(t,y)+k). \Big[\rho_m(x-y)\nabla_x\varphi(t,x) -\varphi(t,x) \nabla_y\rho_m(x-y) ) \Big]\rho_l(k)\,dk\,dx\,dy\,dt \right]
\\&
-\E \left[ \int_{Q_T} \int_\D \int_\R F(v_\ep(t,y),u(t,x)-k). \nabla_y\rho_m(x-y) \varphi(t,x) \rho_l(k)\,dk\,dx\,dy\,dt\right]
\\=&
-\E \bigg[ \int_{Q_T} \int_\D \int_\R \nabla_y.( F(u(t,x), v_\ep(t,y)+k)- F(v_\ep(t,y),u(t,x)-k))
\varphi(t,x)\rho_m(x-y)\rho_l(k) \,dk \,dx\,dy\,dt \bigg]
\\&
-\E \left[ \int_{Q_T} \int_\D \int_\R F(u(t,x),v_\ep(t,y)+k). \nabla_x \varphi(t,x) \rho_l(k)\rho_m(x-y) \,dk\,dx\,dy\,dt\right]
\\=&
\E \bigg[ \int_{Q_T} \int_\D \int_\R \nabla_y.( F(v_\ep(t,y),u(t,x)-k)-F(v_\ep(t,y)+k,u(t,x) ))
\varphi(t,x)\rho_m(x-y)\rho_l(k) \,dk \,dx\,dy\,dt \bigg]
\\&
-\E \left[ \int_{Q_T} \int_\D \int_\R F(u(t,x),v_\ep(t,y)+k). \nabla_x \varphi(t,x) \rho_l(k)\rho_m(x-y) \,dk\,dx\,dy\,dt\right]
\\=&\E \bigg[ \int_{Q_T} \int_\D \int_\R \nabla_y v_\ep(t,y). \partial_a( F(a,b-k)-F(a+k,b))\bigg|_{(a,b)=(v_\ep(t,y),u(t,x))}
\hspace{-2cm}\varphi(t,x) \rho_m(x-y)\rho_l(k) \,dk\,dx\,dy\,dt\bigg]
\\& 
-\E \left[ \int_{Q_T} \int_\D \int_\R F(u(t,x),v_\ep(t,y)+k). \nabla_x\varphi(t,x)  \rho_l(k)\rho_m(x-y)\,dk\,dx\,dy\,dt\right]\\
\end{align*}
To proceed further we notice that
\begin{align*}
\begin{aligned}\left|\partial_{a}(F(a, b-k)-F(a+k, b))\right|=&\left|\frac{\partial}{\partial u}\left(\int_{b-k}^{a} \operatorname{sgn}(\sigma-(b-k)) f^{\prime}(\sigma) d \sigma-\int_{b}^{a+k} \operatorname{sgn}(\sigma-b) f^{\prime}(\sigma) d \sigma\right)\right| \\ &=\left|\operatorname{sgn}(a-b+k)\left(f^{\prime}(a)-f^{\prime}(a+k)\right)\right| \leq \left\|f^{\prime \prime}\right\|_{\infty}|k|.  \end{aligned}
\end{align*}
Thus we get the lemma thanks to the \textit{a priori} estimates.
Moreover, it is easy to see that $|I_6| \le C(m)\ep$ and we are left with fractional terms. 
To deal with these terms, we follow closely the uniqueness proof in Section~\ref{uniqueness}. In particular, following Lemma~\ref{kato_lemma2}, and Lemma~\ref{kato_lemma3}, we conclude
\begin{lem}
\label{fractional_lemma_4}
The following holds:
\begin{align*}
\limsup_{\delta \goto 0} & \lim_{\gamma \to 0}\lim_{n\goto \infty}  \big(J_7+I_8 \big) 
\\
\leq& -  \E \bigg[\int_{\R^d} \int_{Q_T}\int_\R | A(u(t,x))-A(k)| \,\mathscr{L}_{\lambda,r} \big[ \varphi(t, \cdot) \rho_m (\cdot -y)\big](x) \rho_l(v_\ep(t,y)-k) \,dk\,dx\,dt \,dy \bigg]
\\ 
& - \E \bigg[\int_{\R^d} \int_{Q_T} \int_\R |B(v_\eps(t,y))-B(k)| \,\mathscr{L}_{\lambda,r} \big[ \rho_m (x -\cdot)\big](y)\,\varphi(t,x) \rho_l(u(t,x)-k) \,dk\,dx\,dt \,dy \bigg]
\\&
\underset{l \to 0}\longrightarrow  
-  \E \bigg[\int_{\R^d} \int_{Q_T} |A(u(t,x))-A(v_\ep(t,y))| \,\mathscr{L}_{\lambda,r} \big[ \varphi(t, \cdot) \rho_m (\cdot -y)\big](x) \,dx\,dt \,dy \bigg]
\\ 
&- \E \bigg[\int_{\R^d} \int_{Q_T} |B(v_\eps(t,y))-B(u(t,x))| \,\mathscr{L}_{\lambda,r} \big[ \rho_m (x -\cdot)\big](y)\,\varphi(t,x) \,dx\,dt \,dy \bigg].
\end{align*} 
\end{lem}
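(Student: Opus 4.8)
The plan is to follow, step by step, the four-fold limit scheme already carried out for the singular part in Lemma~\ref{kato_lemma2} and Lemma~\ref{kato_lemma3}, now keeping track of the two distinct nonlinearities: $A$ acts on the entropy solution $u$ through $J_7$, while $B$ acts on the viscous process $v_\eps$ through $I_8$. By Remark~\ref{remnonvisc} the computations of Section~\ref{Kato} remain valid when $u$ is merely an entropy solution, so the estimates proved there for the term $J_8$ transfer to $J_7$ and those for $I_8$ transfer verbatim with $A$ replaced by $B$. Throughout, every integral is supported on the fixed compact set determined by $\supp\varphi$, $\supp\rho_m$ and $\{|z|\le r\}$, which secures the integrability needed for each dominated-convergence argument. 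For convenience we abbreviate $B^\eta_k(a):=\int_k^a\eta'(\sigma-k)B'(\sigma)\,d\sigma$.

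First I would pass to the limit $n\to\infty$, concentrating the time mollifier $\rho_n(t-s)$ onto $s=t$. For $I_8$ this rests on the $L^2$-in-time continuity of $\sigma\mapsto B(v_\eps)^\gamma(\sigma,\cdot)$ together with the modulus bound $|\eta'(v_\eps^\gamma(s,y)-k)-\eta'(v_\eps^\gamma(t,y)-k)|\le\min\{2,\norm{\eta''}_\infty|v_\eps^\gamma(s,y)-v_\eps^\gamma(t,y)|\}$, exactly as in the splitting of Step~1 of Lemma~\ref{kato_lemma2}. For $J_7$ one uses the estimate $|A^\eta_a(c)-A^\eta_b(c)|\le\norm{A'}_\infty|a-b|\big(1+\norm{\eta''}_\infty\max\{|c-a|,|c-b|\}\big)$ recorded in that proof, so that the time-shift of $\rho_l(v_\eps^\gamma(s,y)-k)$ is controlled by $\|v_\eps^\gamma(s)-v_\eps^\gamma(t)\|_{L^2}^2$ and vanishes after integration against $\rho_n$. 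Next, letting $\gamma\to0$, I would remove the spatial regularization: for $I_8$ I pass to the variational representation of $\mathscr{L}_{\lambda,r}$ and split the error into a piece controlled by $\|B(v_\eps)^\gamma-B(v_\eps)\|_{H^\lambda}$ and a piece controlled by $\|v_\eps^\gamma-v_\eps\|_{L^2}$ through $\eta''$, both tending to zero; for $J_7$ the $L^1$-convergence $\rho_l(v_\eps^\gamma(t,y)-k)\to\rho_l(v_\eps(t,y)-k)$ and Lebesgue's theorem suffice.

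The step $\delta\to0$ is the one I expect to be the main obstacle, and it is where the inequality sign (hence the $\limsup$) is created. For $J_7$ the operator already sits on the test function, so dominated convergence together with $\big|A^\eta_k(u)-|A(u)-A(k)|\big|\le\delta\norm{A'}_\infty$ and $|\eta'|\le1$ simply replaces $A^\eta_k(u)$ by $|A(u)-A(k)|$, an equality in the limit. For $I_8$, however, the operator still acts on $B(v_\eps)$, and I would invoke the convexity inequality for the singular operator established in the computation of the term $I_r$ in Appendix~\ref{appendix_entropy}, in the form $\eta'(v_\eps-k)\,\mathscr{L}_{\lambda,r}[B(v_\eps)](y)\ge\mathscr{L}_{\lambda,r}[B^\eta_k(v_\eps)](y)$, which holds because $\eta'$ is nondecreasing and $B'\ge0$. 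Multiplying by the nonnegative weight $\rho_m(x-y)$, integrating in $y$, and using the self-adjointness of $\mathscr{L}_{\lambda,r}$ moves the operator onto $\rho_m$, yielding $-\mathscr{L}_{\lambda,r}[\rho_m(x-\cdot)](y)\,B^\eta_k(v_\eps)$ with an inequality; dominated convergence then sends $B^\eta_k(v_\eps)$ to $|B(v_\eps)-B(k)|$. Combining the two contributions produces precisely the intermediate bound of the lemma.

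Finally, letting $l\to0$, I would concentrate the remaining mollifier in $k$: since $\mathscr{L}_{\lambda,r}[\varphi(t,\cdot)\rho_m(\cdot-y)]$ and $\mathscr{L}_{\lambda,r}[\rho_m(x-\cdot)]$ lie in $L^1$ over the fixed compact support, the convergences $\int_\R|A(u(t,x))-A(k)|\rho_l(v_\eps(t,y)-k)\,dk\to|A(u(t,x))-A(v_\eps(t,y))|$ and, symmetrically, $\int_\R|B(v_\eps(t,y))-B(k)|\rho_l(u(t,x)-k)\,dk\to|B(v_\eps(t,y))-B(u(t,x))|$ give the stated limit by dominated convergence. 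The asymmetry $A\neq B$ is carried harmlessly through all four steps because, unlike the regular part $\mathscr{L}^r_\lambda$ treated separately, the singular part never requires the Kato sign-cancellation between $\sgn(u-v_\eps)$ and the difference of the diffusion nonlinearities.
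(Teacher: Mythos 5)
Your proposal is correct and follows essentially the same route as the paper, which proves this lemma simply by citing Lemma~\ref{kato_lemma2} (Steps 1--4) together with the $I_r$ computation of Appendix~\ref{appendix_entropy}; you have correctly identified the one genuinely asymmetric point, namely that the inequality (and hence the $\limsup$) is generated only in $I_8$, where the convexity estimate $[B(a)-B(b)]\eta'(a-k)\ge B^\eta_k(a)-B^\eta_k(b)$ is needed to move $\mathscr{L}_{\lambda,r}$ off $B(v_\eps)$ and onto $\rho_m(x-\cdot)$, while $J_7$ already carries the operator on the test function. The only cosmetic caveat is that the pointwise form $\eta'(v_\eps-k)\mathscr{L}_{\lambda,r}[B(v_\eps)]\ge\mathscr{L}_{\lambda,r}[B^\eta_k(v_\eps)]$ should be read through the symmetric bilinear (variational) representation, as in the appendix, since the principal-value integral need not converge pointwise for $H^1$ data when $\lambda\ge 1/2$ --- which is exactly how your subsequent "multiply by $\rho_m$, integrate, use self-adjointness" step resolves it.
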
 
Now, we are left with the last two terms. To deal with those terms, we make use of the Lemma~\ref{kato_lemma1} to conclude
\begin{align*}
\mathrm{M}:= & \lim_{\delta \to 0}\,\lim_{\gamma \to 0}\lim_{n \to \infty}  \big(I_7+J_6\big)\\ 
&= - \E \bigg[\int_{\R^d} \int_{Q_T} \int_\R \fr^r[A(u(t,\cdot))](x)\,\varphi(t,x) \rho_m(x-y)\, \sgn(u(t,x)-k) \rho_l(v_\ep(t,y)-k) \,dk\,dy\,dx\,dt \Big] \\
&\qquad - \E \bigg[\int_{\R^d} \int_{Q_T}\int_\R \mathscr{L}^r_{\lambda} [B(v_\eps(t,\cdot))](y)\,\varphi(t,x) \rho_m(x-y)\, \sgn(v_{\eps}(t,y)-k) \rho_l(u(t,x)-k)\,dk \,dy\,dx\,dt \Big] \\
&= - \E \bigg[\int_{\R^d} \int_{Q_T}\int_\R \Big( \fr^r[A(u(t,\cdot))](x)- \fr^r[ B(v_\ep(t,\cdot))](y) \Big)\sgn( u(t,x)-v_\ep(t,y)+k) \\[-0.5cm]
&\hspace{10cm} \times \varphi(t,x) \rho_m(x-y) \rho_l(k) \,dk\,dx\,dt\,dy \bigg]
\end{align*}
In order to proceed and estimate  $M$, we first state the following lemma. In what follows, let us denote by $d\mu_\lambda(z):= \frac{dz}{|z|^{d+2\lambda}}$.
\begin{lem}\label{lemma_01}
The following hold:
\begin{align*}
&\E \bigg[\int_{\R^d} \int_{Q_T}\int_\R  \int_{|z| >r}\bigg[ \Big(A(u(t,x+z))- A(u(t,x))\Big) -\Big( A(v(t,y+z))-A(v(t,y))\Big) \bigg] \d\mu_\lambda(z) \\[-0.3cm]
&\hspace{5cm} \times \varphi(t,x) \rho_m(x-y) \rho_l(k)\, \sgn( u(t,x)-v(t,y)+k)\,dk\,dy\,dx\,dt \bigg]\\
&\le - \E \bigg[ \int_\D \int_{Q_T} |A(u(t,x))-A(v(t,y))| \fr^r[ \varphi(t,\cdot)](x) \rho_m(x-y) \,dy\,dx\,dt \bigg] +\frac{C\| A'\|_\infty l}{\lambda r^{2\lambda}}\\
\end{align*}
\end{lem}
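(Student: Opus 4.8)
The plan is to establish a Kato-type inequality for the tail operator $\fr^r$ in the doubled variables, while carefully tracking the error created by the regularisation $\rho_l$ in the $k$-variable. Throughout I abbreviate $a=u(t,x)$, $b=v(t,y)$, $a_z=u(t,x+z)$, $b_z=v(t,y+z)$, and recall that $d\mu_\lambda(z)=|z|^{-(d+2\lambda)}\,dz$ satisfies $\int_{|z|>r}d\mu_\lambda(z)=C_d\,\lambda^{-1}r^{-2\lambda}$ and, up to the normalising constant $c_\lambda$ which is carried throughout, $\int_{|z|>r}\big(\varphi(t,x+z)-\varphi(t,x)\big)\,d\mu_\lambda(z)=-\fr^r[\varphi(t,\cdot)](x)$. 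Since $A$ is Lipschitz with $A(0)=0$ and $u,v\in L^\infty(0,T;L^2(\Omega\times\R^d))$, while $\varphi,\rho_m,\rho_l$ are compactly supported and $\int_{|z|>r}d\mu_\lambda<\infty$, every integrand below lies in $L^1$, so that Fubini's theorem and the change of variables are justified exactly as in Step~5 of Lemma~\ref{kato_lemma1} and Step~3 of Lemma~\ref{kato_lemma2}. First I would rewrite the bracket as $\{[A(a_z)-A(b_z)]-[A(a)-A(b)]\}$ and split the integrand into an \emph{off-diagonal} piece $[A(a_z)-A(b_z)]\,\sgn(a-b+k)$ and a \emph{diagonal} piece $-[A(a)-A(b)]\,\sgn(a-b+k)$.

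For the off-diagonal piece, since $|\sgn(a-b+k)|\le 1$ I bound $[A(a_z)-A(b_z)]\,\sgn(a-b+k)\le |A(u(t,x+z))-A(v(t,y+z))|$, which no longer depends on $k$; integrating out $\rho_l$ then yields a factor $1$. Next I perform the change of variables $x\mapsto x+z$, $y\mapsto y+z$, $z\mapsto -z$ (Jacobian one, with $x-y$ and $|z|$ invariant), which turns $|A(u(t,x+z))-A(v(t,y+z))|\,\varphi(t,x)$ into $|A(u(t,x))-A(v(t,y))|\,\varphi(t,x+z)$. Combining with the surviving part $-|A(a)-A(b)|\,\varphi(t,x)$ from the diagonal estimate below produces $|A(u(t,x))-A(v(t,y))|\int_{|z|>r}\big(\varphi(t,x+z)-\varphi(t,x)\big)\,d\mu_\lambda(z)=-|A(u)-A(v)|\,\fr^r[\varphi(t,\cdot)](x)$, which is exactly the first term on the right-hand side.

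For the diagonal piece I use that $A$ is nondecreasing, so $A(a)-A(b)=|A(a)-A(b)|\,\sgn(a-b)$ and hence $-[A(a)-A(b)]\,\sgn(a-b+k)=-|A(a)-A(b)|+|A(a)-A(b)|\big(1-\sgn(a-b)\sgn(a-b+k)\big)$, the last summand being nonnegative. It vanishes whenever $\sgn(a-b)=\sgn(a-b+k)$; otherwise $a-b$ and $a-b+k$ have opposite signs, which forces $|a-b|\le|k|\le l$ on the support of $\rho_l$, whence $|A(a)-A(b)|\le\|A'\|_\infty\,l$ and the summand is at most $2\|A'\|_\infty\,l$. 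The decisive point is that this error is \emph{independent of} $z$, so integrating it over $\{|z|>r\}$ costs only $\int_{|z|>r}d\mu_\lambda=C_d\,\lambda^{-1}r^{-2\lambda}$, while the remaining integrals of $\varphi\,\rho_m\,\rho_l$ are bounded; altogether this contributes at most $\frac{C\|A'\|_\infty\,l}{\lambda r^{2\lambda}}$, the claimed remainder. Adding the two pieces gives the asserted inequality.

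The only genuinely delicate point is the diagonal estimate: one must observe that the sign mismatch between $\sgn(a-b+k)$ and $\sgn(a-b)$ is confined to the set $\{|a-b|\le l\}$ and produces a $z$-independent error, so that the otherwise threatening factor $\int_{|z|>r}d\mu_\lambda$ remains finite and yields precisely the $\lambda^{-1}r^{-2\lambda}$ scaling. The off-diagonal change of variables is the standard fractional Kato manipulation and, together with the $L^1$-integrability that legitimises Fubini and the substitution, is routine.
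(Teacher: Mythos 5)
Your proof is correct and follows essentially the same route as the paper: rewrite the bracket as the off-diagonal term minus the diagonal term, bound the off-diagonal term by $|A(u(t,x+z))-A(v(t,y+z))|$ and apply the change of variables $x\mapsto x+z$, $y\mapsto y+z$, $z\mapsto -z$, and control the diagonal term up to an error of $2\|A'\|_\infty l$ whose integral over $\{|z|>r\}$ produces the $\lambda^{-1}r^{-2\lambda}$ remainder. The only (cosmetic) difference is in the diagonal estimate, where the paper shifts the argument of $A$ by $k$ and uses $\sgn(u-v+k)=\sgn(u-(v-k))$ together with monotonicity, whereas you factor out the sign mismatch $1-\sgn(u-v)\sgn(u-v+k)$ and observe it is supported on $\{|u-v|\le l\}$; both yield the same bound.
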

\begin{proof}
First notice that for any $k$ in the support of $\rho_l$, 
\begin{align*}
&\Big[\Big(A(u(t,x+z))-A(u(t,x))\Big)-\Big(A(v(t,y+z))-A(v(t,y))\Big)\Big] \sgn(u(t,x)-v(t,y)+k) \\ 
&=\Big[\Big(A(u(t,x+z))-A(v(t,y+z))\Big)-\Big(A(u(t,x))-A(v(t,y))\Big)\Big] \sgn(u(t,x)-v(t,y)+k) \\
& \leq|A(u(t,x+z))-A(v(t,y+z))|-\Big(A(u(t,x))-A(v(t,y)-k)+A(v(t,y)-k)-A(v(t,y))\Big) \\[-0.2cm] & \hspace{11cm}\times\sgn(u(t,x)-(v(t,y)-k)) \\
&  \leq|A(u(t,x+z))-A(v(t,y+z))|-|A(u(t,x))-A(v(t,y)-k)|+\left\|A^{\prime}\right\|_{\infty} l \\ 
& \leq|A(u(t,x+z))-A(v(t,y+z))|-| A(u(t,x))-A(v(t,y))|+2\left\|A^{\prime}\right\|_{\infty} l.
\end{align*}
Therefore
\begin{align*}
&\E \bigg[\int_{\R^d} \int_{Q_T}\int_\R  \int_{|z| >r}\bigg[ \Big(A(u(t,x+z))- A(u(t,x))\Big) -\Big( A(v(t,y+z))-A(v(t,y))\Big) \bigg] \d\mu_\lambda(z) \\[-0.3cm]
&\hspace{5cm} \times \varphi(t,x) \rho_m(x-y) \rho_l(k)\, \sgn( u(t,x)-v(t,y)+k)\,dk\,dy\,dx\,dt \bigg]\\
& \le \E\bigg[\int_\D \int_{Q_T} \int_\R \bigg[\int_{|z|>r} \big(|A(u(t,x+z))-A(v(t,y+z))| - | A(u(t,x))-A(v(t,y))|+ 2l\|A'\|_\infty \big)d \mu_{\lambda}(z)\bigg]\\[-0.2cm]
&\hspace{10cm} \times \varphi(t,x) \rho_m(x-y) \rho_l(k) \,dk\,dy\,dx\,dt\bigg]\\
& \le -\E \bigg[ \int_\D \int_{Q_T} | A(u(t,x)) - A(v(t,y))| \fr^r[ \varphi(t,\cdot)](x) \rho_m(x-y) \,dy\,dx\,dt \bigg]+ \frac{C\|A'\|_\infty l}{\lambda r^{2\lambda}},
\end{align*}
where we have used similar tricks as in Lemma ~\ref{kato_lemma1} and the symmetry of $\rho_m$ to get the last inequality.
\end{proof}

\begin{lem}\label{lemma_02}
For any $k \in \R$, the following hold:
\begin{align*}
\int_\D \sgn( k- u(t,x)) \mathscr{L}^r_\lambda[ A(u(t,\cdot))](x) \varphi(t,x) \,dx \le - \int_\D |A(k) - A(u(t,x))| \mathscr{L}^r_\lambda[ \varphi(t,\cdot)](x) \,dx.
\end{align*}
\end{lem}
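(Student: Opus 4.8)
The plan is to reduce the stated integral inequality to an elementary pointwise inequality for the integrand, and then to exploit the nonnegativity of $\varphi$ together with the symmetry of the kernel $|z|^{-(d+2\lambda)}$. First I would unfold the definition of $\mathscr{L}^r_\lambda$, so that the left-hand side reads
\[
c_\lambda \int_\D \sgn(k-u(t,x))\,\varphi(t,x)\int_{|z|>r}\frac{A(u(t,x)) - A(u(t,x+z))}{|z|^{d+2\lambda}}\,dz\,dx.
\]
Interchanging the two integrations is licit by Fubini's theorem, since $\varphi(t,\cdot)$ has compact support, $A(u(t,\cdot))\in L^2(\D)$ (as $A$ is Lipschitz with $A(0)=0$ by \ref{A3} and $u(t,\cdot)\in L^2(\D)$), and the kernel is integrable on $\{|z|>r\}$.

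The heart of the matter is the pointwise estimate, valid for all real $a=u(t,x)$, $w=u(t,x+z)$ and $k$,
\[
\sgn(k-a)\big(A(a)-A(w)\big)\le |A(k)-A(w)|-|A(k)-A(a)|.
\]
Its proof is purely algebraic and uses only that $A$ is non-decreasing (Assumption \ref{A3}). Writing $s:=\sgn(k-a)$, monotonicity of $A$ gives the identity $s\,(A(k)-A(a))=|A(k)-A(a)|$ for every $a$, whence $s\,A(a)=s\,A(k)-|A(k)-A(a)|$. Substituting this yields $s(A(a)-A(w))=-|A(k)-A(a)|+s(A(k)-A(w))$, and since $|s|\le 1$ we have $s(A(k)-A(w))\le|A(k)-A(w)|$, which is exactly the claim.

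Having established the pointwise inequality, I would multiply it by the nonnegative weight $c_\lambda\,\varphi(t,x)\,|z|^{-(d+2\lambda)}$ and integrate over $x\in\D$ and $|z|>r$; the direction of the inequality is preserved precisely because $\varphi\ge 0$ and the kernel is positive. This bounds the left-hand side by
\[
c_\lambda \int_\D \varphi(t,x)\int_{|z|>r}\frac{|A(k)-A(u(t,x+z))| - |A(k)-A(u(t,x))|}{|z|^{d+2\lambda}}\,dz\,dx.
\]
The final step is to recognize this as $-\int_\D |A(k)-A(u(t,x))|\,\mathscr{L}^r_\lambda[\varphi(t,\cdot)](x)\,dx$: in the term carrying $|A(k)-A(u(t,x+z))|$ I would substitute $x\mapsto x-z$ and then use the symmetry $z\mapsto -z$ of the kernel, turning that term into $|A(k)-A(u(t,x))|\,\varphi(t,x+z)$, which combines with the $\varphi(t,x)$ contribution to rebuild $\varphi(t,x)-\varphi(t,x+z)$ under the integral.

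I do not expect a serious obstacle here; the argument is a restriction-to-$\{|z|>r\}$ version of the Kato inequality already used in Lemma \ref{kato_lemma1}. The only points requiring care are the justification of Fubini and of the change of variables, which rest on the compact support of $\varphi$, the decay of $\mathscr{L}^r_\lambda[\varphi]$ like $|x|^{-(d+2\lambda)}$ at infinity, and the $L^2$-regularity of $u(t,\cdot)$ (hence of $A(u(t,\cdot))$ via the Lipschitz bound on $A$), which together guarantee absolute integrability of every term encountered.
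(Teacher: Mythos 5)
Your proof is correct. The paper does not prove this lemma itself but simply cites \cite[Lemma 4.3]{Alibaud_one} and \cite[Lemma 4.8]{BhKoleyVa}; your argument --- the pointwise Kato-type inequality $\sgn(k-a)\big(A(a)-A(w)\big)\le |A(k)-A(w)|-|A(k)-A(a)|$ obtained from the monotonicity of $A$ via $\sgn(k-a)(A(k)-A(a))=|A(k)-A(a)|$, followed by integration against the nonnegative weight and the change of variables $x\mapsto x-z$, $z\mapsto -z$ --- is precisely the standard argument of those references and is the single-solution analogue of Step~5 in the proof of Lemma~\ref{kato_lemma1}, with the integrability justifications (compact support of $\varphi$, decay of $\mathscr{L}^r_\lambda[\varphi]$, $L^2$-bound on $A(u)$) correctly identified.
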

\begin{proof}
For a proof of this, see \cite[Lemma 4.3]{Alibaud_one} or \cite[Lemma 4.8]{BhKoleyVa}.
\end{proof}

Now making use of  Lemma~\ref{lemma_01}, we can rewrite 
\begin{align*}
\mathrm{M}& = \sum_{\pm} \E \Big[\int_{\R^d} \int_{Q_T} \int_\R \Big( \mathscr{L}^r_{\lambda} [B_{\pm}(v_\eps(t,\cdot))](y) - \mathscr{L}^r_{\lambda} [A_{\pm}(u(t,\cdot))](x) \Big)\,\varphi(t,x) \rho_m(x-y)\\ 
&\hspace{5cm} \times \sgn(u(t,x)-v_{\eps}(t,y)+k) \rho_l(k)\,dk \,dy\,dx\,dt \Big] \\
& \quad \le \E \Big[\int_{\R^d} \int_{Q_T} \int_\R\Big( \mathscr{L}^r_{\lambda} [B_+(u(t,\cdot))](x) - \mathscr{L}^r_{\lambda} [A_+(u(t,\cdot))](x) \Big)\,\varphi(t,x) \rho_m(x-y)\, \\
& \hspace{5cm} \times \sgn(u(t,x)-v_{\eps}(t,y)+k) \rho_l(k) \,dk\,dy\,dx\,dt \Big] \\
& \quad - \E \Big[\int_{\R^d} \int_{Q_T} |B_+(u(t,x))-B_+(v_\eps(t,y))| \mathscr{L}^r_{\lambda} [\varphi(t,\cdot)](x) \, \rho_m(x-y)\,dy\,dx\,dt \Big] +\frac{C\| B_+'\|_\infty l}{\lambda r^{2\lambda}}\\
& \quad + \E \Big[\int_{\R^d} \int_{Q_T} \Big( \mathscr{L}^r_{\lambda} [B_-(v_\eps(t,\cdot)](y) - \mathscr{L}^r_{\lambda} [A_-(v_\eps(t,\cdot))](y) \Big)\,\varphi(t,x) \rho_m(x-y)\,\\ 
& \hspace{5cm} \times\sgn(u(t,x)-v_{\eps}(t,y)+k) \rho_l(k) \,dk\,dy\,dx\,dt \Big] \\
& \quad - \E \Big[\int_{\R^d} \int_{Q_T} |A_-(u(t,x))-A_-(v_\eps(t,y))|   \mathscr{L}^r_{\lambda} [\varphi(t,\cdot)](x) \, \rho_m(x-y)\,dy\,dx\,dt \Big] +\frac{C\| A_-'\|_\infty l}{\lambda r^{2\lambda}}\\
& \quad = \E \Big[\int_{\R^d} \int_{Q_T}\int_\R \mathscr{L}^r_{\lambda} [C_+(u(t,\cdot))](x)\,\varphi(t,x) \rho_m(x-y)\, \sgn(v_{\eps}(t,y) -k-u(t,x)) \rho_l(k) \,dk \,dy\,dx\,dt \Big] \\
& \quad + \E \Big[\int_{\R^d} \int_{Q_T}\int_\R \mathscr{L}^r_{\lambda} [C_-(v_\eps(t,\cdot))](y)\,\varphi(t,x) \rho_m(x-y)\, 
\sgn(u(t,x)+k-v_{\eps}(t,y)) \rho_l(k) \,dk\,dy\,dx\,dt \Big] \\
& \quad - \E \Big[\int_{\R^d} \int_{Q_T} |B_+(u(t,x))-B_+(v_\eps(t,y)| \mathscr{L}^r_{\lambda} [\varphi(t,\cdot)](x) \, \rho_m(x-y)\,dy\,dx\,dt \Big] +\frac{C\| B_+'\|_\infty l}{\lambda r^{2\lambda}}\\
& \quad - \E \Big[\int_{\R^d} \int_{Q_T} |A_-(u(t,x))-A_-(v_\eps(t,y))|  \mathscr{L}^r_{\lambda} [\varphi(t,\cdot)](x) \, \rho_m(x-y)\,dy\,dx\,dt \Big] +\frac{C\| A_-'\|_\infty l}{\lambda r^{2\lambda}}\\[2mm]
&\quad := \mathrm{M}_1 + \mathrm{M}_2 + \mathrm{M}_3 + \mathrm{M}_4.
\end{align*}

To deal with the above terms, first for any measure $\Gamma$, we let $\Gamma^1 = \Gamma|_{\{0 <|z|\leq r_1\}}$ and write $\Gamma = \Gamma^1 + \Gamma|_{\{|z|>r_1\}}$ for $r_1 >r$. Therefore, denoting by  $\mathscr{L}^{r,r_1}_{\lambda}$ the non-local operator integrated over $\{r <|z| \leq r_1\}$ and using the proof of Lemma~\ref{lemma_02}, we have
\begin{align*}
\mathrm{M}_1 & \le -\E \Big[\int_{\R^d} \int_{Q_T}\int_\R |C_+(v_{\eps}(t,y)-k) -C_+(u(t,x))|
\mathscr{L}^{r,r_1}_{\lambda} \big[\varphi(\cdot,t) \rho_m(\cdot-y) \big](x)\rho_l(k) \,dk\,dy\,dx\,dt \Big] \\
& + \E \Big[\int_{\R^d} \int_{Q_T} \int_\R\mathscr{L}^{r_1}_{\lambda} [C_+(u(t,\cdot))](x)\,\varphi(t,x) \rho_m(x-y)\, \sgn(v_{\eps}(t,y)-k -u(t,x)) \rho_l(k) \,dk \,dy\,dx\,dt \Big] 
\end{align*}
To estimate the first term of the above inequality, we note that by Taylor's expansion
\begin{align*}
\mathcal{K}:=& \E \Big[\int_{\R^d} \int_{Q_T} \int_\R  |C_+(v_\eps(t,y)-k) - C_+(u(t,x))| \,\mathscr{L}^{r,r_1}_{\lambda} \big[ \varphi(\cdot,t) \rho_m(\cdot-y) \big](x) \rho_l(k) \,dx\,dt \,dy \Big] \\
& = \E \bigg[\int_0^1 \int_{\R^d} \int_{Q_T} \int_\R \int_{r<|z|\le r_1} \hspace{-1cm} (1-\tau)|C_+(v_\eps(t,y)-k) - C_+(u(t,x))| \, D_x^2 [\varphi(x-\tau z,t) \rho_m(x-\tau z-y)] z.z \,d\mu_\lambda(z) 
\\[-0.4cm] & \hspace{13cm}\rho_l(k) \,dk\,dx\,dt \,dy \,\d\tau \bigg]. 
\end{align*} 
\\[-0.5cm]
By using that 
\begin{align*}
&D_x^2 [\varphi(x-\tau z,t) \rho_m(x-\tau z-y)] 
\\ =& \rho_m(x-\tau z-y)D^2 \varphi(x-\tau z,t) + 2 D \rho_m(x-\tau z-y) D\varphi(x-\tau z,t)  + \varphi(x-\tau z,t) D^2 [\rho_m(x-\tau z-y)]
\\ =& \rho_m(x-\tau z-y)D^2 \varphi(x-\tau z,t) - 2 D_y[\rho_m(x-\tau z-y) D\varphi(x-\tau z,t)]  +  D_y^2 [\varphi(x-\tau z,t)\rho_m(x-\tau z-y)],
\end{align*}
one gets that 
\begin{align*}
\mathcal{K} \leq & 
\Big[\|C_+^\prime\|_\infty l \|D^2 \varphi\|_{L^1(Q_T)}  + \int_{Q_T} \|D^2 \varphi(\cdot,t)\|_\infty \E \big[ |C_+(v_\eps(t,x))| +  |C_+(u(t,x))|   \big] \,   
\,dx\,dt \Big]\int_{r<|z|\le r_1} |z|^2 \,d\mu_\lambda(z)
\\&
+2\|C_+^\prime\|_\infty \E \bigg[  \int_0^T \|\nabla\varphi(\cdot,t)\|_\infty   \|\nabla v_\eps(t,\cdot)| _{L^1(\D)}\, dt  \bigg] \int_{r<|z|\le r_1} |z|^2 \,d\mu_\lambda(z)
\\ &+
Cm\|C_+^\prime\|_\infty \E \bigg[\int_0^T \|\varphi(\cdot,t)\|_\infty \|\nabla v_\eps(t,\cdot)\|_{L^1(\D)}\,dt \bigg] \int_{r<|z|\le r_1} |z|^2 \,d\mu_\lambda(z)
\\ \leq &
C\|C_+^\prime\|_\infty\Big[
l \|D^2 \varphi\|_{L^1(Q_T)} + (\|v_0\|_{BV(\D)}+\|u_0\|_{L^1(\D)})\int_0^T \|D^2 \varphi(\cdot,t)\|_\infty\, dt
\\ & \hspace{3cm}
+  \| v_0\| _{BV(\D)}\int_0^T (\|\nabla\varphi(\cdot,t)\|_\infty + m \|\varphi(\cdot,t)\|_\infty)  dt\Big]\int_{r<|z|\le r_1} |z|^2 \,d\mu_\lambda(z)
\end{align*}

On the other hand, to handle the other term of $M_1$ we proceed as follows:
\begin{align*}
& \E \Big[\int_{\R^d} \int_{Q_T}\int_\R \mathscr{L}^{r_1}_{\lambda} [C_+(u(t,\cdot))](x)\,\varphi(t,x) \rho_m(x-y)\, \sgn(v_{\eps}(t,y) -k-u(t,x)) \rho_l(k)\,dk\,dy\,dx\,dt \Big] \\
  \le &\int_0^T \norm{\varphi(t,\cdot)}_{\infty}\, \int_{|z|> r_1} \norm{C_+(u(t,\cdot+z) -C_+(u(t,\cdot))}_{L^1(\R^d)} \,d\mu_\lambda(z) \,dt,
\end{align*}
and, since $u(\cdot,\cdot+z)$ is the entropy solution associated with the initial condition $u_0(\cdot+z)$, thanks to the $L^1$-contraction  principle of Theorem \ref{uniqueness_new}, 
\begin{align*}
&\limsup_lM_1 \leq
\\[-0.3cm] & 
\|C_+^\prime\|_{\infty} \Bigg[C(\|v_0\|_{BV}+\|u_0\|_{L^1})
\int_0^T \Big(\|D^2  \varphi(\cdot,t)\|_\infty\,  + \|\nabla\varphi(\cdot,t)\|_\infty + m \|\varphi(\cdot,t)\|_\infty \Big)dt\Big]\int_{r<|z|\le r_1} |z|^2 \,d\mu_\lambda(z)
\\&+
 \int_0^T \norm{\varphi(t,\cdot)}_{\infty} \,dt\, \int_{|z|> r_1} \norm{u_0(\cdot+z) -u_0}_{L^1(\R^d)} \,d\mu_\lambda(z)\Bigg].
\end{align*}
Observe that, similar calculations will help us to estimate $\mathrm{M}_2$. Indeed, using Lemma \ref{lemma_02}, we have
\begin{align*}
\mathrm{M}_2 & \le -\E \Big[\int_{\R^d} \int_{Q_T}\int_\R |C_-(u (t,x)+k) -C_-(v_\ep(t,y))|
\mathscr{L}^{r,r_1}_{\lambda} \big[ \rho_m(x-\cdot) \big](y) \varphi(t,x) \rho_l(k)\,dk\,dy\,dx\,dt \Big] \\
& + \E \Big[\int_{\R^d} \int_{Q_T} \int_\R \mathscr{L}^{r_1}_{\lambda} [C_-(v_\ep(t,\cdot))](y)\,\varphi(t,x) \rho_m(x-y)\, \sgn(u(t,x)+k-v_{\eps}(t,y)) \rho_l(k) \,dk \,dy\,dx\,dt \Big] 
.
\end{align*}
We estimate the first term of the above inequality as follows:
\begin{align*}
&\E \Big[\int_{\R^d} \int_{Q_T}\int_\R |C_-(u (t,x)+k) -C_-(v_\ep(t,y))|
\mathscr{L}^{r,r_1}_{\lambda} \big[ \rho_m(\cdot-y) \big](y) \varphi(t,x) \rho_l(k)\,dk\,dy\,dx\,dt \Big]
\\
&=\E \Big[\int^1_0 \int_{\R^d} \int_{Q_T}\int_\R \int_{r < |z| \le r_1} (1-\tau) | C_-(u(t,x)+k) -C_-(v_\ep(t,y))| \\
&\hspace{6cm} \times D_y^2 \rho_m(x-y+\tau z)z.z \varphi(t,x) \rho_l(k) \,d\mu_\lambda(z)\,dk\,dy\,dx\,dt\,d\tau\Big]
\\&\le 
\|C_-^\prime\|_{\infty} \E\bigg[\int_{0}^{1} \int_{\mathbb{R}^{d}} \int_{Q_{T}} \int_\R\int_{r<|z| \leq r_{1}}(1-\tau)  \varphi(t, x)  |\nabla v_{\varepsilon}(t, y)|
\\[-0.3cm]
&\hspace{6cm} \times  |\nabla \rho_m(x-y-\tau z)| |z|^2 \rho_l(k)\,d\mu_\lambda(z)\,dk\,dx\, dt\, d y \mathrm{d} \tau \bigg] 
\\& \le 
\|C_-^\prime\|_{\infty}m \int_0^T \|\varphi(t, \cdot)\|_\infty   \E \|\nabla v_{\varepsilon}(t, \cdot)\|_{L^1(\D)}\,dt   \int_{r<|z| \leq r_{1}}  |z|^2 \,d\mu_\lambda(z).
\end{align*}
And the second one:
\begin{align*}
&\E \Big[\int_{\R^d} \int_{Q_T} \int_\R \mathscr{L}^{r_1}_{\lambda} [C_-(v_\ep(t,\cdot))](y)\,\varphi(t,x) \rho_m(x-y)\, \sgn(u(t,x)+k-v_{\eps}(t,y)) \rho_l(k) \,dk \,dy\,dx\,dt \Big] 
\\ \leq &
\|C_-^\prime\|_{\infty} \E \Big[ \int_0^T\|\varphi(t,\cdot)\|_\infty \int_{r_1<|z| }\hspace{-0.5cm} \|v_\ep(t,\cdot+z)-v_\eps(t,\cdot)\|_{L^1(\D)} \,dt  \,d\mu_\lambda(z)\Big] 
\\ \leq &
\|C_-^\prime\|_{\infty}\int_0^T\|\varphi(t,\cdot)\|_\infty  \,dt\int_{r_1<|z| }\hspace{-0.5cm} \|v_0(\cdot+z)-v_0(\cdot)\|_{L^1(\D)}  \,d\mu_\lambda(z).
\end{align*}
Therefore, 
\begin{align*}
\limsup_{l}M_2 \leq 
\|C_-^\prime\|_{\infty} \int_0^T \|\varphi(t, \cdot)\|_\infty dt  
\Big[ m\|v_0\|_{BV(\D)} \int_{r<|z| \leq r_{1}}\hspace{-1cm} |z|^2 \,d\mu_\lambda(z)
+
\int_{r_1<|z| }\hspace{-0.5cm} \|v_0(\cdot+z)-v_0(\cdot)\|_{L^1(\D)}d\mu_\lambda(z) \Big].
\end{align*}
We are now in a position to add \eqref{continuous_dep_1} and \eqref{continuous_dep_2}
and pass to the limits in $n,\gamma$ and $l$. In what follows, invoking the above estimates and keeping in mind that $\{v_\eps\}_{\eps>0}$ converges in $L^p_{\mathrm{loc}} (\R^d; L^p((0,T) \times \Omega))$, for any $p \in [1,2)$, to the unique BV entropy solution $v$ of \eqref{eq:stoc_frac} with data  $(v_0, f,B, h , \lambda)$, we have 
\begin{align}\label{stoc_entropy_4}  
0 \leq &
\int_{\R^d}\int_{\R^d} |u_0(x)-v_0(y)|\varphi(0,x)\rho_m(x-y)\,dx\,dy \nonumber
  \\ &+
  \E \Big[\int_{Q_T}\int_{\R^d} |u(t,x)-v(t,y)| \partial_t\varphi(t,x) \, \rho_m(x-y)\,dy\,dx\,dt\Big]  \nonumber
  \\ & -
  \E \left[ \int_{Q_T} \int_\D F(u(t,x),v(t,y)).\nabla_x\varphi(t,x)  \rho_m(x-y)\,dx\,dy\,dt\right] \nonumber
  \\ & -  
  \E \bigg[\int_{\R^d} \int_{Q_T} |A(u(t,x))-A(v(t,y))| \,\mathscr{L}_{\lambda,r} \big[ \varphi(t, \cdot) \rho_m (\cdot -y)\big](x) \,dx\,dt \,dy \bigg]  \nonumber
  \\ & 
  - \E \bigg[\int_{\R^d} \int_{Q_T} |B(v(t,y))-B(u(t,x))| \,\mathscr{L}_{\lambda,r} \big[ \rho_m (x -\cdot)\big](y)\,\varphi(t,x) \,dx\,dt \,dy \bigg]  \nonumber
  \\ &
+ C\, \|C^\prime\|_\infty 
\int_0^T \Big(\|D^2  \varphi(\cdot,t)\|_\infty\,  + \|\nabla\varphi(\cdot,t)\|_\infty + m \|\varphi(\cdot,t)\|_\infty \Big)dt\Big]
\int_{r<|z| \leq r_{1}}\hspace{-1cm} |z|^2 \,d\mu_\lambda(z)  \nonumber
\\ & +
  \|C^\prime\|_{\infty} 
 \int_0^T \norm{\varphi(t,\cdot)}_{\infty} \,dt\, \int_{|z|> r_1} \Big[\norm{u_0(\cdot+z) -u_0}_{L^1(\R^d)} +\|v_0(\cdot+z)-v_0(\cdot)\|_{L^1(\D)}\Big]\,d\mu_\lambda(z)  \nonumber
\\ & - 
\E \Big[\int_{\R^d} \int_{Q_T} |B_+(u(t,x))-B_+(v(t,y)| \mathscr{L}^r_{\lambda} [\varphi(t,\cdot)](x) \, \rho_m(x-y)\,dy\,dx\,dt \Big] \nonumber
\\ & - 
\E \Big[\int_{\R^d} \int_{Q_T} |A_-(u(t,x))-A_-(v(t,y))|  \mathscr{L}^r_{\lambda} [\varphi(t,\cdot)](x) \, \rho_m(x-y)\,dy\,dx\,dt \Big],
\end{align}
where $C$ is a constant depending on $\|u_0\|_{L^1(\R^d)}$ and $\|v_0\|_{BV(\R^d)}$, and using the fact that $\| C^\prime_{\pm}\|_{\infty}\le \| A'-B'\|_\infty=\| C'\|_\infty$.


To proceed further, we make a special choice for the function $\psi(t,x)$. To this end, for each $h>0$ and fixed $t\ge 0$, we define
 \begin{align}
 \psi_h^t(s):=\begin{cases} 1, &\quad \text{if}~ s\le t, \notag \\
 1-\frac{s-t}{h}, &\quad \text{if}~~t\le s\le t+h,\quad \psi_R(x):=\text{min}\Big(1,\frac{R^a}{|x|^a}\Big)\notag \\
 0, & \quad \text{if} ~ s \ge t+h.
 \end{cases}
 \end{align}
Furthermore, let $\rho$ be any non-negative mollifier.  Clearly, \eqref{stoc_entropy_4} holds with
 $\varphi(s,x)=\psi_h^t(s) \, (\psi_R \star \rho)(x)$. 
 
With the above choice of test function in \eqref{stoc_entropy_4}, we first wish to pass to the limit as $R \to \infty$ and subsequently as $r \to 0$ in \eqref{stoc_entropy_4}. Thanks to the \textit{a priori} estimates in Appendix~\ref{sec:apriori+existence}, we recall that $u, v \in L^1(\Omega \times Q_T)$. 
Also note that by properties of $\psi_R$ and $\rho $, it follows that $\psi_R \star \rho \to 1$ pointwise as $R \to \infty$. 
Therefore, for any $x$ and $z$, $\psi_R \star \rho(x) -\psi_R \star \rho(x+z) \to 0$ and since it is bounded by $2$ which is integrable on the set $\{|z|>r\}$ with respect to $\mu_{\lambda}$, one concludes that $\mathscr{L}^{r}_{\lambda} [\varphi(t,\cdot)](x) \to 0$. 
\\
As moreover $|\mathscr{L}^{r}_{\lambda} [\varphi(t,\cdot)](x)| \leq 2 \int_{|z|>r} \frac{dz}{r^{d+2\lambda}}$, Lebesgue's theorem once again, yields 
\begin{align*}
& \E \Big[\int_{\R^d} \int_{Q_T} |B_+(u(t,x))-B_+(v(t,y))| \mathscr{L}^{r}_{\lambda} [\varphi(t,\cdot)](x) \, \rho_{m}(x-y)\,dy\,dx\,dt \Big] \to 0\quad (R \to + \infty).
\end{align*}
Similarly,
$$  \E \Big[\int_{\R^d} \int_{Q_T} |A_-(u(t,x))-A_-(v(t,y))| \mathscr{L}^{r}_{\lambda} [\varphi(t,\cdot)](x) \, \rho_{m}(x-y)\,dy\,dx\,dt \Big] \to 0\quad (R \to + \infty).
$$
Note that
\begin{align*}
&\mathscr{L}^r_{\lambda} [\varphi(t,\cdot)\rho_m (\cdot -y)](x) = \int_{|z|<r}\int_0^1 (1-\tau)D_x^2[\varphi(t,\cdot)\rho_m (\cdot -y)](x+\tau z)(z.z)\, d\tau d\mu_\lambda(z)
\\ =& 
\int_{|z|<r} \int_0^1 (1-\tau)\Big[\rho_m (x+\tau z -y)D^2\varphi(t,x+\tau z) + 2 D\varphi(t,x+\tau z)D\rho_m (x+\tau z -y) 
\\[-0.3cm]&\hspace{7cm}+ \varphi(t,x+\tau z)D^2\rho_m (x+\tau z -y)\Big](z.z)\, d\tau d\mu_\lambda(z),
\end{align*}
where $D\varphi$ and $D^2\varphi$ converge uniformly to $0$ when $R\to +\infty$. Thus, 
\begin{align*}
&\Big|\E \bigg[\int_{\R^d} \int_{Q_T} |A(u(t,x))-A(v(t,y))| \,\int_{|z|<r} \int_0^1 (1-\tau)\rho_m (x+\tau z -y)D^2\varphi(t,x+\tau z)(z.z)\, d\tau d\mu_\lambda(z) \,dx\,dt \,dy\Big|
\\ & \qquad  \leq 
\|D^2\varphi\|_\infty  C(A) \Big[\|u\|_{L^1(\Omega\times Q_T)}+\|v\|_{L^1(\Omega\times Q_T)}\Big] \int_{|z|<r} |z|^2\, d\mu_\lambda(z) \underset{R \to +\infty}{\to} 0,
\end{align*}
 and similarly with $2 D\varphi(t,x+\tau z)D\rho_m (x+\tau z -y)$ for a constant depending also on $m$.
\\
Concerning the term with $\varphi$, since it is bounded by $1$, 
\begin{align*}
\kappa=&\Big|\E \bigg[\int\limits_{\R^d} \int_{Q_T} |A(u(t,x))-A(v(t,y))| \,\int\limits_{|z|<r} \int_0^1 (1-\tau)\varphi(t,x+\tau z)D^2\rho_m (x+\tau z -y)(z.z)\, d\tau d\mu_\lambda(z) \,dx\,dt \,dy\Big|
\\ \leq &
C(A,m^2) \Big[\|u\|_{L^1(\Omega\times Q_T)}+\|v\|_{L^1(\Omega\times Q_T)}\Big] \,\int_{|z|<r} |z|^2\, \, d\mu_\lambda(z)
\end{align*}
and $\lim_{r \to 0}\limsup_{R \to +\infty} \kappa =0$, as well as the similar term 
\begin{align*}
  - \E \bigg[\int_{\R^d} \int_{Q_T} |B(v(t,y))-B(u(t,x))| \,\mathscr{L}_{\lambda,r} \big[ \rho_m (x -\cdot)\big](y)\,\varphi(t,x) \,dx\,dt \,dy \bigg].
\end{align*}
Hence using the converge of $\nabla \varphi$, passing limit over $R$ and $r$, and simple applications of dominated convergence theorem yield
\begin{align}
 0 & \le   \E \Big[\int_{\R^d} \int_{\R^d}\big|u_0(x)-v_0(y)\big|\rho_m(x-y)\,dx\,dy\Big] + 
 \E \Big[\int_{Q_T}\int_{\R^d}  |u(t,x)-v(t,y)| \partial_s\psi^t_h(s)\rho_m(x-y))\,dy\,dx\,dt\Big]\notag 
 \\
&+  \|A'-B'\|_\infty \int_0^T \psi^t_h(s)\,ds\, \int_{|z|> r_1} \Big[\norm{u_0(\cdot+z) -u_0}_{L^1(\R^d)}+\norm{v_0(\cdot+z) -v_0}_{L^1(\R^d)}\Big] \,d\mu_\lambda(z)\notag\\
&+ Cm\|A'-B'\|_\infty \int_0^T \psi^t_h(s)\,ds\, \int_{0<|z|\le r_1} |z|^{2} \,d\mu_\lambda(z)  \label{stoc_entropy_nonl},
\end{align}
where $C$ is a constant depending on $\|u_0\|_{L^1(\R^d)}$ and $\|v_0\|_{BV(\R^d)}$.
Let $\mathbb{T}$ be the set all points $t$ in $[0, \infty)$ such that $t$ is a right
 Lebesgue point of 
 \begin{align*}
\mathcal{B}(t)= \E \Big[\int_{\R^d}\int_{\D} |u(t,x)-v(t,x)| \rho_m (x-y)\,dx\,dy \Big].
\end{align*}

\noindent Clearly, $\mathbb{T}^{\complement}$ has zero Lebesgue measure. Fix  $t\in \mathbb{T}$. Thus, passing to the limit as $h\goto 0$ in \eqref{stoc_entropy_nonl}, we obtain
 \begin{align}
&\E \Big[\int_{\R^{d}}\int_{\D} | u(t,x)-v(t,y)| \rho_m (x-y)\,dx\,dy \Big] 
\le  \Big[\int_{\R^{d}} \int_{\R^{d}} |u_0(x)-v_0(y)| \rho_m (x-y) \,dx\,dy\Big] \label{pre:final} \\
&+  \|A'-B'\|_\infty t\, \int_{|z|> r_1} \Big[\norm{u_0(\cdot+z) -u_0}_{L^1(\R^d)}+\norm{v_0(\cdot+z) -v_0}_{L^1(\R^d)}\Big] \,d \lambda(z) \nonumber
\\ &+ C\,t\,m\|A'-B'\|_\infty  \, \int_{0<|z|\le r_1} |z|^{2} \,d\mu_\lambda(z). \notag 
\end{align}
Next, observe that
 \begin{align}
\E \Big[\int_{\R^d} & \big|u(t,x)-v(t,x)\big|\,dx\Big]  \notag \\
  \le &   \E \Big[\int_{\R^d}\int_{\R^d}  \big| u(t,x)-v(t,y)\big| 
\rho_m(x-y)\,dx\,dy \Big]
 + \E \Big[\int_{\R^d}\int_{\R^d} \big| v(t,x) -v(t,y)\big|\rho_m(x-y)\,dx\,dy \Big]\notag \\
\le &\E  \Big[\int_{\R^{d}}\int_{\D} | u(t,x)-v(t,y)| \rho_m (x-y)\,dx\,dy \Big] + \frac1m\,\|v_0\|_{\mathrm{BV}}, \label{estimate:solu_levy}
 \end{align} 
and 
 \begin{align}
    \Big[\int_{\R^d}\int_{ \R^d} \big| u_0(x) -v_0(y)\big|\rho_m(x-y) \,dx\, dy\Big]
   \le   \Big[\int_{\R^d} \big|u_0(x)-v_0(x)\big| \,dx \Big]+ \frac1m \, \|v_0\|_{\mathrm{BV}}. \label{estimate:ini_levy}
 \end{align}
Making use of \eqref{estimate:solu_levy} and \eqref{estimate:ini_levy} in \eqref{pre:final}  , we have 
\begin{align}
  \E& \Big[\int_{\R^d}  \big| u(t,x)-v(t,x)\big|\,dx \Big] 
 \le \Big[\int_{ \R^d}  \big| u_0(x) -v_0(x)\big|\,dx \Big]  
 + 2/m \,\|v_0\|_{\mathrm{BV}}  \label{inq:pre-final_001} \\
 & + t \|A'-B'\|_\infty \, \int_{|z|> r_1} \Big[\norm{u_0(\cdot+z) -u_0}_{L^1(\R^d)}+\norm{v_0(\cdot+z) -v_0}_{L^1(\R^d)}\Big] \,d\mu_\lambda(z) \notag 
 \\ &+ C t m\|A'-B'\|_\infty  \, \int_{0<|z|\le r_1} |z|^{2} \,d\mu_\lambda(z). \notag 
  \end{align}
Now we optimize the terms involving $m$ in \eqref{inq:pre-final_001}, by using  $\min_{m>0} \big(m a + \frac{b}{m}  \big) = 2 \sqrt{ab}$, for $a,b > 0$ , we obtain
\begin{align}
 \label{inq:pre-final_002}
  \E & \Big[ \int_{\D} \big| u(t,x)-v(t,x)\big|\,dx \Big] 
 \le   \Big[ \int_{\D} \big| u_0(x) -v_0(x)\big|\,dx \Big] + C\sqrt{T\|A'-B'\|_\infty} r_1^{1-\lambda} \notag \\
 & + C \|A'-B'\|_\infty T\, \int_{|z|> r_1} \Big[\norm{u_0(\cdot+z) -u_0}_{L^1(\R^d)}+\norm{v_0(\cdot+z) -v_0}_{L^1(\R^d)}\Big] \,d\mu_\lambda(z) \notag 
\\  \le&   \| u_0 -v_0\|_{L^1} + C\sqrt{T\|A'-B'\|_\infty} r_1^{1-\lambda}   + C \|A'-B'\|_\infty T\, \Big[\norm{u_0}_{L^1(\R^d)}+\norm{v_0}_{L^1(\R^d)}\Big] r_1^{-2\lambda},\notag 
\end{align}
 where C depends on $\|v_0\|_{BV}$ and $\|u_0\|_{L^1}$. Then, choosing $r_1=[T\|A'-B'\|_\infty]^{\frac1{2(1+\lambda)}}$, one gets that
\begin{align*}
  \E & \Big[ \int_{\D} \big| u(t,x)-v(t,x)\big|\,dx \Big] 
 \le   \norm{u_0-v_0}_{L^1(\R^d)} + C\Big(\norm{u_0}_{L^1(\R^d)},\norm{v_0}_{BV(\R^d)}\Big)\Big[T\|A'-B'\|_\infty\Big]^{\frac1{1+\lambda}}. 
\end{align*}
This finishes the proof of the theorem.

\section{Proof of Corollary \ref{rate} : Rate of Convergence}
\label{rateofconv}
Having achieved the convergence of vanishing viscosity solutions $u_\ep(t,x)$ of the problem \eqref{eq:viscous-Brown} to the unique entropy solution of the stochastic conservation law \eqref{eq:stoc_frac}, we now look forward to derive the explicit rate of convergence. We shall use the continuous dependence estimates to explicitly obtain the rate of convergence of the sequence $\{ u_\ep(t,x)\}_{\ep>0}$ to the BV entropy solution $u(t,x)$ of \eqref{eq:stoc_frac}.\\
For $\ep>0$, let $u_\ep(t,x)$ be the weak solution to \eqref{eq:viscous-Brown} with data $(u_0,f,A,h,\lambda)$ and $u(t,x)$ be the entropy solution to \eqref{eq:stoc_frac}. A similar argument (with $A= B$) leading to \eqref{stoc_entropy_4} yields
\begin{align}
0 \leq &
\int_{\R^d}\int_{\R^d} |u_0(x)-u^\eps_0(y)|\varphi(0,x)\rho_m(x-y)\,dx\,dy + C\eps m \nonumber
  \\ &+
  \E \Big[\int_{Q_T}\int_{\R^d} |u(t,x)-u_\eps(t,y)| \partial_t\varphi(t,x) \, \rho_m(x-y)\,dy\,dx\,dt\Big]  \nonumber
  \\ & -
  \E \left[ \int_{Q_T} \int_\D F(u(t,x),u_\eps(t,y)).\nabla_x\varphi(t,x)  \rho_m(x-y)\,dx\,dy\,dt\right] \nonumber
  \\ & -  
  \E \bigg[\int_{\R^d} \int_{Q_T} |A(u(t,x))-A(u_\eps(t,y))| \,\mathscr{L}_{\lambda,r} \big[ \varphi(t, \cdot) \rho_m (\cdot -y)\big](x) \,dx\,dt \,dy \bigg]  \nonumber
  \\ & 
  - \E \bigg[\int_{\R^d} \int_{Q_T} |A(u_\eps(t,y))-A(u(t,x))| \,\mathscr{L}_{\lambda,r} \big[ \rho_m (x -\cdot)\big](y)\,\varphi(t,x) \,dx\,dt \,dy \bigg]  \nonumber
\\ & - 
\E \Big[\int_{\R^d} \int_{Q_T} |A_+(u(t,x))-A_+(u_\eps(t,y)| \mathscr{L}^r_{\lambda} [\varphi(t,\cdot)](x) \, \rho_m(x-y)\,dy\,dx\,dt \Big] \nonumber
\\ & - 
\E \Big[\int_{\R^d} \int_{Q_T} |A_-(u(t,x))-A_-(u_\eps(t,y))|  \mathscr{L}^r_{\lambda} [\varphi(t,\cdot)](x) \, \rho_m(x-y)\,dy\,dx\,dt \Big],
\end{align}
where $C$ is a constant depending on $\|u_0\|_{BV(\R^d)}$.

As before we choose the test function $\varphi(s,x)= \psi^t_h(s)( \psi_R \ast \rho)(x)$, where $\psi^t_h(s)$,$\psi_R$ are defined previously. Passing to the limit as $R \to \infty$ and then $r \to 0$, we get 
\begin{align*}
&-\E \left[ \int_{Q_T} \int_\D |u(s,x)-u_\ep(s,y)| \partial_s \psi^t_h(s) \rho_m(x-y) \,dy\,dx\,ds \right]\\
& \hspace{3cm}\le \E \left[ \int_\D \int_\D | u_0(x) - u^\ep_0(y)| \rho_m(x-y)\,dx\,dy \right]+ C \ep m.
\end{align*}
Next, we let $h \to 0$ to get 
\begin{align*}
&\E \left[ \int_\D \int_\D |u(s,x)-u_\ep(s,y)|  \rho_m(x-y) \,dy\,dx \right] \le  \left[ \int_\D \int_\D | u_0(x) - u^\ep_0(y)| \rho_m(x-y)\,dx\,dy \right]+ C \ep m.
\end{align*}
As $u_\ep(t,y)$ and $u(t,x)$ satisfy the spatial BV bounds, bounded by the BV norm of $u_0$, we obtain 
\begin{equation}
\begin{aligned}
\E \bigg[ \int_\D | u_\ep(t,x) -u(t,x)| \,dx \bigg] \le C\bigg( \ep^{1/2}+ \frac{1}{m} + \ep m \bigg). \label{ROC}
\end{aligned}
\end{equation}
Choosing the optimal value of $m =\ep^{-1/2}$ in \eqref{ROC} yields
$$\E \big[ \| u_\ep(t,\cdot) -u(t,\cdot) \|_{L^1(\D)} \big] \le C \ep^{1/2},$$
where $C>0$ is a constant depending only on $|u_0|_{BV}$.


\section{Proof of Theorem~\ref{prop:vanishing viscosity-solution}: Existence of Viscous Solution}
\label{viscous}

In this section, we demonstrate a proof of existence and uniqueness of the solution $u_\eps$ in $N_w^2(0,T,H^1(\R^d))$ which has pathwise continuous trajectories with values in $L^2(\R^d)$, under assumptions~\ref{A1}--\ref{A4}, to the regularized problem \eqref{eq:viscous-Brown}:
\begin{align*}
du_\eps(t,x) -[\eps \Delta u_\eps(t,x)+ \Div f(u_\ep(t,x))]\,dt & + \mathscr{L}_{\lambda}[A(u_\eps(t, \cdot))](x)\,dt = \h(u_\eps(t,x))\,dW(t),
\end{align*}
with a regular initial data $u_{\eps}(0,\cdot)=u_0^{\eps}\in H^1(\R^d)\cap L^1(\R^d)$ such that 
$u_0^{\eps}$ converges to $u_0$ in $L^2(\R^d)$, $\sqrt{\eps}u_0^{\eps}$ is bounded in $H^1(\R^d)$ by $C\|u_0^{\eps}\|_{L^2(\R^d)}$; if moreover $u_0\in L^1(\R^d)$ then $\|u_0^{\eps}\|_{L^1(\R^d)} \leq \|u_0\|_{L^1(\R^d)}$ and if $u_0\in BV(\R^d)$ then $TV(u_0^{\eps}) \leq TV(u_0)$. 
\\
Note that such an initial condition can be chosen in the following way: $u_0^{\eps}=v$ where $v$ is the solution to $v - \eps \Delta v = u_0$ if $u_0$ is also known to be in $L^1(\R^d)$, else $u_0^{\eps}=v\chi_\eps$ where $\chi_\eps$ is the classical cut-off function.
Note that one may expect to establish an existence result for \eqref{eq:viscous-Brown} by applying classical results using monotone arguments. However, there is a complicating factor at play here, mainly for the case $\lambda >1/2$. In fact, because of the nonlinear function $A$, present in the fractional Laplace operator, it is not feasible to apply the classical results of existence based on monotone arguments. As a remedy, we follow a general strategy of proving existence of solutions by a compactness argument. To that context, we first propose the following singular perturbation of \eqref{eq:viscous-Brown}
\begin{equation}
\label{eq:viscous-Brown_one_01} 
\begin{aligned}
du_{\eps, \gamma}(t,x) + \mathscr{L}_{\lambda}[A(u_{\eps, \gamma}(t,\cdot))](x)\,dt  & - \Div f(u_{\eps, \gamma}(t,x)) \,dt + \gamma \Delta^2 u_{\eps,\gamma}(t,x) \,dt  \\
&=  \eps \Delta u_{\eps,\gamma}(t,x) \,dt  + \h(u_{\eps,\gamma}(t,x))\,dW(t),  \quad (x,t) \in {Q}_{T}, \\
u_{\eps, \gamma}(0,x) &= u^{\eps}_0(x), \quad x \in \D.
\end{aligned}
\end{equation}

\subsection{Existence $\&$ uniqueness of solution for \eqref{eq:viscous-Brown_one_01}}
To establish the existence and uniqueness of strong solutions to \eqref{eq:viscous-Brown_one_01}, we closely follow the work of Pr\'evot and R\"ockner \cite[Sec. 4.1 p.55]{PrevotRockner}.

In what follows, let us denote by $H=L^2(\D)$, $V=H^2(\D)$ with dual space $H^{-2}(\R^d)$ and by $\bar A$ the operator $\bar A(t,u):= \eps \Delta u - \mathscr{L}_{\lambda}[A(u)] + \Div f(u) -\gamma \Delta^2 u$ understood in the weak sense:
\begin{align*}
\forall u,v \in H^2(\D), \quad \langle \bar A(t,u),v \rangle = 
- \int_{\D} \big[ \eps \nabla u \nabla v + \gamma \Delta u \Delta v + \mathscr{L}_{\lambda/2}[A(u)]\mathscr{L}_{\lambda/2}[v] + f(u)\nabla v \big] dx.
\end{align*}

\begin{lem}
	For any $\gamma \in (0,1)$, there exists a unique strong $L^2(\D)$-valued continuous process solution $u_{\eps, \gamma} \in N^2_w(0,T;H^2(\D))$, such that $\partial_t \big[ u_{\eps, \gamma} - \int_0^t \h(u_{\eps, \gamma}) dW \big] \in L^2(\Omega\times(0,T),H^{-2}(\D))$, to the regularized problem \eqref{eq:viscous-Brown_one_01}.
\end{lem}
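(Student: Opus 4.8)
The plan is to cast \eqref{eq:viscous-Brown_one_01} as an abstract stochastic evolution equation in the Gelfand triple $V = H^2(\D) \hookrightarrow H = L^2(\D) \hookrightarrow V^{*} = H^{-2}(\D)$, with drift $\bar A$ as defined above and diffusion $u \mapsto \h(u)$, and to apply the classical variational existence-and-uniqueness theorem for monotone coercive SPDEs (\cite{PrevotRockner}, Sec.~4.1). Concretely, I would verify the four structural hypotheses (hemicontinuity, weak monotonicity, coercivity, boundedness/growth) for $\bar A$ together with the Lipschitz and growth bounds for $\h$ from \ref{A4}, and then read off the unique $L^2(\D)$-continuous solution $u_{\eps,\gamma} \in N^2_w(0,T;H^2(\D))$ with the stated regularity $\partial_t\big[u_{\eps,\gamma} - \int_0^t \h(u_{\eps,\gamma})\,dW\big] \in L^2(\Omega\times(0,T);H^{-2}(\D))$.

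Coercivity is where the fourth-order regularization pays off. Testing $\bar A(u)$ against $u$ gives $\langle \bar A(u),u\rangle = -\eps\|\grad u\|_{L^2}^2 - \gamma\|\Delta u\|_{L^2}^2 - \langle \mathscr{L}_\lambda[A(u)],u\rangle$, since the flux contribution $\int_\D f(u)\cdot\grad u\,dx = \int_\D \Div(\mathcal{F}(u))\,dx$ (with $\mathcal{F}'=f$) vanishes. The nonlocal term is nonpositive: by the bilinear representation recalled in Section~\ref{sec:tech}, $\langle \mathscr{L}_\lambda[A(u)],u\rangle = \frac{c_\lambda}{2}\iint \frac{(A(u(x))-A(u(y)))(u(x)-u(y))}{|x-y|^{d+2\lambda}}\,dx\,dy \ge 0$ because $A$ is nondecreasing. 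Using the norm equivalence $\|u\|_{H^2}^2 \le C(\|u\|_{L^2}^2 + \|\Delta u\|_{L^2}^2)$ on $\D$ and the bound $\mathbb{G}^2(u)\le K|u|^2$, one obtains $2\langle\bar A(u),u\rangle + \|\h(u)\|_{L_2(\mathbb{H},H)}^2 \le -c_2\gamma\|u\|_{H^2}^2 + c_1\|u\|_{L^2}^2$, i.e. coercivity in $V$ with exponent $2$. The boundedness estimate $\|\bar A(u)\|_{H^{-2}} \le C(1+\|u\|_{H^2})$ then follows termwise, the two (bi-)Laplacian parts being of order $\le 4$ and the nonlocal and flux parts controlled in $H^{-2}$ by $\|u\|_{L^2}$.

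The main obstacle is the weak monotonicity of the nonlinear nonlocal term: contrary to the linear case, $u\mapsto\mathscr{L}_\lambda[A(u)]$ is \emph{not} a monotone operator in the $L^2$-pivot, which is precisely the phenomenon flagged after \eqref{imp_10}. The resolution, and the very reason the perturbation $\gamma\Delta^2$ is introduced, is that $\bar A$ now acts into $V^{*}=H^{-2}$: since $A$ is Lipschitz with $A(0)=0$, the Nemytskii map is Lipschitz on $L^2$, with $\|A(u)-A(v)\|_{L^2}\le \|A'\|_\infty\|u-v\|_{L^2}$, while $\mathscr{L}_\lambda$ maps $H^0=L^2$ boundedly into $H^{-2\lambda}\hookrightarrow H^{-2}$ because $\lambda\le 1$. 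Hence $u\mapsto\mathscr{L}_\lambda[A(u)]$ is globally Lipschitz from $H$ into $V^{*}$, and for $\phi=u-v$,
\[
-2\langle \mathscr{L}_\lambda[A(u)]-\mathscr{L}_\lambda[A(v)],\phi\rangle \le 2\,\|\mathscr{L}_\lambda[A(u)-A(v)]\|_{H^{-2}}\,\|\phi\|_{H^2} \le C\,\|\phi\|_{L^2}\,\|\phi\|_{H^2}.
\]
A Young inequality splits the right-hand side into $\nu\|\phi\|_{H^2}^2$, absorbed for $\nu$ small into the dissipative $-2\gamma\|\Delta\phi\|_{L^2}^2$ coming from $\bar A$, plus $C(\gamma)\|\phi\|_{L^2}^2$. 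The flux difference is handled by $|2\int_\D (f(u)-f(v))\cdot\grad\phi\,dx| \le 2\|f'\|_\infty\|\phi\|_{L^2}\|\grad\phi\|_{L^2}$, absorbed into $\eps\|\grad\phi\|_{L^2}^2$, the noise contributes $\le K\|\phi\|_{L^2}^2$ by \ref{A4}, and the $\eps$- and $\gamma$-terms are dissipative. Altogether $2\langle\bar A(u)-\bar A(v),\phi\rangle + \|\h(u)-\h(v)\|_{L_2(\mathbb{H},H)}^2 \le c\|\phi\|_{L^2}^2$, which is the (global) weak monotonicity. Hemicontinuity of $s\mapsto\langle\bar A(u+sv),w\rangle$ is immediate from the continuity of $A$ and $f$ together with Lebesgue's theorem. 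With the four hypotheses verified, the abstract theorem yields existence, while weak monotonicity gives pathwise uniqueness (equivalently, an It\^o expansion of $\|u-v\|_{L^2}^2$ and Gr\"onwall's lemma close the argument), together with the claimed time-derivative regularity.
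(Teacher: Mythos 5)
Your proposal is correct and follows essentially the same route as the paper: the same Gelfand triple $H^2(\D)\hookrightarrow L^2(\D)\hookrightarrow H^{-2}(\D)$, verification of the four Pr\'ev\^ot--R\"ockner hypotheses, with coercivity using the sign of $\langle\mathscr{L}_\lambda[A(u)],u\rangle$ from the monotonicity of $A$, and weak monotonicity obtained by bounding the nonlocal difference term by $C\|u-v\|_{L^2}\|u-v\|_{H^2}$ and absorbing it into the $\gamma$-dissipation via Young's inequality and the equivalence $\|w\|_{H^2}\simeq\|w\|_{L^2}+\|\Delta w\|_{L^2}$. The only cosmetic difference is that you estimate the duality pairing through $\|\mathscr{L}_\lambda[A(u)-A(v)]\|_{H^{-2}}$, whereas the paper symmetrically places the operator on $u-v$; the resulting bound is identical.
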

\begin{proof}
Note that it is enough to verify conditions $(H1)$-$(H4)$ of \cite[p.556]{PrevotRockner} : the hemicontinuity, the weak monotonicity, the coercivity and the  boundedness. 
	\\
	To verify the condition of the hemicontinuity $(H1)$, we remark that this continuity is obvious for the linear part of the operator, and one only needs to remark that 
	\begin{align*}
	s \mapsto \int_{\D} \big[  \mathscr{L}_{\lambda/2}[A(u+s v)]\mathscr{L}_{\lambda/2}[p] + f(u+s v)\nabla p \big] dx
	\end{align*}
	is continuous since $f$ and $A$ are Lipschitz-continuous functions and $u$, $v$ and $p$ are in $H^2(\D)$. 
	
	For the verification of the weak monotonicity $(H2)$, we note that  for any $u$ and $v$ in  $H^2(\D)$, 
	\begin{align*}
&- 2\int_{\D} \big[ \eps |\nabla (u- v)|^2 + \gamma |\Delta (u- v)|^2 + \mathscr{L}_{\lambda/2}[A(u)-A(v)]\mathscr{L}_{\lambda/2}[u-v] \\
&\hspace{7cm} + [f(u)-f(v)]\nabla (u-v) \big] dx+  \| \h(u)-\h(v)\|^2_2
\\ \leq &
- 2\Big[\eps \|\nabla (u- v)\|_2^2 + \gamma \|\Delta (u- v)\|_2^2 + \int_{\D} [A(u)-A(v)]\mathscr{L}_{\lambda}[u-v]dx\Big] \\
&\hspace{7cm} + \|f'\|_\infty\|u-v\|_2\|\nabla (u-v)\|_2 +  K\| u-v\|^2_2
\\ \leq &
- 2\Big[\eps \|\nabla (u- v)\|_2^2 + \gamma \|\Delta (u- v)\|_2^2\Big]+\|A'\|_\infty\|u-v\|_2\|\mathscr{L}_{\lambda}[u-v]\|_2 \\
& \hspace{7cm}  + \|f'\|_\infty\|u-v\|_2\|\nabla (u-v)\|_2 +  K\| u-v\|^2_2
\\ \leq &
-\Big[\eps \|\nabla (u- v)\|_2^2 + \gamma \|\Delta (u- v)\|_2^2\Big]+ (K+C(\eps,\gamma)\| u-v\|^2_2
\end{align*}
where we have used the fact that $\|\mathscr{L}_{\lambda}[u-v]\|_2\leq C\|u-v\|_{H^2}$ and that $u \mapsto \|u\|_2+\|\Delta u\|_2$ is a norm on $H^2(\D)$ equivalent to the usual one.
	
To verify the coercivity $(H3)$, we proceed as follows: 
\begin{align*}
&- 2\int_{\D} \big[ \eps |\nabla u)|^2 + \gamma |\Delta u|^2 + \mathscr{L}_{\lambda/2}[A(u)]\mathscr{L}_{\lambda/2}[u] + f(u)\nabla (u-v) \big] dx+  \| \h(u)\|^2_2
\\ & \qquad \leq 
- 2\Big[\eps \|\nabla u\|_2^2 + \gamma \|\Delta u\|_2^2\Big] +  K\| u\|^2_2 \leq - 2\min(\eps,\gamma) \| u\|_{H^2}^2 +  (K+2\min(\eps,\gamma))\| u\|^2_2,
\end{align*}
since $A$ is an increasing function and thanks to Green's formula. 
	
Finally, to verify the boundedness condition $(H4)$, we proceed as follows: observe that
\begin{align*}
&\norm{\ep \Delta v - \gamma\,\Delta^2 v -\mathscr{L}_{\lambda}[A(v)] +\mathrm{div} f(v)}_{H^{-2}}
= \sup_{\| u\| \neq 0} \frac{\big| \big<\ep \Delta v - \gamma\,\Delta^2 v -\mathscr{L}_{\lambda}[A(v)] +\mathrm{div} f(v) ,u \big> \big|}{\|u\|_{H^2}}\\
&\qquad = \sup_{\|u\|\neq 0} \frac{\big| -\ep \langle \nabla v, \nabla u\rangle -\gamma \langle \Delta v, \Delta u\rangle -\langle \mathscr{L}_{\lambda}[A(v)], u \rangle -\langle f(v),\nabla u\rangle \big|}{\|u\|_{H^2}}\\
&\le  \sup_{\|u\| \neq 0} \ep \frac{\| \nabla v\|_2 \| \nabla u\|_2}{\|u\|_{H^2}} +\sup_{\|u\| \neq 0} \gamma \frac{\| \Delta v\|_2 \| \Delta u\|_2}{\|u\|_{H^2}} + C_A \sup_{\|u \| \neq 0} \frac{\| v\|_{H^1}\| u\|_{H^1}}{\|u \|_{H^2}}+ C_f\sup_{\|u \| \neq 0} \frac{\|v\|_2 \|\nabla u\|_2}{\|u\|_{H^2}},
\end{align*}
and thus we get 
\begin{align*}
\norm{\ep \Delta v - \gamma\,\Delta^2 v -\mathscr{L}_{\lambda}[A(v)] +\mathrm{div} f(v)}_{H^{-2}}  \le \big( c(\ep)+ c(\gamma) +c_{A}+C_f \big) \|v\|_{H^2}.
\end{align*}
Thus there exists a unique solution to \eqref{eq:viscous-Brown_one_01}, thanks to \cite[Theorem 4.2.4, p.75]{PrevotRockner}.
\end{proof}


As a first step towards proving the existence and uniqueness of strong solutions to the regularized problem \eqref{eq:viscous-Brown}, 
we will assume that $u_0^\eps$ enjoys additional regularity: $u_0^\eps \in L^2\big(\R^d,\|x\|^2 dx \big)$. In the subsequent section, we will show how to remove this additional regularity assumption and deal with the case when $u_0^\eps \in L^2(\R^d)$ by an approximation argument and making use of the ``$L^1$-stability" estimate of the solution with respect to the initial data. 

\subsection{Energy estimates}
\label{regularity}

In this subsection, our main aim is to derive \textit{a priori} energy estimates required to furnish a compactness argument. To that context, we have the following lemma:
\begin{lem}[Energy estimates]
\label{EnergyEstimates}
There exists a constant $C>0$ such that, for all $\gamma \in (0,1)$, the unique weak solution $u_{\eps, \gamma} \in N_w^2(0,T,H^2(\R^d))$ satisfies the following $\gamma$-independent energy bounds
\begin{itemize}
\item [$(a)$] The solution satisfies
\begin{align*}
\displaystyle{\sup_{0\le t\le T} \E\Big[\big\|u_{\eps, \gamma}(t)\big\|_{L^2(\R^d)}^2\Big]}  &+ \eps \int_0^T \E\Big[\big\|\grad u_{\eps, \gamma}(s)\big\|_{L^2(\R^d)}^2\Big]\,ds \\
&+ \gamma \int_0^T \E\Big[\big\|\Delta u_{\eps, \gamma}(s)\big\|_{L^2(\R^d)}^2\Big]\,ds  + \int_0^T \E\Big[\big\| A(u_{\eps, \gamma})\|_{H^{\lambda}(\R^d)}^2\Big]\,ds \le C, 
\end{align*} \vspace{.2cm}
\item [$(b)$] $\norm{\partial_t \big(u_{\eps, \gamma} - \int_0^t \h(u_{\eps, \gamma}(s,\cdot))\, dW(s) \big)}_{L^2(\Omega\times(0,T); H^{-2}(\R^d))} \le C,$\vspace{.2cm}
\item [$(c)$] $\Big\|u_{\eps, \gamma} -\int_0^t \h(u_{\eps,\gamma})\,dW \Big\| _{L^2 \big(\Omega, W^{1,2}(0,T,H^{1}(\D),H^{-2}(\D))\big)} \leq C/\sqrt{\eps}$,
\item [$(d)$] For any $p\geq 2$ and any $0<\alpha<\frac12$
\begin{align*}
\norm{\int_0^t \h(u_{\eps, \gamma}) \,dW}_{L^p\big(\Omega,W^{\alpha,p}(0,T,L^2(\D))\big)} \le C,\text{ and }\sup_\gamma  \E \big[\|u_{\eps, \gamma} \|^p_{C([0,T],L^2(\D))} \big] \le C
\end{align*}
\end{itemize}
\begin{itemize}
\item[$(e)$] Assuming moreover that $u_0 \in L^2(\R^d,\|x\|^2 dx)$, one has $\sup_t\E \big[\int_{\R^d} u^2_{\eps, \gamma}(t) \|x\|^2 dx\big] \leq C(\eps)$.
\end{itemize}
\end{lem}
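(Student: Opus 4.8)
The plan is to obtain every bound by applying It\^o's formula in the Gelfand triple $H^2(\D)\hookrightarrow L^2(\D)\hookrightarrow H^{-2}(\D)$ to the functional $u\mapsto\norm{u}_{L^2(\D)}^2$ (together with its powers and a weighted variant), exploiting that all four deterministic operators constituting $\bar A$ are dissipative for this functional. The solution $u_{\eps,\gamma}$ has exactly the regularity needed to legitimate the variational It\^o formula (cf. the previous lemma and \cite{PrevotRockner}), so I treat its application as granted, and write
\begin{align*}
\norm{u_{\eps,\gamma}(t)}^2 = \norm{u_0^\eps}^2 + 2\int_0^t\langle u_{\eps,\gamma}, \bar A(s,u_{\eps,\gamma})\rangle\,ds + 2\int_0^t\langle u_{\eps,\gamma}, \h(u_{\eps,\gamma})\,dW\rangle + \int_0^t\norm{\h(u_{\eps,\gamma})}^2_{HS}\,ds.
\end{align*}

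For part $(a)$ I treat the four pieces of $\langle u,\bar A u\rangle$ separately: $\langle u,\eps\Delta u\rangle=-\eps\norm{\grad u}^2$; $\langle u,-\gamma\Delta^2 u\rangle=-\gamma\norm{\Delta u}^2$; the flux term $\langle u,\Div f(u)\rangle=-\int\grad u\cdot f(u)=0$ because $\grad u\cdot f(u)=\Div F(u)$ with $F'=f$ integrates to zero; and for the nonlocal term the bilinear form gives
\begin{align*}
\langle u,\mathscr{L}_\lambda[A(u)]\rangle = \frac{c_\lambda}{2}\iint\frac{\big(A(u(x))-A(u(y))\big)\big(u(x)-u(y)\big)}{|x-y|^{d+2\lambda}}\,dx\,dy \;\ge\;0,
\end{align*}
which, since $|u(x)-u(y)|\ge\norm{A'}_\infty^{-1}|A(u(x))-A(u(y))|$, is bounded below by $\norm{A'}_\infty^{-1}\,|A(u)|^2_{\dot H^{\lambda}(\D)}$; combined with $\norm{A(u)}_{L^2}\le\norm{A'}_\infty\norm{u}_{L^2}$ this controls the full $H^\lambda$-norm. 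Taking expectations kills the martingale, while $\norm{\h(u)}^2_{HS}=\int\mathbb{G}^2(u)\le K\norm{u}^2$ by \ref{A4}; Gr\"onwall then yields $\sup_t\E\norm{u_{\eps,\gamma}(t)}^2\le\norm{u_0^\eps}^2 e^{KT}\le C$, and feeding this back bounds the three dissipative integrals, all uniformly in $\gamma$.

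For $(b)$--$(d)$, from $\partial_t\big(u_{\eps,\gamma}-\int_0^t\h\,dW\big)=\bar A(t,u_{\eps,\gamma})$ part $(b)$ follows by bounding each summand of $\bar A$ in $H^{-2}$: $\norm{\eps\Delta u}_{H^{-2}}\lesssim\eps\norm{u}_{L^2}$, $\norm{\mathscr{L}_\lambda[A(u)]}_{H^{-2}}\lesssim\norm{A(u)}_{H^\lambda}$ (as $\lambda<2$), $\norm{\Div f(u)}_{H^{-2}}\lesssim\norm{u}_{L^2}$, and $\norm{\gamma\Delta^2 u}^2_{H^{-2}}\lesssim\gamma^2\norm{u}^2_{H^2}\le\gamma\big(\gamma\norm{\Delta u}^2\big)+\gamma^2\norm{u}^2$, so that integrating in $(t,\omega)$ and invoking $(a)$ (with $\gamma\le1$) gives a $\gamma$-independent constant. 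For $(c)$ I set $w:=u_{\eps,\gamma}-\int_0^t\h\,dW$, whose time-derivative is controlled in $L^2(0,T;H^{-2})$ by $(b)$, and whose $L^2(0,T;H^1)$-norm I bound by $\norm{u_{\eps,\gamma}}_{L^2(0,T;H^1)}=O(\eps^{-1/2})$ (since $\eps\int_0^T\E\norm{\grad u}^2\le C$) plus $\norm{\int_0^t\h\,dW}_{L^2(0,T;H^1)}$, estimated via the It\^o isometry and $\sum_k\norm{\grad g_k(u)}^2\le K\norm{\grad u}^2$, again $O(\eps^{-1/2})$; this yields the stated $C/\sqrt\eps$. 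For $(d)$, the time-regularity of the stochastic convolution is the Flandoli--Gatarek lemma: since $\h(u_{\eps,\gamma})$ is bounded in $L^p(\Omega\times(0,T);HS(\mathbb{H},L^2))$, the integral lies in $L^p(\Omega;W^{\alpha,p}(0,T;L^2))$ for $\alpha<1/2$; the required $p$-th moment bound $\sup_\gamma\E\norm{u_{\eps,\gamma}}^p_{C([0,T];L^2)}\le C$ is obtained by raising the It\^o identity of $(a)$ to the power $p/2$, estimating the supremum of the martingale term by Burkholder--Davis--Gundy (its bracket is $\le K\norm{u}^4$), absorbing $\sup_s\norm{u(s)}^p$ by Young's inequality, and closing with Gr\"onwall---the two parts feeding each other, the moment bound supplying the integrability needed for the time-regularity.

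Part $(e)$ is the delicate one. I apply It\^o to $u\mapsto\int u^2 w_R\,dx$, with $w_R$ a smooth truncation of $\norm{x}^2$ satisfying $w_R\uparrow\norm{x}^2$ and, crucially, $|D^2 w_R|+|\Delta w_R|\le C$ uniformly in $R$. The $\eps\Delta$ term contributes $-2\eps\int w_R|\grad u|^2+\eps\int\Delta w_R\,u^2\le C\eps\norm{u}^2_{L^2}$; the $\gamma\Delta^2$ term gives $-2\gamma\int w_R|\Delta u|^2\le0$ plus remainders of the form $\gamma\int D^2 w_R(\grad u,\grad u)$ and $\gamma\int u\,\Delta w_R\,\Delta u$, controlled $R$-uniformly by $(a)$; the flux term is handled using $|F(u)|\le C u^2$ and $|\grad w_R|\lesssim\norm{x}$; and $\int\mathbb{G}^2(u)w_R\le K\int u^2 w_R$ by \ref{A4}. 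The main obstacle is the nonlocal term $-2\int u w_R\,\mathscr{L}_\lambda[A(u)]$: I symmetrize its bilinear form via
\begin{align*}
u(x)w_R(x)-u(y)w_R(y)=\tfrac12\big(w_R(x)+w_R(y)\big)\big(u(x)-u(y)\big)+\tfrac12\big(u(x)+u(y)\big)\big(w_R(x)-w_R(y)\big),
\end{align*}
so that the first piece yields a nonnegative (dissipative) contribution while the second is a commutator that must be bounded, \emph{uniformly in} $R$, by $C(\eps)\big(\norm{u}^2_{L^2}+\int u^2 w_R\big)$. Securing this commutator estimate $R$-uniformly---using the Lipschitz character of $A$ together with the controlled growth of $w_R$ and its derivatives---is the technical heart of $(e)$; once it is in hand, Gr\"onwall produces a bound independent of $R$ and monotone convergence as $R\to\infty$ delivers $\sup_t\E\int u^2_{\eps,\gamma}(t)\norm{x}^2\,dx\le C(\eps)$.
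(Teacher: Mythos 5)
Parts (a)--(d) of your proposal follow the paper's proof essentially verbatim: the same variational It\^o identity, the cancellation $\int_\D f(u)\cdot\grad u\,dx=0$, the monotonicity bound $\langle\mathscr{L}_\lambda[A(u)],u\rangle\ge\|A'\|_\infty^{-1}\,|A(u)|^2_{H^\lambda}$ followed by Gr\"onwall for (a); termwise $H^{-2}$ bounds for (b)--(c); and Flandoli--Gatarek together with Burkholder--Davis--Gundy and a second Gr\"onwall for (d). No objection there.

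Part (e), however, has a genuine gap, and it sits exactly where you place it: you reduce everything to the claim that the commutator piece $\tfrac{c_\lambda}{2}\iint (A(u(x))-A(u(y)))(u(x)+u(y))(w_R(x)-w_R(y))\,|x-y|^{-d-2\lambda}\,dx\,dy$ is bounded, uniformly in $R$, by $C(\eps)\big(\|u\|_{L^2}^2+\int u^2 w_R\,dx\big)$, and then you state that "securing this estimate is the technical heart" without proving it. That estimate is the entire content of (e), and two concrete obstacles stand in the way. First, the hypotheses you impose on $w_R$ (smooth, $w_R\uparrow\|x\|^2$, $|D^2w_R|+|\Delta w_R|\le C$) are not sufficient: the absorption argument needs the pointwise bound $|w_R(x)-w_R(y)|\le C\,|x-y|\,\sqrt{\max(w_R(x),w_R(y))}$, which requires both $|\grad w_R|\le C\sqrt{w_R}$ \emph{and} that $w_R$ restricted to any segment attains its supremum at an endpoint; a generic truncation of $\|x\|^2$ is not convex and need not satisfy the latter. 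This is why the paper constructs the weight coordinate-wise, $\varphi_k(x)=\sum_i\widetilde\varphi_k(x_i)$ with $|\widetilde\varphi_k'|\le 2\sqrt{\widetilde\varphi_k}$ and $\widetilde\varphi_k$ monotone on each half-line, and then argues separately in the two cases $\max=\widetilde\varphi_k(x_i)$ and $\max=\widetilde\varphi_k(x_i+z_i)$. With that bound in hand, Young's inequality gives $|u|\,|A(u(x+z))-A(u(x))|\,|w_R(x+z)-w_R(x)|\le \tfrac{\|A'\|_\infty}{2}|u|^2|z|^2+\tfrac{1}{2\|A'\|_\infty}[A(u(x+z))-A(u(x))]^2\max(w_R(x),w_R(x+z))$, and the second summand is swallowed by your nonnegative symmetric piece via $[A(a)-A(b)]^2\le\|A'\|_\infty[A(a)-A(b)](a-b)$ and $\max(w_R(x),w_R(y))\le w_R(x)+w_R(y)$ --- this absorption is the mechanism your sketch gestures at but never writes down. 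Second, the surviving remainder is of the form $\iint|u|^2\,|z|^2\,|z|^{-d-2\lambda}\,dx\,dz$, which is integrable only near $z=0$ (for $|z|\ge 1$ one has $\int_{|z|\ge1}|z|^{2-d-2\lambda}dz=+\infty$ when $\lambda<1$), so the far field must be handled separately, e.g.\ by bounding $|w_R(x)-w_R(y)|\le w_R(x)+w_R(y)$ there and closing through the weighted norm in Gr\"onwall. Until these two steps are supplied, (e) is not proved; once they are, your symmetrization route coincides with the paper's argument.
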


\begin{proof} 

	By It\^o's energy, one gets that
	\begin{align*}
	&\norm{u_{\eps, \gamma}(t)}_{L^2(\R^d)}^2  - \norm{u^{\eps}_0}_{L^2(\R^d)}^2 
	+ 2\eps \int_0^t\int_\D |\nabla u_{\eps, \gamma} |^2\,dx\,ds + 2\gamma  \int_0^t\int_\D |\Delta u_{\eps, \gamma} |^2\,dx\,ds \nonumber \\
	&+2\int_0^t\int_\D f(u_{\eps, \gamma})\nabla u_{\eps, \gamma}\,dx\,ds+ 2  \int_0^t \int_\D\mathscr{L}_{\lambda}[A(u_{\eps, \gamma})]\mathscr{L}_{\lambda}[u_{\eps, \gamma}] \,dx\,ds \nonumber
	\\ = &
	2\sum_{k\ge 1}\int_0^t\int_{\D} g_k(u_{\eps,\gamma})u_{\eps,\gamma}\,dx\,d\beta_k(s)
	+ \int_0^t\int_{\D} \mathbb{G}^2(u_{\eps,\gamma}) \,dx\,ds. 
	\end{align*}
	Since $\int_\D f(u_{\eps,\gamma})\nabla u_{\eps,\gamma} \,dx=0$ and 
	\begin{align*}
	\frac{1}{\|A^\prime\|_\infty} |A(u_{\eps, \gamma}(t,x))-A(u_{\eps, \gamma}(t,y))|^2 \leq 
	\big[A(u_{\eps, \gamma}(t,x))-A(u_{\eps, \gamma}(t,y))\big]\big[u_{\eps, \gamma}(t,x)-u_{\eps, \gamma}(t,y)\big],
	\end{align*}
	this yields
	\begin{align}\label{viscousito}
	&\norm{u_{\eps, \gamma}(t)}_{L^2(\R^d)}^2
	+ 2\eps \int_0^t\|\nabla u_{\eps, \gamma}\|_2^2\,ds + 2\gamma  \int_0^t\|\Delta u_{\eps, \gamma}\|^2\,ds  + \frac{2}{\|A^\prime\|_\infty}  \int_0^t \|A(u_{\eps, \gamma})\|^2_{H^{\lambda}}\,ds \nonumber
	\\ & \qquad \leq  \norm{u^{\eps}_0}_{L^2(\R^d)}^2  + 
	2\sum_{k\ge 1}\int_0^t\int_{\D} g_k(u_{\eps,\gamma})u_{\eps,\gamma}\,dx\,d\beta_k(s)
	+ K\int_0^t\|u_{\eps,\gamma}\|^2_2\,ds. 
	\end{align}
	Taking expectation, we get 
	\begin{align*}
	&\E \Big[\norm{u_{\eps, \gamma}(t)}_{L^2(\R^d)}^2\Big]
	+ \eps \E \Big[\int_0^t\int_\D |\nabla u_{\eps, \gamma} |^2\,dx\,ds \Big]+ \gamma \E \Big[\int_0^t\int_\D |\Delta u_{\eps, \gamma} |^2\,dx\,ds \Big] 
	+  \E \Big[ \int_0^t \norm{A(u_{\eps, \gamma})}^2_{H^{\lambda}(\R^d)} \Big]
	\\ & \qquad \qquad \le \E \Big[\norm{u^{\eps}_0}_{L^2(\R^d)}^2 \Big] + C\, \E \Big[ \int_0^t  \norm{u_{\eps, \gamma}(s)}_{L^2(\R^d)}^2 \,ds \Big] , 
	\end{align*}
	and a simple application of Gronwall inequality proves (a).

	To prove (b), first observe that we can recast the equation as
	\begin{align*}
	\partial_t \Big[u_{\eps, \gamma} -\int_0^t \h(u_{\eps,\gamma})\,dW \Big] + \gamma \Delta^2 u_{\eps,\gamma}  - \eps\Delta u_{\eps,\gamma} - \Div f(u_{\eps, \gamma}) +  \mathscr{L}_{\lambda}[A(u_{\eps, \gamma})] =0.
	\end{align*}
	Therefore, thanks to the regularity obtained in part (a), we conclude that
	\begin{align*}
	\sup_{\gamma}\bigg\| \partial_t \Big[u_{\eps, \gamma} -\int_0^t \h(u_{\eps,\gamma})\,dW \Big]\bigg\|^2_{L^2(\Omega\times(0,T);H^{-2}(\D))} \leq C.
	\end{align*}
	Note that \ref{A4} and It\^o's isometry yields the boundedness of $\int_0^t \h(u_{\eps,\gamma})\,dW$ in $L^2(\Omega\times(0,T),H^{1}(\D))$ by the norm of $u_{\eps,\gamma}$ in the same space. Hence, (c) is proved by adding (a) and the above inequality. 
	Then, following a classical result from \cite[Lemma 2.1]{FlandoliGatarek}, we conclude
	\begin{align*}
	\forall p \ge 2, \forall \alpha < \frac12, &\quad \bigg\|\int_0^t \h(u_{\eps,\gamma})\,dW \bigg\|^p_{L^p\big(\Omega,W^{\alpha,p}(0,T,L^2(\D))\big)} \leq c \| \h(u_{\eps,\gamma}) \|^p_{L^p\big(\Omega\times(0,T); L^2(\D))\big)}.
	\end{align*}
Next, to get an estimate on $\sup_\gamma  \E \big[\|u_{\eps, \gamma} \|^p_{L^\infty(0,T,L^2(\D))} \big]$, observe that from the equation \eqref{viscousito} we get
	\begin{align*}
	\sup_{0 \le t \le T} \norm{u_{\eps, \gamma}(t)}_{L^2(\R^d)}^2 \le \norm{u^{\eps}_0}_{L^2(\R^d)}^2 & + K \int_0^T  \norm{u_{\eps, \gamma}(s)}_{L^2(\R^d)}^2 \,ds \\
	& + 2 \sup_{0 \le t \le T} \Bigg|  \sum_{k\ge 1}\int_0^t\int_{\D} g_k(u_{\eps,\gamma})\,u_{\eps,\gamma} \,d\beta_k(s)\,dx   \Bigg|
	\end{align*}
	We raise both sides of the above inequality to the power $p/2$, take the expectation, and apply several elementary inequalities, to arrive at 
	\begin{align*}
	\E \Bigg[\sup_{0 \le t \le T} \norm{u_{\eps, \gamma}(t)}_{L^2(\R^d)}^p \Bigg] \le C(p)\Bigg[ \E \Big[\norm{u^{\eps}_0}_{L^2(\R^d)}^p \Big]& +  \int_0^T  \E \Big[\norm{u_{\eps, \gamma}(s)}_{L^2(\R^d)}^p \Big] \,ds \Big] \\
	& +  \E \Big[\sup_{0 \le t \le T} \Big|  \sum_{k\ge 1}\int_0^t\int_{\D} g_k(u_{\eps,\gamma})\,u_{\eps,\gamma} \,d\beta_k(s)\,dx   \Big|^{p/2} \Big]\Bigg]
	\end{align*}
	A classical application of Burkholder-Davis-Gundy's, Cauchy-Schwarz's and Young's inequalities (cf. \cite{Hofmanova}) reveal that
	\begin{align*}
	&\E \Bigg[\sup_{0 \le t \le T} \Bigg|  \sum_{k\ge 1}\int_0^t\int_{\D} g_k(u_{\eps,\gamma})\,u_{\eps,\gamma} \,d\beta_k(s)\,dx   \Bigg|^{p/2} \Bigg] 
	\leq 
	C \E \Bigg[\Big[ \int_0^T\sum_{k\ge 1} \Big|  \int_{\D} g_k(u_{\eps,\gamma})\,u_{\eps,\gamma}\,dx\Big|^{2} \,ds   \Big]^{p/4} \Bigg] 
	\\
	\leq& 
	C \E \Bigg[\Big[ \int_0^T\sum_{k\ge 1}  \|g_k(u_{\eps,\gamma})\|^2_{L^2(\D)}\|u_{\eps,\gamma}\|^2_{L^2(\D)} \,ds   \Big]^{p/4} \Bigg] \\
		\leq &
	C \E \Bigg[\Big[\sup_{0 \le t \le T} \|u_{\eps,\gamma}\|^2_{L^2(\D)}\int_0^T\sum_{k\ge 1}  \|g_k(u_{\eps,\gamma})\|^2_{L^2(\D)} \,ds   \Big]^{p/4} \Bigg] 
	\\
	\leq& 
	CK^{p/4} \E \Bigg[\sup_{0 \le t \le T} \|u_{\eps,\gamma}\|^{p/2}_{L^2(\D)}\Big[\int_0^T\|u_{\eps,\gamma}\|^2_{L^2(\D)} \,ds   \Big]^{p/4} \Bigg] 
	\leq
	C \E \Bigg[\sup_{0 \le t \le T} \|u_{\eps,\gamma}\|^{p/2}_{L^2(\D)}\int_0^T\|u_{\eps,\gamma}\|^{p/2}_{L^2(\D)} \,ds    \Bigg] 
	\\%
	 \le & \frac12 \E \Bigg[ \sup_{0 \le t \le T} \norm{u_{\eps, \gamma}(t)}_{L^2(\R^d)}^p  \Bigg]
	+ C \int_0^T  \E \Big[\norm{u_{\eps, \gamma}(s)}_{L^2(\R^d)}^p \Big] \,ds.
	\end{align*}
Therefore we have 
$$\E \Bigg[\sup_{0 \le t \le T} \norm{u_{\eps, \gamma}(t)}_{L^2(\R^d)}^p \Bigg] \le  C\Bigg(\E\Big[\norm{u^{\eps}_0}_{L^2(\R^d)}^p \Big]+1+\int_0^T  \E \Big[\norm{u_{\eps, \gamma}(s)}_{L^2(\R^d)}^p \Big] \,ds \Bigg).$$
Since $T$ is arbitrary, applying Gronwall's lemma we get the result.
	\\[0.2cm]
Finally, to prove (e), we momentarily assume the existence of a function $\varphi \in W^{2,\infty}(\R^d)$ satisfying $0 \leq \varphi(x) \leq C_\varphi\|x\|^2$ and $\|\nabla\varphi(x)\| \leq C\sqrt{\varphi(x)}$ for a given constant $C$. Note that $L^2(\R^d)$ embeds continuously in $L^2(\R^d,\varphi dx)$, and thanks to It\^o's formula, 
\begin{align*}
\E \bigg[\int_{\R^d} u^2_{\eps, \gamma}(t) \varphi dx \bigg]- \E \bigg[\int_0^t\langle \bar A(s,u_{\eps, \gamma}(s)),u_{\eps, \gamma}(s)\varphi \rangle ds \bigg]= \frac12\E \bigg[\int_0^t \int_{\R^2}\mathbb{G}^2(u_{\eps, \gamma}(s)) \varphi dxds \bigg]+ \int_{\R^d} (u^\eps_{0})^2 \varphi dx.
\end{align*}
Note that, thanks to assumption \eqref{A4}
\begin{align*}
\mathbb{G}^2(u_{\eps, \gamma}(s)) \varphi \leq K u^2_{\eps, \gamma}(s) \varphi,
\end{align*}
and for any $0<\theta<\frac{1}{C_\varphi^2+C(f')}$
	\begin{align*}
	\langle \bar A(s,u_{\eps, \gamma}(s)),& u_{\eps, \gamma}(s)\varphi \rangle = 
	- \eps \int_{\D} \big[ |\nabla u_{\eps, \gamma}(s)|^2 \varphi + u_{\eps, \gamma}(s) \nabla u_{\eps, \gamma}(s) \nabla \varphi \big] dx
	\\& 
	- \gamma \int_{\D} \big[ |\Delta u_{\eps, \gamma}(s)|^2 \varphi + 2  \Delta u_{\eps, \gamma}(s) \nabla u_{\eps, \gamma}(s)\nabla \varphi + u_{\eps, \gamma}(s) \Delta u_{\eps, \gamma}(s) \Delta \varphi  \big]dx
	\\ &
	- \int_{\D} \big[ \mathscr{L}_{\lambda/2}[A(u_{\eps, \gamma}(s))]\mathscr{L}_{\lambda/2}[u_{\eps, \gamma}(s) \varphi] 
	- \text{div}[f(u_{\eps, \gamma}(s))] u_{\eps, \gamma}(s) \varphi \big] dx
	\\ &\leq  
	\eps \int_{\D} \big[ (\theta C^2_\varphi-1) |\nabla u_{\eps, \gamma}(s)|^2 \varphi +C_\theta u^2_{\eps, \gamma}(s) \big] dx
	\\& 
	+ \gamma \int_{\D} \big[ (\theta C^2_\varphi-1) |\Delta u_{\eps, \gamma}(s)|^2 \varphi + C_\theta \|\nabla u_{\eps, \gamma}(s)\|^2 + \frac12\|\Delta \varphi\|_{\infty}(u^2_{\eps, \gamma}(s) + |\Delta u_{\eps, \gamma}(s)|^2) \big]dx
	\\ &
	- \int_{\D}  \mathscr{L}_{\lambda/2}[A(u_{\eps, \gamma}(s))]\mathscr{L}_{\lambda/2}[u_{\eps, \gamma}(s) \varphi] dx 
	+ C(f')\int_{\D} \eps \theta |\nabla u_{\eps, \gamma}(s)|^2 \varphi + \frac{C_\theta}{\eps} |u_{\eps, \gamma}(s)|^2 \varphi \ dx 
	\\ &\leq 
	C_\theta \Big[ \eps\|u_{\eps, \gamma}(s)\|^2_{L^2} + \gamma \|\nabla u_{\eps, \gamma}(s)\|^2_{L^2}\Big] 
	+ \frac\gamma2 \|\Delta \varphi\|_{\infty}\Big[\|u_{\eps, \gamma}(s)\|^2_{L^2} + \|\Delta u_{\eps, \gamma}(s)\|_{L^2}^2 \Big]
	\\ &
	- \int_{\D}  \mathscr{L}_{\lambda/2}[A(u_{\eps, \gamma}(s))]\mathscr{L}_{\lambda/2}[u_{\eps, \gamma}(s) \varphi] dx + C(\theta,\eps)\int_{\D} |u_{\eps, \gamma}(s)|^2 \varphi \ dx.
	\end{align*}
	Let us be more specific on our choice of the weight-function $\varphi$: set $\widetilde\varphi_k$ the even fonction defined for positive real $x$ by $\widetilde\varphi_k(x)=x^2$ if $x \in[0,k]$, $\widetilde\varphi_k(x)=2k^2-(x-2k)^2$ if $x \in[k,2k]$, and $\widetilde\varphi_k(x)=2k^2$ if $x \geq 2k$ and denote by $\varphi_k(x)=\sum_{i=1}^d \widetilde\varphi_k(x_i)$.  
	\\
	Thus,   one has that $0 \leq \widetilde\varphi_k(x_i) \leq x_i^2$, $|\widetilde\varphi^\prime_k(x_i)| \leq 2 \sqrt{\widetilde\varphi_k(x_i)}$ and $|\widetilde\varphi^{\prime\prime}_k(x_i)| \leq 2$. Thus, $0 \leq \varphi_k(x) \leq \|x\|^2$, $\|\nabla \varphi_k(x)\| = \sqrt{\sum_i [\widetilde\varphi^\prime_k(x_i)]^2}\leq 2\sqrt{\varphi_k(x)}$ and $|\Delta \varphi_k(x)| \leq 2d$.
	\\[0.1cm]
	Let us check the last term of the above estimate, 
	\begin{align*}
	&- \int_{\D}  \mathscr{L}_{\lambda/2}[A(u_{\eps, \gamma}(s))]\mathscr{L}_{\lambda/2}[u_{\eps, \gamma}(s) \varphi_k] dx
	\\ = &
	-\int_{\D} \int_{\D} \frac{[A(u_{\eps, \gamma}(s,x+z))-A(u_{\eps, \gamma}(s,x))][u_{\eps, \gamma}(s,x+z) \varphi_k(x+z)-u_{\eps, \gamma}(s,x) \varphi_k(x)]}{|z|^{d+2\lambda}}dz dx
	\\ = &
	-\sum_i \int_{\D} \int_{\D} \frac{[A(u_{\eps, \gamma}(s,x+z))-A(u_{\eps, \gamma}(s,x))][u_{\eps, \gamma}(s,x+z) \widetilde\varphi_k(x_i+z_i)-u_{\eps, \gamma}(s,x) \widetilde\varphi_k(x_i)]}{|z|^{d+2\lambda}}dz dx
	\end{align*}
	Note that 
	\begin{align*}
	|\widetilde\varphi_k(x_i+z_i)-\widetilde\varphi_k(x_i)| = |\int_0^1 \widetilde\varphi^\prime_k(x_i+tz_i) z_i dt| \leq |z_i| \int_0^1 |\widetilde\varphi^\prime_k(x_i+tz_i)|  dt \leq |z_i| \sup_{t\in[0,1]} \sqrt{\widetilde\varphi_k(x_i+tz_i)}.
	\end{align*}
	Assume on one hand that $x$ and $z$ are such that  $\max(\widetilde\varphi_k(x_i+z_i), \widetilde\varphi_k(x_i))=\widetilde\varphi_k(x_i)$. Thanks to the specific definition of $\widetilde\varphi_k$, 
	\begin{align*}
	|\widetilde\varphi_k(x_i+z_i)-\widetilde\varphi_k(x_i)|^2 \leq |z_i|^2 \widetilde\varphi_k(x_i)
	\end{align*}
	and 
	\begin{align*}
	&-[A(u_{\eps, \gamma}(s,x+z))-A(u_{\eps, \gamma}(s,x))][u_{\eps, \gamma}(s,x+z) \widetilde\varphi_k(x_i+z_i)-u_{\eps, \gamma}(s,x) \widetilde\varphi_k(x_i)]
	\\ =&
	-\widetilde\varphi_k(x_i)[A(u_{\eps, \gamma}(s,x+z))-A(u_{\eps, \gamma}(s,x))][u_{\eps, \gamma}(s,x+z) -u_{\eps, \gamma}(s,x)]
	\\ &
	-u_{\eps, \gamma}(s,x+z)[A(u_{\eps, \gamma}(s,x+z))-A(u_{\eps, \gamma}(s,x))][ \widetilde\varphi_k(x_i+z_i)-\widetilde\varphi_k(x_i)]
	\\ \leq&
	-\widetilde\varphi_k(x_i)[A(u_{\eps, \gamma}(s,x+z))-A(u_{\eps, \gamma}(s,x))][u_{\eps, \gamma}(s,x+z) -u_{\eps, \gamma}(s,x)]
	\\ &
	+\frac{\|A^\prime\|_\infty}{2}|u_{\eps, \gamma}(s,x+z)|^2 |z_i|^2 + \frac1{2\|A^\prime\|_\infty}[A(u_{\eps, \gamma}(s,x+z))-A(u_{\eps, \gamma}(s,x))]^2\widetilde\varphi_k(x_i)
	\\ \leq&
	\frac{\|A^\prime\|_\infty}{2}|u_{\eps, \gamma}(s,x+z)|^2 |z_i|^2 
	\end{align*}
	using the fact that \\
	$[A(u_{\eps, \gamma}(s,x+z))-A(u_{\eps, \gamma}(s,x))]^2\leq \|A^\prime\|_\infty[A(u_{\eps, \gamma}(s,x+z))-A(u_{\eps, \gamma}(s,x))][u_{\eps, \gamma}(s,x+z)-u_{\eps, \gamma}(s,x)]$.
	\\[0.2cm]
	Assume on the other hand that $x$ and $z$ are such that  $\max(\widetilde\varphi_k(x_i+z_i), \widetilde\varphi_k(x_i))=\widetilde\varphi_k(x_i+z_i)$. Similarly, 
	\begin{align*}
	|\widetilde\varphi_k(x_i+z_i)-\widetilde\varphi_k(x_i)|^2 \leq |z_i|^2 \widetilde\varphi_k(x_i+z_i)
	\end{align*}
	and 
	\begin{align*}
	&-[A(u_{\eps, \gamma}(s,x+z))-A(u_{\eps, \gamma}(s,x))][u_{\eps, \gamma}(s,x+z) \widetilde\varphi_k(x_i+z_i)-u_{\eps, \gamma}(s,x) \widetilde\varphi_k(x_i)]
	\\ =&
	-\widetilde\varphi_k(x_i+z_i)[A(u_{\eps, \gamma}(s,x+z))-A(u_{\eps, \gamma}(s,x))][u_{\eps, \gamma}(s,x+z) -u_{\eps, \gamma}(s,x)]
	\\ &
	-u_{\eps, \gamma}(s,x)[A(u_{\eps, \gamma}(s,x+z))-A(u_{\eps, \gamma}(s,x))][ \widetilde\varphi_k(x_i+z_i)-\widetilde\varphi_k(x_i)]
	\\ \leq&
	-\widetilde\varphi_k(x_i+z_i)[A(u_{\eps, \gamma}(s,x+z))-A(u_{\eps, \gamma}(s,x))][u_{\eps, \gamma}(s,x+z) -u_{\eps, \gamma}(s,x)]
	\\ &
	+\frac{\|A^\prime\|_\infty}{2}|u_{\eps, \gamma}(s,x)|^2 |z_i|^2 + \frac1{2\|A^\prime\|_\infty}[A(u_{\eps, \gamma}(s,x+z))-A(u_{\eps, \gamma}(s,x))]^2\widetilde\varphi_k(x_i+z_i)
	\\ \leq&
	\frac{\|A^\prime\|_\infty}{2}|u_{\eps, \gamma}(s,x)|^2 |z_i|^2 .
	\end{align*}
	Thus, one concludes 
	\begin{align*}
&	- \int_{\D}  \mathscr{L}_{\lambda/2}[A(u_{\eps, \gamma}(s))]\mathscr{L}_{\lambda/2}[u_{\eps, \gamma}(s) \varphi_k] dx \\
& \qquad	 \qquad  \qquad \leq 
	C \int_{\R^d}\int_{\R^d} \Big[|u_{\eps, \gamma}(s,x)|^2+|u_{\eps, \gamma}(s,x+z)|^2\Big] \frac{\|z\|^2}{\|z\|^{d+2\lambda}}dxdz
 \leq C\|u_{\eps, \gamma}(s)\|^2_{L^2}
	\end{align*}
	and, thanks to estimate (a), 
	\begin{align*}
	\int_0^t \E \Big[\langle \bar A(s,u_{\eps, \gamma}(s)),u_{\eps, \gamma}(s)\varphi_k \rangle \Big]\,ds  
	\leq 
	C(\eps)\Big(1 + \int_0^t\E \Big[\int_{\D} |u_{\eps, \gamma}(s)|^2 \varphi_k \,dx \Big]\,ds\Big).
	\end{align*}
	Therefore, 
	\begin{align*}
	\E \Big[\int_{\R^d} u^2_{\eps, \gamma}(t) \varphi_k dx\Big] \leq 
	C(\eps)+ (C(\eps)+K)\int_0^t\E \Big[\int_{\D} |u_{\eps, \gamma}(s)|^2 \varphi_k \,dx \Big]\,ds  + \int_{\R^d} (u^\eps_{0})^2 \|x\|^2 dx,
	\end{align*}
	and, by Gronwall's lemma,  $\sup_t\E \Big[\int_{\R^d} u^2_{\eps, \gamma}(t) \varphi_k dx\Big] \leq C(\eps)$ and $\sup_t\E \Big[\int_{\R^d} u^2_{\eps, \gamma}(t) \|x\|^2 dx\Big] \leq C(\eps)$ by passing to the limit over $k$.
\end{proof}


\subsection{Compactness argument}

It is well known that, without assuming any topological structure on $\Omega$, establishing a result of compactness in the probability variable ($\omega$-variable) is a non-trivial task. In what follows, to obtain strong (a.s.) convergence in the $\omega$-variable, we make use of Skorokhod representation theorem, linked to tightness of probability measures and a.s. representations of random variables with Gy\"ongy-Krylov's characterization of convergence in probability adapted to this situation.
\medskip

To proceed, we first denote by $\mu_{\eps,\gamma}$ the joint law of $(u_{\eps, \gamma},W)$ in the space $U=L^2(Q_T) \times C([0,T],\mathbb{H})$.  
Thanks to Lemma \ref{EnergyEstimates}, $(u_{\eps, \gamma})_\gamma$ is a bounded sequence in $L^2(\Omega,L^{2}(0,T,H^{1}(\R^d))$ and \\ $L^2(\Omega,W^{\alpha,2}(0,T,H^{-2}(\R^d)))$; assuming moreover that $u_0 \in L^2(\R^d,\|x\|^2 dx)$ yields the boundedness of $(u_{\eps, \gamma})_\gamma$ in $L^2(\Omega,L^2(\R^d,\|x\|^2dx))$. Since $H^{1}(\R^d)\cap L^2(\R^d,\|x\|^2dx)$ is compactly embedded in $L^2(\R^d)$, so \cite[Corollary 5]{Simon} implies that  $W^{\alpha,2}(0,T,H^{-2}(\R^d)) \cap L^{2}(0,T,H^{1}(\R^d)\cap L^2(\R^d,\|x\|^2dx))$ is compactly embedded in $L^2(Q_T)$. Thus, in light of the above estimates, the following lemma holds.
\begin{lem}
If additionally $u_0 \in L^2(\R^d,\|x\|^2 dx)$, the family of laws $\lbrace \mu_{\eps,\gamma}: \gamma \in (0,1)\rbrace$ is tight on  $(U,\mathcal{B}(U))$.
\end{lem}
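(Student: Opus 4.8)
The plan is to obtain tightness of the joint laws $\mu_{\eps,\gamma}$ on $U = L^2(Q_T) \times C([0,T],\mathbb{H})$ by reducing to tightness of each of the two marginals separately, and then using that a finite product of relatively compact sets is relatively compact. The law of the second marginal, namely the law of $W$, does not depend on $\gamma$: it is the fixed cylindrical Wiener process. Since it is a single Borel probability measure on a Polish path space, it is inner regular and hence automatically tight, so given $\eta>0$ one may fix a compact set $K_2 \subset C([0,T],\mathbb{H})$ with $\mathbb{P}(W \notin K_2) < \eta/2$, uniformly in $\gamma$. The entire difficulty is therefore concentrated in the first marginal, the law of $u_{\eps,\gamma}$ on $L^2(Q_T)$.

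For that first marginal I would argue by a Markov--Chebyshev estimate against the compact embedding recalled immediately before the statement. Set
\[
X := W^{\alpha,2}(0,T,H^{-2}(\R^d)) \,\cap\, L^2\big(0,T,\, H^1(\R^d)\cap L^2(\R^d,\|x\|^2\,dx)\big),
\]
which, by \cite[Corollary 5]{Simon} together with the fact that $H^1(\R^d)\cap L^2(\R^d,\|x\|^2\,dx)$ is compactly embedded in $L^2(\R^d)$, embeds compactly into $L^2(Q_T)$. For $R>0$ the ball $B_R = \{ v \in X : \|v\|_X \le R\}$ is then relatively compact in $L^2(Q_T)$, so its closure $\overline{B_R}$ is compact there. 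By the uniform-in-$\gamma$ bounds of Lemma~\ref{EnergyEstimates}, specifically estimate $(a)$, the regularity recorded just above the statement giving the $W^{\alpha,2}(0,T,H^{-2})$ bound, and the weighted estimate $(e)$ (which is precisely where the extra hypothesis $u_0 \in L^2(\R^d,\|x\|^2\,dx)$ enters), there is a constant $C$ independent of $\gamma$ with $\mathbb{E}\big[\|u_{\eps,\gamma}\|_X^2\big] \le C$, whence $\mathbb{E}\big[\|u_{\eps,\gamma}\|_X\big] \le C$. Markov's inequality then yields
\[
\mathbb{P}\big(u_{\eps,\gamma} \notin \overline{B_R}\big) \le \mathbb{P}\big(\|u_{\eps,\gamma}\|_X > R\big) \le \frac{C}{R},
\]
uniformly in $\gamma$, so that for $R$ large enough the left-hand side is below $\eta/2$ for every $\gamma \in (0,1)$.

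Combining the two marginals concludes the proof: the set $K := \overline{B_R} \times K_2$ is compact in $U$, and
\[
\mu_{\eps,\gamma}(U \setminus K) \le \mathbb{P}\big(u_{\eps,\gamma} \notin \overline{B_R}\big) + \mathbb{P}(W \notin K_2) < \eta
\]
for all $\gamma$, which is exactly the tightness of $\{\mu_{\eps,\gamma}: \gamma\in(0,1)\}$. The one genuinely analytic ingredient, and the step I expect to be the real obstacle, is the compact embedding of $X$ into $L^2(Q_T)$ on the unbounded domain $\R^d$: there the Rellich--Kondrachov mechanism fails and mass can escape to infinity, so the spatial second-moment bound $(e)$ furnished by the weight $\|x\|^2$ is indispensable in upgrading $H^1(\R^d)\cap L^2(\R^d,\|x\|^2\,dx)$ to a space compactly embedded in $L^2(\R^d)$ before feeding it into the Aubin--Lions--Simon time-compactness theorem. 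Once this embedding is in hand, everything else is a routine consequence of the already-established uniform estimates and elementary measure-theoretic facts.
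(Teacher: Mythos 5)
Your argument is correct and is essentially the paper's own: the paper likewise obtains tightness from the $\gamma$-uniform bounds of Lemma~\ref{EnergyEstimates} (with the weighted estimate $(e)$ being exactly where the extra hypothesis $u_0\in L^2(\R^d,\|x\|^2dx)$ is needed on the unbounded domain) together with the compact embedding of $W^{\alpha,2}(0,T,H^{-2}(\R^d))\cap L^{2}(0,T,H^{1}(\R^d)\cap L^2(\R^d,\|x\|^2dx))$ into $L^2(Q_T)$ furnished by Simon's theorem. The paper merely leaves the Chebyshev step for the first marginal and the trivial tightness of the fixed Wiener marginal implicit, which you have spelled out.
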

Applying Prokhorov compactness theorem to the family of laws $\lbrace \mu_{\eps,\gamma}: \gamma \in (0,1)\rbrace$ and the modified version of Skorokhod representation theorem presented in \cite[Thm. C.1]{BrzezniakHausenblasRazafimandimby}, passing to a weakly convergent subsequence, still denoted $\mu_{\eps,\gamma}$ (\textit{i.e.} $\gamma$ is a decreasing sequence converging to $0$), and denoting by $\mu^{\eps}$ the limit law, we infer the following result:
\begin{lem}
Passing to a subsequence (not relabeled), there exists a probability space $(\tilde \Omega, \tilde{ \mathcal{F}}, \tilde{\mathbb{P}})$, and a family of $U$-valued  random variables $\big\{( \tilde u_{\eps, \gamma}, \tilde W_{\eps, \gamma}): \gamma \in (0,1) \big\}\cup\big\{( \tilde u_{\eps}, \tilde  W_{\eps})\big\}$ such that
\begin{itemize}
\item [$(a)$] for any $\gamma$, $( u_{\eps, \gamma},  W)$ and $( \tilde u_{\eps, \gamma}, \tilde W_{\eps, \gamma})$ have the same law $\mu_{\eps,\gamma}$ on $U$,
\item [$(b)$] the law of $\big( \tilde u_{\eps}, \tilde W_{\eps}\big)$ is given by $\mu^{\eps}$,
\item [$(c)$] $\big( \tilde u_{\eps, \gamma}, \tilde W_{\eps, \gamma}\big)$ converge to $\big( \tilde u_{\eps}, \tilde W_{\eps}\big)$ $\tilde{\mathbb{P}}$-a.s. in $U$,
\item[$(d)$] for any $\gamma$, $\tilde W_{\eps, \gamma}=\tilde W_{\eps}$.
\end{itemize}
\end{lem}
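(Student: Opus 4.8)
The plan is to combine Prokhorov's theorem with a form of the Skorokhod representation theorem that is compatible with a fixed driving noise. First I would note that the ambient space $U=L^2(Q_T)\times C([0,T],\mathbb{H})$ is a separable metric (indeed Polish) space, being the product of a separable Hilbert space with the space of continuous $\mathbb{H}$-valued paths for separable $\mathbb{H}$. Invoking the tightness of $\{\mu_{\eps,\gamma}:\gamma\in(0,1)\}$ established in the preceding lemma, Prokhorov's theorem yields relative compactness in the topology of weak convergence of probability measures on $(U,\mathcal{B}(U))$. Hence I can extract a subsequence (not relabeled, so $\gamma$ is now a decreasing sequence tending to $0$) and a probability measure $\mu^{\eps}$ such that $\mu_{\eps,\gamma}\rightharpoonup\mu^{\eps}$ weakly.

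Next I would apply the modified Skorokhod representation theorem \cite[Thm.~C.1]{BrzezniakHausenblasRazafimandimby} to this weakly convergent sequence. The classical Skorokhod theorem already produces a single probability space $(\tilde\Omega,\tilde{\mathcal{F}},\tilde{\mathbb{P}})$ carrying variables $(\tilde u_{\eps,\gamma},\tilde W_{\eps,\gamma})$ and a limit $(\tilde u_{\eps},\tilde W_{\eps})$ with the prescribed laws $\mu_{\eps,\gamma}$ and $\mu^{\eps}$ (properties $(a)$ and $(b)$) and with $\tilde{\mathbb{P}}$-almost sure convergence in $U$ (property $(c)$). The only genuinely delicate point, and the one where the \emph{modified} version is indispensable, is property $(d)$: the noise components must coincide, $\tilde W_{\eps,\gamma}=\tilde W_{\eps}$, rather than merely share a common law. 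I expect this to be the main obstacle, since a naive Skorokhod construction produces independent copies of the noise for each $\gamma$, which would destroy the probabilistic structure needed later to pass to the limit in the stochastic integral.

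This is resolved by the key structural observation that in the original family the second component of $(u_{\eps,\gamma},W)$ is the \emph{same} process $W$ for every $\gamma$; consequently the second marginal of each $\mu_{\eps,\gamma}$, and hence of $\mu^{\eps}$, equals $\mathcal{L}(W)$. The enhanced representation theorem then permits choosing the representatives so that the noise component is a single, $\gamma$-independent process, giving $(d)$. Finally I would record that, by $(a)$, each $\tilde u_{\eps,\gamma}$ has the same law as $u_{\eps,\gamma}$, so it inherits verbatim the $\gamma$-uniform \textit{a priori} bounds of Lemma~\ref{EnergyEstimates}, which are functionals of the law; these estimates will be the input for identifying $(\tilde u_{\eps},\tilde W_{\eps})$ as a martingale solution in the next step. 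The substance of the argument is therefore a careful verification of the hypotheses (Polish target, tightness, fixed noise marginal) followed by a citation of the representation theorem.
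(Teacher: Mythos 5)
Your proposal is correct and follows essentially the same route as the paper: tightness plus Prokhorov, then the modified Skorokhod representation theorem of \cite[Thm.~C.1]{BrzezniakHausenblasRazafimandimby}, whose ``modified'' feature is invoked precisely to fix the noise component and obtain $(d)$. The additional remarks you make (Polish structure of $U$, the constancy of the second marginal, inheritance of the law-determined \textit{a priori} bounds) are all consistent with, and slightly more explicit than, the paper's one-line justification.
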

Note that without lost of generality, this new probability space can be considered complete. Moreover, as $u_{\eps, \gamma}$ is also a random variable with values in $C([0,T],L^2(\R^d))$, by \cite[Lem. A.3]{ValletZimmermann}, $\tilde u_{\eps, \gamma}$ is also a continuous processes with values in $L^2(\R^d)$. 
Following \cite[Section 8]{BrzezniakHausenblasRazafimandimby}, setting $(\tilde{\mathcal{F}}_t)$ the filtration generated by the family $(\tilde u_{\eps, \gamma})_\gamma$ and $\tilde W_{\eps}$, one gets, by standard arguments, that $\tilde W_{\eps}$ is a $(\tilde{\mathcal{F}}_t)$, $\mathbb{H}$ valued, cylindrical Wiener process and that $u_{\eps, \gamma}$ is a square integrable $(\tilde{\mathcal{F}}_t)$-predictable process.
\\
Let us remark that by assuming that the initial filtration $(\mathcal{F}_t)$ is the one generated by $W$, then, by \cite[Lem. A.6]{ValletZimmermann}, one can consider that $(\tilde{\mathcal{F}}_t)$ is the filtration generated by $\tilde W_{\eps}$.

Therefore, since $( u_{\eps, \gamma},  W)$ and $( \tilde u_{\eps, \gamma}, \tilde W_{\eps})$ have the same law, by an adapted stepwise approximation of It\^o's integral, one has that $\tilde u_{\eps, \gamma}$ is the solution to equation \eqref{eq:viscous-Brown_one_01} for given $(\tilde\Omega, \tilde{\mathcal{F}},(\tilde{\mathcal{F}}_t),\tilde{\mathbb{P}},\tilde{W_\eps})$.
\\
As a consequence, the estimates of Lemma \ref{EnergyEstimates} hold: there exists a positive constant $C$ independent of $\eps$ and $\gamma$ such that 
\begin{itemize}
	\item [$(a)$] 
	\begin{align*}
	\displaystyle{\sup_{0\le t\le T} \tilde\E\Big[\big\|\tilde u_{\eps, \gamma}(t)\big\|_{L^2(\R^d)}^2\Big]}  &+ \eps \int_0^T \tilde\E\Big[\big\|\grad \tilde u_{\eps, \gamma}(s)\big\|_{L^2(\R^d)}^2\Big]\,ds \\
	&+ \gamma \int_0^T \tilde\E\Big[\big\|\Delta \tilde u_{\eps, \gamma}(s)\big\|_{L^2(\R^d)}^2\Big]\,ds  + \int_0^T \tilde\E\Big[\big\| A(\tilde u_{\eps, \gamma})\|_{H^{\lambda}(\R^d)}^2\Big]\,ds \le C, 
	\end{align*} \vspace{.2cm}
	\item [$(b)$] $\norm{\partial_t \big(\tilde u_{\eps, \gamma} - \int_0^t \h(\tilde u_{\eps, \gamma}(s,\cdot))\, d\tilde W_\eps(s) \big)}_{L^2(\tilde\Omega\times(0,T); H^{-2}(\R^d))} \le C,$\vspace{.2cm}
	\item [$(c)$] $\Big\|\tilde u_{\eps, \gamma} -\int_0^t \h(\tilde u_{\eps,\gamma})\,d\tilde W_\eps \Big\| _{L^2 \big(\tilde\Omega, W^{1,2}(0,T,H^{1}(\D),H^{-2}(\D))\big)} \leq C/\sqrt{\eps}$,
	
	\item [$(d)$] For any $p\geq 2$ and any $0<\alpha<\frac12$,  
	\begin{align*}
	\norm{\int_0^t \h(\tilde u_{\eps, \gamma}) \,d\tilde W_\eps}_{L^p\big(\tilde\Omega,W^{\alpha,p}(0,T,L^2(\D))\big)} \le C\text{ and }\sup_\gamma  \tilde\E \big[\|\tilde u_{\eps, \gamma} \|^p_{C([0,T],L^2(\D))} \big] \le C.
	\end{align*}
\end{itemize}

\subsection{Identification of the limit} 
\label{identify}
It is known that $\tilde u_{\eps, \gamma}$ converges a.s. to $\tilde u_{\eps}$ and in $L^2(Q_T)$ when $\gamma$ goes to $0$, thus, by Vitali theorem and the above estimate (d), one concludes that $\tilde u_{\eps, \gamma}$ converges $\tilde u_{\eps}$ in $L^2(\tilde\Omega\times Q_T)$, thus in $\tilde N_w^2(0,T,L^2(\R^d))$ where a "tilde" is used to distinguish it from the same space on $\Omega$, and a.e. in $\tilde\Omega\times Q_T$ for a subsequence denoted similarly. 

This first result will help us to identify the following weak limits: the above \textit{a priori} estimates yield,
\\[0.1cm]
$\tilde u_{\eps, \gamma}$ converges weakly to $\tilde u_{\eps}$ in $\tilde N_w^2(0,T,H^1(\R^d))$ and *-weakly in $L^\infty(0,T,L^2(\tilde\Omega\times\R^d))$, and $\gamma \tilde u_{\eps, \gamma}$ converges to $0$ in $L^2(\tilde\Omega\times(0,T),H^2(\R^d))$;
\\[0.1cm]
$f(\tilde u_{\eps, \gamma})$ converges to $f(\tilde u_{\eps})$ in $L^2(\tilde\Omega\times Q_T)$;
\\[0.1cm]
$A(\tilde{u}_{\eps,\gamma})$ converges to $A(\tilde{u}_{\eps})$ in $L^2(\tilde\Omega\times Q_T)$ and weakly in $L^2(\tilde\Omega,L^2(0,T,H^\lambda(\R^d)))$;
\\[0.1cm]
$\h(\tilde u_{\eps, \gamma})$ converges to $\h(\tilde u_{\gamma})$ in $L^2(\tilde\Omega,L^2(0,T,HS(\mathbb{H},\R^d)))$ and $\int_0^\cdot \h(\tilde u_{\eps, \gamma}) d\tilde W_\eps$ converges to $\int_0^\cdot \h(\tilde u_{\eps}) d\tilde W_\eps$ in $L^2(\tilde\Omega,C([0,T],L^2(\R^d)))$;
\\[0.1cm]
$\tilde u_{\eps, \gamma}-\int_0^\cdot \h(\tilde u_{\eps, \gamma}) d\tilde W_\eps$ converges to $\tilde u_{\eps}-\int_0^\cdot \h(\tilde u_{\eps}) d\tilde W_\eps$ in $L^2(\tilde\Omega,W^{1,2}(0,T,H^{1}(\D),H^{-2}(\D)))$.

Note that from the two last estimates and  the second part of Estimate (d), $\tilde u_{\eps, \gamma}$ converges weakly to $\tilde u_{\gamma}$ in $C_w([0,T],L^2(\R^d))$ and  $\tilde u_{\eps, \gamma}(t)$ converges weakly to $\tilde u_{\gamma}(t)$ in $L^{2}(\D)$ for any $t$. 
\\[0.2cm]
We are now in position to conclude that $\tilde u_{\eps}$ is a martingale solution to the viscous problem \eqref{eq:viscous-Brown}, under the constraint that $u_0^\eps \in L^2(\R^d,\|x\|^2dx)$. 

\subsection{Pathwise solutions} 
\label{pathwise}
To continue this section, one is able, thanks to Section \ref{AnnexeParab} where the pathwise uniqueness of the solution is proved, to give a result of  existence of a strong (in the sense of probability) solution to the viscous problem \eqref{eq:viscous-Brown} by using  Yamada-Watanabe approach (see \cite{YamadaWatanabe}), relying on  Gy\"ongy-Krylov characterization of convergence in probability (see \cite{GyongyKrylov}). 
\medskip

Finally, let us briefly mention why one can remove the assumption $u_0^\eps \in L^2\big(\R^d,\|x\|^2 dx\big)$, which is only used to obtain compactness information, and prove the result of existence for $u_0^\eps \in H^1(\R^d)\cap L^1(\R^d)$.
\\
Indeed, by a classical density argument: convolution and cut-off, a sequence $(v_\delta) \subset \mathcal{D}(\R^d)$ exits converging to $u_0^\eps$ in $H^1(\R^d)$ and $L^1(\R^d)$. Therefore, by the first step, there exists a corresponding sequence $(u^\eps_\delta)$ of viscous solutions whose initial conditions are the $v_\delta$. 
Note that the estimates (a) to (d) of Lemma \ref{EnergyEstimates} still hold, independently of $\delta$, and that \eqref{Uniq-vicous} adds an information of Cauchy sequence of $(u_\delta^\eps)$ in $L^\infty(0,T,L^1(\Omega\times\R^d))$.  
This last information replaces the compactness argument used to obtain an a.e. convergence, needed to identify the weak limits. Then, one is able to reproduce the above Subsection~\ref{identify}, by using weak limits and in particular the strong/weak continuity property of linear operators (like It\^o's integral) between two Banach spaces. This yields the result of existence of Theorem~\ref{prop:vanishing viscosity-solution}.


\subsection{Uniqueness of viscous solutions}
\label{AnnexeParab} \hfill 

Let $u_\ep(s,y)$ and $v_\ep(t,x)$ be two solutions of (\ref{eq:viscous-Brown})  . Applying It\^{o}'s formula to $\eta( u^\gamma_\ep(s,y)-k) \psi(t,x,s,y)$, multiplying by $\rho_l( u_\theta(t,x)-k)$, taking the expectation, and integrating with respect to $t,x,k$, an application of Fubini's theorem yield 

\begin{align}
 0\leq & 
\E\bigg[\dint_{Q_T\times\R\times \D}\eta(u^\gamma_\eps(0)-k)\psi(t,x,0,y)  \rho_l(v_\eps(t,x)-k) \,dy\,dk\,dx\,dt\bigg] \notag
\\ & 
+ \E \bigg[\dint_{\R\times Q^2_T} \eta(u^\gamma_\eps(s,y)-k) \partial_s \psi(t,x,s,y) \rho_l(v_\eps(t,x)-k)\,dy\,ds\,dk\,dx\,dt\bigg]\notag\\
&+ \E\bigg[\sum_{k \ge 1} \dint_{\R\times Q^2_T}  \eta'(u^\gamma_\eps(s,y)-k) \psi(t,x,s,y) g_k(u_\eps)^\gamma(s,y) \rho_l(v_\eps(t,x)-k)\, d\beta_k(s)\,dy \,dk\,dx\,dt\bigg] \notag
\\&
+ \frac{1}{2}\E\bigg[\dint_{\R\times Q^2_T} \eta^{\prime\prime}(u^\gamma_\eps(s,y)-k)  (\mathbb{G}(u_\eps)^\gamma)^2 \psi(t,x,s,y)\rho_l(v_\eps(t,x)-k) \,ds\,dy\,dk\,dx\,dt\bigg]\notag\\
  & -\E\bigg[\dint_{\R\times Q^2_T} F^\eta( u^\gamma_\ep,k)\cdot \nabla_y \psi(t,x,s,y)\rho_l( v_\eps(t,x)-k)\,  dy\,ds\,dk\,dx\,dt\bigg]\notag \\
&- \eps \E\bigg[\dint_{\R\times Q^2_T}\eta'(u^\gamma_\eps(s,y)-k) \nabla_y u^\gamma_\eps(s,y) \nabla_y\psi(t,x,s,y)\rho_l(v_\eps(t,x)-k) \,dy\,ds\,dk\,dx\,dt\bigg]\notag
\\
& -\eps\E\bigg[\dint_{\R\times Q^2_T}\eta''(u^\gamma_\eps(s,y)-k) |\nabla_y u^\gamma_\eps(s,y)|^2 \psi(t,x,s,y) \rho_l(v_\eps(t,x)-k)\,dy\,ds\,dk\,dx\,dt\bigg]\notag\\
& -\E \bigg[\int_{\R \times Q^2_T} \mathscr{L}^r_\lambda[A(u_\eps)^\gamma(s,\cdot)](y) \psi(t,x,s,y)\, \eta^\prime( u^\gamma_\eps(s,y)-k) \rho_l(v_\eps(t,x)-k) \,dy \,ds\,dk\,dx\,dt\bigg]\notag\\
& -\E \bigg[\int_{\R \times Q^2_T} \mathscr{L}_{\lambda,r}[A(u_\eps)^\gamma(s,\cdot)](y) \psi(t,x,s,y) \eta^\prime( u^\gamma_\eps(s,y)-k)\rho_l(v_\eps(t,x)-k) \,ds\,dy\,dk\,dx\,dt\bigg]\notag\\
& =:  I_1 + I_2 + I_3 +I_4 + I_5 + I_6 + I_7 + I_8 +I_9.
\label{kato_one}
\end{align}

We now write  It\^o's formula  for $v_\eps(t,x)$, and then multiply by $\rho_l[u^\gamma_\eps(s,y)-k]$ and  integrate with respect to $k,y,s$ and then take expectation to get
\begin{align}
 0\leq & 
\E\bigg[\dint_{Q_T\times\R\times \D}\eta(u_\eps(0)-k)\psi(0,x,s,y)  \rho_l(u^\gamma_\eps(s,y)-k) \,dy\,dk\,dx\,ds\bigg] \notag
\\ & 
+ \E \bigg[\dint_{\R\times Q^2_T} \eta(v_\eps(t,x)-k) \partial_t \psi(t,x,s,y) \rho_l(u^\gamma_\eps(s,y)-k)\,dy\,ds\,dk\,dx\,dt\bigg]\notag\\
& +\E\bigg[\sum_{k \ge1}\dint_{\R\times Q^2_T} \eta'(v_\eps(t,x)-k) \psi(t,x,s,y) g_k(v_\eps(t,x)) \, d\beta_k(t)\rho_l(u^\gamma_\eps(s,y)-k)\,dy \,dk\,dx\,ds\bigg] \notag
\\&
+ \frac{1}{2}\E\bigg[\dint_{\R\times Q^2_T} \eta^{\prime\prime}(v_\eps(t,x)-k)  \mathbb{G}^2(v_\eps(t,x)) \psi(t,x,s,y)\rho_l(u^\gamma_\eps(s,y)-k) \,ds\,dy\,dk\,dx\,dt\bigg]\notag\\
&-\E\bigg[\dint_{\R\times Q^2_T} F^\eta(v_\eps(t,x),k). \nabla_x \psi(t,x,s,y) \rho_l(u^\gamma_\eps(s,y)-k) \,dy\,ds\,dk\,dx\,dt\bigg] \notag
\\
&- \eps \E\bigg[\dint_{\R\times Q^2_T}\eta'(v_\eps(t,x)-k) \nabla_x v_\eps(t,x) .\nabla_x\psi(t,x,s,y) \rho_l(u^\gamma_\eps(s,y)-k) \,dy\,ds\,dk\,dx\,dt\bigg]\notag
\\
&-\eps \E\bigg[\dint_{\R\times Q^2_T}\eta''(v_\eps(t,x)-k) |\nabla_x v_\eps(t,x)|^2 \psi(t,x,s,y) \rho_l(u^\gamma_\eps(s,y)-k) \,dy\,ds\,dk\,dx\,dt\bigg]\notag\\
& -\E \bigg[\int_{\R \times Q^2_T} \mathscr{L}^r_\lambda[A(v_\eps)(t,\cdot)](x) \psi(t,x,s,y)\, \eta^\prime( v_\eps(t,x)-k) \rho_l(u^\gamma_\eps(s,y)-k) \,dy \,ds\,dk\,dx\,dt\bigg]\notag\\
& -\E \bigg[\int_{\R \times Q^2_T} \mathscr{L}_{\lambda,r}[A(v_\eps)(t,\cdot)](x) \psi(t,x,s,y) \eta^\prime( v_\eps(t,x)-k)\rho_l(u^\gamma_\eps(s,y)-k) \,ds\,dy\,dk\,dx\,dt\bigg]\notag\\
& =:  J_1 + J_2 + J_3 +J_4 + J_5 + J_6 + J_7 + J_8+J_9.
\label{Kato_second}
\end{align}

We shall now add \eqref{kato_one} and \eqref{Kato_second}, and pass to the limit with respect the parameters in a fixed order: first $n$, then $\gamma, \delta, l, r$ and finally $m \to \infty$.\hfill

We shall consider the terms $I_6, I_7$ and $J_6,J_7$, rest of the terms can be treated exactly in the same way as done in the uniqueness method o Section \ref{uniqueness}.
We have the following lemmas
\begin{lem}
\label{viscous1}
\begin{align*}
A:=&\lim_{\delta \to 0} \lim_{\gamma \to 0} \lim_{n \to \infty}( I_6+J_6)
\\=& - \eps \E\bigg[\dint_{\R\times \D \times Q_T}\sgn(u_\eps(t,y)-k) \nabla_y u_\eps(t,y) .\nabla_y \rho_m(x-y)
\varphi(t,x) \rho_l(v_\eps(t,x)-k) \,dy\,dk\,dx\,dt\bigg]\\
& - \eps \E\bigg[\dint_{\R\times \D \times Q_T}\sgn(v_\eps(t,x)-k) \nabla_x v_\eps(t,x) .\nabla_x( \varphi(t,x) \rho_m(x-y))
 \rho_l(u_\eps(t,y)-k) \,dy\,dk\,dx\,dt\bigg].
\end{align*}
\end{lem}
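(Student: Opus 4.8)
The plan is to pass to the limits in the prescribed order $n\to\infty$, then $\gamma\to 0$, then $\delta\to 0$, exploiting the product structure of the test function $\psi(t,x,s,y)=\rho_n(t-s)\rho_m(x-y)\varphi(t,x)$, for which $\nabla_y\psi=\rho_n(t-s)\varphi(t,x)\,\nabla_y\rho_m(x-y)$ and $\nabla_x\psi=\rho_n(t-s)\,\nabla_x\big(\varphi(t,x)\rho_m(x-y)\big)$.

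First I would handle $n\to\infty$. Since $\rho_n$ is supported in $[-2/n,0]$ with unit mass and all of $u^\gamma_\eps,\nabla_y u^\gamma_\eps,v_\eps,\nabla_x v_\eps$ belong to $L^2(\Omega\times Q_T)$, the time mollifier $\rho_n(t-s)$ collapses the $s$-integration onto $s=t$ by the $L^2$-continuity of time translations, exactly as for the local terms in \cite{BaVaWitParab}. This yields
\begin{align*}
I_6+J_6 \underset{n\to\infty}{\longrightarrow}
&-\eps\,\E\Big[\dint_{\R\times Q_T\times\D}\eta'(u^\gamma_\eps(t,y)-k)\,\nabla_y u^\gamma_\eps(t,y)\cdot\nabla_y\rho_m(x-y)\,\varphi(t,x)\,\rho_l(v_\eps(t,x)-k)\,dy\,dk\,dx\,dt\Big]\\
&-\eps\,\E\Big[\dint_{\R\times Q_T\times\D}\eta'(v_\eps(t,x)-k)\,\nabla_x v_\eps(t,x)\cdot\nabla_x\big(\varphi(t,x)\rho_m(x-y)\big)\,\rho_l(u^\gamma_\eps(t,y)-k)\,dy\,dk\,dx\,dt\Big].
\end{align*}

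Second, for $\gamma\to 0$ with $\delta$ (hence $\eta=\eta_\delta$) fixed, so that $\eta'$ is bounded and Lipschitz, I would use the strong convergences $u^\gamma_\eps\to u_\eps$ and $\nabla u^\gamma_\eps\to\nabla u_\eps$ in $L^2(\Omega\times Q_T)$ coming from the space convolution. In the first integral I split $\eta'(u^\gamma_\eps-k)\nabla_y u^\gamma_\eps-\eta'(u_\eps-k)\nabla_y u_\eps$ as $\eta'(u^\gamma_\eps-k)(\nabla_y u^\gamma_\eps-\nabla_y u_\eps)+\big(\eta'(u^\gamma_\eps-k)-\eta'(u_\eps-k)\big)\nabla_y u_\eps$; the first piece is controlled by $|\eta'|\le1$ times the strong gradient convergence, and the second tends to zero by dominated convergence (dominated by $2|\nabla_y u_\eps|$, the difference vanishing a.e.\ along a subsequence). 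In the second integral the gradient factor $\nabla_x v_\eps$ is untouched and only $\rho_l(u^\gamma_\eps-k)\to\rho_l(u_\eps-k)$ must be passed, which again follows from the strong $L^2$-convergence of $u^\gamma_\eps$ together with the Lipschitz continuity of $\rho_l$.

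Finally, for $\delta\to0$ I would invoke $\eta'_\delta\to\sgn$ pointwise with $|\eta'_\delta|\le1$. After checking that the surviving integrands lie in $L^1(\Omega\times Q_T\times\D\times\R)$ --- which follows from Cauchy--Schwarz, since $\nabla u_\eps,\nabla v_\eps\in L^2$, the kernels $\nabla_y\rho_m$ and $\nabla_x(\varphi\rho_m)$ are bounded with compact spatial support, and $\rho_l$ has unit mass --- dominated convergence yields the stated identity with $\eta'$ replaced by $\sgn$. The step I expect to be the main obstacle is the $\gamma\to0$ passage in the $I_6$-term, where the mollification acts simultaneously on the argument of the nonlinear $\eta'$ and on the merely $L^2$ gradient $\nabla_y u^\gamma_\eps$; this product of a bounded, a.e.-convergent factor with a strongly $L^2$-convergent one must be resolved by the splitting above rather than by a single weak--strong or dominated-convergence argument.
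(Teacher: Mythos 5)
Your proposal is correct and follows exactly the route the paper intends: the paper states Lemma~\ref{viscous1} without a written proof, deferring to the analogous limit passages of Section~\ref{uniqueness} (collapse of the time mollifier $\rho_n$, removal of the space convolution via the strong $L^2(\Omega\times Q_T)$ convergence of $u_\eps^\gamma$ and $\nabla u_\eps^\gamma$ guaranteed by $u_\eps\in N^2_w(0,T,H^1(\R^d))$, and then $\eta_\delta'\to\sgn$ by dominated convergence with $|\eta_\delta'|\le 1$). Your splitting of $\eta'(u^\gamma_\eps-k)\nabla u^\gamma_\eps-\eta'(u_\eps-k)\nabla u_\eps$ is precisely the standard device used for the corresponding viscous term in the cited references, so there is no genuine obstacle in the $\gamma\to0$ step.
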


\begin{lem}
\label{viscous2}
\begin{align*}
B:=&\lim_{\delta \to 0} \lim_{\gamma \to 0} \lim_{n \to \infty}( I_7 + J_7)
\\ =& -2\ep\Bigg( \E \int_{ \D \times Q_T} ( | \nabla_x v_\ep(t,y)|^2+ | \nabla_y u_\ep(t,x)|^2) \rho_l(u_\ep(t,x)- v_\ep(t,y))
 \varphi(t,x) \rho_m(x-y) \,dy\,dx\,dt \Bigg).
\end{align*}

\begin{proof}
After an integration by parts on k, we get 
\begin{align*}
 &\lim_{\gamma \to 0} \lim_{ n \to \infty}(I_7 + J_7) \\
 &\qquad =    \eps \E\bigg[\dint_{\R\times \D \times Q_T}\eta'(u_\eps(t,y)-k)\rho'_l(v_\eps(t,x)-k) |\nabla_y u_\eps(t,y)|^2 \varphi(t,x) \rho_m(x-y) \,dy\,ds\,dk\,dx\,dt\bigg]\\
 &\qquad +\eps \E\bigg[\dint_{\R\times \D \times Q_T}\eta'(v_\eps(t,x)-k)\rho'_l(u_\eps(t,y)-k) |\nabla_x v_\eps(t,x)|^2  \varphi(t,x) \rho_m(x-y) \,dy\,dk\,dx\,dt\bigg]\\
 & \underset{ \delta \to 0} \longrightarrow \eps \E\bigg[\dint_{\R\times \D \times Q_T}\sgn(u_\eps(t,y)-k)\rho'_l(v_\eps(t,x)-k) |\nabla_y u_\eps(t,y)|^2 \varphi(t,x) \rho_m(x-y) \,dy\,ds\,dk\,dx\,dt\bigg]\\
 &+\eps \E\bigg[\dint_{\R\times \D \times Q_T}\sgn(v_\eps(t,x)-k)\rho'_l(u_\eps(t,y)-k) |\nabla_x v_\eps(t,x)|^2  \varphi(t,x) \rho_m(x-y) \,dy\,dk\,dx\,dt\bigg]\\
 &=-2\ep \bigg[ \E \int_{ \D \times Q_T} ( | \nabla_x v_\ep(t,y)|^2+ | \nabla_y u_\ep(t,x)|^2) \rho_l(u_\ep(t,x)- v_\ep(t,y)) \varphi(t,x) \rho_m(x-y) \,dy\,dx\,dt \bigg]\\
\end{align*}
where we have used the fact that $\int_\R \sgn ( u_\ep(t,y)-k) \rho'_l(v_\ep(t,x)-k)\,dk = \int_\R \sgn( v_\ep(t,x)-k) \rho'_l(u_\ep(t,y)-k) \,dk= -2\rho_l(u_\ep(t,x)-v_\ep(t,y))$.
\end{proof}
\end{lem}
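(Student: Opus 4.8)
The plan is to handle the two genuinely second-order viscous terms $I_7$ and $J_7$ — those carrying $\eta''$ together with a squared gradient — by passing to the limits in the prescribed order $n\to\infty$, $\gamma\to 0$, $\delta\to 0$, and then collapsing the remaining $k$-integral by a signed identity. Recall that
\[
I_7=-\ep\,\E\Big[\int_{\R\times Q_T^2}\eta''(u^\gamma_\ep(s,y)-k)\,|\nabla_y u^\gamma_\ep(s,y)|^2\,\psi(t,x,s,y)\,\rho_l(v_\ep(t,x)-k)\,dy\,ds\,dk\,dx\,dt\Big]
\]
and $J_7$ is its symmetric counterpart with the roles of $(s,y,u_\ep)$ and $(t,x,v_\ep)$ interchanged. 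First I would send $n\to\infty$: since $\psi$ contains the factor $\rho_n(t-s)$ supported in $s-2/n\le t\le s$, this collapses $s$ onto $t$, and the passage is justified by the $L^2(\Omega\times Q_T)$-continuity of $s\mapsto u^\gamma_\ep(s,\cdot)$ coming from the regularity in Theorem~\ref{prop:vanishing viscosity-solution}. Next I would let $\gamma\to0$; here the only delicate point is the quadratic term $|\nabla_y u^\gamma_\ep|^2$, for which one needs the strong convergence $\nabla u^\gamma_\ep\to\nabla u_\ep$ in $L^2(\Omega\times Q_T)$, a direct consequence of the uniform $H^1$-bound of Lemma~\ref{EnergyEstimates}(a) and the standard properties of mollification.

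Having reached the level $\lim_{\gamma\to0}\lim_{n\to\infty}(I_7+J_7)$, the next step is an integration by parts in the variable $k$. Writing $\eta''(\cdot-k)=-\partial_k\eta'(\cdot-k)$ and integrating by parts transfers the $k$-derivative onto $\rho_l$, producing the kernels $\eta'(u_\ep(t,y)-k)\,\rho'_l(v_\ep(t,x)-k)$ and $\eta'(v_\ep(t,x)-k)\,\rho'_l(u_\ep(t,y)-k)$ respectively; the boundary terms vanish because $\eta''$ has compact support. No new regularity is needed here beyond the square-integrability already available.

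Finally, I would pass to the limit $\delta\to0$. Since $|\eta'|\le1$ and the products $|\nabla u_\ep|^2\,\rho'_l(\cdot)\,\varphi\,\rho_m$ are integrable over $\Omega\times Q_T\times\R^d\times\R$ — the compact supports of $\varphi$, $\rho_m$, $\rho_l$ together with Lemma~\ref{EnergyEstimates}(a) supplying the integrable majorant — dominated convergence replaces $\eta'$ by $\sgn$. The proof then closes with the elementary identity
\[
\int_\R \sgn(a-k)\,\rho'_l(b-k)\,dk=-2\rho_l(a-b),
\]
valid for any $a,b\in\R$ by the symmetry of $\rho_l$; applied with $(a,b)=(u_\ep,v_\ep)$ and $(a,b)=(v_\ep,u_\ep)$ it collapses each $k$-integral and yields the common factor $-2\rho_l(u_\ep-v_\ep)$. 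Summing the two contributions gives exactly
\[
B=-2\ep\,\E\!\int_{\D\times Q_T}\big(|\nabla_x v_\ep|^2+|\nabla_y u_\ep|^2\big)\,\rho_l(u_\ep-v_\ep)\,\varphi(t,x)\,\rho_m(x-y)\,dy\,dx\,dt.
\]

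The main obstacle I anticipate is the $\gamma\to0$ limit of the quadratic gradient term: weak convergence of $\nabla u^\gamma_\ep$ is not enough, and one must genuinely invoke the strong $L^2$-convergence furnished by the $\gamma$-uniform energy bound, so that the nonlinear functional $v\mapsto|\nabla v|^2$ passes to the limit. All other limits are routine applications of Lebesgue's theorem once the compact-support majorant is identified, and the sign identity above is the only genuinely algebraic input.
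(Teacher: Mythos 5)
Your proof is correct and takes essentially the same route as the paper: integrate by parts in $k$ to move the derivative from $\eta''$ onto $\rho_l$, use dominated convergence (with $|\eta'|\le 1$) to replace $\eta'$ by $\sgn$ as $\delta\to0$, and collapse the $k$-integral via the identity $\int_\R \sgn(a-k)\,\rho_l'(b-k)\,dk=-2\rho_l(a-b)$. The additional detail you supply for the $n\to\infty$ and $\gamma\to0$ passages (in particular the strong $L^2$-convergence of the mollified gradient $\nabla u^\gamma_\ep=(\nabla u_\ep)\star\rho_\gamma\to\nabla u_\ep$, which is what lets the quadratic term pass to the limit) is exactly what the paper leaves implicit.
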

\begin{lem}
\label{viscous3}
 $$\lim_{m \to \infty}\limsup_{l \to 0}(A+B) \le \ep \E \bigg[ \int_{Q_T} |u_\ep(t,x)-v_\ep(t,x)| \Delta \varphi (t,x) \,dx\,dt \bigg]$$ 
 \begin{proof}

\begin{align*}
A+B=& - \eps \E\bigg[\dint_{\R\times \D \times Q_T}\sgn(u_\eps(t,y)-k) \nabla_y u_\eps(t,y) .\nabla_y \rho_m(x-y)\varphi(t,x) \rho_l(v_\eps(t,x)-k) \,dy\,dk\,dx\,dt\bigg]\\
& - \eps \E\bigg[\dint_{\R\times \D \times Q_T}\sgn(v_\eps(t,x)-k) \nabla_x v_\eps(t,x) .\nabla_x( \varphi(t,x) \rho_m(x-y)) \rho_l(u_\eps(t,y)-k) \,dy\,dk\,dx\,dt\bigg]\\
&+2\ep \bigg[ \E \int_{ \D \times Q_T} ( | \nabla_x v_\ep(t,y)|^2+ | \nabla_y u_\ep(t,x)|^2) \rho_l(u_\ep(t,x)- v_\ep(t,y)) \varphi(t,x) \rho_m(x-y) \,dy\,dx\,dt \bigg]\\
=&- \eps \E\bigg[\dint_{\R\times \D \times Q_T}\sgn(u_\eps(t,y)-k) \nabla_y u_\eps(t,y) .(\nabla_y (\rho_m(x-y) \varphi(t,x))+\nabla_x(\rho_m(x-y) \varphi(t,x)))\\[-0.3cm]
&\hspace{10cm} \times \rho_l(v_\eps(t,x)-k) \,dy\,dk\,dx\,dt\bigg]\\
& - \eps \E\bigg[\dint_{\R\times \D \times Q_T}\sgn(v_\eps(t,x)-k) \nabla_x v_\eps(t,x) .(\nabla_x( \varphi(t,x) \rho_m(x-y)+\nabla_y( \varphi(t,x) \rho_m(x-y)))\\[-0.3cm]
&\hspace{10cm} \times \rho_l(u_\eps(t,y)-k) \,dy\,dk\,dx\,dt\bigg]\\
&-2\ep  \E\bigg[ \int_{ \D \times Q_T} ( | \nabla_x v_\ep(t,y)|^2+ | \nabla_y u_\ep(t,x)|^2) \rho_l(u_\ep(t,x)- v_\ep(t,y)) \varphi(t,x) \rho_m(x-y) \,dy\,dx\,dt \bigg]\\
&+\ep \E \bigg[ {\int_{\R\times \D \times Q_T}} \sgn(u_\ep(t,y)-k) \nabla_y u_\ep(t,y). \nabla_x( \rho_m(x-y) \varphi(t,x)) \rho_l(v_\ep(t,x)-k)\,dy\,dk\,dx\,dt \bigg]\\
&+ \ep \E \bigg[ {\int_{\R\times \D \times Q_T}} \sgn(v_\ep(t,x)-k) \nabla_x v_\ep(t,x). \nabla_y( \rho_m(x-y) \varphi(t,x)) \rho_l(u_\ep(t,y)-k)\,dy\,dk\,dx\,dt \bigg]\\
=& -\ep \E\bigg[\dint_{\R\times \D \times Q_T} \sgn(u_\eps(t,y)-v_\ep(t,x)+k) \nabla_y u_\eps(t,y) .\nabla_{x+y}( \rho_m(x-y) \varphi(t,x)) \rho_l(k) \,dy\,dk\,dx\,dt\bigg]\\
& - \eps \E\bigg[\dint_{\R\times \D \times Q_T}\sgn(v_\eps(t,x)-u_\ep(t,y)-k) \nabla_x v_\eps(t,x) . \nabla_{x+y}( \varphi(t,x) \rho_m(x-y))\rho_l(k) \,dy\,dk\,dx\,dt\bigg]\\
&-2\ep  \E\bigg[ \int_{ \D \times Q_T} ( | \nabla_x v_\ep(t,y)|^2+ | \nabla_y u_\ep(t,x)|^2) \rho_l(u_\ep(t,x)- v_\ep(t,y)) \varphi(t,x) \rho_m(x-y) \,dy\,dx\,dt \bigg]\\
&-\ep \E \bigg[ {\int_{\R\times \D \times Q_T}} \sgn(u_\ep(t,y)-k) \nabla_y u_\ep(t,y). \nabla_x v_\ep(t,x) \rho_m(x-y) \varphi(t,x) \rho'_l(v_\ep(t,x)-k)\,dy\,dk\,dx\,dt \bigg]\\
&- \ep \E \bigg[ {\int_{\R\times \D \times Q_T}} \sgn(v_\ep(t,x)-k) \nabla_x v_\ep(t,x). \nabla_y u_\ep(t,y) \rho_m(x-y) \varphi(t,x) \rho'_l(u_\ep(t,y)-k)\,dy\,dk\,dx\,dt \bigg]\\
=&-\ep \E\bigg[\dint_{\R\times \D \times Q_T} \sgn( u_\eps(t,y)-v_\ep(t,x)+k)( \nabla_y u_\ep(t,y) - \nabla_x v_\ep (t,x)). \nabla_{x+y}( \varphi(t,x) \rho_m(x-y))\\[-0.3cm]
& \hspace{12cm} \times \rho_l(k) \,dy\,dk\,dx\,dt \bigg]\\
&-2\ep  \E\bigg[ \int_{ \D \times Q_T} ( | \nabla_x v_\ep(t,y)|^2+ | \nabla_y u_\ep(t,x)|^2) \rho_l(u_\ep(t,x)- v_\ep(t,y)) \varphi(t,x) \rho_m(x-y) \,dy\,dx\,dt \bigg]\\
&-4 \ep \E \bigg[ \int_{ \D \times Q_T} \nabla_x v_\ep(t,x) . \nabla_y u_\ep (t,y) \rho_l( v_\ep(t,x) - u_\ep (t,y)) \varphi(t,x) \rho_m(x-y) \,dy \,dx \,dt \bigg]\\
=&-\ep \E\bigg[\dint_{\R\times \D \times Q_T} \nabla_{x+y} | u_\ep(t,y)-v_\ep(t,x)+k| . \nabla_{x+y}(\varphi(t,x) \rho_m(x-y))\rho_l(k) \,dy\,dk\,dx\,dt\\
& - 2 \ep  \E\bigg[ \int_{ \D \times Q_T}  | \nabla_x v_\ep(t,y)+ \nabla_y u_\ep(t,x)|^2 \rho_l(u_\ep(t,x)- v_\ep(t,y)) \varphi(t,x) \rho_m(x-y) \,dy\,dx\,dt \bigg]\\
\le& \ep \E \bigg[\dint_{\R\times \D \times Q_T}  | u_\ep(t,y)-v_\ep(t,x)+k| \nabla_{x+y}.\nabla_{x+y}(\varphi(t,x) \rho_m(x-y))\rho_l(k) \,dy\,dk\,dx\,dt\bigg]\\
=& \ep \E \bigg[\dint_{\R\times \D \times Q_T}  | u_\ep(t,y)-v_\ep(t,x)+k| \nabla_{x+y}.(\nabla_x\varphi(t,x) \rho_m(x-y)) \rho_l(k) \,dy\,dk\,dx\,dt\bigg]\\
 =& \ep \E \bigg[\dint_{\R\times \D \times Q_T}  | u_\ep(t,y)-v_\ep(t,x)+k| \Delta(\varphi(t,x)) \rho_m(x-y) \rho_l(k) \,dy\,dk\,dx\,dt\bigg]\\
& \overset { \l \to 0} \longrightarrow \ep\E \bigg[\dint_{\D \times Q_T}  | u_\ep(t,y)-v_\ep(t,x)| \Delta(\varphi(t,x)) \rho_m(x-y) \,dy\,dx\,dt\bigg]\\
& \overset { m \to \infty} \longrightarrow \ep \E \bigg[\dint_{\times Q_T}  | u_\ep(t,x)-v_\ep(t,x)| \Delta(\varphi(t,x)) \,dx\,dt\bigg]
\end{align*}
 \end{proof}
 \end{lem}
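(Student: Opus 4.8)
The plan is to add the two expressions for $A$ and $B$ supplied by Lemma~\ref{viscous1} and Lemma~\ref{viscous2} and to reorganize the resulting viscous contributions into a manifestly non-positive ``perfect square'' plus a benign term carrying only $\Delta\varphi$. The first observation I would exploit is that $\rho_m$ depends only on $x-y$, so that the total gradient $\nabla_{x+y}:=\nabla_x+\nabla_y$ annihilates the singular factor: $\nabla_{x+y}\big[\varphi(t,x)\rho_m(x-y)\big]=\rho_m(x-y)\,\nabla\varphi(t,x)$. Adding and subtracting $\nabla_x(\varphi\rho_m)$ and $\nabla_y(\varphi\rho_m)$ inside the two transport-type terms of $A$ lets me replace the mixed single-variable derivatives by $\nabla_{x+y}$, at the cost of generating cross contributions in which $\nabla_y u_\ep$ is paired with $\nabla_x v_\ep$ through the derivatives of the mollifier $\rho_l$.

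Next I would collapse the $\sgn$-weighted terms. Using the chain rule, $\sgn(u_\ep(t,y)-v_\ep(t,x)+k)\big(\nabla_y u_\ep(t,y)-\nabla_x v_\ep(t,x)\big)=\nabla_{x+y}\big|u_\ep(t,y)-v_\ep(t,x)+k\big|$, so that after an integration by parts in the combined variable this contribution becomes $\ep\,\E\!\int |u_\ep(t,y)-v_\ep(t,x)+k|\,\nabla_{x+y}\!\cdot\!\nabla_{x+y}[\varphi\rho_m]\,\rho_l(k)$. A direct computation then gives $\nabla_{x+y}\!\cdot\!\nabla_{x+y}[\varphi(t,x)\rho_m(x-y)]=\rho_m(x-y)\,\Delta\varphi(t,x)$, which is precisely the structure appearing on the right-hand side of the claim.

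The crucial step, and the one I expect to be the main obstacle, is the bookkeeping that turns the remaining viscous terms into a non-positive square. The cross terms produced by the rewriting above, together with the quadratic $-2\ep(|\nabla_x v_\ep|^2+|\nabla_y u_\ep|^2)\rho_l$ coming from $B$ and a further cross term $-4\ep\,\nabla_x v_\ep\cdot\nabla_y u_\ep\,\rho_l$ arising when the integration-by-parts derivatives hit the mollifier $\rho_l$ (through its derivative $\rho'_l$, after the $k$-integration identity $\int_\R\sgn(u_\ep-k)\rho'_l(v_\ep-k)\,dk=-2\rho_l(u_\ep-v_\ep)$), must assemble exactly into $-2\ep\,|\nabla_x v_\ep+\nabla_y u_\ep|^2\,\rho_l\big(u_\ep(t,x)-v_\ep(t,y)\big)\le 0$. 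Getting every sign and factor of $2$ right in the interplay between the $\sgn$-differentiation and the $\rho'_l$-differentiation is the delicate point; once this square is identified it is simply discarded, which is what produces the inequality rather than an identity.

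Finally I would pass to the limits. Sending $l\to0$ uses $\int_\R|u_\ep(t,y)-v_\ep(t,x)+k|\,\rho_l(k)\,dk\to|u_\ep(t,y)-v_\ep(t,x)|$ by continuity of the absolute value and the standard properties of the mollifier, giving $\ep\,\E\!\int_{Q_T\times\D}|u_\ep(t,y)-v_\ep(t,x)|\,\Delta\varphi(t,x)\,\rho_m(x-y)$. Then $m\to\infty$, with $\rho_m(x-y)$ concentrating on the diagonal and using the $L^2$-regularity of $u_\ep$ and $v_\ep$ to justify the convergence, yields the stated bound $\ep\,\E\!\int_{Q_T}|u_\ep(t,x)-v_\ep(t,x)|\,\Delta\varphi(t,x)\,dx\,dt$.
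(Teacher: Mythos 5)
Your proposal matches the paper's own argument essentially step for step: the same replacement of single-variable derivatives by $\nabla_{x+y}$ using that $\nabla_{x+y}[\varphi\rho_m]=\rho_m\nabla\varphi$, the same collapse of the $\sgn$-weighted gradients into $\nabla_{x+y}|u_\ep-v_\ep+k|$ followed by integration by parts, the same assembly of the cross terms (via $\int_\R\sgn(u_\ep-k)\rho'_l(v_\ep-k)\,dk=-2\rho_l(u_\ep-v_\ep)$) with the quadratic terms into the discarded square $-2\ep|\nabla_x v_\ep+\nabla_y u_\ep|^2\rho_l\le 0$, and the same order of limits $l\to 0$ then $m\to\infty$. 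No substantive difference from the paper's proof.
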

Calculating the other terms exactly in the same fashion, as in Section \ref{uniqueness}, and  making use of Lemma~\ref{viscous1}, Lemma~\ref{viscous2} and Lemma~\ref{viscous3}, one obtains the following Kato's inequality:
 \begin{align*}
0 \leq& \int_\D |u_0-v_0|\varphi(0)\,dx + \E\bigg[\int_{Q_T}   |u_\ep(t,x) -v_\ep(t,x)| \partial_t \varphi(t,x)\,dx\,dt\bigg]\\
& +\ep\E \bigg[\dint_{Q_T}  | u_\ep(t,x)-v_\ep(t,x)| \Delta \varphi(t,x) \,dx\,dt\bigg] -\E\bigg[\int_{Q_T}   F(u_\ep(t,x),v_\ep(t,x))\nabla \varphi(t,x)\,dx\,dt 
\bigg]\\
&\quad -\E\bigg[\int_{Q_T}  |A(u_\ep(t,x))-A(v_\ep(t,x))|  \mathscr{L}_\lambda[\varphi(t,\cdot)](x) \,dx \,dt\bigg],
\end{align*} 
\noindent \textit{a priori} for any non-negative $\varphi \in \mathcal{D}([0,T[\times\D)$, but for any non-negative $\varphi \in L^2(0,T,H^2(\D))\cap H^1(Q_T)$ by a density argument.

Finally, to show the result of uniqueness we follow the arguments of Subsection~\ref{stability}, for the two viscous solutions $u_\ep$ and $v_\ep$ with the same initial data. Indeed, we have from Kato's inequality
\begin{align*}
0 \leq& \E\Big[\int_{Q_T} |u_\ep(t,x) -v_\ep(t,x)| \partial_t \varphi(t,x) + \ep |u_\ep(t,x)-v_\ep(t,x)| \Delta \varphi(t,x)-F(u_\ep(t,x),v_\ep(t,x)).\nabla \varphi(t,x)\,dx\,dt\Big]\\
&\hspace{2cm}-\E\Big[\int_{Q_T}   \mathscr{L}_{\lambda/2}\big[|A(u_\ep(t,\cdot))-A(v_\ep(t,\cdot))|\big](x)  \mathscr{L}_{\lambda/2}[\varphi(t,\cdot)](x) \,dx\,dt\Big].
\end{align*}
Let $\rho$ be a space mollifier and $\psi_R$ be as defined before, then $\psi_R \star \rho \to 0$ in $H^\lambda(\Rd)$ and  $\mathscr{L}_{\lambda/2}(\psi_R\star \rho) \to 0$ in $L^2(\Omega\times(0,T),L^2(\D))$.
Then,  for any $\theta \in \mathcal{D}([0,T))$ non-negative, choosing $\varphi= \theta(t) \psi_R \ast \rho$ we get
\begin{align*}
&0 \le  \E\bigg[\int_{Q_T} \theta^\prime(t) |u_\ep(t,x) -v_\ep(t,x)| \psi_R\star\rho (x) +\ep |u_\ep(t,x) -v_\ep(t,x)| \Delta (\psi_R \star \rho)\\
& + \E\bigg[\int_{Q_T}-F(u_\ep(t,x),v_\ep(t,x))\nabla (\psi_R\star\rho) (x) - \theta(t)  \mathscr{L}_{\lambda/2}(|A(u_\ep(t,\cdot))-A_\ep(v(t,\cdot))|)(x)  \mathscr{L}_{\lambda/2}(\psi_R\star\rho)(x) \,dx\,dt\bigg].
\end{align*}
Since $|\nabla (\psi_R\star\rho) (x)| \leq |\nabla \psi_R|\star\rho(x) \leq \frac{c}{R} \psi_R\star\rho (x)$ and $| \Delta( \psi_R \ast \rho)(x)| \le \frac{c}{R^2}\psi_R \star\rho(x)$,
\begin{align*}
0 \leq&  \E\bigg[\int_{Q_T}  |u_\ep(t,x) -v_\ep(t,x)|  \psi_R\star\rho (x) \bigg[ \theta^\prime(t)  +c\bigg(\frac{1}{R}+\frac{\ep}{R^2}\bigg) \theta(t)\bigg] \,dx\,dt\bigg]\\
&-\E\bigg[\int_{Q_T} \theta(t)  \mathscr{L}_{\lambda/2}(|A(u_\ep(t,\cdot))-A(v_\ep(t,\cdot))|)(x)  \mathscr{L}_{\lambda/2}(\psi_R\star\rho)(x) \,dx\,dt\bigg].
\end{align*}
Replacing $\theta(t)$ by $\theta(t)e^{-ct\big(\frac{1}{R}+\frac{\ep}{R^2}\big)}$, one has that 
\begin{align*}
0 \leq& \int_\D |u_0-v_0|\theta(0)\psi_R\star\rho\,dx + \E\int_{Q_T}  |u_\ep(t,x) -v_\ep(t,x)|  \psi_R\star\rho (x) \theta^\prime(t) e^{-ct\big(\frac{1}{R}+\frac{\ep}{R^2}\big)} \,dx\,dt 
\\&
+\|\theta\|_\infty \| \mathscr{L}_{\lambda/2}(|A(u_\ep(t,x))-A(v_\ep(t,x))|)\|_{L^2(\Omega\times(0,T),H^{\lambda/2}(\D))}  \|\mathscr{L}_{\lambda/2}(\psi_R\star\rho)\|_{L^2(\Omega\times(0,T),H^{\lambda/2}(\D))}.
\end{align*}
Choosing $\theta$ in such a way that it is a non-increasing function with $\theta(0)=1$, then one gets, passing to the limit when $R \to \infty$, for $t$ a.e. in $(0,T)$ first, then all $t$ since $u_\ep$ and $v_\ep$ are continuous  processes, 
\begin{align}\label{Uniq-vicous}
\E\bigg[\int_{\D} |u_\ep(t,x) -v_\ep(t,x)| \,dx\bigg] \leq \int_\D |u_0-v_0| \,dx.
\end{align}
As the initial profiles are same, we get
\begin{align*}
\E\bigg[\int_{\D} |u_\ep(t,x) -v_\ep(t,x)| \,dx\bigg] \leq 0.
\end{align*}
This concludes the pathwise uniqueness of viscous solution.

 
\subsection{Bounded variation estimates}
\label{sec:apriori+existence}
In this section, we collect some \textit{a priori} estimates for the viscous solutions \textit{i.e.}, solutions of \eqref{eq:viscous-Brown}. To that context, we have the following results.
\begin{lem}
The viscous solution to \eqref{eq:viscous-Brown} has a.s. continuous trajectories with values in $L^2(\R^d)$ and satisfies, independently of $\eps$, the following estimate: 
\begin{align*}
\sup_{0\le t\le T} \E\Big[\big\|u_\eps(t)\big\|_{L^2(\R^d)}^2\Big]  + \eps \int_0^T \E\Big[\big\|\grad u_\eps(s)\big\|_{L^2(\R^d)}^2\Big]\,ds + \int_0^T \E\Big[\big\| A(u_\eps(s))\|_{H^{\lambda}(\R^d)}^2\Big]\,ds \le C.
\end{align*}
\end{lem}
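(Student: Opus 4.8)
The plan is to derive the estimate directly from an It\^o energy balance for $t\mapsto \big\|u_\eps(t)\big\|_{L^2(\R^d)}^2$, carried out in the Gelfand triple $H^1(\R^d)\hookrightarrow L^2(\R^d)\hookrightarrow H^{-1}(\R^d)$ compatible with the regularity $u_\eps\in N^2_w(0,T,H^1(\R^d))$ and $\partial_t\big(u_\eps-\int_0^t\h(u_\eps)\,dW\big)\in L^2(\Omega\times(0,T),H^{-1}(\R^d))$ furnished by Theorem~\ref{prop:vanishing viscosity-solution}. Testing \eqref{eq:viscous-Brown} against $u_\eps$ and using the variational It\^o formula, I would obtain, for $\mathbb{P}$-a.e.\ $\omega$ and every $t$,
\begin{align*}
\big\|u_\eps(t)\big\|_{L^2}^2 &+ 2\eps\int_0^t\big\|\grad u_\eps\big\|_{L^2}^2\,ds + 2\int_0^t\langle \mathscr{L}_\lambda[A(u_\eps)],u_\eps\rangle\,ds + 2\int_0^t\int_\D f(u_\eps)\cdot\grad u_\eps\,dx\,ds\\
&= \big\|u_0^\eps\big\|_{L^2}^2 + 2\sum_{k\ge1}\int_0^t\int_\D g_k(u_\eps)\,u_\eps\,dx\,d\beta_k(s) + \int_0^t\int_\D \mathbb{G}^2(u_\eps)\,dx\,ds,
\end{align*}
which is the exact analogue of \eqref{viscousito} with $\gamma=0$.

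Then I would dispose of the deterministic terms exactly as in Lemma~\ref{EnergyEstimates}(a). The flux contribution vanishes, since $f(u_\eps)\cdot\grad u_\eps=\Div\Psi(u_\eps)$ with $\Psi'=f$ and $\Psi(0)=0$, so that $\int_\D f(u_\eps)\cdot\grad u_\eps\,dx=0$ for functions decaying at infinity. For the nonlocal term I would pass to the symmetric bilinear representation $\langle \mathscr{L}_\lambda[A(u_\eps)],u_\eps\rangle=\frac{c_\lambda}{2}\int_\D\int_\D\frac{(A(u_\eps(x))-A(u_\eps(y)))(u_\eps(x)-u_\eps(y))}{|x-y|^{d+2\lambda}}\,dx\,dy$ and invoke the pointwise monotonicity inequality $\frac{1}{\|A'\|_\infty}|A(a)-A(b)|^2\le (A(a)-A(b))(a-b)$ (valid by \ref{A3}) to bound it below by $\frac{1}{\|A'\|_\infty}\big\|A(u_\eps)\big\|_{H^\lambda(\R^d)}^2$. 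The It\^o correction is controlled by $\int_\D\mathbb{G}^2(u_\eps)\,dx\le K\big\|u_\eps\big\|_{L^2}^2$ thanks to \ref{A4}, while the stochastic integral is a genuine mean-zero martingale once its square integrability is checked, again via \ref{A4} and estimate~(a).

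Taking expectations therefore yields
\begin{align*}
\E\big[\big\|u_\eps(t)\big\|_{L^2}^2\big] + 2\eps\int_0^t\E\big[\big\|\grad u_\eps\big\|_{L^2}^2\big]\,ds + \frac{2}{\|A'\|_\infty}\int_0^t\E\big[\big\|A(u_\eps)\big\|_{H^\lambda}^2\big]\,ds \le \E\big[\big\|u_0^\eps\big\|_{L^2}^2\big] + K\int_0^t\E\big[\big\|u_\eps\big\|_{L^2}^2\big]\,ds,
\end{align*}
and a Gronwall argument, together with the $\eps$-uniform bound $\E\big[\|u_0^\eps\|_{L^2}^2\big]\le C\|u_0\|_{L^2}^2$ coming from the choice of $u_0^\eps$, gives the claimed $\eps$-independent estimate after taking the supremum in $t$. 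The pathwise continuity in $L^2(\R^d)$ is inherited from Theorem~\ref{prop:vanishing viscosity-solution}: since $\partial_t\big(u_\eps-\int_0^t\h(u_\eps)\,dW\big)\in L^2(\Omega\times(0,T),H^{-1}(\R^d))$ and $u_\eps\in L^\infty(0,T,L^2)$, one first obtains weak continuity as in Remark~\ref{weakcont}, and the continuity of the It\^o integral upgrades this to strong continuity.

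The main obstacle I anticipate is the rigorous justification of the It\^o energy identity at the $H^1$ regularity level: unlike \eqref{eq:viscous-Brown_one_01}, the limiting equation \eqref{eq:viscous-Brown} carries no $\gamma\Delta^2$ term, so $u_\eps$ is only $H^1$ in space and the duality $\langle \mathscr{L}_\lambda[A(u_\eps)],u_\eps\rangle$ must be read through the $H^{\lambda/2}$ form rather than as an $L^2$ pairing. A clean way around any remaining technical scruple is to avoid re-deriving It\^o altogether and instead pass to the limit $\gamma\to0$ in the $\gamma$-uniform bound of Lemma~\ref{EnergyEstimates}(a): along the convergences recorded in Subsection~\ref{identify} ($u_{\eps,\gamma}\rightharpoonup u_\eps$ weakly-$\ast$ in $L^\infty(0,T,L^2(\Omega\times\R^d))$, $\grad u_{\eps,\gamma}\rightharpoonup\grad u_\eps$ and $A(u_{\eps,\gamma})\rightharpoonup A(u_\eps)$ weakly in the respective $L^2$-type spaces), weak lower semicontinuity of the norms preserves each of the three terms while the nonnegative $\gamma$-term simply drops, delivering the same inequality (the quantities being expectations, hence law-invariant).
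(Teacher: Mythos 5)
Your proposal is correct and follows essentially the same route as the paper: the paper's proof is just the citation of Pr\'ev\^ot--R\"ockner, Thm.\ 4.2.5, which is precisely the variational It\^o formula for $\|u_\eps(t)\|_{L^2}^2$ in the triple $H^1\hookrightarrow L^2\hookrightarrow H^{-1}$ that you invoke, followed by the same term-by-term estimates already carried out for \eqref{viscousito} in Lemma~\ref{EnergyEstimates}(a). Your fallback of instead passing to the limit $\gamma\to0$ in the $\gamma$-uniform bound via weak lower semicontinuity is also sound and addresses the only technical scruple (reading the nonlocal pairing through the $H^{\lambda/2}$ form at $H^1$ regularity), so no gap remains.
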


\begin{proof}
For a proof, follow \textit{e.g.} \cite[Thm. 4.2.5]{PrevotRockner},
\end{proof}

In view of the well-posedness results (cf. Section~\ref{uniqueness}) and the properties of the convergence in the sense of Young measures, we conclude that under the assumptions \ref{A1}-\ref{A4}, the family $\{u_\eps\}_{\eps>0}$ converges to the unique entropy solution $u$ of the underlying problem \eqref{eq:stoc_frac}, weakly in $L^2(\Omega\times(0,T)\times\R^d)$ and strongly $L^p(\Omega\times(0,T)\times B_{\R^d}(0,M))$ for any positive $M$ and any $p\in [1,2)$.

Next, we state some results concerning the uniform spatial BV bound for the viscous solutions and. as corollary, also for the solution of \eqref{eq:stoc_frac}. Indeed, we have the following theorem. 
\begin{thm}
\label{thm:bv-viscous}
Let the assumptions $\ref{A1}$-$\ref{A4}$ hold. For $\eps>0$, let $u_\eps$ be the solution to the Cauchy problem \eqref{eq:viscous-Brown} and assume that $u_0 \in L^1(\R^d)$, resp. $u_0 \in BV(\R^d)$.
Then there exists a constant $C>0$, independent of $\eps$ and $t>0$, such that 
\begin{align*}
 \E\Big[ \|u_\eps(t)\|_{L^1(\R^d)}\Big] \le  C \, \|u_0\|_{L^1(\R^d)}, \quad
&\text{resp.}\quad  \E\Big[ TV_x(u_\eps(t))\Big] \le TV_x(u_0).
 \end{align*}
\end{thm}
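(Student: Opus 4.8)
The plan is to obtain both estimates as direct consequences of the $L^1$-contraction principle \eqref{Uniq-vicous} for viscous solutions, proved in Subsection~\ref{AnnexeParab}, combined with the translation invariance of \eqref{eq:viscous-Brown}. First I would record that, by Assumptions \ref{A2}--\ref{A4}, one has $f(0)=0$, $A(0)=0$ and $g_k(0)=0$ for all $k$, so the constant field $v_\eps\equiv 0$ is the (unique) viscous solution of \eqref{eq:viscous-Brown} associated with the datum $v_0\equiv 0$. Applying \eqref{Uniq-vicous} to the pair $(u_\eps,0)$ then yields, for every $t>0$,
\[
\E\Big[\|u_\eps(t)\|_{L^1(\D)}\Big]=\E\Big[\int_\D |u_\eps(t,x)|\,dx\Big]\le \int_\D |u_0|\,dx=\|u_0\|_{L^1(\D)},
\]
which is the first estimate (in fact with constant $C=1$).

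For the $BV$ bound I would begin from the observation that \eqref{eq:viscous-Brown} is invariant under spatial translations: none of the operators $\Delta$, $\Div f(\cdot)$, $\fr[A(\cdot)]$ nor the noise coefficient $u\mapsto\h(u)$ depends explicitly on $x$, and $\fr$ is a translation-invariant Fourier multiplier. Consequently, for each fixed $z\in\D$, the shifted field $x\mapsto u_\eps(t,x+z)$ is again a viscous solution of \eqref{eq:viscous-Brown} driven by the same Wiener process $W$, with initial datum $u_0(\cdot+z)$. Applying \eqref{Uniq-vicous} to this solution and to $u_\eps$ gives, for every $z$,
\[
\E\Big[\big\|u_\eps(t,\cdot+z)-u_\eps(t,\cdot)\big\|_{L^1(\D)}\Big]\le \big\|u_0(\cdot+z)-u_0\big\|_{L^1(\D)}.
\]

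It then remains to convert these translation estimates into a control of $TV_x$, and the only delicate point is the interchange of the expectation with the supremum defining the total variation. I would use the directional characterization of the variation: for $w\in L^1(\D)$ and each coordinate direction $e_i$, the map $h\mapsto\|w(\cdot+he_i)-w\|_{L^1(\D)}$ is subadditive, so $h^{-1}\|w(\cdot+he_i)-w\|_{L^1(\D)}$ is monotone and increases, as $h\downarrow 0$, to the directional variation $|\partial_{x_i}w|(\D)$, with $TV_x(w)=\sum_i|\partial_{x_i}w|(\D)$. Because the family is monotone, the monotone convergence theorem allows exchanging the supremum with the expectation for $w=u_\eps(t,\cdot)$, whence
\[
\E\Big[|\partial_{x_i}u_\eps(t)|(\D)\Big]=\sup_{h>0}\frac1h\,\E\Big[\big\|u_\eps(t,\cdot+he_i)-u_\eps(t,\cdot)\big\|_{L^1(\D)}\Big]\le \sup_{h>0}\frac1h\big\|u_0(\cdot+he_i)-u_0\big\|_{L^1(\D)}=|\partial_{x_i}u_0|(\D).
\]
Summing over $i=1,\dots,d$ gives $\E[TV_x(u_\eps(t))]\le TV_x(u_0)$, completing the proof.

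The hard part is precisely this last measure-theoretic exchange: a dual $\sup$ over test fields sits inside the expectation, so $\E[\sup]\ge\sup\E$ is the wrong direction in general. What saves the argument is that the subadditivity of the $L^1$ modulus of translation makes the difference quotients a monotone family in $h$, turning the problematic supremum into a monotone limit to which the monotone convergence theorem applies; this simultaneously produces the sharp constant $1$. All the remaining ingredients—existence of a well-defined pointwise-in-$t$ value of $u_\eps$ with continuous $L^2$-trajectories, and the contraction itself—are already available from the preceding sections, so I would simply invoke them.
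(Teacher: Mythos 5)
Your proof is correct, and for the total variation bound it follows exactly the paper's route: the paper's entire argument for that part is the one-line observation that $v_\ep(t,x)=u_\ep(t,x+h)$ solves \eqref{eq:viscous-Brown} with datum $u_0(\cdot+h)$, so \eqref{Uniq-vicous} controls the translation modulus of $u_\eps(t)$ by that of the initial datum. You differ in two respects, both to your credit. First, for the $L^1$ bound the paper simply cites \cite[Appendix A]{BhKoleyVa}, whereas you derive it internally by comparing with the zero solution (legitimate since \ref{A2}--\ref{A4} give $f(0)=0$, $A(0)=0$, $g_k(0)=0$, so $v_\eps\equiv 0$ is the unique viscous solution with zero datum); this even sharpens the constant to $C=1$, modulo replacing $u_0$ by $u_0^\eps$ and using the stated property $\|u_0^\eps\|_{L^1}\le\|u_0\|_{L^1}$. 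Second, you make explicit the passage from the translation estimates to $\E[TV_x(u_\eps(t))]\le TV_x(u_0)$, which the paper leaves implicit; your identification of the $\E$-versus-$\sup$ exchange as the only delicate step is exactly right. One small imprecision there: subadditivity of $h\mapsto\|w(\cdot+he_i)-w\|_{L^1}$ does not make $h^{-1}\|w(\cdot+he_i)-w\|_{L^1}$ monotone for all $h$; it gives $\omega(nh)/(nh)\le\omega(h)/h$, hence monotonicity only along geometric sequences $h_n=h/2^n$, together with the Fekete-type identity $\lim_{h\downarrow0}\omega(h)/h=\sup_{h>0}\omega(h)/h$. That is all you need: apply monotone convergence along $h_n\downarrow 0$ and conclude as you do. With that rephrasing the argument is complete, and it matches (indeed slightly strengthens) what the paper asserts.
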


\begin{proof}
For the details of the proof of the $L^1$-property, we refer to \cite[Appendix A]{BhKoleyVa}. The total variation estimate is a direct consequence of the stability result \eqref{Uniq-vicous} by noticing that $v_\ep(t,x)=u_\ep(t,x+h)$ if $v_0(x)=u_0(x+h)$.
\end{proof}
	
\begin{rem}
By using Fatou's lemma for the $L^1(\R^d)$-norm on $L^2(\R^d)$ and the fact that the total variation $TV(u)=\sup\Big\{\int_{\R^d}u \mbox{div} \vec \varphi dx,\ \vec \varphi \in C^1_c(\R^d)^d,\ \|\vec \varphi\|_\infty\leq 1\Big\}$ is given by a supremum of a family of continuous linear functions on $L^2(\R^d)$, one gets that the application $u \in L^2(\R^d) \mapsto \|u\|_{BV(\R^d)}\in [0,+\infty]$ is a lower semi-continuous convex mapping. In particular, it is a Borel function.
\\
Again, Fatou's lemma yields $u \in L^2(\Omega\times\R^d) \mapsto \E[\|u\|_{BV(\R^d)}]\in [0,+\infty]$ is a lower semi-continuous convex mapping and, thanks to the properties of supremum of such functions, $u \in L^2(\Omega\times Q_T) \mapsto \supess_{t}\E[\|u\|_{BV(\R^d)}]\in [0,+\infty]$ is a lower semi-continuous convex mapping too.
\end{rem}

 Now, assuming that $u_0 \in BV(\R^d)$, our aim is to show that $\omega$ a.s., $u(\omega)$ is actually a spatial BV solution of \eqref{eq:stoc_frac} provided the initial function $u_0$ lies in $L^2 \cap BV(\R^d)$. 
Since $u_\eps$ converges to $u$ weakly in $L^2(\Omega\times Q_T)$, we have, since the lower semi-continuity property holds also for the weak convergence for convex functions,
\begin{align*}
\E\Big[\int_{Q_T}|u| \,dx\,dt\Big] \leq \lim\inf_\eps \E\Big[\int_{Q_T}|u_\eps| \,dx\,dt\Big] \leq M,
\end{align*}
thanks to Theorem~\ref{thm:bv-viscous}, and $u \in L^1(\Omega\times Q_T)$.
\\
Again, in view of the lower semi-continuity properties of the above remark,  there is a set $Z \subset (0,T)$ of full measure such that  for all $t \in Z$,
\begin{align*}
 \E\Big[ \|u(t)\|_{BV(\R^d)}\Big] \le  \supess_s \E\Big[ \|u(s)\|_{BV(\R^d)}\Big] \le \liminf_{\eps \goto 0} \supess_s \E\Big[ \|u_\eps(s)\|_{BV(\R^d)}\Big] \le \E \Big[ TV_x(u_0)\Big],
\end{align*}
where the last inequality follows from Theorem~\ref{thm:bv-viscous}. 
\\
Using now the facts that $u \in C_w([0,T],L^2(\Omega\times\R^d))$ and that $Z$ is dense in $[0,T]$, any $t$ is a limit of a sequence $(t_n)\subset Z$ with the information that $u(t_n) \rightharpoonup u(t)$ in $L^2(\Omega\times\R^d)$. The above remark and the weak lower semi-continuity property for convex functions yield
\begin{align*}
 \E\Big[ \|u(t)\|_{BV(\R^d)}\Big] \le  \liminf_{n \goto +\infty}  \E\Big[ \|u(t_n)\|_{BV(\R^d)}\Big] \le \E \Big[ TV_x(u_0)\Big].
\end{align*}
In other words, we have the existence of the ``BV entropy solution" for problem \eqref{eq:stoc_frac} given by the following theorem.
 \begin{thm} [BV-entropy solution]
 \label{thm:existence-bv}
Suppose that the assumptions \ref{A1}-\ref{A4} hold with $u_0 \in BV(\R^d)$.
Then the unique entropy solution $u$ of \eqref{eq:stoc_frac} is a BV-entropy solution in the sense that for all $t>0$
\begin{align*}
   \E \Big[\|u(t)\|_{BV(\R^d)} \Big] \le \E \Big[|u_0|_{BV(\R^d)} \Big].
\end{align*}	
\end{thm}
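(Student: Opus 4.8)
The plan is to derive the bound $\E[\|u(t)\|_{BV(\R^d)}]\le \E[|u_0|_{BV(\R^d)}]$ by transferring the uniform viscous BV estimate of Theorem~\ref{thm:bv-viscous} to the limit $u$ through lower semicontinuity, and then to upgrade the resulting almost-everywhere-in-time inequality to an everywhere-in-time inequality by exploiting the weak time-continuity of $u$. The only convergence I need as input is the one already established in the existence argument, namely that the vanishing-viscosity family $\{u_\eps\}_{\eps>0}$ converges to the unique entropy solution $u$ weakly in $L^2(\Omega\times Q_T)$ (and that $u\in C_w([0,T],L^2(\Omega\times\R^d))$ by Remark~\ref{weakcont}).

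First I would record integrability of the limit. Since the functional $u\mapsto \E[\int_{Q_T}|u|\,dx\,dt]$ is convex and lower semicontinuous on $L^2(\Omega\times Q_T)$, it is also lower semicontinuous for the weak topology; applying this to the weakly convergent sequence $u_\eps\rightharpoonup u$ together with the uniform $L^1$-bound of Theorem~\ref{thm:bv-viscous} gives $u\in L^1(\Omega\times Q_T)$ with $\E[\int_{Q_T}|u|\,dx\,dt]\le M$. The decisive step is then to invoke the convexity and lower semicontinuity of the maps $u\mapsto\E[\|u\|_{BV(\R^d)}]$ and $u\mapsto\supess_t\E[\|u(t)\|_{BV(\R^d)}]$, as established in the Remark preceding the statement. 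Combined with the uniform viscous bound $\E[TV_x(u_\eps(s))]\le TV_x(u_0)$, these produce a set $Z\subset(0,T)$ of full Lebesgue measure on which
\[
\E\big[\|u(t)\|_{BV(\R^d)}\big]\le \supess_s\,\E\big[\|u(s)\|_{BV(\R^d)}\big]\le \liminf_{\eps\to0}\,\supess_s\,\E\big[\|u_\eps(s)\|_{BV(\R^d)}\big]\le \E\big[TV_x(u_0)\big].
\]

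Finally, to reach every $t\in(0,T)$ rather than merely almost every $t$, I would use $u\in C_w([0,T],L^2(\Omega\times\R^d))$ together with the density of $Z$ in $[0,T]$: any fixed $t$ is a limit of some sequence $(t_n)\subset Z$ for which $u(t_n)\rightharpoonup u(t)$ in $L^2(\Omega\times\R^d)$, and the weak lower semicontinuity of $u\mapsto\E[\|u\|_{BV(\R^d)}]$ yields $\E[\|u(t)\|_{BV(\R^d)}]\le\liminf_n\E[\|u(t_n)\|_{BV(\R^d)}]\le\E[TV_x(u_0)]$, which is the claim.

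I expect the main obstacle to lie not in any explicit computation but in the correct handling of the two lower-semicontinuity passages: first the $\liminf$ over $\eps$ applied to the essential supremum in time (so that the $\supess$ itself is treated as a lower semicontinuous convex functional of the whole trajectory, not pointwise in $t$), and second the upgrade from the full-measure set $Z$ to all $t$, which genuinely requires the weak-$\ast$ time continuity of $u$ and the fact that the $BV$-seminorm is sequentially weakly lower semicontinuous on $L^2$. Everything else — the integrability bound and the transfer of the $L^1$ estimate — is routine once these semicontinuity properties, stated in the Remark, are in hand.
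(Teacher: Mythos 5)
Your proposal is correct and coincides, step for step, with the paper's own argument in Subsection 6.7: transfer of the uniform viscous $BV$ bound via the convex lower semicontinuity of $u\mapsto\E[\|u\|_{BV(\R^d)}]$ and of $u\mapsto\supess_t\E[\|u(t)\|_{BV(\R^d)}]$ under weak $L^2$ convergence, followed by the upgrade from a full-measure set $Z$ to all $t$ using $u\in C_w([0,T],L^2(\Omega\times\R^d))$. No gaps.
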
 
\begin{rem}
Let us notice that $\E \Big[\|u(t,\cdot)\|_{BV(\R^d)} \Big] < +\infty$ implies that $u(t,\omega) \in BV(\R^d)$ for any $t$ and almost all $\omega$, but we do not claim that it is measurable with values in $BV(\R^d)$. 
\end{rem}


\section{Extension to an Explicit Space-Dependent Noise Coefficient}
\label{extension}
In this section, we consider a larger class of stochastic nonlocal degenerate equations driven by Brownian noise of the type
\begin{equation}
\label{eq:stoc_frac_001}
\begin{cases} 
du(t,x) + \Big[\mathscr{L}_{\lambda}[A(u(t,\cdot))](x)- \Div f(u(t,x))\Big]\,dt
=\h(x,u(t,x)) \,dW(t), & \quad \text{in } Q_T, \\
u(0,x) = u_0(x), & \quad \text{in } \D,
\end{cases}
\end{equation}
Here we assume that
$\Phi(x,u(t,x))$ satisfies the following assumptions: for each $z\in L^2(\mathbb{R}^d)$ we consider a mapping $\Phi: \mathbb{H} \to L^2(\mathbb{R}^d)$ defined by $\Phi(z)e_k = g_k(\cdot, z(\cdot))$. Thus we may also define
$$\Phi(x,u)=\sum_{k\ge1} g_k(x,u)e_k.$$
We assume $g_k\in C(\mathbb{R}^d \times\mathbb{R})$, $g_k(x,0)=0$ for all $x \in \R^d$, with the bounds
\begin{align}\label{2.3} 
\sum_{k\ge1}|g_k(x,u)-g_k(y,v)|^2\le D_1\big(|x-y|^{2}+|u-v|^2\big),
\end{align}
where $x,y\in\mathbb{R}^d$; $ u,v\in\mathbb{R}$. Observe that, the noise coefficient $\Phi(x,u(t,x))$ depends explicitly on the spatial position $x$. Due to some technical difficulties, here we restrict ourselves to the case corresponding to $\lambda <1/2$. 
\subsection{Extension to $\h(x,u)$}
We remark that the explicit dependency on the spatial variable forces us to rearrange the order in computing limits with respect to various parameters to obtain Kato's inequality \eqref{kato}. In what follows, we first pass to the limits in $n,\gamma$ and $\delta$. This is the same as in the uniqueness proof in Section~\ref{uniqueness}, but then we pass to the limits in $\ep$ and $\theta$, and finally pass to the limits in the rest of the parameters: $l,r,m$ simultaneously, to obtain the desired Kato's inequality \eqref{kato}. Let us briefly mention the corresponding changes in the lemmas dealing with various terms in Section~\ref{uniqueness}.
\medskip

\noindent To that context, note that a straightforward adaptation of Lemma \ref{lem:initial+time-terms} leads to the following result:

\begin{lem}\label{lem:initial+time-terms_extd}
	It holds that $I_1=0$ and since $u_0^{\eps} \underset{\eps\to0} \longrightarrow u_0$ in $L^2(\D)$, and $v_0^{\theta} \underset{\theta\to0} \longrightarrow v_0$ in $L^2(\D)$, we have
	\begin{align*}
	&\limsup_{\theta \goto 0}\,\limsup_{\eps \goto 0}\,\,\lim_{\delta\goto 0} \,\lim_{\gamma\to 0}\,\lim_{n\goto \infty} \big(I_1 + J_1\big) \le  \E \left[\int_{\R^d}\int_{\R^d} |u_0(x)-v_0(y)|\,\varphi(0,x)\rho_m(x-y)\,dy \,dx \right]+Cl.\\
	&\limsup_{\theta \goto 0}\,\limsup_{\eps \goto 0}\,\,\lim_{\delta\goto 0} \,\lim_{\gamma\to 0}\,\lim_{n\goto \infty}  \big(I_2 + J_2\big) \\
	& \hspace{3cm}\le \E \Big[ \int_{Q_T}\int_0^1 \int_0^1 
	|u(t,x,\alpha)-v(t,y,\beta)| \partial_t\varphi(t,x)\rho_m(x-y)
	d\alpha \,d\beta\,dx\,dt\Big]+Cl.
	\end{align*}
\end{lem}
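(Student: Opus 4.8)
\textbf{Proof strategy for Lemma~\ref{lem:initial+time-terms_extd}.}
The plan is to reprove the convergences of Lemma~\ref{lem:initial+time-terms} in the space-dependent setting, tracking the changes forced by the modified test function $\psi(t,x,s,y)=\rho_n(t-s)\rho_m(x-y)\varphi(t,x)$ carrying the extra mollifier $\rho_m(x-y)$, and accounting for the fact that we now only pass to the limit in $\ep,\theta$ after $\delta,\gamma,n$. The key observation is that, apart from these reordered limits, the terms $I_1,J_1,I_2,J_2$ involve no noise coefficient at all, so the estimate \eqref{2.3} enters only through the a priori bounds that guarantee the relevant integrands are uniformly integrable; the mechanism of the proof is identical to that of Lemma~\ref{lem:initial+time-terms}. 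The new feature producing the additive error $Cl$ is that the doubling in the entropy variable $k$ is resolved against $\rho_l(\cdot-k)$, and replacing $\eta_\delta$ by $\sgn$ leaves a residual of size $\mathcal{O}(l)$ coming from the width of the support of $\rho_l$.

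First I would handle $I_1+J_1$. By the particular choice $\supp(\rho_n)\subset[-2/n,0]$ one gets $I_1=0$ exactly as before, since $\psi(t,x,0,y)$ forces $t\in[-2/n,0]$ which is incompatible with $t\ge0$ in the interior. For $J_1$, I would pass to the limit $n\to\infty$ using dominated convergence so that $\rho_n(t-s)$ collapses $s$ to $t$ and the initial slice survives with $\varphi(0,x)$; then send $\gamma\to0$ using the $L^2$-convergence $A(u_\ep)^\gamma\to A(u_\ep)$ and $u^\gamma_\ep\to u_\ep$ together with standard convolution properties; then $\delta\to0$, replacing $\eta_\delta(u_0(x)-k)$ by $|u_0(x)-k|$ via $|\eta_\delta|\le|\cdot|+\delta$ and $|\eta_\delta'|\le1$; and finally integrate out $k$ against $\rho_l(v^\theta_0(y)-k)$. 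At this last step the identity $\int_\R|u_0(x)-k|\rho_l(v_0(y)-k)\,dk=|u_0(x)-v_0(y)|+\mathcal{O}(l)$ produces the $Cl$ term. Sending $\eps,\theta\to0$ then uses $u_0^\eps\to u_0$ and $v_0^\theta\to v_0$ in $L^2(\D)$, which is why a $\limsup$ (rather than a genuine limit) suffices here.

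For $I_2+J_2$ the plan is the same in spirit but now the Young-measure representation enters. After the $n,\gamma,\delta,l$ limits one is left with an integrand of the form $|u_\theta(t,x)-u_\ep(t,y)|\,\partial_t\varphi(t,x)\,\rho_m(x-y)$; passing to the limit in $\eps$ and then $\theta$ in the sense of Young measures, exactly as in Lemma~\ref{lem:initial+time-terms}, yields the two extra integration variables $\alpha,\beta\in(0,1)$ and the $\rho_m(x-y)$ weight, with the $Cl$ residual again arising from resolving the $k$-integral against $\rho_l$. The uniform integrability needed to invoke Young-measure convergence follows from the uniform $L^2(\Omega\times Q_T)$ bounds on $u_\ep,u_\theta$ guaranteed by Theorem~\ref{prop:vanishing viscosity-solution}, and the compact support of $\varphi$ confines all spatial integrations to a fixed compact set.

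The main obstacle I anticipate is purely bookkeeping rather than conceptual: one must verify that reordering the limits so that $\delta\to0$ precedes $\eps,\theta\to0$ does not spoil the dominated-convergence estimates, since the dominating functions must be chosen uniformly in the parameters that have not yet been sent to their limits. Concretely, the bound $|\eta_\delta'|\le1$ and the uniform $L^2$-estimates give domination independent of $\delta,\gamma,n$, so the argument goes through; the only genuinely new quantity to control is the $\mathcal{O}(l)$ error, whose size is dictated solely by $\|A'\|_\infty$-type Lipschitz constants and the width $l$ of $\supp(\rho_l)$, and which is harmless because it will be sent to zero when $l\to0$ in the subsequent assembly of Kato's inequality.
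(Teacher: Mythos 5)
Your proposal is correct and matches the paper's intent: the paper gives no detailed proof here, stating only that the result is "a straightforward adaptation" of Lemma~\ref{lem:initial+time-terms}, and your reconstruction — $I_1=0$ from the support of $\rho_n$, the usual $n,\gamma,\delta$ limits, the residual $Cl$ arising because the $k$-integration against $\rho_l$ is performed without sending $l\to0$ (so $\int_\R|a-k|\rho_l(b-k)\,dk=|a-b|+\mathcal{O}(l)$), and Young measures for the deferred $\eps,\theta$ limits — is exactly the intended argument. The only slip is the phrase "after the $n,\gamma,\delta,l$ limits": the $l$ limit is precisely the one \emph{not} taken at this stage, which is why the $Cl$ term survives; since you correctly attribute the $Cl$ to resolving the $k$-integral against $\rho_l$, this is a wording issue rather than a mathematical one.
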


\noindent Similarly, a straightforward adaptation of Lemma~\ref{lem:flux-terms} reveals that
\begin{lem}\label{lem:flux-terms_extd}
	The following holds:
	\begin{align*}
	&\limsup_{\theta \to 0} \limsup_{\ep \to 0}\lim_{\delta \to 0} \lim_{\gamma \to 0}\lim_{n \to \infty}( I_5+J_5)\\
	&\qquad \le Clm +Cl  -\E \left[ \int_{Q_T} \int_\D \int_{(0,1)^2} F(u(t,x,\alpha),u(t,y,\beta))\cdot \nabla_x \varphi(t,x) \rho_m(x-y)\,d\alpha\,d\beta\,dy\,dx\,dt \right]
	\end{align*}
\end{lem}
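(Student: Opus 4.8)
The plan is to run the doubling-of-variables computation for the flux terms exactly as in the proof of Lemma~\ref{stochastic_lemma_4}, but to stop \emph{before} the limit in $l$ and to insert the Young-measure passage in $\ep,\theta$ instead. By Remark~\ref{remnonvisc} the manipulations behind Lemma~\ref{stochastic_lemma_4} use only the $L^2$-regularity of the two approximations and the noise never enters the flux terms, so they apply verbatim to the two viscous solutions $u_\theta$ and $u_\eps$ here. First I would compute $\lim_{\delta\to0}\lim_{\gamma\to0}\lim_{n\to\infty}(I_5+J_5)$: after letting $s\to t$, removing the convolution, replacing $F^\eta$ by $F$, and performing the shift $k\mapsto u_\eps(t,y)+k$ (resp. $k\mapsto u_\theta(t,x)-k$) that turns both mollifiers into $\rho_l(k)$, one is left with a \emph{main} term
\[
-\E\Big[\int_{Q_T\times\D\times\R} F\big(u_\theta(t,x),u_\eps(t,y)+k\big)\cdot\nabla_x\varphi(t,x)\,\rho_m(x-y)\rho_l(k)\,dk\,dy\,dx\,dt\Big]
\]
together with a \emph{mismatch} term carrying the factor $\nabla\rho_m(x-y)$, coming from the fact that $\nabla_x(\varphi\rho_m)$ and $\nabla_y\rho_m$ do not cancel completely; its integrand is $\big(F(u_\theta,u_\eps+k)-F(u_\eps,u_\theta-k)\big)\cdot\nabla\rho_m(x-y)\,\varphi\,\rho_l(k)$.

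The second step estimates the two error sources, and this is where the argument departs from Lemma~\ref{stochastic_lemma_4}. There the mismatch term was integrated by parts in $y$ and controlled by the BV-seminorm of $v_\eps$; in the present uniqueness argument no BV bound is available, so instead I would estimate it directly. Using the symmetry $F(a,b)=F(b,a)$ together with the Lipschitz continuity of $F$ (Assumption~\ref{A2}), on $\supp\rho_l$ the middle difference $F(u_\theta,u_\eps)-F(u_\eps,u_\theta)$ vanishes and hence
\[
\big|F(u_\theta,u_\eps+k)-F(u_\eps,u_\theta-k)\big|\le 2\|f'\|_\infty|k|\le 2\|f'\|_\infty\,l .
\]
Combined with $\|\nabla\rho_m\|_{L^1(\R^d)}\le Cm$ and the compact support of $\varphi$, the mismatch term is bounded by $Clm$. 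A completely analogous estimate, $|F(u_\theta,u_\eps+k)-F(u_\theta,u_\eps)|\le\|f'\|_\infty l$, lets me drop the residual $k$-shift in the main term at the cost of an additional $Cl$, producing the integrand $F(u_\theta(t,x),u_\eps(t,y))$ and the two error terms $Clm+Cl$ announced in the statement.

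The final step is the passage $\ep,\theta\to0$ in the sense of Young measures, handled as for the flux terms in \cite{BaVaWit_2012,BaVaWitParab,BisMajKarl_2014} and as for the nonlocal terms in Lemma~\ref{kato_lemma1}. I would fix $\theta$, set $G(t,y,\omega;\mu)=\int_\D F(u_\theta(t,x),\mu)\cdot\nabla_x\varphi(t,x)\,\rho_m(x-y)\,dx$, check that it is a Carath\'eodory function, that $G(\cdot,u_\eps(\cdot))$ is bounded in $L^2(\Omega\times Q_T)$ (from the a priori estimates and $|F(a,b)|\le\|f'\|_\infty(|a|+|b|)$) and uniformly integrable (equi-smallness at infinity from $\supp\varphi$), and then send $\ep\to0$, followed by the verbatim argument in $\theta\to0$. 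Since the constants in $Clm+Cl$ are independent of $\ep$ and $\theta$, they persist under both $\limsup$'s, and one reaches the Young-measure integrand $F(u(t,x,\alpha),u(t,y,\beta))$ against $d\alpha\,d\beta$, which is the claimed inequality.

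The hard part is precisely the treatment of the mismatch divergence term without a BV bound: exploiting the symmetry and Lipschitz continuity of the Kru\v{z}kov flux replaces the BV control but forces the spurious factor $m$, and hence the $Clm$ error. This is harmless only because the concluding limits in $l,r,m$ are taken \emph{simultaneously} with $lm\to0$, so that $Clm+Cl\to0$; this is the structural reason the order of limits had to be rearranged (first $n,\gamma,\delta$, then $\ep,\theta$, then $l,r,m$) in the space-dependent case.
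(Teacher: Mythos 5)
Your argument is correct, and it supplies details that the paper itself omits: the paper disposes of this lemma with the single remark that it is ``a straightforward adaptation of Lemma~\ref{lem:flux-terms}'', whose proof is in turn delegated to \cite{BaVaWit_2012,BaVaWitParab}. The substantive point of comparison is therefore the in-paper computation for the analogous flux terms in Lemma~\ref{stochastic_lemma_4}, and there your route genuinely differs. In Lemma~\ref{stochastic_lemma_4} the mismatch term carrying $\nabla\rho_m$ is integrated by parts in $y$, which produces $\nabla_y v_\eps$ and the bound $\|v_0\|_{BV}\, l\,\|f''\|_\infty\int_0^T\|\varphi(t,\cdot)\|_\infty\,dt$ --- an $O(l)$ error, but one that requires both a BV bound on one of the two approximations and $f''\in L^\infty$, neither of which is available under the hypotheses of Theorem~\ref{uniqueness_new}. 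You instead bound the mismatch integrand directly by $2\|f'\|_\infty|k|$, using the symmetry $F(a,b)=F(b,a)$ together with the $\|f'\|_\infty$-Lipschitz continuity of $F$ in each argument, and pay for it with the factor $\|\nabla\rho_m\|_{L^1(\R^d)}\approx m$. This is exactly the right trade: it is the only mechanism that produces the $Clm$ term actually appearing in the statement (the BV route would give $Cl$ with no $m$), and it is harmless because the concluding limits in Section~\ref{extension} are taken with $lm\to 0$ (with the paper's scaling $\xi^2=l^{\theta_1}$, $\xi=1/m$, one must in addition take $\theta_1<2$ so that $lm=l^{1-\theta_1/2}$ vanishes; a choice such as $\theta_1=3/2$ is compatible with all the other constraints when $\lambda<1/2$). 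The remaining ingredients --- the $n,\gamma,\delta$ limits, the $k$-shifts turning both mollifiers into $\rho_l(k)$, the $Cl$ cost of undoing the shift in the main term, and the two-stage Young-measure passage in $\eps$ then $\theta$ with the Carath\'eodory and uniform-integrability checks --- are carried out correctly and as the paper intends.
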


\noindent Regarding the noise term, we have the following variant of Lemma~\ref{lem:stochastic-terms}.
\begin{lem}\label{lem:noise_extd}
	The following holds:
	\begin{align*}
	&\limsup_{\theta \to 0} \limsup_{\ep \to 0}\lim_{\delta \to 0} \lim_{\gamma \to 0}\lim_{n \to \infty}( I_3 +J_3 + I_4 +J_4) \le C \Big(l +\frac{1}{lm^{2}}\Big).
	\end{align*}
\end{lem}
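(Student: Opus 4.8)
The plan is to reduce the computation to the already-established Lemma~\ref{lem:stochastic-terms} and then exploit the new structural bound \eqref{2.3}. First I would pass to the iterated limit $\lim_{\delta\to0}\lim_{\gamma\to0}\lim_{n\to\infty}$ exactly as in the proof of Lemma~\ref{lem:stochastic-terms}: the conditional independence argument still gives $I_3=0$, the It\^o expansion governing $J_3$ in the variable $k$ is structurally unchanged, and the integration by parts together with the identity $\int_\R\sgn(\cdot+k)\rho_l'(k)\,dk=-2\rho_l(\cdot)$ reassembles the four terms into a single sum of squares. The only bookkeeping change is that each coefficient now carries its explicit spatial argument, so $g_j(u_\theta(t,x))$ becomes $g_j(x,u_\theta(t,x))$ and $g_j(u_\eps(t,y))$ becomes $g_j(y,u_\eps(t,y))$, while $\mathbb{G}^2(u)=\sum_j g_j^2(u)$ is replaced by $\mathbb{G}^2(x,u)=\sum_j g_j^2(x,u)$. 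This yields
\begin{align*}
\lim_{\delta\to0}\lim_{\gamma\to0}\lim_{n\to\infty}(I_3+J_3+I_4+J_4)=\E\bigg[\sum_{j\ge1}\int_{Q_T\times\rd}\big(g_j(x,u_\theta(t,x))-g_j(y,u_\eps(t,y))\big)^2\varphi(t,x)\rho_m(x-y)\rho_l(u_\eps(t,y)-u_\theta(t,x))\,dy\,dx\,dt\bigg].
\end{align*}

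Next I would estimate this nonnegative quantity using \eqref{2.3}, which controls $\sum_j(g_j(x,u_\theta)-g_j(y,u_\eps))^2$ by $D_1\big(|x-y|^2+|u_\theta-u_\eps|^2\big)$. On the support of $\rho_l(u_\eps-u_\theta)$ one has $|u_\theta-u_\eps|\le l$, on the support of $\rho_m(x-y)$ one has $|x-y|\le 1/m$, and $\rho_l=\mathcal{O}(1/l)$; consequently the integrand is dominated pointwise by $C(l^2+m^{-2})\,l^{-1}\varphi\rho_m=C\big(l+\tfrac{1}{lm^2}\big)\varphi\rho_m$. Integrating in $y$ (using $\int_{\rd}\rho_m(x-y)\,dy=1$) and then in $(t,x)$ against $\varphi$ produces the bound $C\big(l+\tfrac{1}{lm^2}\big)$, with $C$ depending only on $D_1$ and $\|\varphi\|_{L^1(Q_T)}$. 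Since this estimate does not involve $\eps$ or $\theta$, taking $\limsup_{\eps\to0}$ and then $\limsup_{\theta\to0}$ preserves it, which gives the claim.

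The part that deserves the most care, and which is the very reason this lemma is stated separately in the space-dependent setting, is the new term $|x-y|^2$ coming from \eqref{2.3}. In the space-independent case (Lemma~\ref{lem:stochastic-terms}) only the $|u_\theta-u_\eps|^2\le l^2$ contribution survives and the bound is simply $Cl\to0$ as $l\to0$; here the spatial contribution $|x-y|^2\le m^{-2}$ interacts with the $1/l$ blow-up of $\rho_l$ to produce the term $\tfrac{1}{lm^2}$, which does \emph{not} vanish as $l\to0$ for fixed $m$. This is exactly what forces the change in the order of limits announced at the start of Section~\ref{extension}: one cannot send $l\to0$ first, but must instead retain the estimate $C\big(l+\tfrac{1}{lm^2}\big)$ and later send $l,r,m$ to their limits simultaneously in a coupled fashion so that both $l$ and $\tfrac{1}{lm^2}$ tend to zero.
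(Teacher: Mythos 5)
Your proof is correct and follows essentially the same route as the paper: pass to the limits in $n,\gamma,\delta$ exactly as in Lemma~\ref{lem:stochastic-terms} to obtain the sum-of-squares expression, then apply the bound \eqref{2.3} together with the support constraints $|u_\theta-u_\eps|\le l$, $|x-y|\le 1/m$ and $\rho_l=\mathcal{O}(1/l)$ to get $C\big(l+\tfrac{1}{lm^2}\big)$. Your closing remark on why the $\tfrac{1}{lm^2}$ term forces the coupled passage to the limit in $l,r,m$ matches the paper's motivation for reordering the limits in Section~\ref{extension}.
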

\begin{proof}
To prove this, we can follow Lemma~\ref{lem:stochastic-terms} and pass to the initial limits i.e., $n,\gamma$, and $\delta$ to obtain the following expression
\begin{align*}
&\E \bigg[\sum_{j \ge1}\dint_{Q_T\times \R^d} ( g_j(x,u_\theta(t,x)-g_j(y,u_\ep(t,y))^2 \varphi(t,x) \rho_m(x-y)\rho_l(u_\ep(t,y)- u_\theta(t,x))\,dy\,dx\,dt\bigg]\\
& \le 2\E\bigg[\dint_{Q_T\times \R^d} \big(|x-y|^{2}+|u_\theta(t,x)-u_\ep(t,y)|^2\big) \varphi(t,x) \rho_m(x-y)\rho_l(u_\ep(t,y)- u_\theta(t,x))\,dy\,dx\,dt\bigg]\\
&\le 2\Big(\frac{1}{l m^2}+l\Big)\E \bigg[\dint_{Q_T\times \R^d} \varphi(t,x) \rho_m(x-y)\,dy\,dx\,dt \bigg] \le C \Big(l +\frac{1}{lm^{2}}\Big).
\end{align*}
This finishes the proof of the lemma.
\end{proof}

Regarding the terms $I_6$ and $J_6$, we have the following result:
\begin{lem}\label{lem:eps_extd}
It follows that
\begin{align*}
&\limsup_{\theta \to 0} \limsup_{\ep \to 0}\lim_{\delta \to 0} \lim_{\gamma \to 0}\lim_{n \to \infty}( I_6 + J_6) =0.
\end{align*}
\end{lem}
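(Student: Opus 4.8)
The plan is to recognize that $I_6$ and $J_6$ are exactly the first-order viscous terms already disposed of in Lemma~\ref{lem:flux-terms}; the only novelty in the space-dependent setting is the reordered hierarchy of limits, which is harmless because the estimates I use are uniform in all the remaining parameters. Recall from \eqref{kato_one_01} and \eqref{Kato_second_01} that, after discarding the non-positive contributions $-\eps\,\eta''(u^\gamma_\eps-k)|\nabla u^\gamma_\eps|^2\psi\le 0$ and $-\theta\,\eta''(u_\theta-k)|\nabla_x u_\theta|^2\psi\le 0$ (non-positivity by convexity of $\eta$), one is left with
\[
I_6 = -\eps\,\E\bigg[\int_{\R\times Q_T^2}\eta'(u^\gamma_\eps(s,y)-k)\,\nabla u^\gamma_\eps(s,y)\cdot\nabla_y\psi(t,x,s,y)\,\rho_l(u_\theta(t,x)-k)\,dy\,ds\,dk\,dx\,dt\bigg],
\]
together with the analogous expression for $J_6$, which carries the prefactor $\theta$, the gradient $\nabla_x u_\theta$, and $\nabla_x\psi$ in place of $\eps$, $\nabla u^\gamma_\eps$, and $\nabla_y\psi$.

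First I would bound $|I_6|$ uniformly in $n,\gamma,\delta,\theta,l,r$. Using $|\eta'|\le 1$, $\int_\R\rho_l(u_\theta(t,x)-k)\,dk=1$, $\int_0^T\rho_n(t-s)\,dt\le 1$, and the identity $\nabla_y\psi=\rho_n(t-s)\,\nabla_y\rho_m(x-y)\,\varphi(t,x)$, the integrals in $k$, $s$ and $x$ are controlled by fixed constants; since $\varphi$ is compactly supported and $\supp\rho_m\subset\overline{B_{1/m}}$, the variable $y$ ranges over a fixed compact set and $\int_{\R^d}|\nabla_y\rho_m(x-y)|\varphi(t,x)\,dx\le C(m)\|\varphi(t,\cdot)\|_\infty$. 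A Cauchy--Schwarz inequality in $(s,y)$ combined with Young's inequality for convolution ($\|\nabla u^\gamma_\eps\|_{L^2}\le\|\nabla u_\eps\|_{L^2}$) then yields
\[
|I_6|\le \eps\,C(m,\varphi)\,\Big(\E\Big[\int_0^T\|\nabla u_\eps(s)\|_{L^2(\R^d)}^2\,ds\Big]\Big)^{1/2}.
\]
Invoking the $\eps$-weighted \textit{a priori} estimate $\eps\,\E\big[\int_0^T\|\nabla u_\eps\|_{L^2}^2\,ds\big]\le C$ of Theorem~\ref{prop:vanishing viscosity-solution} turns this into $|I_6|\le C(m,\varphi)\sqrt{\eps}$. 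As this bound is independent of $n,\gamma,\delta$, the iterated limit $\lim_{\delta\to0}\lim_{\gamma\to0}\lim_{n\to\infty}I_6$ remains bounded in absolute value by $C(m,\varphi)\sqrt{\eps}$, so that $\limsup_{\eps\to0}(\cdots)=0$ and hence $\limsup_{\theta\to0}\limsup_{\eps\to0}(\cdots)=0$, with $m,l,r$ fixed.

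The term $J_6$ is treated symmetrically, with $\theta$ playing the role of $\eps$: after passing to the limits in $n,\gamma,\delta$ the factor $\rho_l(u_\eps(t,y)-k)$ integrates to $1$ in $k$ \emph{irrespective of the value of $u_\eps$}, so that (using $\nabla_x\psi=\rho_n(t-s)[\nabla_x\rho_m(x-y)\varphi+\rho_m\nabla_x\varphi]$ and the $\theta$-weighted gradient bound) one obtains $|J_6|\le C(m,\varphi)\sqrt{\theta}$ \emph{uniformly in} $\eps$; thus $\limsup_{\eps\to0}|J_6|\le C(m,\varphi)\sqrt{\theta}$ and then $\limsup_{\theta\to0}(\cdots)=0$. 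Summing the two contributions gives the claim. The only point requiring care — and the mild obstacle here — is the bookkeeping of the order of limits: the constants $C(m,\varphi)$ do blow up as $m\to\infty$ (and the $J_6$ bound as $l\to0$), but those parameters are held fixed in the present lemma, so the decisive gain is precisely the factor $\sqrt{\eps}$ (respectively $\sqrt{\theta}$) produced by pairing the viscosity coefficient with the vanishing-viscosity gradient estimate.
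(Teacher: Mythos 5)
Your argument is correct and follows essentially the same route as the paper: isolate the first-order viscous terms, bound each by its viscosity coefficient times a constant depending on $m$ and $\varphi$ and on the gradient of the approximate solution, and let $\eps$ (resp.\ $\theta$) tend to zero with $m,l,r$ held fixed. The only cosmetic difference is that you pair Cauchy--Schwarz with the $\eps$-weighted energy estimate to get a rate $C(m,\varphi)\sqrt{\eps}$, whereas the paper quotes a bound of order $\eps m^2$; both vanish in the stated order of limits, so the conclusion is the same.
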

\begin{proof}
We first notice that after passing to the limits in initial parameters $n,\gamma$, and $\delta$ in $I_6$, we obtain the following term
\begin{align*}
\lim_{n, \gamma, \delta} I_6&= 
- \eps \E\bigg[\dint_{\R\times Q_T \times \R^d}\sgn(u_\eps(t,y)-k) \nabla u_\eps(t,y) \nabla_y\rho_m(x-y) \varphi(x,t)\rho_l(u_\theta(t,x)-k) \,dy\,dk\,dx\,dt\bigg] \\
&\le \eps \E\bigg[\dint_{Q_T \times \R^d} |\nabla u_\eps(t,y)| |\nabla_y\rho_m(x-y)| \varphi(x,t) \,dy\,dx\,dt\bigg] \approx \mathcal{O}(\ep m^2).
\end{align*}
Similar estimate also holds for the term $J_6$. Hence the result follows.
\end{proof}

Remaining terms are expected from the fractional operator. First we look at the regular part $\mathscr{L}^r_\lambda$ of the nonlocal term. In what follows, we start with the following lemma.

\begin{lem}\label{lem:frac_01_extd}
	The following hold:
	\begin{align*}
	&\limsup_{\theta \to 0}\,\limsup_{\ep \to 0}\,\,\lim_{\delta\to 0}\,\lim_{\gamma \to 0} \, \lim_{n \to \infty} (I_7+J_7) \\
	& \quad \le -\E \bigg[\int_{Q_T \times \R^d\times (0,1)^2} | A(u(t,x,\alpha)) - A( v(t,y,\beta)) |\mathscr{L}^r_\lambda[ \varphi(t,\cdot)](x) \rho_m(x-y) \,d\alpha\,d\beta\,dx\,dy\,dt\bigg]+\frac{C\|A'\|_\infty l}{\lambda r^{2\lambda}}
	\end{align*}
\end{lem}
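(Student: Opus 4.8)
The plan is to carry out the first three passages to the limit exactly as in the proof of Lemma~\ref{kato_lemma1} (its Steps~1--3, in the order $n\to\infty$, then $\gamma\to0$, then $\delta\to0$). These steps only involve the regular nonlocal factors $\mathscr{L}^r_\lambda[A(u_\ep)]$, $\mathscr{L}^r_\lambda[A(u_\theta)]$, the cut-offs $\rho_l,\rho_m$ and the test function, none of which feels the explicit $x$-dependence of the noise (that dependence enters only through $I_3,I_4,J_3,J_4$, treated in Lemma~\ref{lem:noise_extd}). Thus the dominated-convergence and convolution arguments are unchanged, and after these three limits one reaches the intermediate expression of the third arrow of Lemma~\ref{kato_lemma1}, namely
\begin{align*}
\lim_{\delta\to0}\lim_{\gamma\to0}\lim_{n\to\infty}(I_7+J_7)
= & -\E\Big[\int_{Q_T\times\rd\times\R}\mathscr{L}^r_\lambda[A(u_\ep(t,\cdot))](y)\,\varphi(t,x)\rho_m(x-y)\,\sgn(u_\ep(t,y)-k)\rho_l(u_\theta(t,x)-k)\,dy\,dk\,dx\,dt\Big]\\
& -\E\Big[\int_{Q_T\times\rd\times\R}\mathscr{L}^r_\lambda[A(u_\theta(t,\cdot))](x)\,\varphi(t,x)\rho_m(x-y)\,\sgn(u_\theta(t,x)-k)\rho_l(u_\ep(t,y)-k)\,dy\,dk\,dx\,dt\Big].
\end{align*}

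The crucial departure forced by the space-dependent noise is that I may \emph{not} send $l\to0$ next: the estimate of Lemma~\ref{lem:noise_extd} controls the noise only through the balanced quantity $l+\tfrac1{lm^2}$, so $l$ must be kept alive and sent to its limit jointly with $m$ at the very end. Therefore I would combine the two contributions above \emph{with $l$ frozen}. After the substitutions $k\mapsto u_\theta(t,x)-k$ in the first term and $k\mapsto u_\ep(t,y)-k$ in the second, using that $\rho_l$ is even and $\sgn$ is odd, the sum collapses to
\[
-\E\Big[\int\big(\mathscr{L}^r_\lambda[A(u_\ep(t,\cdot))](y)-\mathscr{L}^r_\lambda[A(u_\theta(t,\cdot))](x)\big)\varphi(t,x)\rho_m(x-y)\,\sgn(u_\ep(t,y)-u_\theta(t,x)+k)\,\rho_l(k)\,dk\,dy\,dx\,dt\Big].
\]
Expanding the operators over $\{|z|>r\}$ and applying, for each $|k|\le l$, the pointwise Kato-type inequality established in Lemma~\ref{lemma_01} (which rests on the monotonicity of $A$, whence $\sgn(a-b)$ and $A(a)-A(b)$ share their sign), one produces the clean term $-|A(u_\ep(t,y))-A(u_\theta(t,x))|$ after the usual change of variables $x\mapsto x+z$, $y\mapsto y+z$, $z\mapsto-z$ — which leaves $\rho_m(x-y)$ invariant and turns the shifted test function into $\mathscr{L}^r_\lambda[\varphi]$ — while the $2\|A'\|_\infty l$ defect integrates against $\int_{|z|>r}|z|^{-d-2\lambda}\,dz=C/(\lambda r^{2\lambda})$ to give precisely the error $\tfrac{C\|A'\|_\infty l}{\lambda r^{2\lambda}}$.

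Finally I would let $\ep,\theta\to0$ with $l,r,m$ held fixed. The error term is independent of $\ep,\theta$ and so survives verbatim, and the main term $-\E\big[\int|A(u_\ep(t,y))-A(u_\theta(t,x))|\,\mathscr{L}^r_\lambda[\varphi(t,\cdot)](x)\rho_m(x-y)\big]$ passes to the limit by the Young-measure argument of Step~5 of Lemma~\ref{kato_lemma1}: the map $G(t,y,\omega;\mu)=\int_{\rd}|A(u_\theta(t,x))-A(\mu)|\,\mathscr{L}^r_\lambda[\varphi(t,\cdot)](x)\rho_m(x-y)\,dx$ is a Carath\'eodory function, $G(\cdot,u_\ep)$ is bounded in $L^2(\Omega\times Q_T)$ and uniformly integrable (because $\mathscr{L}^r_\lambda[\varphi]\in L^p$ and $\varphi,\rho_m$ have compact support), so the limit $\ep\to0$ introduces the measure $v(t,y,\beta)$ and the subsequent limit $\theta\to0$ introduces $u(t,x,\alpha)$, yielding the asserted bound. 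The hard part will be the bookkeeping of the $2\|A'\|_\infty l$ defect through the combination of the two nonlocal terms \emph{while $\rho_l(k)$ cannot yet be collapsed onto $\sgn$} — in Lemma~\ref{kato_lemma1} this defect was simply avoided by sending $l\to0$ first, so the genuinely new content is to re-derive the nonlocal Kato inequality in this regime and to verify that the defect is uniform in $\ep,\theta$, so that it can be carried past the Young-measure passage and absorbed only later, when $l,r,m$ are sent to their limits together.
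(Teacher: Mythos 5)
Your proposal follows the paper's own proof essentially step for step: limits in $n,\gamma,\delta$ exactly as in Lemma~\ref{kato_lemma1}, then recombination of the two nonlocal contributions with $l$ frozen via the change of variables in $k$ and the symmetry of $\rho_l$, the application of the Kato-type estimate of Lemma~\ref{lemma_01} producing the $\frac{C\|A'\|_\infty l}{\lambda r^{2\lambda}}$ defect, and finally the Young-measure passage in $\eps$ and $\theta$ with the ($\eps,\theta$-independent) defect carried along. This is the same argument as in the paper; no gaps.
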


\begin{proof}
We refer to Lemma~\ref{kato_lemma1} for the passage to the limits in the initial parameters \textit{i.e.} $n,\gamma,\delta$; the resulted expression is
\begin{align}
&- \E\bigg[ \int_{Q_T \times \R^d \times \R} \mathscr{L}^r_\lambda[A(u_\ep(t,\cdot))](y) \varphi(t,x)\rho_m(x-y)\, \sgn ( u_\ep(t,y) - k) \rho_l(u_\theta(t,x)-k) \,dy \,dk \,dx\,dt\bigg] \notag\\
&\hspace{0.6cm}-\E\bigg[ \int_{Q_T \times \R^d \times \R} \mathscr{L}^r_\lambda[A(u_\theta(t,\cdot))](x) \varphi(t,x)\rho_m(x-y)\, \sgn( u_\theta(t,x)- k)\rho_l(u_\ep(t,y)-k)  \,dy\,dk \,dx\,dt\bigg] \notag\\
&= -\E\bigg[ \int_{Q_T \times \R^d \times \R} \mathscr{L}^r_\lambda[A(u_\ep(t,\cdot))](y) \varphi(t,x)\rho_m(x-y)\, \sgn ( u_\ep(t,y) - u_\theta(t,x) - k) \rho_l(k) \,dy \,dk \,dx\,dt\bigg]\notag \\
&\hspace{0.6cm}-\E\bigg[ \int_{Q_T \times \R^d \times \R} \mathscr{L}^r_\lambda[A(u_\theta(t,\cdot))](x) \varphi(t,x)\rho_m(x-y)\, \sgn( u_\theta(t,x)-u _\ep(t,y)+ k)\rho_l(k)  \,dy\,dk \,dx\,dt\bigg]\notag \\
&=-\E\bigg[ \int_{Q_T \times \R^d \times \R} (\mathscr{L}^r_\lambda[A(u_\theta(t,\cdot))](x)-\mathscr{L}^r_\lambda[A(u_\ep(t,\cdot))](y))\sgn( u_\theta(t,x)-u _\ep(t,y)+ k) \notag \\[-0.3cm]
&\hspace{8cm}\times \varphi(t,x)\rho_m(x-y)\rho_l(k) \,dy\,dk\,dx\,dt \bigg] \notag\\
& = \E \bigg[\int_{\R^d} \int_{Q_T}\int_\R  \bigg[\int_{|z| >r} (A(u_\theta(t,x+z))- A(u_\theta(t,x))) -( A(u_\ep(t,y+z))-A(u_\ep(t,y))) \d\mu_\lambda(z) \bigg]\notag \\[-0.3cm]
&\hspace{6cm} \times \varphi(t,x) \rho_m(x-y) \rho_l(k)\, \sgn( u_\theta(t,x)-u_\ep(t,y)+k)\,dk\,dy\,dx\,dt \bigg] \notag\\
&\bigg( \text{making use of Lemma \ref{lemma_01}}\bigg)\notag \\
&\le - \E \bigg[ \int_\D \int_{Q_T} |A(u_\theta(t,x))-A(u_\ep(t,y))| \fr^r[ \varphi(t,\cdot)](x) \rho_m(x-y) \,dy\,dx\,dt \bigg] +\frac{C\| A'\|_\infty l}{\lambda r^{2\lambda}}. \label{imp_1}
\end{align} 
At this point we can pass to the limits in $\eps$ and $\theta$ in \eqref{imp_1}, with the help of Young measures theory (cf. Lemma~\ref{kato_lemma1}), to conclude the proof of the lemma.
\end{proof}

Finally, we are left with the irregular part $\mathscr{L}_{\lambda,r}$ of the nonlocal term. We deal with these terms in the following lemma.
\begin{lem}\label{lem:frac_02_extd}
	The following holds:
	\begin{align*}
	&\limsup_{\theta \to 0} \limsup_{\ep \to 0}\lim_{\delta \to 0}\lim_{\gamma \to 0} \lim_{n \to \infty}(I_8+J_8) \\
	& \le \E \left[\int_{Q_T \times \R^d \times (0,1)^2} | A(v(t,y,\beta))-A( u(t,x,\alpha))|\,| \mathscr{L}_{\lambda,r}[ \rho_m(x-\cdot)](y)|\varphi(t,x) \,d\beta\,d\alpha\,dy\,dx\,dt\right]\\
	&\qquad \quad + l\|A'\|_\infty\E \left[ \int_{Q_T \times \R^d }|\mathscr{L}_{\lambda,r}[ \rho_m(x-\cdot)](y)|\varphi(t,x) \,dy\,dx\,dt\right]\\
	& \hspace{1cm}+\E \left[\int_{Q_T\times \R^d\times (0,1)^2} | A(u(t,x,\alpha))-A(v(t,y,\beta))|\, | \mathscr{L}_{\lambda,r}[ \varphi(t,\cdot) \rho_m(\cdot-y)](x)|  \,d\beta\,d\alpha\,dy\,dx\,dt\right]\\
	&\qquad\quad +l \|A'\|_\infty \E \left[ \int_{Q_T \times \R^d }| \mathscr{L}_{\lambda,r}[ \varphi(t,\cdot) \rho_m(\cdot-y)](x)| \,dy\,dx\,dt\right]
	\end{align*}
\end{lem}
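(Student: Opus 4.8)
The plan is to mimic the proof of Lemma~\ref{kato_lemma2}, the only genuine change being that the spatial dependence of $\h$ forces us to send $\ep$ and $\theta$ to zero \emph{before} the parameter $l$, with $r$ and $m$ kept fixed. The first three passages $n\to\infty$, $\gamma\to0$ and $\delta\to0$ are carried out verbatim as in Steps~1--3 of Lemma~\ref{kato_lemma2}: these rely only on the $L^2(\Omega\times Q_T)$-regularity of $A(u_\ep)^\gamma$ and $A(u_\theta)$, on Young's and Cauchy--Schwarz's inequalities for the convolutions, and on the elementary bound $\big|A^\eta_b(a)-|A(a)-A(b)|\big|\le \delta\|A'\|_\infty$, none of which is affected by replacing $g_k(u)$ by $g_k(x,u)$. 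Consequently,
\begin{align*}
\lim_{\delta \to 0}\lim_{\gamma \to 0}\lim_{n \to \infty}(I_8+J_8)
\le\ & -\E \Big[\int_{Q_T \times \R^d\times \R} |A(u_\ep(t,y))-A(k)|\, \mathscr{L}_{\lambda,r}[ \rho_m(x-\cdot)](y)\,\varphi(t,x)\, \rho_l(u_\theta(t,x)-k)\,dy\,dk\,dx\,dt\Big]\\
& -\E \Big[\int_{Q_T\times \R^d\times \R} |A(u_\theta(t,x))-A(k)|\, \mathscr{L}_{\lambda,r}[ \varphi(t,\cdot) \rho_m(\cdot-y)](x)\, \rho_l(u_\ep(t,y)-k)\,dy\,dk\,dx\,dt\Big].
\end{align*}

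Next I would extract the $l$-dependent corrections at once, before touching $\ep$ and $\theta$. Since $\mathscr{L}_{\lambda,r}[\rho_m(x-\cdot)](y)$ and $\mathscr{L}_{\lambda,r}[\varphi(t,\cdot)\rho_m(\cdot-y)](x)$ carry no sign, I bound each by its absolute value using $-X\le|X|$. On the support of $\rho_l(u_\theta-k)$ one has $|u_\theta(t,x)-k|\le l$, so the triangle inequality gives $|A(u_\ep(t,y))-A(k)|\le |A(u_\ep(t,y))-A(u_\theta(t,x))|+l\|A'\|_\infty$; integrating out $k$ against $\rho_l$ (which contributes a factor $1$) and treating the second term symmetrically, I arrive at
\begin{align*}
\lim_{\delta \to 0}\lim_{\gamma \to 0}\lim_{n \to \infty}(I_8+J_8)
\le\ & \E \Big[\int_{Q_T \times \R^d} |A(u_\ep(t,y))-A(u_\theta(t,x))|\, |\mathscr{L}_{\lambda,r}[ \rho_m(x-\cdot)](y)|\,\varphi(t,x)\,dy\,dx\,dt\Big]\\
& + l\|A'\|_\infty\,\E \Big[\int_{Q_T \times \R^d} |\mathscr{L}_{\lambda,r}[ \rho_m(x-\cdot)](y)|\,\varphi(t,x)\,dy\,dx\,dt\Big] + (\text{the symmetric pair}),
\end{align*}
where the compact support of $\varphi$ together with the estimate $|\mathscr{L}_{\lambda,r}[\psi](x)|\le c_\lambda\|D\psi\|_{L^\infty}\int_{|z|\le r}|z|^{1-d-2\lambda}\,dz$, valid for $\lambda\in(0,1/2)$, guarantees that all integrands are dominated by fixed $L^1(\Omega\times Q_T)$ functions.

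Finally I would send $\ep\to0$ and then $\theta\to0$ in the main (and symmetric) term by the Young measure argument already used in Step~5 of Lemma~\ref{kato_lemma2} and in the $\mathcal{B}_1,\mathcal{B}_2$ computations of Section~\ref{ContDep_NonL}: the maps $(t,y,\omega;\mu)\mapsto \int_{\R^d}|A(\mu)-A(u_\theta(t,x))|\,|\mathscr{L}_{\lambda,r}[\rho_m(x-\cdot)](y)|\,\varphi(t,x)\,dx$ (and their $\theta$-counterparts) are Carath\'eodory functions, uniformly bounded in $L^2(\Omega\times Q_T)$ and compactly supported in the free variable, hence uniformly integrable, so their averages converge to the corresponding integrals against the Young measures $v(t,y,\beta)$ and $u(t,x,\alpha)$; the $l\|A'\|_\infty$ corrections are untouched by these limits. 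This produces precisely the asserted inequality. The main obstacle is exactly this forced recourse to absolute values: having lost the sign of the singular operator (which in Lemma~\ref{kato_lemma2} could be preserved because $l\to0$ was taken first), one can no longer assemble a clean Kato structure and must carry along the error terms $l\|A'\|_\infty\int|\mathscr{L}_{\lambda,r}[\cdots]|\varphi$. These remain harmless only because the restriction $\lambda<1/2$ yields the bound $|\mathscr{L}_{\lambda,r}[\cdots]|\le C\,r^{1-2\lambda}$, which allows them to be absorbed when $l,r\to0$ and $m\to\infty$ are carried out jointly in the subsequent step.
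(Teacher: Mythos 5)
Your proposal is correct and follows essentially the same route as the paper: pass to the limits in $n,\gamma,\delta$ exactly as in Lemma~\ref{kato_lemma2}, then, since $l$ can no longer be sent to zero before $\eps$ and $\theta$, replace the singular operators by their absolute values and use $|A(u_\ep(t,y))-A(k)|\le |A(u_\ep(t,y))-A(u_\theta(t,x))|+l\|A'\|_\infty$ on the support of $\rho_l$, before concluding with the Young-measure passage in $\eps$ and $\theta$. You also correctly identify the key structural point — the loss of sign of $\mathscr{L}_{\lambda,r}$ and the resulting $l\|A'\|_\infty$ error terms — which is exactly what the paper's argument hinges on.
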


\begin{proof}
Note that we can follow  Lemma~\ref{kato_lemma2} to pass limits in the initial parameters, \textit{i.e.} $n, \gamma$, and $\delta$. The outcome is 
\begin{align*}
&-\E \bigg[\int_{Q_T \times \R^d\times \R} | A(u_\ep(t,y))-A(k)| \mathscr{L}_{\lambda,r}[ \rho_m(x-\cdot)](y)\varphi(t,x)\rho_l(u_\theta(t,x)-k) \,dy\,dk\,dx\,dt\bigg]\\
&\hspace{1cm}-\E \bigg[\int_{Q_T\times \R^d\times \R} | A(u_\theta(t,x))-A(k)| \mathscr{L}_{\lambda,r}[ \varphi(t,\cdot) \rho_m(\cdot-y)](x) \rho_l(u_\ep(t,y)-k) \,dy\,dk\,dx\,dt\bigg].
\end{align*}
We shall only estimate the first term (the second term yields the same result): we see that
\begin{align*}
&-\E \bigg[\int_{Q_T \times \R^d\times \R} | A(u_\ep(t,y))-A(k)| \mathscr{L}_{\lambda,r}[ \rho_m(x-\cdot)](y)\varphi(t,x)\rho_l(u_\theta(t,x)-k) \,dy\,dk\,dx\,dt\bigg]\\
&=-\E \bigg[\int_{Q_T \times \R^d\times \R} | A(u_\ep(t,y))-A(u_\theta(t,x)-k)| \mathscr{L}_{\lambda,r}[ \rho_m(x-\cdot)](y)\varphi(t,x)\rho_l(k) \,dy\,dk\,dx\,dt\bigg]\\
& \le\E \bigg[\int_{Q_T \times \R^d} | A(u_\ep(t,y))- A(u_\theta(t,x))|\, |\mathscr{L}_{\lambda,r}[ \rho_m(x-\cdot)](y)|\varphi(t,x) \,dy\,dx\,dt\bigg]\\
&\qquad+ l\,\|A'\|_\infty\,\E \bigg[\int_{Q_T \times \R^d} |\mathscr{L}_{\lambda,r}[ \rho_m(x-\cdot)](y)|\varphi(t,x)\,dy\,dx\,dt\bigg].
\end{align*}
Making use of similar arguments, as depicted in Lemma~\ref{kato_lemma2}, we can pass to the limits in $\eps$ and $\theta$.
\end{proof}

Now we are in a position to combine Lemma~\ref{lem:frac_01_extd} and Lemma~\ref{lem:frac_02_extd} to conclude

\begin{lem}
\label{kato_lemma3_extd}
It holds that,
\begin{align*}
&\limsup_{\theta \goto 0}\limsup_{\eps \goto 0}\lim_{\delta \goto 0}\lim_{\gamma \to 0}\lim_{n \goto \infty}(I_7+J_7+I_8+J_8) \le C\frac{l}{r^{2\lambda}}+C(1+l)r^{2-2\lambda}(1+m+m^2) \\
&\qquad -\E \bigg[\int_{Q_T \times \R^d\times (0,1)^2} | A(u(t,x,\alpha)) - A( v(t,y,\beta)) |\mathscr{L}_\lambda[ \varphi(t,\cdot)](x) \rho_m(x-y) \,d\alpha\,d\beta\,dx\,dy\,dt \bigg].
\end{align*} 
\end{lem}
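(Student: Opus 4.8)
The plan is to assemble the two preceding lemmas and then convert the bound written in terms of the truncated operator $\mathscr{L}^r_\lambda$ into one featuring the full operator $\mathscr{L}_\lambda$, absorbing the discrepancy into controlled error terms. First I would add the conclusions of Lemma~\ref{lem:frac_01_extd} (governing $I_7+J_7$) and Lemma~\ref{lem:frac_02_extd} (governing $I_8+J_8$) after applying the nested $\limsup$'s in the prescribed order $n,\gamma,\delta$ followed by $\eps,\theta$. This produces, on the right-hand side, the negative ``good'' term carrying $\mathscr{L}^r_\lambda[\varphi(t,\cdot)](x)\rho_m(x-y)$, the error $\tfrac{C\|A'\|_\infty l}{\lambda r^{2\lambda}}$ (which is exactly the announced $Cl/r^{2\lambda}$), together with the nonnegative contributions built from the singular operator $\mathscr{L}_{\lambda,r}$ acting on either $\rho_m(x-\cdot)$ or $\varphi(t,\cdot)\rho_m(\cdot-y)$, and two further terms proportional to $l\|A'\|_\infty$.

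Next I would use the decomposition $\mathscr{L}^r_\lambda[\varphi]=\mathscr{L}_\lambda[\varphi]-\mathscr{L}_{\lambda,r}[\varphi]$ inside the good term. This splits it into the desired expression involving the full operator $\mathscr{L}_\lambda[\varphi(t,\cdot)](x)\rho_m(x-y)$ — the only term surviving in the stated bound — plus a remainder in which $\mathscr{L}_{\lambda,r}[\varphi(t,\cdot)](x)$ appears. At this stage every error term is of the same nature: an integral of $|A(u)-A(v)|$ (or a factor $l\|A'\|_\infty$) against the absolute value of $\mathscr{L}_{\lambda,r}$ applied to a smooth, compactly supported function.

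The heart of the matter is then to estimate these singular-operator terms with the correct joint dependence on $l$, $r$ and $m$. For a sufficiently smooth $\phi$ the principal-value symmetrisation $2\phi(x)-\phi(x+z)-\phi(x-z)=-D^2\phi(\xi)[z,z]$ gives $|\mathscr{L}_{\lambda,r}[\phi](x)|\le \tfrac{c_\lambda}{2}\|D^2\phi\|_\infty\int_{|z|\le r}|z|^{2-d-2\lambda}\,dz\le C\|D^2\phi\|_\infty r^{2-2\lambda}$, the integral converging since $\lambda<1/2$ (indeed whenever $\lambda<1$). Applying this with $\phi=\varphi(t,\cdot)$ contributes a term of size $r^{2-2\lambda}$; applying it to the product $\varphi(t,\cdot)\rho_m(\cdot-y)$ and expanding $D^2[\varphi\rho_m]=\rho_m D^2\varphi+2D\varphi\,D\rho_m+\varphi\,D^2\rho_m$, together with the scalings $\|D\rho_m\|_{L^1}\sim m$ and $\|D^2\rho_m\|_{L^1}\sim m^2$, produces the factor $(1+m+m^2)r^{2-2\lambda}$; the term with $\mathscr{L}_{\lambda,r}[\rho_m(x-\cdot)](y)$ is bounded analogously by $\int_{\R^d}|\mathscr{L}_{\lambda,r}[\rho_m(x-\cdot)](y)|\,dy\le C m^2 r^{2-2\lambda}$. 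Since $A$ is Lipschitz with $A(0)=0$ and $u,v\in L^2$, the weights $\int|A(u)-A(v)|\rho_m$ are uniformly bounded over the compact support of $\varphi$, so these contributions combine into $C(1+l)r^{2-2\lambda}(1+m+m^2)$, where the $(1+l)$ accounts simultaneously for the $|A(u)-A(v)|$-weighted terms and the $l\|A'\|_\infty$-weighted ones. Collecting everything yields the asserted inequality. The main obstacle I anticipate is bookkeeping rather than any conceptual difficulty: one must use the symmetrised $D^2$-estimate uniformly (so that no weaker $r^{1-2\lambda}$ power slips in) and track every power of $m$ coming from differentiating $\rho_m$ through each product rule, since it is precisely the balance between $l/r^{2\lambda}$, $r^{2-2\lambda}m^2$ and the noise error $1/(lm^2)$ of Lemma~\ref{lem:noise_extd} that ultimately forces the restriction $\lambda<1/2$.
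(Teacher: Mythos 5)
Your proposal is correct and follows essentially the same route as the paper: add Lemmas \ref{lem:frac_01_extd} and \ref{lem:frac_02_extd}, write $\mathscr{L}^r_\lambda[\varphi]=\mathscr{L}_\lambda[\varphi]-\mathscr{L}_{\lambda,r}[\varphi]$, and bound every residual $\mathscr{L}_{\lambda,r}$-term via the second-order Taylor estimate $|\mathscr{L}_{\lambda,r}[\phi]|\le C\|D^2\phi\|_\infty\int_{|z|\le r}|z|^{2-d-2\lambda}\,dz\sim r^{2-2\lambda}$, with the product rule on $\varphi\rho_m$ producing the $(1+m+m^2)$ factor. The only cosmetic difference is that you symmetrise the kernel while the paper adds the gradient correction inside the principal value before Taylor-expanding; both yield the same $|z|^2$ gain.
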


\begin{proof}
The proof of the above lemma follows from the following observations. First note that we can follow Lemma~\ref{kato_lemma3} to conclude that
\begin{align*}
& - \E \bigg[ \int_\D \int_{Q_T}\int_0^1 \int_0^1 | A(u(t,x,\alpha)) - A( v(t,y,\beta)) | \fr^r[ \varphi(t,\cdot)](x) \rho_m(x-y) \,dy\,dx\,dt\,d\alpha\,d\beta\bigg] \\ 
&\quad \le - \E \bigg[ \int_\D \int_{Q_T} \int_0^1 \int_0^1 | A(u(t,x,\alpha)) - A( v(t,y,\beta)) | \fr[ \varphi(t,\cdot)](x) \rho_m(x-y) \,dy\,dx\,dt\,d\alpha\,d\beta \bigg] + C r^{2-2\lambda}.
\end{align*}
Next, we shall make use of the following observation
\begin{align*}
|\mathscr{L}_{\lambda,r}[ \rho_m(x-\cdot)](y)| \le \int_{|z|<r} \int_0^1 \frac{|z|^2 |D^2 \rho_m(x -y -\tau z)|}{|z|^{d+2\lambda}} \,d\tau\,dz,
\end{align*}
to conclude
\begin{align*}
&\E \left[\int_{Q_T \times \R^d \times (0,1)^2} \hspace{-0.5cm}| A(v(t,y,\beta))-A( u(t,x,\alpha))|\,| \mathscr{L}_{\lambda,r}[ \rho_m(x-\cdot)](y)|\varphi(t,x) \,d\beta\,d\alpha\,dy\,dx\,dt\right] \le C m^2r^{2-2\lambda},  \\
&\E \left[\int_{Q_T\times \R^d\times (0,1)^2} \hspace{-1.2cm}| A(u(t,x,\alpha))-A(v(t,y,\beta))|\, | \mathscr{L}_{\lambda,r}[ \varphi(t,\cdot) \rho_m(\cdot-y)](x)|  \,d\beta\,d\alpha\,dy\,dx\,dt\right] \le C  r^{2-2\lambda}(1 + m + m^2), \\
& \qquad l\|A'\|_\infty\E \left[ \int_{Q_T \times \R^d }|\mathscr{L}_{\lambda,r}[ \rho_m(x-\cdot)](y)|\varphi(t,x) \,dy\,dx\,dt\right] \le C l m^2r^{2-2\lambda},\\
& \qquad l \|A'\|_\infty \E \left[ \int_{Q_T \times \R^d }| \mathscr{L}_{\lambda,r}[ \varphi(t,\cdot) \rho_m(\cdot-y)](x)| \,dy\,dx\,dt\right]\le C  l r^{2-2\lambda}(1 + m + m^2).
\end{align*}
Combining the above results, we conclude the proof of the lemma.
\end{proof}

Using Lemma~\ref{lem:initial+time-terms_extd},  Lemma~\ref{lem:flux-terms_extd}, Lemma~\ref{lem:noise_extd}, Lemma~\ref{lem:eps_extd} and Lemma~\ref{kato_lemma3_extd}, we have the following version of  Kato's inequality: 
\begin{align}
0 \leq& \int_\D |u_0-v_0|\varphi(0)\,dx + \E\bigg[\int_{Q_T} \int^1_{0}\int^1_{0}  |u(\alpha,t,x) -v(\beta,t,x)| \partial_t \varphi(t,x)\,d\alpha\,d\beta\,dx\,dt\bigg]\notag\\
&\qquad -\E\bigg[\int_{Q_T} \int^1_{0}\int^1_{0}  F(u(\alpha,t,x),v(\beta,t,x))\nabla \varphi(t,x)\,dx\,d\alpha d\beta\,dt 
\bigg]+C\left(\frac{1}{l m^2}+lm+l\right) \notag\\
&\qquad \qquad -\E\bigg[\int_{Q_T} \int^1_{0} \int^1_{0} |A(u(\alpha,t,x))-A(v(\beta,t,x))|  \mathscr{L}_\lambda[\varphi(t,\cdot)](x) \,dx\,d\alpha d\beta\,dt\bigg]\notag \\
&\qquad \qquad \qquad +C\left(\frac{l}{r^{2\lambda}}+(1+l)r^{2-2\lambda}(1+m+m^2)\right). \label{kato_extd}
\end{align}

Next, our aim is to pass to the limits in the remaining parameters \textit{i.e.} $m, l$ and $r$ in \eqref{kato_extd}. This forces us to optimize the following terms
\begin{equation}\label{conditions}
\frac{\xi^{2}}{l}, \frac{l}{r^{2\lambda}}, \frac{r^{2-2\lambda}}{\xi^2},
\end{equation}
where for convenience we set $\xi = \frac{1}{m}$ and use the fact that $m^2r^{2-2\lambda}> lm^2r^{2-2\lambda}$, for small $l$. In order to simultaneously pass to the limit when $\xi,l,r \to 0$, we suppose that there is some $\theta_1, \theta_2 >0 $ such that $\xi^{2}= l^\theta_1\,, r=l^\theta_2$, then we must have $\theta_1-1>0,\, 1-2\lambda \theta_2>0$ from the first and the second term of \eqref{conditions}. From the third term of \eqref{conditions}, we must have $\theta_2(1-\lambda)-\frac{\theta_1}{2} >0$. Therefore the conditions we need are as follows $$ \theta_1 > 1, \,  \frac{1}{2\lambda}> \theta_2,\, 1> \frac{\lambda}{1-\lambda}.$$
Since by our assumption $\lambda \in [0,1/2)$, we can pass to the limit and reach at the desired Kato's inequality \eqref{kato}. Then we can proceed as in Subsection~\ref{Well-posedness} to conclude the uniqueness of the solution.


\appendix
\section{Entropy Inequality}
\label{appendix_entropy}
In this section, we present a formal derivation of the entropy inequality for the regularized equation \eqref{eq:viscous-Brown}. In what follows, let $\varphi$ in $\mathcal{D}^+(Q_T)$, $k$ a real number and $\eta$ in $\mathcal{E}$. Let $(\eta,\zeta)$ be an entropy flux pair. Given a non-negative test function $\varphi\in C_{c}^{1,2}([0,\infty )\times\R^d)$, as $u_\ep \in L^2((0,T)\times \Omega; H^1(\D))$ we apply a weak generalized version of It\^{o} formula (cf. \cite[Appendix A]{BisMajVal}) to yield, for all $T >0$
\begin{align*}
&\int_\D\eta(u_\eps(T,x)-k)\varphi(T,x) \,dx=
\int_\D\eta(u_\ep(0,x)-k)\varphi(0,x) \,dx\\
&  + \int_{Q_T} \eta(u_\ep(t,x) - k)\partial_t\varphi(t,x) \,dx\,dt  -\int_{Q_T} \nabla \varphi(t,x) \cdot \zeta( u_\ep(t,x))\,dx\,dt\\
& - \eps \int_{Q_T} [ \eta'(u_\eps(t,x)-k)\nabla u_\eps(t,x) \nabla \varphi(t,x)  +  \eta^{\prime\prime}(u_\eps(t,x)-k)\varphi |\nabla u_\eps(t,x)|^2] \,dx\,dt
\\
& + \int_0^T\int_\D \eta'(u_\ep(t,x) -k) \h(u_\ep(t,x))\varphi(t,x) \,dx \,dW(t) + \frac{1}{2}\int_{Q_T} \mathbb{G}^2(u_\ep(t,x))\eta^{\prime\prime}(u_\ep(t,x) - k)\varphi(t,x) \,dx\,dt\\
& - \frac12\int_{(0,T)\times(\D)^2}\frac{[A(u_\eps(t,x))-A(u_\eps(t,y))][(\varphi \eta'(u_\eps-k)) (t,x)-(\varphi \eta'(u_\eps -k)) (t,y)]}{|x-y|^{d+2\lambda}}\,dx\,dy\,dt.
\end{align*}
For technical reasons, it seems essential to split the non-local term. Indeed, following  \cite{CifaniJakobsen}, for any fixed positive $r$, we write
\begin{align}
& \int_{(0,T)\times(\D)^2}\frac{[A(u_\eps(t,x))-A(u_\eps(t,y))][(\varphi \eta'(u_\eps-k)) (t,x)-(\varphi \eta'(u_\eps-k)) (t,y)]}{|x-y|^{d+2\lambda}}\,dx\,dy\,dt 
 \notag \\ =& \int_{(0,T)\times \{|x-y|\leq r\}}\frac{[A(u_\eps(t,x))-A(u_\eps(t,y))][(\varphi \eta'(u_\eps-k)) (t,x)-(\varphi \eta'(u_\eps-k)) (t,y)]}{|x-y|^{d+2\lambda}}\,dx\,dy\,dt 
\notag \\& +  \int_{(0,T)\times \{|x-y| > r\}} \hspace{-0.2cm} \frac{[A(u_\eps(t,x))-A(u_\eps(t,y))][(\varphi \eta'(u_\eps-k)) (t,x)-(\varphi \eta'(u_\eps-k)) (t,y)]}{|x-y|^{d+2\lambda}}\,dx\,dy\,dt := I_r + I^r. \notag
\end{align}
Note that $A(u_\eps(t)) \in H^1(\D)$ and that $\varphi(t)$ is with compact support, so that  
\begin{align*}
I^r&=\int_{(0,T)\times \{|x-y| > r\}}\frac{[A(u_\eps(t,x))-A(u_\eps(t,y))][(\varphi \eta'(u_\eps-k)) (t,x)-(\varphi \eta'(u_\eps-k)) (t,y)]}{|x-y|^{d+2\lambda}}\,dx\,dy\,dt 
\\
&=\int_{(0,T)\times \{|x-y| > r\}}\frac{[A(u_\eps(t,x))-A(u_\eps(t,y))](\varphi \eta'(u_\eps-k)) (t,x)}{|x-y|^{d+2\lambda}}\,dx\,dy\,dt
\\&\qquad -
\int_{(0,T)\times \{|x-y| > r\}}\frac{[A(u_\eps(t,x))-A(u_\eps(t,y))](\varphi \eta'(u_\eps-k)) (t,y)}{|x-y|^{d+2\lambda}}\,dx\,dy\,dt
\\
&=\int_{(0,T)\times \D\times \{|z| > r\}}\frac{[A(u_\eps(t,x))-A(u_\eps(t,x+z))](\varphi \eta'(u_\eps-k)) (t,x)}{|z|^{d+2\lambda}}\,dx\,dz\,dt
\\&\qquad -
\int_{(0,T)\times \D\times\{|z| > r\}}\frac{[A(u_\eps(t,y+z))-A(u_\eps(t,y))](\varphi \eta'(u_\eps-k)) (t,y)}{|z|^{d+2\lambda}}\,dz\,dy\,dt
\\
&=2 \int_{Q_T}[\varphi \eta'(u_\eps(t,x)-k)] (t,x)\int_{\{|z| > r\}}\frac{[A(u_\eps(t,x))-A(u_\eps(t,x+z))]}{|z|^{d+2\lambda}}\,dz\,dx\,dt\\
& := 2\int_{Q_T} \mathscr{L}^r_{\lambda}(A(u_\eps(t,x))) \varphi \eta'(u_\ep(t,x)-k)\,dx\,dt.
\end{align*}
Moreover, since for any $(a,b)$, $[A(a)-A(b)]\eta^\prime(a-k) \geq A^\eta_k(a)-A^\eta_k(b)\geq [A(a)-A(b)]\eta^\prime(b-k)$, 
we get, 
\begin{align*}
&[A(u_\eps(t,x))-A(u_\eps(t,y))][(\varphi \eta'(u_\eps-k)) (t,x)-(\varphi \eta'(u_\eps-k)) (t,y)]\\
& \hspace{3cm} \ge [A^\eta_k(u_\eps(t,x))-A^\eta_k(u_\eps(t,y))][\varphi(t,x)-\varphi(t,y)].
\end{align*}
Then, 
\begin{align*}
I_r \geq &\int_{(0,T)\times \{|x-y|\leq r\}}\frac{[A^\eta_k(u_\eps(t,x))-A^\eta_k(u_\eps(t,y))][\varphi(t,x)-\varphi(t,y)]}{|x-y|^{d+2\lambda}}\,dx\,dy\,dt
\\
=& \int_{(0,T)}\lim_{\delta \to 0} \int_{\{\delta<|x-y|\leq r\}} \frac{[A^\eta_k(u_\eps(t,x))-A^\eta_k(u_\eps(t,y))][\varphi(t,x)-\varphi(t,y)]}{|x-y|^{d+2\lambda}}\,dx\,dy\,dt
\\
=& \bigg[ \int_{(0,T)} \lim_{\delta \to 0}\int_{ \{\delta<|x-y|\leq r\}}\frac{A^\eta_k(u_\eps(t,x))[\varphi(t,x)-\varphi(t,y)]}{|x-y|^{d+2\lambda}}\,dx\,dy\,dt\\
&\hspace{3cm} -\int_{(0,T)} \lim_{\delta \to 0} \int_{\{\delta<|x-y|\leq r\}}\frac{A^\eta_k(u_\eps(t,y))[\varphi(t,x)-\varphi(t,y)]}{|x-y|^{d+2\lambda}}\,dx\,dy\,dt\bigg]
\\=&2\int_{(0,T)}\lim_{\delta \to 0} \int_{\D \times \{\delta<|z|\leq r\}}A^\eta_k(u_\eps(t,x))\frac{(\varphi(t,x)-\varphi(t,x+z))}{|z|^{d+2\lambda}}\,dx\,dz\,dt
\\=&2\int_{Q_T} A^\eta_k(u_\eps(t,x)) \text{ PV} \int_{\{|z|\leq r\}}\frac{\varphi(t,x)-\varphi(t,x+z)}{|z|^{d+2\lambda}}\,dz\,dx\,dt\\
& := 2\int_{Q_T} A^\eta_k(u_\eps(t,x)) \mathscr{L}_{\lambda,r}[\varphi(t,\cdot)](x)\,dx\,dt
\end{align*}
since $\varphi$ is with a compact support and thanks to \cite[Section 2]{CifaniJakobsen}.
\smallskip

As the other terms of the equation are dealt in \cite{BaVaWit_2012}, we conclude that a viscous-entropy formulation is 
\begin{align*}
0&\leq \int_\D\eta(u_\ep(0,x)-k)\varphi(0) \,dx + \int_{Q_T} \eta(u_\eps(t,x) - k)\partial_t\varphi(t,x)- \zeta(u_\ep(t,x))\cdot \nabla \varphi(t,x) \,dx\,dt \notag\\
&- \eps \int_{Q_T} \eta'(u_\eps(t,x)-k)\nabla u_\eps(t,x)\nabla \varphi(t,x) \,dx\,dt  \notag\\
&-\int_{Q_T} [\mathscr{L}^r_\lambda (A(u_\eps(t,x))) \varphi(t,x) \eta'(u_\eps(t,x)-k) + A^\eta_k(u_\eps(t,x)) \mathscr{L}_{\lambda,r}(\varphi(t,x))]\,dx\,dt \notag\\
&+ \int_{Q_T} \eta'(u_\eps(t,x) -k) \h(u_\eps(t,x))\varphi(t,x) \,dx \,dW(t) + \frac{1}{2}\int_{Q_T}\mathbb{G}^2(u_\eps)\eta^{\prime\prime}(u_\eps(t,x) - k)\varphi(t,x) \,dx\,dt.
\end{align*}
Clearly, the above inequality is stable under the limit $\eps \rightarrow 0$, if the family ${\lbrace u_\ep \rbrace}_{\ep>0}$ has $L^p_{\mathrm{loc}}$-type stability. Indeed, the above inequality provides us the entropy inequality presented in Definition~\ref{Defi_Entropy_formulation}.

\section*{Acknowledgements}
U.K acknowledges the support of the Department of Atomic Energy,  Government of India, under project no.$12$-R$\&$D-TFR-$5.01$-$0520$, and India SERB Matrics grant MTR/$2017/000002$. Both U.K and G.V are supported in part by Indo-French Centre for Applied Mathematics : UMI-CNRS 3494, IFCAM.

\medskip

 \end{document}